\newtheorem{theorem}{Theorem}[section]
\newtheorem{proposition}[theorem]{Proposition}
\newtheorem{lemma}[theorem]{Lemma}
\newtheorem{definition}[theorem]{Definition}
\newtheorem{remark}[theorem]{Remark}
\newtheorem{assumption}[theorem]{Assumption}
\newcommand{\Indi}{\mbox{\rm{1}}\hspace{-0.25em}\mbox{\rm{l}}}
\newcommand{\Cov}{\mathrm{Cov}}
\newcommand{\Var}{\mathrm{Var}}
\newcommand{\norm}[1]{ {\left\lVert  #1\right\rVert}}
\newcommand{\opnorm}[1]{{\vert\kern-0.25ex\vert\kern-0.25ex\vert #1 
  \vert\kern-0.25ex\vert\kern-0.25ex\vert}}
\author[
G. Szymanski and T. Takabatake]
{Gr{\'e}goire Szymanski \and Tetsuya Takabatake}
\address{Gr\'egoire Szymanski, Ecole Polytechnique, CMAP, route de Saclay, 91128 Palaiseau, France} 
\email{gregoire.szymanski@polytechnique.edu}
\address{Tetsuya Takabatake, Hiroshima University, School of Economics, 2-1 Kagamiyama 1-Chome, Higashi-Hiroshima, Hiroshima, Japan}
\email{tkbtk@hiroshima-u.ac.jp}
\date{\today}
\begin{document}

\title[Asymptotic Efficiency for Fractional Brownian Motion with general noise]{Asymptotic Efficiency for Fractional Brownian Motion with general noise}

\begin{abstract}
We investigate the Local Asymptotic Property for fractional Brownian models based on discrete observations contaminated by a Gaussian moving average process. We consider both situations of low and high-frequency observations in a unified setup and we show that the convergence rate $n^{1/2} (\nu_n \Delta_n^{-H})^{-1/(2H+2K+1)}$ is optimal for estimating the Hurst index $H$, where $\nu_n$ is the noise intensity, $\Delta_n$ is the sampling frequency and $K$ is the moving average order. We also derive asymptotically efficient variances and we build an estimator achieving this convergence rate and variance. This theoretical analysis is backed up by a comprehensive numerical analysis of the estimation procedure that illustrates in particular its effectiveness for finite samples.
\end{abstract}

\maketitle

\noindent \textbf{Mathematics Subject Classification (2020) }: 62F12, 62M09, 62M10, 62M15.

\noindent \textbf{Keywords}: 
{High-frequency data; Fractional Brownian motion; LAN property;
Moving Average processes; Pre-averaging; Iterated estimation procedure}


\section{Introduction}
\label{sec:introduction}

We aim to recover the parameters of a fractional Brownian motion from discrete-time observations contaminated by an additive Gaussian noise. We consider the observations
\begin{equation}
\label{eq:fbm:model}
   X_{i}^{n}=\sigma W^{H}_{i\Delta_n} + \tau \nu_{n} Y_{i},\;\; i=0,1,\cdots,n,
\end{equation}
where $W^H$ is a fractional Brownian motion with Hurst index $H\in(0,1]$, that is the unique self-similar Gaussian process with stationary increments, characterized by its covariance structure
\begin{equation*}
\Cov[W^{H}_{s}, W^{H}_{u}] = \frac{1}{2}(s^{2H} + u^{2H} - |s-u|^{2H}).
\end{equation*}
Additionnally, in \eqref{eq:fbm:model}, the noise $\{Y_{i}\}_{i\in\mathbb{Z}}$ is a Gaussian stationary sequence independent of $W^{H}$ and $\sigma$, $\tau$, $\nu_n$ and $\Delta_n$ are postive parameters. We consider $\sigma$, $H$, and $\tau$ as unknown, while assuming knowledge of the sampling frequency $\Delta_n$ and the noise decay rate $\nu_n$.\\

The main contribution of this paper is to understand how the structure of the noise $Y$ affects the estimation of parameters $\sigma$ and $H$. In particular, we show that non-i.i.d. structure may significantly improve the convergence rate of estimation procedures. We first establish the Local Asymptotic Property (LAN) property for \eqref{eq:fbm:model}. Unlike \cite{gloter2007estimation, Szymanski-2022} where the noise essentially satisfies martingale conditions, we focus here on noises with a more general structure, which is supposed known. Under the assumptions $\nu_n = 1$ and $\Delta_n = n^{-1}$ the convergence rate $n^{-1/(4H+2)}$ is found in \cite{gloter2007estimation, Szymanski-2022}. Instead, we show that when the noise $Y$ is a moving average process of order $K$, this rate becomes 
\begin{equation*}
    (n/\nu_n^2)^{-(2K+1)/(4H+4K+2)}.
\end{equation*}
This naturaly extends the case where the noise is i.i.d. which corresponds to $K=0$. We then build an  asymptotically efficient estimator for all parameters $(H,\sigma,\tau)$. The construction uses a pre-averaging technique similar to \cite{jacod2015microstructure, mykland2016between}. This method is  based on non-uniform weights chosen to optimally denoise the signal. The estimators are then built from the energy levels and we use a non-standard debiaising technique in the spirit of \cite{Chong-Hoffmann-Liu-Rosenbaum-Szymanski-2022-MinMax}. This method produces rate optimal estimators which are not necessarily efficient and we retrieve efficiency through a One-step procedure (see Section 5.7 in \cite{vaart2000asymptotic} for details). This theoretical analysis is backed up by a comprehensive numerical analysis of the estimation procedure that illustrates in particular its effectiveness for finite samples.\\

The estimation of the parameters of a fractional Brownian motion is a long-standing problem, first studied through the spectrum of the self-similar stationary Gaussian time series. The first estimation procedure relied strongly on the knowledge of the precise distribution of the Gaussian series, including maximum likelihood estimators \cite{Yajima-1985, Dahlhaus-1989,Dahlhaus-2006}, Whittle estimators \cite{fox1986large,fox1987central}, local Whittle estimators \cite{Robinson-1995-LWE}, log-periodogram regressions \cite{Robinson-1995-Log-Periodogram}, \textit{etc}. \cite{istas1997quadratic} put forward a new strategy by using instead a more flexible estimator based on quadratic variations across times scales, studied more precisely in the specific case of the fractional Brownian motion by \cite{coeurjolly2001estimating}.\\

These theoretical results recently gained significant attention due to their applications in mathematical finance. In the seminal works \cite{Comte-Renault-1998} and later \cite{gatheral2018volatility}, the authors suggest a fractional volatility model for the price of financial assets, namely
\begin{equation}
\label{eq:price:roughvol}
\begin{cases}
     \mathrm{d}S_{u} = S_{u} ( \mu_{u}\,\mathrm{d}u + \sigma_{u}\,\mathrm{d}B_{u}),
     \\
     \sigma_{u} = \exp \Big(m + \int_{-\infty}^{u} e^{-\alpha(u-s)}\,\mathrm{d}W^{H}_{s}\Big),
\end{cases}
\end{equation}
where $\mu$ and $\sigma$ represent the drift and volatility processes, where $B$ is a Brownian Motion, and where $m$ and $\alpha$ are some constant. 
However, it is important to acknowledge that in \eqref{eq:price:roughvol}, the volatility $\sigma_u$ is not directly observed and needs to be estimated. Therefore, a carefull statistical treatment is needed in to estimate $H$. This question is particularily challenging when $H<1/2$ because in this context, volatility is highly irregular and cannot be estimated from local averages. This question was first raised in \cite{gloter2007estimation} and applied to \eqref{eq:price:roughvol} in \cite{rosenbaum2008estimation}. These works focused on the case $H > 1/2$ to remove the volatility estimation problem.
This limitation was recently removed in \cite{Szymanski-2022,Chong-Hoffmann-Liu-Rosenbaum-Szymanski-2022-MinMax,Chong-Hoffmann-Liu-Rosenbaum-Szymanski-2022-CLT}. Due to the martingale-price/rough-volatility structure of the model described in \eqref{eq:price:roughvol} naturally relates to \eqref{eq:fbm:model} with i.i.d. noise $Y$. The estimators proposed in this setup exhibit a convergence rate $n^{-1/(4H+2)}$ which is optimal whenever $0<H<1$, see \cite{gloter2007estimation, Chong-Hoffmann-Liu-Rosenbaum-Szymanski-2022-MinMax}. This rate, slower than the parametric rate $n^{-1/2}$ is still fast for small $H$ but quickly deteriorates for large $H$. Moreover, their approach does not incorporate microstructure noise: they assume that the price is directly observed, which is the case when considering daily increments. When considering higher-frequency data, this microstructure noise cannot be ignored. Its statistical properties were studied in depth \cite{jacod2017statistical} where it showed that it is far from i.i.d. Other approaches also exist as one could estimate spot volatility from option data \cite{chong2023asymptotic} or even use directly the VIX index \cite{floc2022roughness}. In those cases, the noise structure could differ, resulting in different estimators. It is therefore crucial to understand the impact of the noise on the estimation procedure of $H$ to identify the difference between each of these estimates.\\

In this work, we focus on the additive noise model \eqref{eq:fbm:model}. Although this parametric problem is by nature a simpler version of \eqref{eq:price:roughvol} where the noise is multiplicative, it provides a first comprehensive analysis of the effects of the regularity of the microstructure noise. It also highlights that different denoising techniques must be used to dampen the effect of the noise depending on its regularity. The huge advantage of using a parametric model is that it allows an in-depth analysis of the asymptotic efficient rates through the LAN property. LAN property was fist introduced by \cite{LeCam-1960} and was further developed by \cite{Sweeting-1980, ibragimov1981statistical, LeCam-1986-book, Yoshida-2011-PLDI}. It is used to provide asymptotic lower bounds for the convergence rate estimators, see {\it e.g.} \cite{Hajek-1972, LeCam-1972, ibragimov1981statistical}, and also helps to identify the optimal limiting variances through the limiting Fisher information matrix. However, this theory faces challenges when applied in the context of fractional Gaussian sequences under high-frequency observations. Indeed, the estimation errors of $H$ and $\sigma$ interplay significantly in the high-frequency asymptotics, {\it i.e.} $\Delta_{n}\to 0$ as $n\to\infty$, resulting in a degenerate Fisher information matrix when the usual "diagonal" rate matrices are used to rescale the estimation errors, see \cite{kawai2013fisher}. Due to this singularity, we cannot apply the existing minimax theorems and new techniques need to be developed. Without noise, this limitation has recently been broken in \cite{brouste2018lan} where the authors identified a class of non-diagonal rate matrices resulting in non-degenerate Fisher information matrices enabling a precise study of the efficiency of estimating $H$ and $\sigma$. In this paper, we identify new rate matrices and extend their methodology in the presence of noise.
\\

The remainder of this paper is structured as follows. Section~\ref{sec:notation} introduces the statistical model we work in and the main assumptions needed for our results. Then Section~\ref{Sec:LAN} presents the LAN property and the minimax rates derived from it, while Section~\ref{Sec:estimation} presents the estimation procedure. We provide an efficient implementation of theses estimators in Section~\ref{sec:numerical}. Eventually, all mathematical proofs and derivations can be found in the appendices.

\section{Notation and Statistical Model}
\label{Sec:stat_model}

\subsection{Statistical Model}
Let $\Theta = \Theta_{H}\times\Theta_{\sigma}\times\Theta_{\tau} = [H_{-}, H_{+}] \times [\sigma_{-}, \sigma_{+}] \times [\tau_{-}, \tau_{+}]  \subset (0,1)\times(0,\infty)^{2}$ with a non-empty interior. We write $\theta = (H,\sigma,\tau)$ for the elements of $\Theta$ and we may write $\xi=(H,\sigma)$ for conciseness when needed.
For each $n$, we consider a probability space $( \Omega_n, \mathcal{A}_n)$ on which are defined a family of random variable 
$\mathbf{X}_n = (X_{0}^{n},X_{1}^{n},\cdots,X_{n}^{n})$ and a family of probability measures $\mathbb{P}^{n}_{\theta}$ such that $\mathbf{X}_n$ can be realised under $\mathbb{P}^{n}_{\theta}$ by
\begin{equation*}
       X_{i}^{n}=\sigma W^{H}_{i\Delta_n} + \tau \nu_{n} Y_{i},\;\; i=0,1,\cdots,n,
\end{equation*}
where $\Delta_{n}>0$ is a sampling frequency, $\nu_{n}>0$ a known noise decay rate, $W^H$ a fractional Brownian motion with Hurst index $H$ and $Y=\{Y_{i}\}_{i\in\mathbb{Z}}$ a centered Gaussian sequence independant of $W^H$. We write $s$ its spectral density.
The behaviour near $0$ of the power spectral density characterises the asymptotic decay of the autocorrelation of $Y$. Suppose that there exists a non-negative integer $K$ such that
\begin{equation}
\label{eq:g:K}
    s(\lambda) \sim c_s |\lambda|^{2K}\ \ \mbox{as $|\lambda|\to 0$}
\end{equation}
for some $c_s > 0$. By periodicity of $s$, we then know that there exists a continuous function $b$ such that
\begin{equation*}
    s(\lambda) = b(\lambda) (2(1-\cos(\lambda)))^{K}.
\end{equation*}
In that case, $K$ can also be viewed as the moving average order of $Y$. Indeed, since $Y$ is Gaussian, we have
\begin{equation}
\label{eq:constructionY}
    Y_{j} = \sum_{l=0}^{K} {K \choose l}(-1)^{l}\varepsilon_{j-l}
\end{equation}
in distribution, where $\{\varepsilon_{j}\}_{j\in\mathbb{Z}}$ is an stationary Gaussian sequence independent of $W^{H}$ with spectral density $b$. In this paper, we focus on the following case.

\begin{assumption}
\label{Assumption_Y}
$K$ is a non-negative integer and we have $s(\lambda):=(2(1-\cos(\lambda)))^{K}$.
\end{assumption} 

A careful examination of our proofs ensures that our results can be proved under more general assumptions on $b$. Indeed, the LAN property proved in Section~\ref{subsec:LAN} extends readily and the presence of $b$ only changes the asymptotic variance. For the construction of an estimator, the presence of $b$ creates an additional problems in the computation of the variance of a local average of the noise $\gamma^{(w)}_p$ appearing in Lemma~\ref{lem:noise:structure}. This is easy to compute when $b(\lambda)=1$, see Section~\ref{Sec:lem:noise:structure:proof}. Although it is reasonable to think that the properties of $\gamma^{(w)}_p$ extends to the case of a general function $b$, we do not proceed to further investigation in this paper. \\

The assumption that $K$ is a non-negative integer is more crucial. Indeed, if \eqref{eq:g:K} holds for some non integer $K$, then we could rewrite $K = K^{\prime} + 1/2 - H^{\prime}$ for some $K^{\prime}\geq 0$ and $0 <  H^{\prime} < 1$. In that case, we still have \eqref{eq:constructionY} using this time a fractional stationary process $\varepsilon$ and the LAN property still holds, although new methods are needed to disentangle the two fractional processes in the estimation procedure. The case $K\in (-3/2,- 1/2)$ is linked to the model $X_i^n = \sigma W^H_{i\Delta_n} + \sigma^{\prime} W^{H^{\prime}}_{i\Delta_n}$ with $H^\prime = - 1/2 - K$ and the particular case $H^{\prime}=\frac{1}{2}$ (that is $K=-1$) corresponds to the mixed fractional Brownian model. This case exhibits new asymptotics and is left for future investigation.\\

Moreover, we also write 
$Z_{i}^{n}=\sigma F_{i}^{n}+\tau G_{i}^{n}$ where 
\begin{equation}
    \label{eq:def_FG}
    F_{i}^{n}:=W_{i\Delta_{n}}^{H}-W_{(i-1)\Delta_{n}}^{H} \;\;\text{ and }\;\;
    G_{i}^{n}:=\nu_{n}(Y_{i}-Y_{i-1})
\end{equation}
These sequences are stationary and are therefore fully characterized by their spectral density. The  spectral density of $\sigma F^{n}$ is denoted $f_{\xi}^{n}$ and is given by
\begin{equation*}
	f_{\xi}^{n}(\lambda):=
	\sigma^{2}\Delta_{n}^{2H}f_{H}(\lambda)
\;\;\;\;\text{ where }\;\;\;\;
	f_{H}(\lambda):=
	c_{H}\{2(1-\cos(\lambda))\}\sum_{\tau\in\mathbb{Z}}|\lambda+2\pi\tau|^{-1-2H}
\end{equation*}
with $c_{H}:=(2\pi)^{-1}\Gamma(2H+1)\sin(\pi H)$, {\it e.g.} see \cite{Samorodnitsky-Taqqu-1994}.
The spectral density of $\tau G^{n}$ is denoted $g_{\tau}^{n}$ and is given by Assumption~\ref{Assumption_Y} by
 \begin{equation*}
	g_{\tau}^{n}(\lambda):=\tau^{2}\nu_{n}^{2}g(\lambda) \;\;\text{ where }\;\; g(\lambda):=(2(1-\cos(\lambda)))^{K+1}
\end{equation*}
Since $F^{n}$ and $G^{n}$ are independent, it follows that the spectral density of $\{Z_{i}^{n}\}_{i\in\mathbb{Z}}$ is given by
\begin{equation*}
	h_{\theta}^{n}(\lambda):=f_{\xi}^{n}(\lambda)+g_{\tau}^{n}(\lambda).
\end{equation*}
We write $f_{\xi}(\lambda):=\sigma^{2}f_{H}(\lambda)$ and $g_{\tau}(\lambda):=\tau^{2}g(\lambda)$ for conciseness.

\subsection{Notation}
\label{sec:notation}
Throughout this paper, we use the following notation:
\begin{itemize}
	\item 
    We write $\partial_{1} := \partial_{H}:=\partial/\partial{H}$, $\partial_{2} := \partial_{\sigma}:=\partial/\partial{\sigma}$, $ \partial_{3} := \partial_{\tau}=\partial/\partial{\tau}$ and $\partial_{i_{1},\cdots,i_{m+1}}^{m+1}:=\partial_{i_{1},\cdots,i_{m}}^{m}\circ\partial_{i_{m+1}}$ for all $i_{1},\cdots,i_{m+1}\in\{1,2,3\}$ and $m\in\mathbb{N}$. 
    
    \item
    For an arbitrary matrix $M$, we write $\|M\|_{\mathrm{F}}:=(\mathrm{Tr}[M^{\ast}M])^{\frac{1}{2}}$ the Frobenius norm of $M$ and $\|M\|_{\mathrm{op}}:=\sup_{\|x\|=1}\|Mx\|_{\mathrm{F}}$ for its operator norm.
    
    \item 
    For $m\in\mathbb{N}$, $I_{m}$ denotes the $(m\times m)$-identity matrix. 
    \item For non-negative sequences $\{a_{n}\}_{n\in\mathbb{N}}$ and $\{b_{n}\}_{n\in\mathbb{N}}$, we write $a_{n}\lesssim b_{n}$ if there exists an absolute constant $C>0$ such that $a_{n}\leq Cb_{n}$ for sufficiently large $n$. 

	\item 
    For $d\in\mathbb{N}$, 
    $B(\theta,\epsilon)$ (resp. $\overline{B}(\theta,\epsilon)$) denotes an open (resp. a closed) ball of center $\theta$ and radius $\epsilon>0$ in $\mathbb{R}^{d}$.
    
    \item We write $ \mathbb{N} = \{ 1, 2, \dots \}$ and $ \mathbb{N}_0 = \{0, 1, 2, \dots \}$.
      
\end{itemize}

\section{Local Asymptotic Normality}
\label{Sec:LAN}

\subsection{Choice of Rate Matrices}
\label{Sec:RateMatrix}

In this section, we prove that the LAN property holds for the increments $\{X^n_i - X^n_{i-1}\}_{i}$. This sequence, which we denote $\{Z_{i}^{n}\}_{i}$, is a stationary sequence and the likelihood of $\mathbf{Z}_{n}:=(Z_{1}^{n},\cdots,Z_{n}^{n})$ is linked to the one of $\mathbf{X}_{n}$, {\it e.g.} see Proposition~4.1 of \cite{Cohen-Gamboa-Lacaux-Loubes-2013}. Therefore, with a slight abuse of notation, we still write $\mathbb{P}^{n}_{\theta}$ the distribution of $\mathbf{Z}_{n}$ under $\mathbb{P}^{n}_{\theta}$, which can be seen in this context as a probability measure on $(\mathbb{R}^n, \mathcal{B}(\mathbb{R}^n))$.
We first recall the classical definition of the LAN property, {\it e.g.} see \cite{ibragimov1981statistical}.
\begin{definition}
\label{Def:decaying:LAN}
Let $\Theta$ be a convex set of $\mathbb{R}^{d}$ and $\{(\mathcal{X}_{n},\mathcal{A}_{n},\{\mathbb{P}^{n}_{\theta}\}_{\theta\in\Theta})\}_{n\in\mathbb{N}}$ be a sequence of statistical experiments. 
The (sequence of) family of distributions $\{\{\mathbb{P}^{n}_{\theta}\}_{\theta\in\Theta}\}_{n\in\mathbb{N}}$ satisfies the LAN (Local Asymptotic Normality) property at an interior point $\theta$ of $\Theta$ if there exist some non-degenerate $(d\times d)$-matrices $\varphi_{n}(\theta)$ and $\mathcal{I}(\theta)$ such that for any $u \in \mathbb{R}^d$ such that $\theta + \varphi_{n}(\theta) u \in \Theta$, the likelihood ratio has the representation
\begin{equation*}
\frac{\mathrm{d}\mathbb{P}^{n}_{\theta+\Phi_{n}(\theta)u}}{\mathrm{d}\mathbb{P}^{n}_{\theta}}
=
\exp \left( u^{\top}\zeta_{n}(\theta) - \frac{1}{2}u^{\top}\mathcal{I}(\theta) u + v_{n}(u,\theta)
\right)
\end{equation*}
where $\zeta_{n}(\theta)$ is a $d$-dimensional random vector that converges in $\mathbb{P}^{n}_{\theta}$-distribution toward a normal distribution $\mathcal{N}(0,\mathcal{I}(\theta))$ and $v_{n}(u,\theta)$ converges to $0$ in $\mathbb{P}^{n}_{\theta}$-probability as $n\to\infty$.
\end{definition}

In our setting, the main driver in the convergence rate lies in the relationship between $H$ and $K$. Intuitively, the rougher the trajectory is, the easier it is to see it behind the noise. Similarly, when the noise is a moving average of higer order, it is more regular and it is therefore easier to denoise. We introduce the key values
\begin{equation*}
	\nu(H):=\lim_{n\to\infty}\nu_{n}\Delta_{n}^{-H}\in[0,\infty]\;\;\;\mbox{and}\;\;\; \Diamond(H):=2(K+1)-(1-2H) = 2K + 2H + 1 >1.
\end{equation*}
We draw attention to the following relationships involving $\nu(H)$ and $\Diamond(H)$:\begin{itemize}
	\item Note that 
 \begin{equation*}
     \nu_{n}\Delta_{n}^{-H} = \sqrt{\Var[G_{i}^{n}]/\Var[F_{i}^{n}]}
 \end{equation*} where $\{G_{i}^{n}\}_{i\in\mathbb{Z}}$ and $\{F_{i}^{n}\}_{i\in\mathbb{Z}}$ are defined in Equation \eqref{eq:def_FG}. 
	Therefore, $\nu(H)=\infty$ (or, conversely, $\nu(H)=0$) means that the magnitude of the additive noise becomes asymptotically dominant (or negligible) in comparison to that of the fractional Brownian motion. Throughout this paper, we always assume $\nu(H)=\infty$ because the LAN property when $\nu(H)\in[0,\infty)$ can be proved by extending the proof of \cite{Szymanski-2022}, where the case $\nu(H) = 0$ and $K=0$ is studied.
	\item On the other hand, $\Diamond(H)$ is the exponent corresponding to the asymptotic behaviour of a ratio of the normalized spectral density function of $\{G_{i}^{n}\}_{i\in\mathbb{Z}}$ to that of $\{F_{i}^{n}\}_{i\in\mathbb{Z}}$ around the origin
	\begin{equation*}
		g(\lambda)/f_{H}(\lambda)\sim c_{H}^{-1}|\lambda|^{\Diamond(H)}\ \ \mbox{as $|\lambda|\to 0$.}
	\end{equation*}
	This exponent is closely related to the condition on the integrability of integrands and appear in the integrals in the Fisher information matrix $\mathcal{I}(\theta)$, see Section~\ref{subsec:LAN}.
\end{itemize}

Naturally, the convergence rates depend on these quantities. We define 
\begin{equation*}
		r_{n}^{1}(H):=n(\nu_{n}\Delta_{n}^{-H})^{-\frac{2}{\Diamond(H)}},\ \ r_{n}^{2}(H):=n.
\end{equation*}
As we will see later, $r_{n}^{1}(H)$ is linked to the convergence rate of $H$ and $\sigma$ while $r_{n}^{2}(H)$ is related to the convergence rate of $\tau$. For technical purposes, we assume the following.
\begin{assumption}
\mbox{}
\label{Assumption:asymptotics_lan}
\begin{enumerate}[$(1)$]
	\item $\inf_{H\in\Theta_{H}}\nu_{n}\Delta_{n}^{-H} \to \infty$ as $n\to\infty$,
	\item there exists some $\epsilon\equiv\epsilon_{\Theta_{H}}\in(0,1)$ such that $\sup_{H\in\Theta_{H}}n^{\epsilon}r_{n}^{1}(H)^{-1}=o(1)$ as $n\to\infty$.
    \item There exists some constants $0 \leq \varrho_{d} \leq 1$ and $\varrho_{\nu},c_{\nu}\geq 0$ and $c_{d}>0$ such that
\begin{equation*}
\Delta_{n}=c_{d}n^{-\varrho_{d}} \;\;\;\;\text{ and }\;\;\;\;
\nu_{n}=c_{\nu}n^{-\varrho_{\nu}}.
\end{equation*}
\end{enumerate}
\end{assumption}

As mentionned previously, the first condition means that we are in a large noise scheme. The breaking point between the large noise and the small noise asymptotics corresponds to $\nu_n = \Delta_n^{H}$ which is natural, since $\Delta_n^H$ is the typical size of the increments of $W_H$ on a timespan $\Delta_n$. The second ensures that the convergence rate of $H$ is non-degenerated.\\

The crux to prove an effective LAN property for fractional Brownian motion under high-frequency observations is to define a well-chosen class of non-diagonal rate matrices. Indeed, if we choose the classic diagonal rate matrix, we would only be able to prove a weak LAN property as in \cite{kawai2013fisher} with a degenerate Fisher information matrix. Here, we introduce the non-diagonal rate matrices needed to derive the non-degenerate Fisher information matrix given in Section~\ref{subsec:LAN}.  
\begin{assumption}\label{Assump:RateMat}
	Working under Assumption~\ref{Assumption:asymptotics_lan},  we define
	\begin{equation}\label{def_an(H)}
		a_{n}(H):=\log\Delta_{n}-\log(r_{n}^{1}(H)/n).
	\end{equation}
    We consider a sequence of $(3\times 3)$-matrix valued continuous functions $\{\varphi_{n}(\theta)\}_{n\in\mathbb{N}}$ defined on $\Theta$ by
    \begin{align*}
    	&\Phi_{n}(\theta):=
    	\mathrm{diag}\left(r_{n}^{1}(H),r_{n}^{1}(H),r_{n}^{2}(H)\right)^{-\frac{1}{2}}
    	\begin{pmatrix}
    	\varphi_{n}(\theta)&0_{2\times 1} \\
    	0_{1\times 2}&1
    	\end{pmatrix},\;\; \text{ where }\;\;
        \varphi_{n}(\theta):=
    	\begin{pmatrix}
    	\varphi^{n}_{11}(\theta)&\varphi^{n}_{12}(\theta) \\
    	\varphi^{n}_{21}(\theta)&\varphi^{n}_{22}(\theta)
    	\end{pmatrix}
    \end{align*}
    for each $n\in\mathbb{N}$, satisfying the following properties:
    \begin{itemize}
        \item For each $j\in\{1,2\}$, there exist continuous functions $\overline{\varphi}_{1j}(\theta)$ and $\overline{s}_{2j}(\theta)$ on $\Theta$ such that 
            \begin{equation*}
                \varphi^{n}_{1j}(\theta)\to\overline{\varphi}_{1j}(\theta),\ \ 
                2\left(\varphi^{n}_{1j}(\theta)a_{n}(H)+\sigma^{-1}\varphi^{n}_{2j}(\theta)\right)\to \overline{s}_{2j}(\theta)\ \ \mbox{as $n\to\infty$}
            \end{equation*}
            uniformly on $\Theta$.
            \item
            $
                \inf_{\theta\in\Theta}|\overline{\varphi}_{11}(\theta)\overline{s}_{22}(\theta)-\overline{\varphi}_{12}(\theta)\overline{s}_{21}(\theta)|>0$.
	  \end{itemize}
\end{assumption}

This assumption is strongly related to the rate matrix structure introduced in Theorem 3.1 in \cite{brouste2018lan}. The novelty here is the presence of the additional term $-\log(r_{n}^{1}(H)/n)$ in the definition of $a_n(H)$. This term relates to the strength of the noise and governs the convergence rate of the LAN property. We refer to Section~\ref{Sec:Lower-Bounds} for several examples of sequences of rate matrices satisfying the conditions of Assumption~\ref{Assump:RateMat}. Using the LAN property and the H\'ajek–Le Cam minimax theorem, they provide asymptotic optimal convergence rates and variances of estimating the parameters $\theta=(H,\sigma,\tau)$. We also refer to Appendix~\ref{Appendix:Rate-Matrix} for properties satisfied by rate matrices under Assumption~\ref{Assump:RateMat}.

\subsection{Local Asymptotic Normality Property}
\label{subsec:LAN}

We write $\ell_{n}(\theta)$ for the Gaussian log-likelihood function of the difference vector $\mathbf{Z}_{n}$, which is given by
\begin{equation*}
	\ell_{n}(\theta)=-\frac{n}{2}\log(2\pi)-\frac{1}{2}\log\mathrm{det}[\Sigma_{n}(h_{\theta}^{n})]-\frac{1}{2}\mathbf{Z}_{n}^{\top}\Sigma_{n}(h_{\theta}^{n})^{-1}\mathbf{Z}_{n}.
\end{equation*}
From explicit computations, we see that the score function $\partial_{\theta}\ell_{n}(\theta)=(\partial_{H}\ell_{n}(\theta),\partial_{\sigma}\ell_{n}(\theta),\partial_{\tau}\ell_{n}(\theta))$ is given by
\begin{align*}
\begin{cases}
    \partial_{H}\ell_{n}(\theta)
    =-\frac{1}{2}\mathrm{Tr}[\Sigma_{n}(h_{\theta}^{n})^{-1}\Sigma_{n}(\partial_{H}f_{\xi}^{n})]
	+\frac{1}{2}\mathbf{Z}_{n}^{\top}\Sigma_{n}(h_{\theta}^{n})^{-1}\Sigma_{n}(\partial_{H}f_{\xi}^{n})\Sigma_{n}(h_{\theta}^{n})^{-1}\mathbf{Z}_{n}, \\
    \partial_{\sigma}\ell_{n}(\theta)
    =-\frac{1}{2}\mathrm{Tr}[\Sigma_{n}(h_{\theta}^{n})^{-1}\Sigma_{n}(\partial_{\sigma}f_{\xi}^{n})]
	+\frac{1}{2}\mathbf{Z}_{n}^{\top}\Sigma_{n}(h_{\theta}^{n})^{-1}\Sigma_{n}(\partial_{\sigma}f_{\xi}^{n})\Sigma_{n}(h_{\theta}^{n})^{-1}\mathbf{Z}_{n}, \\
    \partial_{\tau}\ell_{n}(\theta)
    =-\frac{1}{2}\mathrm{Tr}[\Sigma_{n}(h_{\theta}^{n})^{-1}\Sigma_{n}(\partial_{\tau}g_{\tau}^{n})]
	+\frac{1}{2}\mathbf{Z}_{n}^{\top}\Sigma_{n}(h_{\theta}^{n})^{-1}\Sigma_{n}(\partial_{\tau}g_{\tau}^{n})\Sigma_{n}(h_{\theta}^{n})^{-1}\mathbf{Z}_{n}
\end{cases}
\end{align*}
where we have
\begin{align*}\begin{cases}
\partial_{\sigma}f_{\xi}^{n}
= 2\sigma\Delta_{n}^{2H}f_{H},
\\
\partial_{\tau}g_{\tau}^{n}=2\tau\nu_{n}^{2}g,\\
\partial_{H}f_{\xi}^{n}=
    (2\log\Delta_{n})f_{\xi}^{n} + \sigma^{2}\Delta_{n}^{2H}\partial_{H}f_{H}
    =(2\log\Delta_{n}+\partial_{H}\log{f_{H}})f_{\xi}^{n}.
\end{cases}
\end{align*}
Moreover, we define the normalized score function $\zeta_{n}(\theta)$ and the observed Fisher information matrix $\mathcal{I}_{n}(\theta)$ by
\begin{equation*}
	\zeta_{n}(\theta):=
	\Phi_{n}(\theta)^{\top}\partial_{\theta}\ell_{n}(\theta)\;\;\;\text{ and }\;\;\;
	\mathcal{I}_{n}(\theta):=
	-\Phi_{n}(\theta)^{\top}\partial_{\theta}^{2}\ell_{n}(\theta)\Phi_{n}(\theta).
\end{equation*}
Let $c(\theta):=(\sigma^{2}\tau^{-2}c_{H})^{\frac{1}{\Diamond(H)}}$ and $d(\theta):=\partial_{H}\log{c_{H}}-2\log{c(\theta)}$. We also define the asymptotic Fisher information matrix $\mathcal{I}(\theta)$ by
\begin{equation*}
	\mathcal{I}(\theta):=
	\begin{pmatrix}
		\overline{\varphi}(\theta)& 0_{2\times 1} \\
		0_{1\times 2}& 2\tau^{-1}
	\end{pmatrix}^{\top}
	\mathcal{F}(\theta)
	\begin{pmatrix}
		\overline{\varphi}(\theta)& 0_{2\times 1} \\
		0_{1\times 2}& 2\tau^{-1}
	\end{pmatrix},
\end{equation*}
where  $\overline{\varphi}(\theta):=(\overline{\varphi}_{ij}(\theta))_{i,j=1,2}$ with $\overline{\varphi}_{2j}(\theta):=d(\theta)\overline{\varphi}_{1j}(\theta)+\overline{s}_{2j}(\theta)$ for $j\in\{1,2\}$, see Assumption~\ref{Assump:RateMat} for the definitions of $\overline{\varphi}_{1j}(\theta)$ and $\overline{s}_{2j}(\theta)$, 
and $\mathcal{F}(\theta)=(\mathcal{F}_{ij}(\theta))_{i,j=1,2,3}$ is defined by	
	\begin{equation*}
		\mathcal{F}_{ij}(\theta) :=
		\frac{c(\theta)}{2\pi}\int_{0}^{\infty}\frac{\left(-2\log\mu\right)^{4-(i+j)}}{\left(1+\mu^{\Diamond(H)}\right)^{2}}\,\mathrm{d}\mu
	\end{equation*}
if $i,j \in \{1, 2\}$, $\mathcal{F}_{33}(\theta):=\frac{1}{2}$ and $\mathcal{F}_{k3}(\theta):=\mathcal{F}_{3k}(\theta):=0$ for $k \in \{1, 2\}$. 
We now state the main result of this section.
\begin{theorem}\label{Thm:LAN}
	Under Assumption~\ref{Assump:RateMat}, the family of distributions $\{\mathbb{P}^{n}_{\theta}\}_{\theta\in\Theta}$ 
 satisfies the LAN property at each interior point $\theta\in\Theta$, that is, 
	\begin{equation*}
		\log\frac{\mathrm{d}\mathbb{P}^{n}_{\theta+\Phi_{n}(\theta)u}}{\mathrm{d}\mathbb{P}^{n}_{\theta}}
		=u^{\top} \zeta_{n}(\theta)-\frac{1}{2}u^{\top}\mathcal{I}(\theta)u+o_{\mathbb{P}^{n}_{\theta}}(1)\ \ \mbox{as $n\to\infty$,}
	\end{equation*}
and the normalized score function $\zeta_{n}(\theta)$ satisfies
	\begin{equation}\label{LAN:Score-CLT}
		\mathcal{L}\{\zeta_{n}(\theta)|\mathbb{P}^{n}_{\theta}\}\rightarrow\mathcal{N}(0,\mathcal{I}(\theta))\ \ \mbox{as $n\to\infty$.}
	\end{equation}
\end{theorem}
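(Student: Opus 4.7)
The plan is to use the classical Taylor-expansion approach for Gaussian stationary sequences. Writing $\theta_n(u):=\theta+\Phi_n(\theta)u$, Taylor's formula with integral remainder yields
\begin{equation*}
\log\frac{d\mathbb{P}^n_{\theta_n(u)}}{d\mathbb{P}^n_\theta} = u^\top\zeta_n(\theta) - \tfrac{1}{2}u^\top\mathcal{I}_n(\theta)u + R_n(u,\theta),
\end{equation*}
so the LAN expansion follows once three points are established: (i) $\mathcal{I}_n(\theta)\to\mathcal{I}(\theta)$ in $\mathbb{P}^n_\theta$-probability; (ii) the remainder $R_n(u,\theta)$, which involves third derivatives integrated along the segment, is asymptotically negligible uniformly in $u$ on compacts (equivalently, $\mathcal{I}_n$ is locally uniformly convergent); (iii) the score $\zeta_n(\theta)$ satisfies the CLT~\eqref{LAN:Score-CLT}.

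\textbf{Key computations.} Using standard Gaussian second-moment identities, $\mathbb{E}^n_\theta[-\partial^2_{ij}\ell_n(\theta)]$ reduces, after cancellation of the $\partial^2_{ij}h^n_\theta$ contributions against $\mathbb{E}^n_\theta[\Sigma_n(h^n_\theta)^{-1}\mathbf{Z}_n\mathbf{Z}_n^\top]=I_n$, to $\tfrac{1}{2}\mathrm{Tr}[\Sigma_n(h^n_\theta)^{-1}\Sigma_n(\partial_i h^n_\theta)\Sigma_n(h^n_\theta)^{-1}\Sigma_n(\partial_j h^n_\theta)]$. The central analytic ingredient is the Szegő/Fox--Taqqu-type trace asymptotic
\begin{equation*}
\tfrac{1}{n}\mathrm{Tr}\bigl[\Sigma_n(h^n_\theta)^{-1}\Sigma_n(\partial_i h^n_\theta)\Sigma_n(h^n_\theta)^{-1}\Sigma_n(\partial_j h^n_\theta)\bigr] \approx \tfrac{1}{2\pi}\int_{-\pi}^{\pi}\frac{\partial_i h^n_\theta\,\partial_j h^n_\theta}{(h^n_\theta)^2}\,d\lambda,
\end{equation*}
which remains valid despite the singularity of $f_H$ and the zero of $g$ at $\lambda=0$. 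Substituting the explicit derivatives $\partial_H h^n_\theta=(2\log\Delta_n+\partial_H\log f_H)f^n_\xi$, $\partial_\sigma h^n_\theta=(2/\sigma)f^n_\xi$, $\partial_\tau h^n_\theta=(2/\tau)g^n_\tau$ and performing the change of variables $\mu=\lambda/\lambda^*$ with $\lambda^*:=c(\theta)r_n^1(H)/n=c(\theta)(\nu_n\Delta_n^{-H})^{-2/\Diamond(H)}$ transforms the integrals into the universal ones $\int_0^\infty(-2\log\mu)^{4-(i+j)}(1+\mu^{\Diamond(H)})^{-2}\,d\mu$ appearing in $\mathcal{F}_{ij}(\theta)$. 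The logarithmic factor $\log\lambda^*$ produced by the change of variables combines with $\log\Delta_n$ to yield precisely $a_n(H)$, modulo the constant $\log c(\theta)$ whose contribution is absorbed in $d(\theta)$; Assumption~\ref{Assump:RateMat} on $\varphi_n(\theta)$ is engineered so that, after multiplication by $\Phi_n(\theta)^\top$, the limit is exactly $\overline{\varphi}(\theta)^\top\mathcal{F}(\theta)\overline{\varphi}(\theta)=\mathcal{I}(\theta)$. The cross terms between $(H,\sigma)$ and $\tau$ vanish because the integrands $f^n_\xi/h^n_\theta$ and $g^n_\tau/h^n_\theta$ have essentially disjoint support in the $\mu$ scale. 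The CLT for $\zeta_n(\theta)$ then follows from the cumulant method for centered quadratic forms in stationary Gaussian vectors: the variance converges by the above, and the higher cumulants $2^{k-1}(k-1)!\,\mathrm{Tr}[(A_n\Sigma_n(h^n_\theta))^k]$ are $o(1)$ after rescaling, bounded by analogous spectral integrals.

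\textbf{Main obstacle.} The principal difficulty is the \emph{two-scale} nature of $h^n_\theta=f^n_\xi+g^n_\tau$: around the critical frequency $\lambda^*\to 0$ the spectral density transitions between a fractional regime (where $f^n_\xi$ dominates and is singular) and a noise regime (where $g^n_\tau$ dominates and vanishes to order $2K+2$). Proving the trace approximations uniformly across this boundary layer, in a form sharp enough to isolate the precise logarithmic corrections $a_n(H)$ built into the non-diagonal rate matrix, is the main analytic challenge; one has to adapt the spectral techniques of \cite{brouste2018lan} from a pure-fBm spectral density to this sum of two densities with incompatible behaviours at $0$. The same kind of control, at one order higher in the derivatives, is needed to bound the third-derivative Taylor remainder uniformly over the shrinking neighbourhood $\theta_n(u)$; Assumption~\ref{Assumption:asymptotics_lan}(2) guarantees that this neighbourhood stays compatible with the spectral bounds.
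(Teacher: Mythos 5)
Your proposal follows the same path as the paper's proof: Taylor expansion of the log-likelihood, a Szeg\H{o}/Lieberman--Rosemarin--Rousseau-type trace approximation reducing the observed Fisher information to a spectral integral, the change of variable $\mu = c_n(\theta)\lambda$ (with $c_n(\theta)\sim n/r_n^1(H)$) that yields the universal integrals $\int_0^\infty(-2\log\mu)^m(1+\mu^{\Diamond(H)})^{-2}\,d\mu$ and isolates the logarithmic corrections absorbed into $a_n(H)$ and $d(\theta)$, and a cumulant argument for the CLT of the score. You correctly flag the two-scale structure of $h^n_\theta=f^n_\xi+g^n_\tau$ near the critical frequency $\lambda\approx r_n^1(H)/n$ as the main analytic obstacle, which the paper addresses through Proposition~\ref{Prop:LRR10_Thm5} together with a Sobolev inequality (Lemma~\ref{lemma:Sobolev}) to control the Taylor remainder uniformly on the shrinking neighbourhood.

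One conceptual imprecision worth flagging: the $(H,\sigma)$--$\tau$ cross terms do \emph{not} vanish because $f^n_\xi/h^n_\theta$ and $g^n_\tau/h^n_\theta$ have essentially disjoint support in the $\mu$ scale. After rescaling they are $\approx(1+\mu^{\Diamond(H)})^{-1}$ and $\approx\mu^{\Diamond(H)}/(1+\mu^{\Diamond(H)})$, whose product integrates to a strictly positive finite constant localised at the transition scale $\mu\approx 1$. What kills the cross entries of $\mathcal{I}(\theta)$ is a \emph{rate} mismatch: the cross integral in $\lambda$ scales like $r^1_n(H)/n$, the same order as the $(H,\sigma)$ block, but is normalised by the geometric mean $\sqrt{r^1_n(H)r^2_n(H)}/n$, which leaves a factor $\sqrt{r^1_n(H)/r^2_n(H)}=(\nu_n\Delta_n^{-H})^{-1/\Diamond(H)}\to 0$. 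This is precisely the computation in Proposition~\ref{Prop:FisherInfo-Cross-Terms}; if you replaced the ``disjoint support'' heuristic by this rate comparison, the argument would be complete in outline.
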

The proof of Theorem~\ref{Thm:LAN} is given in Section~\ref{Sec:Proof-LAN}. 
In the rest of this section, we give several applications of the LAN property given in Theorem~\ref{Thm:LAN}.

\subsection{Lower Bounds for estimating $\theta$}
\label{Sec:Lower-Bounds}

In this section, we utilize the LAN property to derive asymptotically efficient rates and variances of estimation for $H$, $\sigma$ and $\tau$ respectively. These bounds are based on the H\'ajek's convolution theorem (see
\cite{Hajek-1972, ibragimov1981statistical,LeCam-1972}), recalled below.
\begin{theorem}[Equation~II.12.1 in \cite{ibragimov1981statistical}]
\label{Thm:Hajek-LeCam}
Suppose that the sequence of the famility of distributions $\{\mathbb{P}^{n}_{\theta}\}_{\theta\in\Theta}$ satisfies the LAN property at the point $\theta$ in the interior of $\Theta \subset \mathbb R^{d}$. Then for any sequence of estimators $\widehat{\theta}_n$ and any symmetric nonnegative quasi-convex function $L$ such that $e^{-\varepsilon |z|^2} L(z) \to 0$ as $|z| \to \infty$ and any $\varepsilon > 0$, we have
\begin{equation*}
    \liminf_{c\to\infty}
    \liminf_{n \to \infty}
    \sup_{\substack{\theta^{\prime}\in\Theta\\\left\|\Phi_{n}(\theta)^{-1}(\theta^{\prime}-\theta)\right\|_{\mathbb{R}^{3}}\leq c}}
    \mathbb{E}_{\theta^{\prime}}^{n}\left[
    L\left(\Phi_{n}(\theta)^{-1}(\widehat{\theta}_{n}-\theta)\right)
    \right]
    \geq 
    \frac{1}{(2\pi)^{d/2}}
    \int_{\mathbb{R}^d} L\big(\mathcal{I}(\theta)^{-1/2}z\big) e^{-\frac{|z|^2}{2}} \,\mathrm{d}z.
\end{equation*}
\end{theorem}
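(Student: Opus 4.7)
The plan is to follow the classical Ibragimov-Khasminskii reduction to a minimax problem in the limiting Gaussian shift experiment. Setting $V_n := \Phi_n(\theta)^{-1}(\widehat{\theta}_n - \theta)$ and $u := \Phi_n(\theta)^{-1}(\theta'-\theta)$, the supremum over the rescaled ball in the statement becomes $\sup_{\|u\|\leq c} \mathbb{E}^n_{\theta + \Phi_n(\theta)u}[L(V_n - u)]$ after centering the loss at the true parameter. For any probability measure $\pi$ on $\overline{B}(0,c) \subset \mathbb{R}^d$, the supremum dominates the Bayes risk $\int \mathbb{E}^n_{\theta+\Phi_n(\theta)u}[L(V_n - u)] \, d\pi(u)$, and changing measure to $\mathbb{P}^n_\theta$ via the LAN likelihood ratio of Theorem~\ref{Thm:LAN} rewrites this as
\begin{equation*}
\int \mathbb{E}^n_\theta\!\left[L(V_n - u)\exp\!\left(u^\top \zeta_n(\theta) - \tfrac12 u^\top \mathcal{I}(\theta) u + v_n(u,\theta)\right)\right] d\pi(u).
\end{equation*}

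Next, I would exploit the joint weak convergence $(V_n, \zeta_n) \Rightarrow (V, \zeta)$ under $\mathbb{P}^n_\theta$ (at least along subsequences, with the marginal $\zeta \sim \mathcal{N}(0, \mathcal{I}(\theta))$ from \eqref{LAN:Score-CLT}), together with the growth hypothesis $e^{-\varepsilon|z|^2} L(z)\to 0$, which supplies the uniform integrability needed to pass the limit through the Bayes integral. This identifies $\liminf_n$ of the Bayes risk with the corresponding Bayes risk in the limiting Gaussian shift experiment, in which an observation $\xi \sim \mathcal{N}(u, \mathcal{I}(\theta)^{-1})$ is seen under the alternative indexed by $u$. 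In the Gaussian shift model Anderson's lemma applies: since $L$ is symmetric and quasi-convex, for $Z \sim \mathcal{N}(0, \mathcal{I}(\theta)^{-1})$ the map $\mu \mapsto \mathbb{E}[L(Z+\mu)]$ is minimized at $\mu=0$, so the identity estimator $\xi$ is optimal among translation-equivariant procedures and its frequentist risk is $\mathbb{E}[L(\mathcal{I}(\theta)^{-1/2} Z)]$ with $Z \sim \mathcal{N}(0, I_d)$.

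Finally, I would let $c \to \infty$ along a sequence of priors $\pi_c$ weakly approaching a flat prior on $\mathbb{R}^d$ (for instance rescaled Gaussians of variance $c^2$); a standard argument then shows that the minimal Bayes risks converge to the equivariant frequentist risk above, yielding the claimed lower bound $(2\pi)^{-d/2}\int L(\mathcal{I}(\theta)^{-1/2}z)\,e^{-|z|^2/2}\,dz$. The main obstacle is the careful handling of the double limit $\liminf_c\liminf_n$: one must control the tails of $V_n$ uniformly in $n$ and show that the truncation $\|u\|\leq c$ becomes immaterial as $c\to\infty$, which is precisely where the integrability hypothesis on $L$ enters. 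This delicate step is the reason the authors simply invoke Ibragimov and Khasminskii's Equation~II.12.1 rather than reproving it in the present paper.
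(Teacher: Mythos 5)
The paper does not prove Theorem~\ref{Thm:Hajek-LeCam}: it is a black-box citation of Equation~II.12.1 of Ibragimov and Khasminskii (as the theorem header itself indicates), and no proof appears anywhere in the manuscript. There is therefore no ``paper's own proof'' to compare against; you yourself note this in your closing paragraph, and that observation is correct. Evaluated on its own merits, your sketch is a faithful account of the standard Ibragimov--Khasminskii argument: reduce to a Bayes risk over a prior on the local shell, change measure via the LAN expansion, pass to the limiting Gaussian shift experiment, invoke Anderson's lemma (symmetry plus quasi-convexity of $L$ give that the centered Gaussian risk minorizes all translated risks), and then remove the truncation with a flattening sequence of priors.

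Two technical points deserve more care if this were turned into a full proof. First, you assert joint weak convergence $(V_n,\zeta_n)\Rightarrow(V,\zeta)$ ``at least along subsequences,'' but $V_n$ need not be tight on $\mathbb{R}^d$ for an arbitrary estimator sequence; the standard fix is to compactify, i.e.\ view $V_n$ as taking values in $\mathbb{R}^d\cup\{\infty\}$ and extend $L$ lower semicontinuously, which is exactly where the growth hypothesis $e^{-\varepsilon|z|^2}L(z)\to 0$ is used to ensure nothing is lost at infinity. Second, you re-center the loss as $L(V_n-u)$, which is what Ibragimov--Khasminskii actually prove; the paper's display has $L(\Phi_n(\theta)^{-1}(\widehat\theta_n-\theta))$ with no $\theta'$ inside, which appears to be a slight imprecision in transcription from the source, so your reading is the right one. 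Neither point undermines the overall correctness of your sketch, but both are precisely the kind of detail the original reference handles and the paper chooses not to reproduce.
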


\subsubsection{Lower Bounds for estimating $H$ and $\tau$}\label{Sec:Lower-Bounds-H}
In this section, we consider
\begin{equation*}
\varphi_{n}(\theta):=r_{n}^{1}(H)^{-\frac{1}{2}}
\begin{pmatrix}
1&0 \\
-\sigma a_{n}(H)&1
\end{pmatrix},\ \ n\in\mathbb{N},
\end{equation*}
where $a_{n}(H)$ is defined in Assumption~\ref{Assump:RateMat}. 
Note that $\{\Phi_{n}(\theta)\}_{n\in\mathbb{N}}$ satisfies Assumption~\ref{Assump:RateMat}. In that case, $\varphi_{n}(\theta)$ is non-diagonal and depends on both parameters $H$ and $\sigma$ and we have
\begin{equation*}
	\overline{\varphi}_{11}(\theta)=1,\ \  
	\overline{\varphi}_{12}(\theta)=0,\ \ 
	\overline{s}_{21}(\theta)=0,\ \ 
	\overline{s}_{22}(\theta)=2\sigma^{-1},
\end{equation*}
and
\begin{equation*}
	\overline{\varphi}(\theta)=
	\begin{pmatrix}
		1&0\\
		d(\theta)&2\sigma^{-1}
	\end{pmatrix}.
\end{equation*}
Since we have
\begin{equation*}
	\varphi_{n}(\theta)^{-1}=\sqrt{r_{n}^{1}(H)}
	\begin{pmatrix}
	 1&0 \\
	 \sigma a_{n}(H)&1
	\end{pmatrix},
\end{equation*}
Theorems~\ref{Thm:LAN} and~\ref{Thm:Hajek-LeCam} and the computations of the inverse of the Fisher information matrix in Appendix \ref{section:explicit_inverse} give the asymptotic lower bounds
\begin{align}\label{LB-H-tau}
\begin{cases}
	&\liminf_{n\to\infty}
	r_{n}^{1}(H)\mathbb{E}_{\theta}^{n}[(\widehat{H}_{n}-H)^{2}]\geq 
	\mathcal{F}_{22}(\theta)(\mathcal{F}_{11}(\theta)\mathcal{F}_{22}(\theta)-\mathcal{F}_{12}(\theta)^{2})^{-1},\\
	&\liminf_{n\to\infty}r_{n}^{2}(H)
	\mathbb{E}_{\theta}^{n}[(\widehat{\tau}_{n}-\tau)^{2}]\geq\frac{\tau^{2}}{2}
\end{cases}
\end{align}
for any sequences of estimators $\{\widehat{H}_{n}\}_{n\in\mathbb{N}}$ and $\{\widehat{\tau }_{n}\}_{n\in\mathbb{N}}$ by taking $\ell(x,y,z)=x^{2}$ and $\ell(x,y,z)=z^{2}$ in Theorem~\ref{Thm:Hajek-LeCam}. 
In particular, the asymptotically efficient rates of estimation for $H$ and $\tau$ are respectively given by $r_{n}^{1}(H)^{\frac{1}{2}}$ and $r_{n}^{2}(H)^{\frac{1}{2}}$. \\

In \eqref{LB-H-tau}, the optimal convergence rate is derived under the assumption \ref{Assumption:asymptotics_lan} which implies that we are in a large noise asymptotics, i.e. $\nu_n \Delta_n^{-H} \to \infty$. In general, extending the approach of \cite{Szymanski-2022} to study the case where $\nu_n$ is small, we get the following asymptotic lower rate of convergence for the estimation of $H$
\begin{equation*}
    n^{1/2} (\nu_n \Delta_n^{-H})^{-1/(2H+2K+1)} \wedge n^{1/2}.
\end{equation*}
Notably, when $K=0$, this expression aligns with the standard convergence rate for estimating the Hurst index from noisy observations, as discussed in \cite{gloter2007estimation, rosenbaum2008estimation, Szymanski-2022, Chong-Hoffmann-Liu-Rosenbaum-Szymanski-2022-MinMax}. An interesting observation is that we get a better convergence rate when $K$ increases. In the limit $K \to \infty$, the rate converges to the parametric rate $n^{1/2}$. This behaviour is a natural progression, as large $K$ correspond to smoother noises that should, in principle, have a lesser impact on the estimation of $H$.

\subsubsection{Lower Bound for estimating $\sigma$}
\label{Sec:Optimal-sigma}
Throughout this section, we always assume $\inf_{n\in\mathbb{N}}|a_{n}(H)|>0$. In that case, we consider the sequence of matrices $\{ \Phi_{n}(\theta)\}_{n\in\mathbb{N}}$ given by
\begin{align*}
	\varphi_{n}(\theta):=r_{n}^{1}(H)^{-\frac{1}{2}}
	\begin{pmatrix}
	a_{n}(H)^{-1}&1\\
	0&-\sigma a_{n}(H)
	\end{pmatrix},\ \ n\in\mathbb{N}. 
\end{align*}
This sequence satifyies Assumption \ref{Assump:RateMat} and $\varphi_{n}(\theta)$ is non-diagonal and depends on both parameters $H$ and $\sigma$. 
Moreover,
\begin{equation*}
	\overline{\varphi}_{11}(\theta)=0,\ \  
	\overline{\varphi}_{12}(\theta)=1,\ \ 
	\overline{s}_{21}(\theta)=2,\ \ 
	\overline{s}_{22}(\theta)=0,
\end{equation*}
so that we obtain
\begin{equation*}
	\overline{\varphi}(\theta)=
	\begin{pmatrix}
		0&1\\
		2&d(\theta)
	\end{pmatrix}.
\end{equation*}
Since we have
\begin{equation*}
\varphi_{n}(\theta)^{-1}=-\frac{\sqrt{r_{n}^{1}(H)}}{\sigma}
\begin{pmatrix}
-\sigma a_{n}(H) &-1\\
0&a_{n}(H)^{-1}
\end{pmatrix},
\end{equation*}
Theorems~\ref{Thm:LAN} and~\ref{Thm:Hajek-LeCam} and the computations of Appendix \ref{section:explicit_inverse} give the asymptotic lower bound
\begin{equation}\label{LB-sigma}
	\liminf_{n\to\infty}
	\frac{r_{n}^{1}(H)}{\sigma^{2}a_{n}(H)^{2}}\mathbb{E}_{\theta}^{n}[(\widehat{\sigma}_{n}-\sigma)^{2}]\geq 
	\mathcal{F}_{22}(\theta)(\mathcal{F}_{11}(\theta)\mathcal{F}_{22}(\theta)-\mathcal{F}_{12}(\theta)^{2})^{-1}
\end{equation}
for any sequence of estimators $\{\widehat{\sigma}_{n}\}_{n\in\mathbb{N}}$ by taking $\ell(x,y,z)=y^{2}$ in Theorem~\ref{Thm:Hajek-LeCam}. 
This lower bound of estimators implies that the asymptotically efficient rate of estimation for $\sigma$ is given by $r_{n}^{1}(H)^{\frac{1}{2}}|a_{n}(H)|^{-1}$.

\begin{remark}\rm
    Assume that $\Delta_{n}=n^{-1}$, $\nu_{n}=1$ and that $H$ and $\tau$ are \textit{known} as considered in the previous work \cite{Sabel-Schmidt-Hieber-2014} We write $\widetilde{\mathbb{P}}^{n}_{\widetilde{\sigma}}=\mathbb{P}_{(H,\widetilde{\sigma},\tau)}^{n}$ for conciseness. Then, proceeding as in the proof of Theorem~\ref{Thm:LAN}, we can also prove that
	\begin{equation*}
    \log\frac{\mathrm{d}\widetilde{\mathbb{P}}^{n}_{\sigma+r_{n}^{1}(H)^{-1/2}u}}{\mathrm{d}\widetilde{\mathbb{P}}^{n}_{\sigma}}
		=\widetilde{\zeta}_{n}(\sigma)u
		-\frac{1}{2}\widetilde{\mathcal{I}}(\sigma)u^{2}+o_{\widetilde{\mathbb{P}}_{\sigma}^{n}}(1)\ \ \mbox{as $n\to\infty$}
	\end{equation*}
    for each $u\in\mathbb{R}$, where
    \begin{equation*}
		\widetilde{\zeta}_{n}(\widetilde{\sigma}):=r_{n}^{1}(H)^{-1/2}\partial_{\sigma}\ell_{n}(\widetilde{\theta}),\ \ 
		\widetilde{\mathcal{I}}(\widetilde{\sigma}):=
		(2\widetilde{\sigma})^{-2}\mathcal{F}_{22}(\widetilde{\theta}),\ \
        \widetilde{\theta}:=(H,\widetilde{\sigma},\tau).
	\end{equation*}
    Moreover, we have $\mathcal{L}\{\widetilde{\zeta}_{n}(\sigma)|\widetilde{\mathbb{P}}^{n}_{\sigma}\}\rightarrow\mathcal{N}(0,\widetilde{\mathcal{I}}(\sigma))$ as $n\to\infty$. An easy application Theorem~\ref{Thm:Hajek-LeCam} then yield the asymptotic lower bound
	\begin{equation*}
		\liminf_{n\to\infty}
		r_{n}^{1}(H)\mathbb{E}_{\theta}^{n}[(\widehat{\sigma}_{n}-\sigma)^{2}]\geq 
		\widetilde{\mathcal{I}}(\sigma)^{-1}
		=8\sigma^{2-\frac{2}{\Diamond(H)}}\tau^{\frac{2}{\Diamond(H)}}c_{H}^{\frac{1}{\Diamond(H)}}
		\frac{\Diamond(H)^{2}\sin\left(\frac{\pi}{\Diamond(H)}\right)}{\Diamond(H)-1}
	\end{equation*}
	for any sequence of estimators $\{\widehat{\sigma}_{n}\}_{n\in\mathbb{N}}$ by taking a loss function $\ell(x)=x^{2}$ which give an asymptotic lower bound of estimating $\sigma$ similar to the one given in \cite{Sabel-Schmidt-Hieber-2014}.
\end{remark}

\subsubsection{Optimal asymptotic variances}

Using Mathematica, we obtain the closed-form expression of each element of the matrix $\mathcal{F}(\theta)$ and we can compute the optimal asymptotic variances of estimating $H$ and $\sigma$, see \eqref{LB-H-tau} and \eqref{LB-sigma}, as follows:
	\begin{equation*}
		\mathcal{F}_{22}(\theta)(\mathcal{F}_{11}(\theta)\mathcal{F}_{22}(\theta)-\mathcal{F}_{12}(\theta)^{2})^{-1}
		=\frac{
		\Diamond(H)^{4}(\Diamond(H)-1)\sin\left(\frac{\pi}{\Diamond(H)}\right)^{3}
		}{
		\Diamond(H)^{2}\cos\left(\frac{2\pi}{\Diamond(H)}\right)
		+2\pi^{2}(\Diamond(H)-1)^{2}-\Diamond(H)^{2}
		}.
	\end{equation*}
Remarkably, this variance is independent of $\sigma$ and $\tau$, relying solely on $H$ and $K$ and we refer to Figure~\ref{Fig:OptVarEst} for an illustration.

\begin{figure}
\centering
  \includegraphics[width=0.7\textwidth]{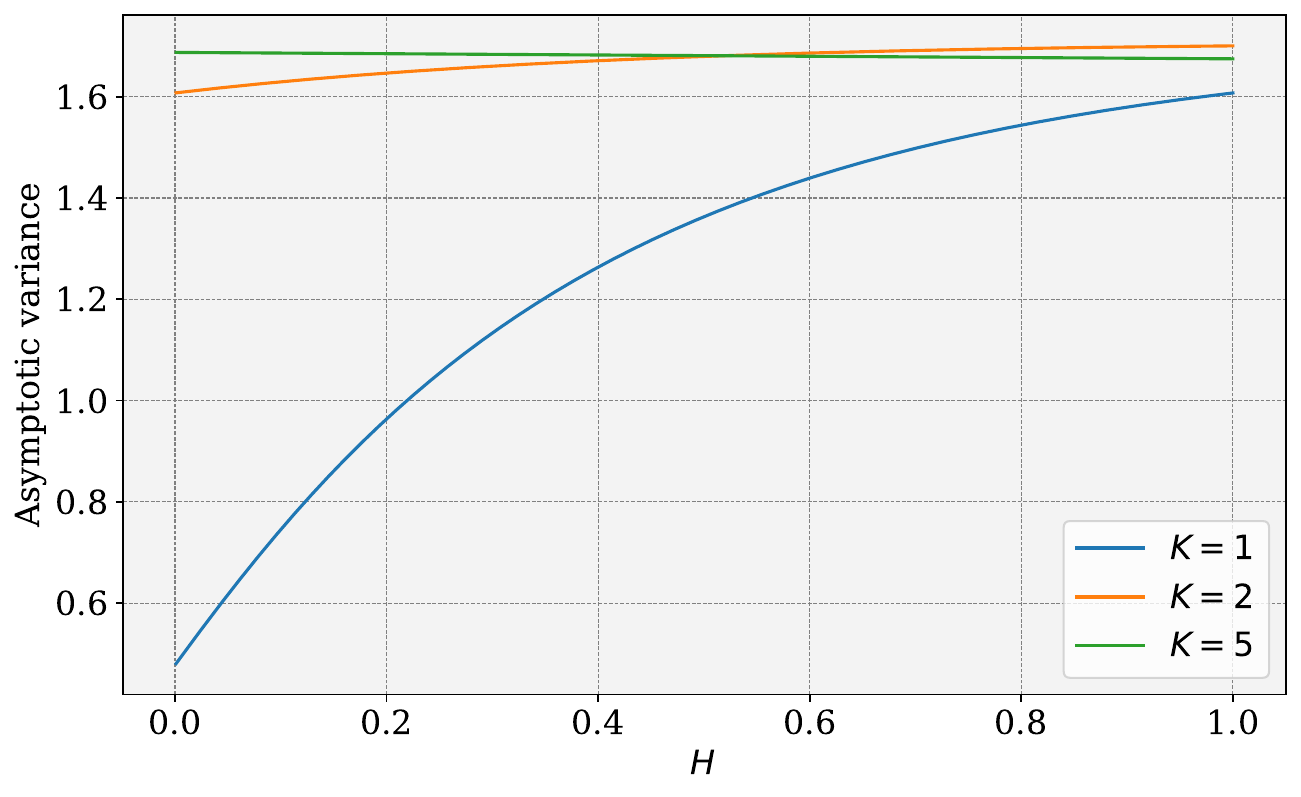}
  \caption{Asymptotic variances for different values of $K$ and $H$}
  \label{Fig:OptVarEst}
\end{figure}

\section{Estimation through Quadratic variations}
\label{Sec:estimation}

In this section, we focus on the estimation of the parameters of the model of Section~\ref{Sec:stat_model} under high-frequency asymptotics. More precisely, we suppose that $\Delta_{n}=1/n$ and $\nu_{n} = \nu$ is constant. To that extent, we first build a rate optimal estimator using a wavelet approach. Then we refine this estimator to reach the efficient convergence rate following a Le Cam’s one-step procedure.

\subsection{Energy levels}
\label{Sec:energy}

We plan to follow a strategy similar to \cite{Chong-Hoffmann-Liu-Rosenbaum-Szymanski-2022-MinMax}. However, due to the shape of the noise, we proceed to a weighed pre-averaging of the data. We consider a continuous function $w: [0,1] \to \mathbb{R}$ representing the weights used in the pre-averaging. The pre-averaging function depends on $K$ and we assume the following. 
\begin{assumption}
\label{assumption:alpha} $w$ is $K$-times continuously differentiable on $[0,1]$ and it satisfies
\begin{equation*}
\forall \, 0 \leq i < K,\; \partial_{x}^{i}w(0) = \partial_{x}^{i}w(1) = 0.
\end{equation*}
\end{assumption}
Of course, there exist functions satisfying Assumption~\ref{assumption:alpha}, for instance one can take $w(x) = x^{K}(1-x)^{K}$.\\

We introduce the wavelet-type coefficients of the fractional Brownian motion $W^H$ pre-averaged using the weight function $w$. We write $p \geq 1$ for the pre-averaging level. For any non-negative integer $j < J_{p,n}^{\star} = (n+1)/p - 2 \leq n/p$, we define
\begin{equation*}
	d_{j,p,n}^{(w)} 
	:=\frac{1}{\sqrt{np}}
	\sum_{\ell=0}^{p-1}w(\tfrac{\ell}{p})
	\left(W^H_{(jp+\ell)n^{-1}}-2W^H_{((j+1)p+\ell)n^{-1}}+W^H_{((j+2)p+\ell)n^{-1}}\right).
\end{equation*}
Their behaviour is summarized in the following Lemma.
\begin{lemma}[Properties of the Wavelet Coefficients]
\label{lem:wavelet:properties}
For any $p \geq 1$ and $0 < H < 1$, we define
\begin{equation*}
	\kappa_{p}^{(w)}(H)
	:=p^{-2}\sum_{\ell_{1}=0}^{p-1}\sum_{\ell_{2}=0}^{p-1}
	w(\ell_{1}/p)w(\ell_{2}/p)\phi_{H}((\ell_{1} - \ell_{2}) / p^{-1}),
\end{equation*}
where $\phi_{H}(x) := \tfrac{1}{2}\sum_{k=0}^{4} (-1)^{k+1} \binom{4}{k} |x+k-2|^{2H}$. The random variables $(d_{j,p,n})_j$ form a stationary Gaussian process whose variance is given by
\begin{equation*}
\mathbb{E}_{\theta}^{n}[(d_{j,p,n}^{(w)})^{2}] = \kappa_{p}^{(w)}(H) (p/n)^{1+2H}.
\end{equation*}
Moreover, the autocorrelation function of the $d_{j,p,n}$ decays quickly at infinity. More precisely, there exists $0 < c_{-}^{(w)} < c_{+}^{(w)}$ independent of $\theta$ such that for all $ j_{1}, j_{2}$ satisfying $|j_{1} - j_{2}| \geq 3$, we have
\begin{equation*}
c_{-}^{(w)}
(1+|j_{1}-j_{2}|)^{2H-4}
\leq 
(n/p)^{(1+2H)}
|\mathbb{E}_{\theta}^{n}{[d^{(w)}_{j_{1},p,n}d^{(w)}_{j_{2},p,n}]}|
\leq 
c_{+}^{(w)}
(1+|j_{1}-j_{2}|)^{2H-4}.
\end{equation*}
\end{lemma}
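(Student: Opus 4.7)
The plan is to reduce all three claims to a single covariance identity for second-order differences of $W^H$. Writing $\Delta_{a,h}^{2} f := f(a) - 2f(a+h) + f(a+2h)$, the wavelet coefficients take the form
\begin{equation*}
d_{j,p,n}^{(w)} = \frac{1}{\sqrt{np}} \sum_{\ell=0}^{p-1} w(\ell/p)\, \Delta_{(jp+\ell)/n,\, p/n}^{2}\, W^{H}.
\end{equation*}
Being a finite linear combination of $W^{H}$ values, the process $(d_{j,p,n}^{(w)})_{j}$ is centered Gaussian. Since each $\Delta^{2}_{\cdot,h} W^{H}$ is a linear combination of first-order increments of $W^{H}$ at spacing $p/n$, the joint distribution of the full collection $\{\Delta^{2}_{(jp+\ell)/n,\,p/n} W^{H}\}_{\ell,j}$ is invariant under any shift $j \mapsto j+j_{0}$ by stationarity of increments, hence $(d_{j,p,n}^{(w)})_{j}$ is stationary.

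Next, a direct computation using $\mathbb{E}[W^{H}_{s}W^{H}_{t}] = \tfrac{1}{2}(s^{2H}+t^{2H}-|s-t|^{2H})$, together with the fact that $\Delta^{2}$ annihilates affine functions (so the $s^{2H}$ and $t^{2H}$ terms in the covariance cancel), yields the key identity
\begin{equation*}
\mathbb{E}\bigl[\Delta_{a,h}^{2} W^{H} \cdot \Delta_{b,h}^{2} W^{H}\bigr] = h^{2H}\, \phi_{H}\!\bigl((a-b)/h\bigr).
\end{equation*}
Substituting with $h = p/n$ gives
\begin{equation*}
\mathbb{E}[d_{j_{1},p,n}^{(w)}\, d_{j_{2},p,n}^{(w)}] = \frac{(p/n)^{1+2H}}{p^{2}} \sum_{\ell_{1}, \ell_{2}=0}^{p-1} w(\ell_{1}/p)\, w(\ell_{2}/p)\, \phi_{H}\!\Bigl((j_{1}-j_{2}) + \tfrac{\ell_{1}-\ell_{2}}{p}\Bigr),
\end{equation*}
which depends only on $j_{1}-j_{2}$ (reconfirming stationarity); setting $j_{1}=j_{2}$ yields the variance formula with $\kappa_{p}^{(w)}(H)$.

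For the two-sided bound, I would Taylor-expand $|x+k-2|^{2H}$ in powers of $1/x$ as $|x| \to \infty$. The coefficients $\sum_{k=0}^{4}(-1)^{k+1}\binom{4}{k}(k-2)^{j}$ vanish for $j = 0,1,2,3$ (they are fourth-order finite differences of polynomials of degree $\leq 3$) and equal $-24$ for $j=4$, giving
\begin{equation*}
\phi_{H}(x) = -12\binom{2H}{4}|x|^{2H-4} + O(|x|^{2H-5}) \quad \text{as } |x|\to\infty.
\end{equation*}
For $|m| := |j_{1}-j_{2}|\geq 3$ and $u,v\in[0,1]$, one has $|m+u-v|\in[|m|-1,|m|+1]$, so $|\phi_{H}(m+u-v)|$ is comparable to $|m|^{2H-4}$, uniformly in $u,v$ and in $H$ on the compact set $\Theta_{H}$. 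The upper bound then follows immediately by bounding each summand, with $c_{+}^{(w)}$ proportional to $\|w\|_{\infty}^{2} \sup_{|x|\geq 2}|x|^{4-2H}|\phi_{H}(x)|$.

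The main obstacle is the lower bound. Note that at $H=1/2$ the leading coefficient $-12\binom{2H}{4}$ vanishes—indeed $\phi_{1/2}\equiv 0$ on $[2,\infty)$, reflecting that Brownian second-order increments are uncorrelated at non-overlapping lags—so the strict positivity of $c_{-}^{(w)}$ requires that $H$ stays bounded away from $1/2$ (implicit in $\Theta_{H}$). For such $H$, the coefficient $-12\binom{2H}{4}$ is continuous and nonzero, so the Riemann sum
\begin{equation*}
p^{-2}\sum_{\ell_{1},\ell_{2}} w(\ell_{1}/p)w(\ell_{2}/p)\phi_{H}\!\bigl(m + \tfrac{\ell_{1}-\ell_{2}}{p}\bigr) \xrightarrow[|m|\to\infty]{} -12\binom{2H}{4}\Bigl(\int_{0}^{1} w\Bigr)^{2} |m|^{2H-4}
\end{equation*}
uniformly in $p\geq 1$, and the limit is nonzero provided $\int_{0}^{1} w \neq 0$. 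Combining this asymptotic with non-vanishing of the finite Riemann sum at each small value $|m|\in\{3,\dots,M\}$ (inherited from non-degeneracy of the Gaussian covariance of $W^{H}$) via a compactness argument in $H$ and $p$ yields the uniform positive constant $c_{-}^{(w)}$.
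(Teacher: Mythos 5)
Your proposal follows essentially the same route as the paper: reduce to the covariance identity $\mathbb{E}[\Delta^2_{a,h}W^H\,\Delta^2_{b,h}W^H] = h^{2H}\phi_H((a-b)/h)$, then extract the leading $|x|^{2H-4}$ behavior of $\phi_H$ by Taylor expansion, and sum over $\ell_1,\ell_2$. Your binomial-series computation giving $\phi_H(x) = -12\binom{2H}{4}|x|^{2H-4} + O(|x|^{2H-5})$ is the tidy version of the paper's integral representation of the fourth finite difference; both isolate the same leading coefficient. Your observation about $H=1/2$ is correct and, more importantly, is a real gap in the lemma as stated: $\binom{2H}{4}$, i.e. $2H(2H-1)(2H-2)(2H-3)$, vanishes at $H=1/2$, so $\phi_{1/2}\equiv 0$ on $[2,\infty)$ and the lower bound cannot hold with a $c_-^{(w)}>0$ independent of $\theta$ when $1/2\in[H_-,H_+]$; the paper neither excludes this case nor confronts it in its own proof, so the restriction is not "implicit in $\Theta_H$" — it is missing.

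There is however a genuine gap in your lower-bound argument for finite $|m|\geq 3$. You invoke "non-degeneracy of the Gaussian covariance of $W^H$" to claim the Riemann sum is nonzero, but this is not a valid principle: a non-degenerate Gaussian vector can have arbitrarily many zero off-diagonal covariance entries (and at $H=1/2$ they all vanish here). The correct reason, valid for $H\neq 1/2$, is that $\phi_H$ keeps a constant nonzero sign on $(2,\infty)$ — visible from $\phi_H(x) = -\tfrac12\prod_{i=0}^{3}(2H-i)\int_{[0,1]^4}(x-2+t_1+\cdots+t_4)^{2H-4}\,\mathrm{d}t$ — so that, when $w\geq 0$ and $w\not\equiv 0$ (as for the paper's choices $w=1$ and $w(x)=x^K(1-x)^K$), the double sum $p^{-2}\sum w(\ell_1/p)w(\ell_2/p)\phi_H(m+(\ell_1-\ell_2)/p)$ is a sum of same-sign terms bounded below by $\bigl(\inf_{2<x\leq m+1}|\phi_H(x)|\bigr)\bigl(p^{-1}\sum_\ell w(\ell/p)\bigr)^2$, which then compares to $|m|^{2H-4}$. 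Your condition $\int_0^1 w\neq 0$ controls the $|m|\to\infty$ limit but does not rule out exact cancellation at finite $|m|$ or finite $p$ when $w$ changes sign; sign-definiteness of $w$ is what is really needed, and the "compactness in $H$ and $p$" step (note $p$ ranges over all of $\mathbb{N}$) should then be replaced by this explicit pointwise estimate.
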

This result is very similar to Lemmas 17 and 18 in \cite{Chong-Hoffmann-Liu-Rosenbaum-Szymanski-2022-MinMax}. The proofs can be found in Appendix~\ref{Sec:lem:wavelet:properties:proof}. Note that the function $\kappa_{p}^{(w)}$ appearing in Lemma~\ref{lem:wavelet:properties} is explicit but does not have any easy closed formula. It can be seen as the Riemann sum approximation of the integral
\begin{equation*}
\kappa_{\infty}^{(w)}(H) := \int_{0}^1 \int_{0}^1 w(x)
w(y) \phi_{H}(x-y) \,\mathrm{d}x\,\mathrm{d}y
\end{equation*}
defined for all $0 < H < 1$. Its main properties are listed in Lemma~\ref{lem:wavelet:kappa_p} which is proved in Appendix~\ref{Sec:lem:wavelet:kappa_p:proof}.
\begin{lemma}[Properties of the function $\kappa_{p}^{(w)}$]
\mbox{}
\label{lem:wavelet:kappa_p}
\begin{itemize}
\item \textbf{Differentiability: } The functions $\kappa_{p}^{(w)}$ and $\kappa_\infty^{(w)}$ are infinitely differentiable on $(0,1)$.
\item \textbf{Uniform convergence: } There exists $c^{(w)}_0 > 0$ such that for any $H_{-} \leq H \leq H_{+}$, we have
\begin{equation*}
|\kappa_{p}^{(w)}(H) - \kappa_\infty^{(w)}(H) | \leq \tfrac{c^{(w)}_0}{p^{2H\wedge 1}}
\;\; \text{ and } \;\;
|{\partial_{H}\kappa_{p}^{(w)}}(H) - {\partial_{H}\kappa_{\infty}^{(w)}}(H) | \leq \tfrac{c^{(w)}_0}{p^{2H\wedge 1}}.
\end{equation*}
\item \textbf{Uniform control: } We have
\begin{equation}
\label{eq:kappa:uniform_bound}
0 < \inf_{p\geq 2} \inf_{H} \kappa_{p}^{(w)}(H) \leq \sup_{p\geq 1} \sup_{H}  \kappa_{p}^{(w)}(H) <\infty
\end{equation}
and there exists $c^{(w)}_1 > 0$ such that for any $H_{-} \leq H \leq H_{+}$, we have
\begin{equation*}
\left|\partial_{H}\kappa_{\infty}^{(w)}(H)\right|\leq c^{(w)}_1.
\end{equation*}
\end{itemize}
\end{lemma}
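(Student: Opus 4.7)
The plan is to verify the three properties in order. For differentiability at fixed $p$, observe that $\kappa_p^{(w)}(H)$ is a finite sum of terms $c_{\ell_1,\ell_2,k}\,|(\ell_1-\ell_2)/p + k-2|^{2H}$; since $H\mapsto a^{2H}$ is $C^\infty$ on $(0,1)$ for each $a\geq 0$ (with the convention $0^{2H}=0$), this sum is $C^\infty$. For $\kappa_\infty^{(w)}$, I would differentiate under the integral sign via dominated convergence: each $m$-th derivative $\partial_H^m\phi_H(z)$ is a finite combination of $|z+k-2|^{2H}(\log|z+k-2|)^m$, and the only potentially singular summand is $k=2$, giving $|x-y|^{2H}|\log|x-y||^m$, which is bounded and integrable on $[0,1]^2$ locally uniformly in $H\in[H_-,H_+]$.

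For uniform convergence, I would write $F_H(x,y):=w(x)w(y)\phi_H(x-y)$ and decompose
\begin{equation*}
\kappa_p^{(w)}(H)-\kappa_\infty^{(w)}(H) \;=\; -\sum_{\ell_1,\ell_2=0}^{p-1}\int_{R_{\ell_1,\ell_2}}\bigl[F_H(x,y)-F_H(\tfrac{\ell_1}{p},\tfrac{\ell_2}{p})\bigr]\,\mathrm{d}x\,\mathrm{d}y
\end{equation*}
over the cells $R_{\ell_1,\ell_2}:=[\tfrac{\ell_1}{p},\tfrac{\ell_1+1}{p})\times[\tfrac{\ell_2}{p},\tfrac{\ell_2+1}{p})$. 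On cells with $|\ell_1-\ell_2|\geq 3$, $F_H$ is $C^1$ on a neighbourhood of the cell uniformly in $H\in[H_-,H_+]$, so each cell contributes $O(p^{-3})$ and the off-diagonal contribution sums to $O(p^{-1})$. On the finitely many near-diagonal bands, the only non-smooth piece is $|x-y|^{2H}$ (coming from $k=2$ in $\phi_H$), which is $(2H\wedge 1)$-H\"older; the oscillation on such a cell is therefore $O(p^{-(2H\wedge 1)})$, yielding cell error $O(p^{-2-(2H\wedge 1)})$ and total $O(p^{-1-(2H\wedge 1)})$. Since $2H\wedge 1\leq 1$, the combined bound is $\lesssim p^{-(2H\wedge 1)}$. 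The same strategy applies to $\partial_H\kappa_p^{(w)}-\partial_H\kappa_\infty^{(w)}$ with $\partial_H\phi_H$ in place of $\phi_H$; the extra $\log|x-y|$ factor slightly worsens the near-diagonal modulus, but since $|z|^{2H}|\log|z||\lesssim |z|^{2H-\eta}$ for any $\eta>0$ this is absorbed uniformly in $H\in[H_-,H_+]$.

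For uniform control, the key identity is
\begin{equation*}
\kappa_\infty^{(w)}(H)\;=\;\mathrm{Var}\!\left(\int_0^1 w(x)\bigl(W_{x+2}^H-2W_{x+1}^H+W_x^H\bigr)\,\mathrm{d}x\right),
\end{equation*}
obtained via Fubini from $\mathrm{Cov}(V_x^c,V_y^c)=\phi_H(x-y)$, where $V_x^c:=W_{x+2}^H-2W_{x+1}^H+W_x^H$ is the continuous-time second difference. Using the spectral representation of fBm, this equals $c_H\int_\mathbb{R}|\hat{w}(\xi)|^2|e^{i\xi}-1|^4|\xi|^{-1-2H}\,\mathrm{d}\xi$, which is strictly positive since $w\not\equiv 0$ implies, by Paley--Wiener, that $\hat{w}$ is not almost-everywhere zero. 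Continuity of $\kappa_\infty^{(w)}$ on the compact interval $[H_-,H_+]$ then yields $\inf_H \kappa_\infty^{(w)}(H)>0$, and combining with the uniform convergence just proved gives $\inf_{p\geq p_0}\inf_H\kappa_p^{(w)}(H)>0$ for some $p_0$; for $p\in\{2,\ldots,p_0\}$, each $\kappa_p^{(w)}(H)$ is strictly positive by Lemma~\ref{lem:wavelet:properties} (it is the normalized variance of the non-degenerate Gaussian $d_{j,p,n}^{(w)}$, where non-degeneracy uses $p\geq 2$ together with $w\not\equiv 0$) and continuous on the compact interval, so compactness closes the argument. The uniform upper bounds on $\kappa_p^{(w)}$ and on $\partial_H\kappa_\infty^{(w)}$ follow immediately from continuity on $[H_-,H_+]$ together with the uniform convergence.

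The main obstacle is twofold. The delicate technical point is obtaining the uniform rate $p^{-(2H\wedge 1)}$ for the derivative, where the extra logarithmic factor in $\partial_H|x-y|^{2H}$ worsens the near-diagonal modulus; this is handled by isolating the finitely many diagonal bands and applying the sub-polynomial improvement of the log bound. The main conceptual step, however, is the identification of $\kappa_\infty^{(w)}(H)$ with a Gaussian variance and its spectral rewriting, which is what ultimately delivers the strict positivity underpinning the lower bound in \eqref{eq:kappa:uniform_bound}.
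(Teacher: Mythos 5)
Your overall structure parallels the paper's: the differentiability argument is identical, and your positivity argument via the Gaussian-variance identity $\kappa_\infty^{(w)}(H)=\mathrm{Var}(\int_0^1 w(u)(W_u^H-2W_{u+1}^H+W_{u+2}^H)\,\mathrm{d}u)$ together with continuity on the compact interval $[H_-,H_+]$ and the uniform convergence for large $p$ is exactly the paper's plan. Your added spectral/Paley--Wiener step for strict positivity is in fact \emph{more} detailed than the paper's, which merely writes the variance identity and asserts it is positive without proof, so that is a small improvement.

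Your uniform-convergence argument, however, contains a technical slip. You claim that on cells with $|\ell_1-\ell_2|\geq 3$ the map $F_H(x,y)=w(x)w(y)\phi_H(x-y)$ is $C^1$ with a bound uniform in $H$, so that each such cell contributes $O(p^{-3})$. This fails for $H<1/2$: one has $|\phi_H'(z)|\asymp|z|^{2H-1}$, which blows up as the cell approaches the diagonal, so a cell with $|\ell_1-\ell_2|=m$ actually contributes $\sim p^{-2}\cdot p^{-2H}m^{2H-1}$, not $O(p^{-3})$. (There is also a secondary oversight: the terms $k=1,3$ of $\phi_H$ produce singularities at $x-y=\pm1$, contradicting the ``only $k=2$ is singular'' claim, though those two corner cells contribute negligibly.) The aggregated off-diagonal sum is still $O(p^{-1})$, because $\sum_{m\geq 3}p^{-1-2H}m^{2H-1}\lesssim p^{-1}$, so your final bound is salvageable, but the per-cell step as written is wrong and the subsequent summation would need to be redone. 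The paper avoids all of this by applying the single uniform H\"older modulus $|\phi_H(x)-\phi_H(y)|\leq C|x-y|^{2H\wedge 1}$ valid for all $x,y\in[-1,1]$, together with the Lipschitz continuity of $w$; this treats every cell at once, absorbs the diagonal and corner singularities automatically, and makes the near/off-diagonal case split unnecessary.
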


As in \cite{gloter2007estimation}, the wavelet coefficients are not observed directly. Instead, we observe
\begin{equation*}
	\widetilde{d}_{j,p,n}^{(w)} 
	:=\frac{1}{\sqrt{np}}
	\sum_{\ell=0}^{p-1}w(\tfrac{\ell}{p})
	\left(X_{jp+\ell}^{n}-2X_{(j+1)p+\ell}^{n}+X_{(j+2)p+\ell}^{n}\right)
\end{equation*}
which relate to the wavelets coefficients through the relation
$
\widetilde{d}_{j,p,n}^{(w)}
=
\sigma d_{j,p,n}^{(w)} + \tau  e_{j,p,n}^{(w)},
$
where 
\begin{equation*}
	e_{j,p,n}^{(w)}:=
	\frac{1}{\sqrt{np}}
	\sum_{\ell=0}^{p-1}w(\tfrac{\ell}{p})
	\left(Y_{jp+\ell}^{n}-2Y_{(j+1)p+\ell}^{n}+Y_{(j+2)p+\ell}^{n}\right).
\end{equation*}

\begin{lemma}[Structure of the noise]
\label{lem:noise:structure}
$\{e_{j,p,n}^{(w)}\}_{j}$ is a stationary Gaussian sequence, whose distribution does not depend on $\theta$. Moreover, its variance is given  by $ \Var[e_{j,p,n}^{(w)}] = \gamma_p^{(w)} n^{-1}$, where $\gamma_p^{(w)} $ depends only on $p$, $w$ and $K$. Moreover, the sequence $\{\gamma_{p}^{(w)}\}_p$ is bounded and $\mathbb{E}_{\theta}^{n} [e_{j_{1},p,n}^{(w)}e_{j_{2},p,n}^{(w)}] = 
0$ provided $|j_{1} - j_{2}| \geq K+4$.

Suppose in addition that $w$ satisfies Assumption~\ref{assumption:alpha}. Then, there exist $0 < c_{-}^{(w)} < c_{+}^{(w)}$ such that 
\begin{equation*}
c_{-}^{(w)} p^{-2K} \leq \gamma_p^{(w)} \leq c_{+}^{(w)}p^{-2K}
\end{equation*}
when $p$ is large enough. (The upper bound is true for any $p$).
\end{lemma}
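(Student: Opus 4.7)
The plan is to write $e_{j,p,n}^{(w)}$ as an explicit finite linear combination of the iid Gaussian innovations $\{\varepsilon_m\}$ driving the MA$(K)$ representation of $Y$. Substituting $Y_j = \sum_{l=0}^K \binom{K}{l}(-1)^l \varepsilon_{j-l}$ into the definition of $e_{j,p,n}^{(w)}$ and swapping the finite sums, I obtain
\begin{equation*}
\sqrt{np}\, e_{j,p,n}^{(w)} = \sum_{a=0}^2 c_a \sum_{m=-K}^{p-1} c(m)\, \varepsilon_{(j+a)p+m},\quad (c_0,c_1,c_2)=(1,-2,1),
\end{equation*}
where $c(m):=\sum_{l=\max(0,-m)}^{\min(K,p-1-m)} \binom{K}{l}(-1)^l w((m+l)/p)$. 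Since the coefficients depend on $j$ only through the shift $jp$, this immediately gives that $\{e_{j,p,n}^{(w)}\}_j$ is a centered stationary Gaussian sequence whose distribution depends only on $(K, p, w)$, not on $\theta$. Moreover, only innovations with index in $\{jp-K,\ldots,(j+3)p-1\}$ appear, so $e_{j_1,p,n}^{(w)}$ and $e_{j_2,p,n}^{(w)}$ share no innovations, and hence are independent, whenever $|j_1-j_2|\geq K+4$ (a clean sufficient condition valid for all $p\geq 1$).

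Grouping the contributions by absolute index gives $\sqrt{np}\, e_{j,p,n}^{(w)} = \sum_i \beta_i \varepsilon_{jp+i}$ with each $\beta_i$ a sum of at most two $c(m)$'s; by independence of the $\varepsilon$'s,
\begin{equation*}
\Var(e_{j,p,n}^{(w)}) = (np)^{-1}\sum_i \beta_i^2 =: \gamma_p^{(w)}/n,
\end{equation*}
so $\gamma_p^{(w)} := p^{-1}\sum_i \beta_i^2$ depends only on $(K, p, w)$. The general boundedness of $\{\gamma_p^{(w)}\}$ is immediate: the crude bound $|c(m)|\leq 2^K\|w\|_\infty$ and the fact that at most $3p+K = O(p)$ of the $\beta_i$ are nonzero give $\sum_i \beta_i^2 = O(p)$, hence $\gamma_p^{(w)} = O(1)$.

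The two-sided estimate $c_-^{(w)} p^{-2K}\leq \gamma_p^{(w)}\leq c_+^{(w)} p^{-2K}$ under Assumption~\ref{assumption:alpha} is the heart of the proof. Recognising $c(m)$ as a scaled $K$-th finite difference of $w$, I would use the identities $\sum_{l=0}^K \binom{K}{l}(-1)^l l^j = 0$ for $0\leq j<K$ and $= (-1)^K K!$ for $j=K$, combined with Taylor's theorem, to show $c(m) = (-1)^K p^{-K} w^{(K)}(m/p) + o(p^{-K})$ uniformly in the bulk $m\in[K, p-1-K]$. At the left boundary $m\in[-K, K-1]$ the sum is truncated and only involves values $w(j/p)$ with $0\leq j\leq K-1$; the vanishing conditions $\partial_x^i w(0)=0$ for $0\leq i<K$ give $w(j/p)=O(p^{-K})$ and hence $|c(m)|=O(p^{-K})$, with the right boundary handled symmetrically via $\partial_x^i w(1)=0$. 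Combining, $|\beta_i|\lesssim p^{-K}$ uniformly in $i$, which yields $\gamma_p^{(w)}\lesssim p^{-2K}$. For the matching lower bound, I restrict the sum of squares to bulk indices $i\in[K, p-1-K]$ near $jp$, where $\beta_i = c(i)$, and the Riemann-sum approximation then yields
\begin{equation*}
\gamma_p^{(w)} \geq p^{-1}\sum_{i=K}^{p-1-K} c(i)^2 \geq (1-o(1))\, p^{-2K}\int_0^1 w^{(K)}(x)^2\,\mathrm{d}x;
\end{equation*}
the integral is strictly positive because, under Assumption~\ref{assumption:alpha}, $w^{(K)}\equiv 0$ would force $w$ to be a polynomial of degree $<K$ with $w^{(i)}(0)=0$ for $0\leq i<K$, hence $w\equiv 0$.

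The main technical obstacle is the uniform boundary control of $c(m)$ when $m$ lies within $K$ of $0$ or $p$: the discrete $K$-th difference loses its clean Taylor-expansion structure under truncation, and Assumption~\ref{assumption:alpha} has to be invoked term by term to preserve the $p^{-K}$ scaling. Everything else reduces to routine stationary-Gaussian bookkeeping and standard Riemann-sum approximation.
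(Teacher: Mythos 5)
Your proposal is correct and takes essentially the same route as the paper's proof: the MA$(K)$ substitution expressing $e_{j,p,n}^{(w)}$ as a finite linear combination of independent innovations (your $c(m)$ is the paper's $\alpha_{m,p}$), the split into bulk indices handled by identifying the truncated sum with a scaled $K$-th finite difference of $w$ plus Taylor remainder, boundary indices controlled via the vanishing derivative conditions of Assumption~\ref{assumption:alpha}, and the Riemann-sum comparison with $\|\partial_x^K w\|_{L^2([0,1])}^2$ for the lower bound. The only (immaterial) difference is that you skip the paper's explicit tallying of the cross-over terms $\alpha_{l,p}\alpha_{l-p,p}$ in the closed-form expression for $\gamma_p^{(w)}$, which is unnecessary for the two-sided $p^{-2K}$ estimate; you also make explicit the needed non-degeneracy $\int_0^1 (w^{(K)})^2 > 0$, which the paper leaves implicit.
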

This Lemma is proved in Section~\ref{Sec:lem:noise:structure:proof} and relies heavily on the choice of $w$ specified in Assumption~\ref{assumption:alpha}. Indeed, the vanishing derivative of $w$ ensures that when plugging \eqref{eq:constructionY} in the definition of $e_{j,p,n}^{(w)}$, most term vanish and do not contribute to the variance. \\

We are now ready to define the energy levels of the observations. 
For each $J\in\{1,\cdots,J_{p,n}^{\star}\}$, we write 
\begin{equation}
    \label{Def:AggEnergy}
	\widehat{Q}_{J,p,n}^{(w)}:=\sum_{j=0}^{J-1}
	\left|\widetilde{d}_{j,p,n}^{(w)}\right|^{2}.
\end{equation}
Using the previous bounds on the wavelet coefficients, we derive an $L^{2}$-concentration property for these energy levels.

\begin{proposition}
\label{prop:deviation:Q}
We have
\begin{equation}
\label{eq:expec:energy}
\mathbb{E}_{\theta}^{n}\left[\widehat{Q}_{J,p,n}^{(w)}\right]
=
J\left(
\sigma^{2}\kappa_{p}^{(w)}(H)(p/n)^{1+2H} + \tau^{2} \gamma_{p}^{(w)}n^{-1}
\right).
\end{equation}
Moreover, there exist $0 < c_{-}^{(w)} < c_{+}^{(w)}$, independent of $n,p,J$ and $\theta$, such that
\begin{equation*}
c_{-}^{(w)}J\bigg(
\Big(\frac{p}{n}\Big)^{2+4H}+\bigg(\frac{\gamma_{p}^{(w)}}{n}\bigg)^{2}
\bigg)
\leq
\Var_{\theta}^{n}\Big[\widehat{Q}_{J,p,n}^{(w)}\Big]
\leq 
c_{+}^{(w)}J\bigg(
\Big(\frac{p}{n}\Big)^{2+4H}+\bigg(\frac{\gamma_{p}^{(w)}}{n}\bigg)^{2}
\bigg).
\end{equation*}
In particular, when $w$ satisfies Assumption~\ref{assumption:alpha}, we have
\begin{equation*}
c_{-}^{(w)}
J
\bigg(
\Big(\frac{p}{n}\Big)^{2+4H}+\frac{1}{n^{2}p^{4K}}\bigg)
\leq
\Var_{\theta}^{n}\left[\widehat{Q}_{J,p,n}^{(w)}\right]
\leq 
c_{+}^{(w)}J
\bigg(
\Big(\frac{p}{n}\Big)^{2+4H}+\frac{1}{n^{2}p^{4K}}\bigg).
\end{equation*}
\end{proposition}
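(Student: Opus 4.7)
The plan is to compute the expectation exactly by linearity and to control the variance by recognising $\widehat{Q}_{J,p,n}^{(w)}$ as a Gaussian quadratic form, then applying the standard identity $\Var[\mathbf{X}^{\top}\mathbf{X}] = 2\Tr[\Sigma^{2}]$. The decay estimates from Lemma~\ref{lem:wavelet:properties} (for the fBM part) and Lemma~\ref{lem:noise:structure} (for the noise part), together with the independence of $W^{H}$ and $Y$, will yield both bounds.

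First I would establish \eqref{eq:expec:energy}: since $\widetilde{d}_{j,p,n}^{(w)} = \sigma d_{j,p,n}^{(w)} + \tau e_{j,p,n}^{(w)}$ with $d$ and $e$ centred and independent, the cross term vanishes in $\mathbb{E}[(\widetilde{d}_{j,p,n}^{(w)})^{2}]$. Plugging in the variances from Lemmas~\ref{lem:wavelet:properties} and~\ref{lem:noise:structure} and summing over $j$ gives the formula. For the variance, the vector $(\widetilde{d}_{j,p,n}^{(w)})_{0\leq j<J}$ is centred Gaussian under $\mathbb{P}^{n}_{\theta}$ (as a linear transform of $W^{H}$ and $Y$), so the Gaussian quadratic form identity yields
\begin{equation*}
\Var_{\theta}^{n}[\widehat{Q}_{J,p,n}^{(w)}] = 2 \sum_{j_{1},j_{2}=0}^{J-1} \Cov_{\theta}^{n}[\widetilde{d}_{j_{1},p,n}^{(w)}, \widetilde{d}_{j_{2},p,n}^{(w)}]^{2}.
\end{equation*}
Setting $a_{k} := \Cov[d_{0,p,n}^{(w)}, d_{k,p,n}^{(w)}]$ and $b_{k} := \Cov[e_{0,p,n}^{(w)}, e_{k,p,n}^{(w)}]$, independence and stationarity give $\Cov[\widetilde{d}_{j_{1},p,n}^{(w)}, \widetilde{d}_{j_{2},p,n}^{(w)}] = \sigma^{2} a_{j_{1}-j_{2}} + \tau^{2} b_{j_{1}-j_{2}}$.

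For the upper bound I would apply $(A+B)^{2}\leq 2A^{2}+2B^{2}$ and switch to $k = j_{1}-j_{2}$ to obtain
\begin{equation*}
\Var_{\theta}^{n}[\widehat{Q}_{J,p,n}^{(w)}] \leq 4J\Bigl(\sigma^{4} \sum_{k\in\mathbb{Z}} a_{k}^{2} + \tau^{4} \sum_{k\in\mathbb{Z}} b_{k}^{2}\Bigr).
\end{equation*}
The decay $|a_{k}|\lesssim (1+|k|)^{2H-4}(p/n)^{1+2H}$ for $|k|\geq 3$ from Lemma~\ref{lem:wavelet:properties}, together with the uniform bound for small $|k|$, gives $\sum_{k} a_{k}^{2} \lesssim (p/n)^{2+4H}$, the series being summable because $4H-8 < -4$ on $\Theta_{H}$. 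Lemma~\ref{lem:noise:structure} gives $b_{k}=0$ for $|k|\geq K+4$ and $|b_{k}|\leq b_{0}=\gamma_{p}^{(w)}/n$ by Cauchy--Schwarz, so $\sum_{k} b_{k}^{2} \lesssim (\gamma_{p}^{(w)}/n)^{2}$. For the lower bound I would simply restrict the double sum to the diagonal:
\begin{equation*}
\Var_{\theta}^{n}[\widehat{Q}_{J,p,n}^{(w)}] \geq 2J(\sigma^{2} a_{0} + \tau^{2} b_{0})^{2} \geq 2J\bigl(\sigma^{4} a_{0}^{2} + \tau^{4} b_{0}^{2}\bigr),
\end{equation*}
the last inequality dropping the nonnegative cross term since $a_{0}, b_{0} > 0$. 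The uniform bound $\kappa_{p}^{(w)}(H)\geq c > 0$ from Lemma~\ref{lem:wavelet:kappa_p} gives $a_{0} \gtrsim (p/n)^{1+2H}$, and $b_{0} = \gamma_{p}^{(w)}/n$ is exact.

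The second form of the bounds follows by substituting the two-sided estimate $\gamma_{p}^{(w)} \asymp p^{-2K}$ provided by Lemma~\ref{lem:noise:structure} under Assumption~\ref{assumption:alpha}. I do not expect any serious obstacle: the entire argument reduces to the Gaussian quadratic form identity plus careful bookkeeping of constants uniformly in $\theta \in \Theta$ and in $p$, the only delicate point being the verification that $\sum_{k} a_{k}^{2}$ remains summable uniformly in $H$, which is guaranteed by $H \leq H_{+} < 1$.
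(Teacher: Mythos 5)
Your proposal is correct and follows essentially the same route as the paper's proof: Wick's theorem for the variance of the Gaussian quadratic form, decomposition of the covariance via independence of $W^H$ and $Y$, the decay estimate from Lemma~\ref{lem:wavelet:properties} and the finite-range property from Lemma~\ref{lem:noise:structure} for the upper bound, and restriction to the diagonal for the lower bound. Minor remark: the convergence threshold for $\sum_k (1+|k|)^{4H-8}$ is $4H-8<-1$, not $<-4$, but your stronger inequality holds anyway since $H<1$, so nothing breaks.
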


The estimation procedure presented in details in Section~\ref{Sec:QVestimators} is mainly derived from the behaviour of $\mathbb{E}_{\theta}^{n}\left[\widehat{Q}_{J,p,n}^{(w)}\right]$ in \eqref{eq:expec:energy}. Indeed, ignoring the noise term and the presence of $\kappa_p$, it is plausible that
\begin{equation}
\label{eq:idea:guess_est}
	\frac{\widehat{Q}_{J,2p,n}^{(w)}}{\widehat{Q}_{2J,p,n}^{(w)}}
	\approx 2^{2H}
\end{equation}
and the approximation is precise when the variance of $\widehat{Q}_{J,2p,n}^{(w)}
$ is small. Therefore we choose the level $p$ so that this variance term is as small as possible. We define $p_{n}^{\mathrm{opt}}(H)$ by
\begin{equation}
\label{eq:Def:optimalchoice}
 p_{n}^{\mathrm{opt}}(H) := 
 \lfloor n^{2H/\Diamond(H)} \rfloor \vee 2,
\end{equation}
where $\Diamond(H)=2K+2H+1$. However, the presence of a noise term in \eqref{eq:expec:energy} and the fact that $\kappa_{2p}(H) = \kappa_p(H)$ induce an asymptotic bias in the estimation of $H$ that needs to be corrected. Therefore, we introduce the bias-corrected energy levels by
\begin{equation}\label{eq:Def:correctedenergy}
	\widehat{Q}_{J,p,n}^{(w),c}(\theta):=
	\widehat{Q}_{J,p,n}^{(w)}
	-J\left(
	\sigma^{2}\overline{\kappa}_{p}^{(w)}(H)\left(\frac{p}{n}\right)^{1+2H} 
	+\tau^{2}\gamma_{p}^{(w)}n^{-1}
	\right)
\end{equation}
for $\theta=(H,\sigma,\tau)\in\Theta$,
where $\overline{\kappa}_{p}^{(w)}(H):=\kappa_{p}^{(w)}(H)-\kappa_{\infty}^{(w)}(H)$.

\subsection{Rate optimal estimators}
\label{Sec:QVestimators}

\subsubsection*{Uncorrected Estimators}

Recall that $J_{p,n}^{\star}=\lfloor (n+1)/p \rfloor - 2 $ and set $J_{p,n}:=\lfloor J_{p,n}^{\star}/2\rfloor$. Utilizing Proposition~\ref{prop:deviation:Q}, we first build estimators for both $H$ and $\tau$, exhibiting suboptimal convergence rates. We start with $H$ using the ratio of Equation \eqref{eq:idea:guess_est}. To ensure its consistency, we select $p_{n}^{(0)}$ that is independent of the value of $H$ and satisfies $p_{n}^{(0)} > p_{n}^{\mathrm{opt}}(H)$. This choice must ensure that the energy levels concentrate around their mean, as demonstrated in Proposition~\ref{prop:deviation:Q}, even though the rate of concentration may be suboptimal. Furthermore, it must also ensure that the bias induced by $\tau$ on the expected energy levels remains small enough to get consistency. Thus, we define
\begin{equation}
\label{eq:Def:guess_est}
\widehat{H}^{(0)}_{n} := \left( 
\left( 
\frac{1}{2}\log_{2}\left[
\frac{\widehat{Q}_{J_{n}^{(0)},2p_{n}^{(0)},n}^{(w)}}{\widehat{Q}_{2J_{n}^{(0)},p_{n}^{(0)},n}^{(w)}}
\right] 
\right)
\vee H_{-}
\right)
\wedge H_{+}. 
\end{equation}
where $p_{n}^{(0)} := \big \lfloor n^{2/(2K+3)} \big \rfloor \vee 2$ and $J_{n}^{(0)}:=J_{p_{n}^{(0)},n}$. The behaviour of $\widehat{H}^{(0)}_{n}$ is summarized in Lemma~\ref{lemma:uncorrectedQV:H} which is proved in Section~\ref{Sec:lemma:uncorrectedQV:H:proof}. 
\begin{lemma}\label{lemma:uncorrectedQV:H}
	Assume that $w$ satisfies Assumption~\ref{assumption:alpha}. Set
	\begin{equation*}
		v_{n}^{(0)}(H) := n^{(2K+1)/(2(2K+3))} \wedge n^{2(1-H)(2K+1)/(2K+3)} \wedge n^{2\{(2H)\wedge 1\}/(2K+3)}.
	\end{equation*}
	Then $\{v_{n}^{(0)}(H)(\widehat{H}_{n}^{(0)}-H)\}_{n\in\mathbb{N}}$ is bounded in $\mathbb{P}^{n}_{\theta}$-probability, uniformly over $\Theta$. 
\end{lemma}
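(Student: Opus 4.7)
My plan is to work with the untruncated version $\widetilde H_n := \tfrac{1}{2}\log_2(A_n/B_n)$ of $\widehat H_n^{(0)}$, where $A_n := \widehat Q^{(w)}_{J_n^{(0)},\, 2p_n^{(0)},\, n}$ and $B_n := \widehat Q^{(w)}_{2J_n^{(0)},\, p_n^{(0)},\, n}$. Since $H\in[H_-,H_+]$, clipping $\widetilde H_n$ onto this interval can only decrease $|\widehat H_n^{(0)}-H|$, so it suffices to prove the stochastic boundedness of $v_n^{(0)}(H)(\widetilde H_n-H)$, uniformly on $\Theta$. The idea is to decompose $A_n/B_n$ as the ratio of expectations (bias) times a stochastic correction (fluctuation). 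For the bias, setting $a_n:=\sigma^2(p_n^{(0)}/n)^{1+2H}$ and $b_n:=\tau^2/n$, equation \eqref{eq:expec:energy} yields
\begin{equation*}
\frac{\mathbb E^n_\theta[A_n]}{\mathbb E^n_\theta[B_n]} = 2^{2H}\bigl(1+R_n(\theta)\bigr), \qquad R_n(\theta) = \frac{a_n\bigl(\kappa_{2p_n^{(0)}}^{(w)}(H)-\kappa_{p_n^{(0)}}^{(w)}(H)\bigr) + b_n\bigl(2^{-1-2H}\gamma_{2p_n^{(0)}}^{(w)}-\gamma_{p_n^{(0)}}^{(w)}\bigr)}{a_n\kappa_{p_n^{(0)}}^{(w)}(H)+b_n\gamma_{p_n^{(0)}}^{(w)}}.
\end{equation*}
Using $|\kappa_{2p}^{(w)}-\kappa_p^{(w)}|\lesssim p^{-(2H)\wedge 1}$ from Lemma~\ref{lem:wavelet:kappa_p}, the uniform lower bound \eqref{eq:kappa:uniform_bound}, and $\gamma_p^{(w)}\lesssim p^{-2K}$ from Lemma~\ref{lem:noise:structure}, the first contribution to $R_n$ is of order $(p_n^{(0)})^{-(2H)\wedge 1}\asymp n^{-2((2H)\wedge 1)/(2K+3)}$, and the second is at worst $(b_n/a_n)(p_n^{(0)})^{-2K}$, which a direct computation with $p_n^{(0)}\asymp n^{2/(2K+3)}$ shows to be of order $n^{-2(1-H)(2K+1)/(2K+3)}$. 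These account for the second and third components of $v_n^{(0)}(H)^{-1}$.

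For the fluctuation, since $H\leq H_+<1$ the signal term $a_n\kappa_{p_n^{(0)}}^{(w)}(H)$ dominates the noise term $b_n\gamma_{p_n^{(0)}}^{(w)}$ in the expectation, so Proposition~\ref{prop:deviation:Q} combined with Chebyshev's inequality gives
\begin{equation*}
\frac{\Var^n_\theta[B_n]}{(\mathbb E^n_\theta[B_n])^2}\lesssim \frac{1}{J_n^{(0)}}\asymp\frac{p_n^{(0)}}{n}\asymp n^{-(2K+1)/(2K+3)},
\end{equation*}
and analogously for $A_n$. Hence $A_n=\mathbb E^n_\theta[A_n](1+U_n)$ and $B_n=\mathbb E^n_\theta[B_n](1+V_n)$ with $U_n,V_n = O_{\mathbb P^n_\theta}(n^{-(2K+1)/(2(2K+3))})$, producing the first component of $v_n^{(0)}(H)^{-1}$.

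Combining the two expansions, $A_n/B_n = 2^{2H}\bigl(1+O_{\mathbb P^n_\theta}(v_n^{(0)}(H)^{-1})\bigr)$; taking $\tfrac12\log_2$ and using the smoothness of $\log_2$ around $1$ then concludes. The main obstacle is keeping all the above estimates uniform over the compact set $\Theta$: this requires that the constants in Lemmas~\ref{lem:wavelet:properties}, \ref{lem:wavelet:kappa_p}, \ref{lem:noise:structure} and Proposition~\ref{prop:deviation:Q} be $\theta$-independent, and critically uses $H_+<1$ to force the noise-to-signal ratio to vanish uniformly, which in turn keeps the denominator of $R_n(\theta)$ uniformly bounded below by a positive multiple of $a_n$.
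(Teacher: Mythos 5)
Your proof is correct and follows essentially the same route as the paper: both split the error into a deterministic bias (controlled via Lemma~\ref{lem:wavelet:kappa_p} and Lemma~\ref{lem:noise:structure}) and a stochastic fluctuation (controlled via Proposition~\ref{prop:deviation:Q}), and both arrive at the same three rate components for the same reasons. The only cosmetic difference is that you expand the ratio $A_n/B_n$ directly, whereas the paper multiplies by $(n/p_n^{(0)})^{2H}$ and bounds $|A_n - 2^{2H}B_n|$ after first establishing a high-probability lower bound for the denominator (Lemma~\ref{lem:lowerbound:hatQ}), a step you should make explicit before expanding $1/(1+V_n)$.
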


Then we define a first uncorrected estimator of $\tau$. We always choose a level $p = 1$ here to maximise the importance of the parameter $\tau$ in the expectation of the energy levels in Equation~\eqref{eq:expec:energy}. For the same reason, we always choose a constant pre-averaging function, i.e. $w(x) = 1$, also ensuring that $\gamma_{1}^{(1)} = { 2K+6 \choose K+3} > 0$. We write
\begin{equation}\label{eq:Def:uncorrected:tau}
	\widehat{\tau}^{(0)}_{n} := \left(
	\left[
	n\frac{\widehat{Q}_{n}^{(1)}}{(n-1)\gamma_{1}^{(1)}}
	\right]^{1/2}
	\vee \tau_- \right) \wedge \tau_+.
\end{equation}

\begin{lemma}\label{lemma:uncorrectedQV:tau}
	Let $u_{n}^{(0)}(H) := n^{(1/2)\wedge (2H)}$. 
	Then $\{u_{n}^{(0)}(H)(\widehat{\tau}_{n}^{(0)} - \tau)\}_{n\in\mathbb{N}}$ is bounded in $\mathbb{P}^{n}_{\theta}$-probability, uniformly over $\Theta$.
\end{lemma}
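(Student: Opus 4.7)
The plan is to exploit the fact that $\widehat{\tau}_n^{(0)}$ is essentially a truncated square root of a linear function of $\widehat{Q}_n^{(1)}$, which is a standard energy level for which Proposition~\ref{prop:deviation:Q} provides sharp first- and second-moment bounds. I would first analyse the bias of $n\widehat{Q}_n^{(1)}/((n-1)\gamma_1^{(1)})$ as an estimator of $\tau^2$, then its variance, and finally propagate these through the square root.

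Applying Proposition~\ref{prop:deviation:Q} with $p=1$, constant weight $w\equiv 1$, and $J = J_{1,n}^\star = n-1$, identity \eqref{eq:expec:energy} gives
$$\mathbb{E}_\theta^n\!\left[\frac{n\widehat{Q}_n^{(1)}}{(n-1)\gamma_1^{(1)}}\right] = \tau^2 + \frac{\sigma^2\,\kappa_1^{(1)}(H)}{\gamma_1^{(1)}}\, n^{-2H},$$
so the deterministic bias with respect to $\tau^2$ is of order $n^{-2H}$, uniformly in $\theta$ thanks to the uniform boundedness of $\kappa_1^{(1)}$ provided by Lemma~\ref{lem:wavelet:kappa_p}. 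The variance bound of Proposition~\ref{prop:deviation:Q} then yields
$$\Var_\theta^n\!\bigl[\widehat{Q}_n^{(1)}\bigr] \lesssim (n-1)\bigl(n^{-2-4H} + n^{-2}\bigr) \lesssim n^{-1},$$
so by Chebyshev's inequality the fluctuation of $n\widehat{Q}_n^{(1)}/((n-1)\gamma_1^{(1)})$ around its mean is $O_{\mathbb{P}_\theta^n}(n^{-1/2})$. Adding the bias and the stochastic fluctuation,
$$\frac{n\widehat{Q}_n^{(1)}}{(n-1)\gamma_1^{(1)}} - \tau^2 \;=\; O_{\mathbb{P}_\theta^n}\!\bigl(n^{-2H} + n^{-1/2}\bigr) \;=\; O_{\mathbb{P}_\theta^n}\!\bigl(n^{-(1/2)\wedge(2H)}\bigr),$$
uniformly over $\Theta$.

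To transfer this to $\widehat{\tau}_n^{(0)}$, I would use the identity $\sqrt{a}-\tau = (a-\tau^2)/(\sqrt{a}+\tau)$, valid for $a\geq 0$, together with $\tau \geq \tau_- > 0$, so that the denominator is bounded below by $\tau_-$. The outer truncation onto $[\tau_-,\tau_+]$ is $1$-Lipschitz and fixes $\tau$, so it cannot worsen the deviation. This immediately yields $|\widehat{\tau}_n^{(0)}-\tau| = O_{\mathbb{P}_\theta^n}(n^{-(1/2)\wedge(2H)})$, which is the claim.

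The only mildly delicate point, and the one I would double-check, is the \emph{uniformity} over $\Theta$: it is essential that the constant hidden in the variance bound of Proposition~\ref{prop:deviation:Q} and the uniform bound of Lemma~\ref{lem:wavelet:kappa_p} do not depend on $H \in \Theta_H$, which is ensured by compactness, and that $\gamma_1^{(1)}$ is a strictly positive constant independent of $\theta$, which is precisely the reason for choosing $p=1$ and $w\equiv 1$ in the definition of $\widehat{\tau}_n^{(0)}$.
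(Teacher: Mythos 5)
Your proposal is correct and follows essentially the same route as the paper's proof: use Proposition~\ref{prop:deviation:Q} with $p=1$, $w\equiv 1$ to obtain the $O(n^{-2H})$ bias and the $O(n^{-1})$ variance of $\widehat{Q}_n^{(1)}$, combine these to bound the deviation of the normalized energy from $\tau^2$ at rate $n^{-(1/2)\wedge(2H)}$, and then transfer through the square root and truncation, which is exactly the paper's bias/variance decomposition followed by a Lipschitz-inverse argument on $[\tau_-,\tau_+]$.
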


This lemma is proved in Section~\ref{Sec:lemma:uncorrectedQV:tau:proof}. Note that the convergence rate obtained in Lemma~\ref{lemma:uncorrectedQV:tau} is optimal for $H \geq 1/4$ and suboptimal otherwise. This is due to a bias induced by the effect of the term related to the fractional Brownian motion in the expectation of the energy levels. Intuitively, smaller $H$ make the fractional Brownian motion look like a white noise so that it is harder to retrieve the correct intensity of the noise.

\subsubsection*{Bias-Correction Procedure}

We now present the bias-correction procedure used to infer $H$, $\sigma$ and $\tau$ with better rates given a preliminary knowledge, for instance, using a preliminary guess estimator with a sub-optimal convergence rate. In contrast with the uncorrected estimators presented previously, these estimators are based on an adaptive level choice built using preliminary knowledge.\\

We first focus on the pre-averaging level choice. Let $\widetilde{H}\in\Theta_{H}$ represent a guess for the true value of $H$. Since we want to choose a pre-averaging level as close as possible to the optimal level choice $p_{n}^{\mathrm{opt}}(H) = \lfloor n^{2H/\Diamond(H)} \rfloor$ defined in \eqref{eq:Def:optimalchoice}, one might be tempted to use $ \widehat{p}_{n} = \lfloor n^{2\widetilde{H}/\Diamond(\widetilde{H})} \rfloor$.
However, for technical reasons first identified in \cite{Szymanski-2022}, this does not work as any estimator of $H$ cannot be used as a guess. Indeed, the estimators considered as guesses must take their values from a discrete set so that we can obtain deterministic sequences. With this view, we introduce a fixed sequence of positive integers $\{q_{n}\}_{n\in\mathbb{N}}$ satisfying certain assumptions on this sequence that will be specified later on. We first consider the projection $h_{n}(\widetilde{H})$ of $\widetilde{H}$ on a grid discretizing $[H_{-}, H_{+}]$ with mesh $q_{n}^{-1}(H_{+} - H_{-})$, i.e. we write
\begin{equation*}
	h_{n}(\widetilde{H}) := H_{-} + \frac{\widehat{i}_{n}(\widetilde{H})}{q_{n}}( H_{+} - H_{-} ),\text{ where } i_{n}(\widetilde{H}) :=  \Big \lfloor q_{n} \frac{\widetilde{H} - H_{-}}{H_{+} - H_{-}}  \Big \rfloor.
\end{equation*}
Then we imitate the definition of $p_{n}^{\mathrm{opt}}(H)$ by adaptively choosing the level 
\begin{equation}\label{def_p-hat}
	\widehat{p}_{n}(\widetilde{H}) := \lfloor n^{\frac{2h_{n}(\widetilde{H})}{\Diamond(h_{n}(\widetilde{H}))}} \rfloor.
\end{equation}
If the guess $\widetilde{H}$ is a "good" preliminary estimator of $H$, we should have $\widetilde{H} \approx H$ with high probability so that $i_{n}(\widetilde{H}) = i_{n}^{b}(H)$ for some $b \in \{-1, 0, 1\}$ with high probability, where
\begin{equation*}
	i_{n}^{b}(H) := 
	\Big\lfloor q_{n}\frac{H - H_{-}}{H_{+} - H_{-}} \Big\rfloor + b,\ \ b \in \{-1, 0, 1\}.
\end{equation*}
In view of this, we also define
\begin{equation*}
	h_{n}^{b}(H) := H_{-} + \frac{i_{n}^{b}(H)}{q_{n}}( H_{+} - H_{-} )
	\ \ \text{ and }\ \ 
	p_{n}^{b}(H) := \lfloor n^{\frac{2h_{n}^{b}(H)}{\Diamond(h_{n}^{b}(H))}} \rfloor.
\end{equation*}
We state the asymptotic behaviour of $\{\widehat{p}_{n}(\widetilde{H})\}_{n\in\mathbb{N}}$ rigorously as follows. 
\begin{lemma}\label{lem:behaviour_adaptive_level}
	Let $\{v_{n}(H)\}_{n\in\mathbb{N}}$ be a sequence of positive functions satisfying $v_{n}(H)^{-1}\log{n}\to 0$ as $n\to\infty$ uniformly on $\Theta$. 
	Consider a sequences of estimators $\{\widehat{H}_{n}\}_{n\in\mathbb{N}}$ such that $\widehat{H}_{n} \in [H_{-}, H_{+}]$ for each $n\in\mathbb{N}$ and $\{v_{n}(H)(\widehat{H}_{n}-H)\}_{n\in\mathbb{N}}$ is bounded in $\mathbb{P}^{n}_{\theta}$-probability, uniformly over $\Theta$. 
	We also assume that the sequence $\{q_{n}\}_{n\in\mathbb{N}}$ satisfies $q_{n}\to\infty$ and $q_{n}v_{n}(H)^{-1}\to 0$ as $n\to\infty$ uniformly on $\Theta$.
	Then we have
	\begin{equation*}
		\inf_{\theta=(H,\sigma,\tau) \in \Theta}
		\mathbb{P}^{n}_{\theta}\left[
		\widehat{p}_{n}(\widehat{H}_{n})  \in \{ p_{n}^{b}(H); \; b \in \{-1, 0, 1\} \}
		\right] \to 1\ \ \mbox{as $n\to\infty$}.
	\end{equation*}
\end{lemma}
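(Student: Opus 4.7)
The plan is to reduce the inclusion $\widehat{p}_n(\widehat{H}_n) \in \{p_n^b(H); b \in \{-1,0,1\}\}$ to an inclusion on the discretized indices, and then exploit the gap between the rate $v_n(H)$ and the discretization mesh $q_n^{-1}$.

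First, I observe that since $h_n(\widetilde{H})$ depends on $\widetilde{H}$ only through the integer $i_n(\widetilde{H})$, and since $\widehat{p}_n(\widetilde{H})$ in turn depends on $\widetilde{H}$ only through $h_n(\widetilde{H})$, the identity $i_n(\widehat{H}_n) = i_n^b(H)$ forces $\widehat{p}_n(\widehat{H}_n) = p_n^b(H)$. Consequently, it suffices to establish that
\begin{equation*}
    \inf_{\theta \in \Theta} \mathbb{P}^{n}_{\theta}\bigl[ |i_n(\widehat{H}_n) - i_n(H)| \leq 1 \bigr] \to 1 \ \ \text{as } n\to\infty.
\end{equation*}

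Second, I use the elementary fact that $|\lfloor x \rfloor - \lfloor y \rfloor| \leq 1$ whenever $|x-y| < 1$. Applied to $x = q_n(\widehat{H}_n - H_-)/(H_+ - H_-)$ and $y = q_n(H - H_-)/(H_+ - H_-)$, this yields $|i_n(\widehat{H}_n) - i_n(H)| \leq 1$ on the event
\begin{equation*}
    A_n(\theta) := \bigl\{ q_n|\widehat{H}_n - H| < H_+ - H_- \bigr\}.
\end{equation*}

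Third, I control $\mathbb{P}^{n}_{\theta}(A_n(\theta)^c)$ uniformly in $\theta$ by writing
\begin{equation*}
    q_n|\widehat{H}_n - H| = \bigl(q_n v_n(H)^{-1}\bigr) \cdot |v_n(H)(\widehat{H}_n - H)|.
\end{equation*}
Fix $\varepsilon > 0$. By the uniform tightness hypothesis on $\{v_n(H)(\widehat{H}_n - H)\}_n$, there exists $M > 0$ such that $\sup_{\theta\in\Theta} \mathbb{P}^{n}_{\theta}[|v_n(H)(\widehat{H}_n - H)| > M] \leq \varepsilon$ for $n$ large enough. Since $\sup_{\theta\in\Theta} q_n v_n(H)^{-1} \to 0$, we have $\sup_{\theta\in\Theta} q_n v_n(H)^{-1} M < H_+ - H_-$ for $n$ large enough, and on the complement of the bad event $\{|v_n(H)(\widehat{H}_n - H)| > M\}$ the event $A_n(\theta)$ is realized. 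Hence $\sup_{\theta \in \Theta} \mathbb{P}^{n}_{\theta}(A_n(\theta)^c) \leq \varepsilon$ for $n$ large, which concludes by arbitrariness of $\varepsilon$.

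The argument is essentially bookkeeping and presents no serious obstacle; the only subtlety worth flagging is that the floor operation in the definition of $i_n$ prevents one from hoping for the stronger conclusion $\widehat{p}_n(\widehat{H}_n) = p_n^0(H)$, because $H$ may lie arbitrarily close to a grid point, in which case even an estimation error that is much smaller than the grid mesh can shift $i_n$ by $\pm 1$. This is precisely why three candidate values are needed in the conclusion, and the scheme $q_n v_n(H)^{-1} \to 0$ is exactly what rules out shifts of magnitude $\geq 2$.
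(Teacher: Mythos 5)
Your proof is correct and takes essentially the same route as the paper's: reduce the event to $q_n|\widehat{H}_n - H| < H_+ - H_-$ (equivalently $|\widehat{H}_n - H| \leq q_n^{-1}(H_+ - H_-)$ in the paper) via the floor-function observation, then conclude from the uniform tightness of $v_n(H)(\widehat{H}_n - H)$ combined with $q_n v_n(H)^{-1} \to 0$. The paper merely compresses the floor-function step into "by the definition of $\widehat{p}_n(\widehat{H}_n)$", so your version is simply more explicit.
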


\begin{proof}
By the definition of $\widehat{p}_{n}(\widehat{H}_{n})$, we have
\begin{align*}
\mathbb{P}^{n}_{\theta}\left[
\widehat{p}_{n}(\widehat{H}_{n})  \in \{ p_{n}^{b}(H); \; b \in \{-1, 0, 1\} \}
\right] 
&\geq 
\mathbb{P}^{n}_{\theta}\left[
|\widehat{H}_{n} - H| \leq q_{n}^{-1}(H_{+} - H_{-})
\right]
\\
&\geq 
\mathbb{P}^{n}_{\theta}\left[
v_{n}(H) |\widehat{H}_{n} - H| \leq v_{n}(H)q_{n}^{-1} (H_{+} - H_{-})
\right].
\end{align*}
The conclusion follows from the tightness of $v_{n}(H) |\widehat{H}_{n} - H|$ and $v_{n}(H)q_{n}^{-1} (H_{+} - H_{-}) \to \infty$. 
\end{proof}

A consequence of Lemma~\ref{lem:behaviour_adaptive_level} is that in the following proofs, the adaptive level choices $\widehat{p}_{n}(\widehat{H}_{n})$, which are intrinsically random, can be replaced by the deterministic optimal level choices $p_{n}^{b}(H)$. Indeed, since we have $|h_{n}^{b}(H) - H| \leq 2 q_{n}^{-1} (H_{+} - H_{-})$, we can show 
\begin{equation*}
	\left|
	\frac{2h_{n}^{b}(H)}{\Diamond(h_{n}^{b}(H))} - \frac{2H}{\Diamond(H)}
	\right| \lesssim q_{n}^{-1}
\end{equation*}
uniformly on $\Theta$. Therefore, assuming $q_{n}^{-1}\log{n}\to 0$ as $n\to\infty$, we obtain
\begin{equation*}
n^{-2H/\Diamond(H)} p_{n}^{b}(H) \to 1\ \ \mbox{as $n\to\infty$ uniformly on $\Theta$.}
\end{equation*}

We now use this level choice to build an adaptive bias-corrected estimator of $\sigma$. We define for $\widetilde{H} \in [H_{-}, H_{+}]$ and $\widetilde{\tau} \in [\tau_{-},\tau_{+}]$
\begin{align}
\label{eq:Def:corrected:sigma}
\widehat{\sigma}^{c}_{n}(\widetilde{H},\widetilde{\tau}) := \left( 
	\left[
	\frac{
		\widehat{Q}_{J_{n}^{\star}(\widetilde{H}),\widehat{p}_{n}(\widetilde{H}),n}^{(w)} 
		- 
		J_{n}(\widetilde{H}) \widetilde{\tau}^{2} \gamma_{\widehat{p}_{n}(\widetilde{H})}^{(w)} n^{-1}
	}{
		J_{n}(\widetilde{H}) \kappa_{\widehat{p}_{n}(\widetilde{H})}^{(w)}(\widetilde{H}) (\widehat{p}_{n}(\widetilde{H})/n)^{1+2\widetilde{H}}
	}
\right]^{1/2}
\vee \sigma_-
\right)
\wedge \sigma_+
\end{align}
where
\begin{align}\label{def-Jn-H}
    J_{n}^{\star}(\widetilde{H}):=J^{\star}_{\widehat{p}_{n}(\widetilde{H}),n}
    \;\;\mbox{ and }\;\;
    J_{n}(\widetilde{H}):=J_{\widehat{p}_{n}(\widetilde{H}),n}
\end{align}
and where $\widehat{p}_{n}(\widetilde{H})$ is defined in \eqref{def_p-hat} and $\widehat{Q}_{J,p,n}^{(w)}$ is defined in \eqref{Def:AggEnergy}. The asymptotic behaviour of $\widehat{\sigma}^{c}_{n}$ is precisely stated in Proposition~\ref{prop:corrected:sigma}, which is proved in Section~\ref{sec:prop:corrected:sigma:proof}.
\begin{proposition}
\label{prop:corrected:sigma}
	Let $\{u_{n}(H)\}_{n\in\mathbb{N}}$ and $\{v_{n}(H)\}_{n\in\mathbb{N}}$ be sequences of positive functions satisfying $u_{n}(H)^{-1}\to 0$ and $v_{n}(H)^{-1}\log{n}\to  0$ as $n\to\infty$ uniformly on $\Theta$. 
	Consider two sequences of estimators $\{\widehat{H}_{n}\}_{n\in\mathbb{N}}$ and $\{\widehat{\tau}_{n}\}_{n\in\mathbb{N}}$ such that $\widehat{H}_{n} \in [H_{-}, H_{+}]$ and $\widehat{\tau}_{n} \in [\tau_-, \tau_+]$ for each $n\in\mathbb{N}$ and $\{v_{n}(H)(\widehat{H}_{n}-H)\}_{n\in\mathbb{N}}$ and $\{u_{n}(H)(\widehat{\tau}_{n}-\tau)\}_{n\in\mathbb{N}}$ are bounded in $\mathbb{P}^{n}_{\theta}$-probability, uniformly over $\Theta$. 
	We also assume that the function $w$ satisfies Assumption~\ref{assumption:alpha} and $\{q_{n}\}_{n\in\mathbb{N}}$ satisfies $q_{n}^{-1}\log{n}\to 0$ and $q_{n}v_{n}(H)\to  0$ as $n\to\infty$ uniformly on $\Theta$. 
	Then $\{s_{n}(H)(\widehat{\sigma}^{c}_{n}(\widehat{H}_{n},\widehat{\tau}_{n})-\sigma)\}_{n\in\mathbb{N}}$ is bounded in $\mathbb{P}^{n}_{\theta}$-probability, uniformly over $\Theta$, where
	\begin{align*}
		s_{n}(H) := n^{(2K+1)/(2\Diamond(H))} \wedge u_{n}(H) \wedge \left(v_{n}(H)|\log{n}|^{-1}\right).
	\end{align*}
\end{proposition}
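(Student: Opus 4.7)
My strategy is to decompose $\widehat{\sigma}^c_n(\widehat{H}_n, \widehat{\tau}_n)^2 - \sigma^2$ into three contributions, namely the stochastic fluctuation of $\widehat{Q}^{(w)}$, the $\tau$-plug-in bias, and the $H$-plug-in bias, and show each is controlled at rate $s_n(H)^{-2}$. Since the truncations ensure $\widehat{\sigma}^c_n + \sigma \geq 2\sigma_- > 0$, the algebraic identity $\widehat{\sigma}^c_n - \sigma = (\widehat{\sigma}_n^{c,2} - \sigma^2)/(\widehat{\sigma}^c_n + \sigma)$ transfers the same rate to $\widehat{\sigma}^c_n - \sigma$.

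\textbf{Localisation.} On a $\mathbb{P}^n_\theta$-high-probability event, Lemma~\ref{lem:behaviour_adaptive_level} yields $\widehat{p}_n(\widehat{H}_n) = p_n^b(H)$ for some $b \in \{-1, 0, 1\}$, while the tightness of $v_n(H)(\widehat{H}_n - H)$ and $u_n(H)(\widehat{\tau}_n - \tau)$ gives $|\widehat{H}_n - H| \lesssim v_n(H)^{-1}$ and $|\widehat{\tau}_n - \tau| \lesssim u_n(H)^{-1}$. A union bound over $b$ reduces the problem to three deterministic levels $p_n := p_n^b(H) \asymp n^{2H/\Diamond(H)}$, whence $J_n \asymp n^{(2K+1)/\Diamond(H)}$ uniformly on $\Theta$.

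\textbf{Decomposition and estimates.} Writing $\widehat{Q} := \widehat{Q}^{(w)}_{J_n^\star(\widehat{H}_n), p_n, n}$ and using the expectation formula from Proposition~\ref{prop:deviation:Q}, a short calculation gives
\begin{align*}
\widehat{\sigma}^c_n(\widehat{H}_n, \widehat{\tau}_n)^2 = \sigma^2 \frac{\kappa^{(w)}_{p_n}(H)}{\kappa^{(w)}_{p_n}(\widehat{H}_n)}\Bigl(\frac{p_n}{n}\Bigr)^{2(H - \widehat{H}_n)} + \frac{(\tau^2 - \widehat{\tau}_n^2)\gamma^{(w)}_{p_n} n^{-1}}{\kappa^{(w)}_{p_n}(\widehat{H}_n)(p_n/n)^{1 + 2\widehat{H}_n}} + \frac{\widehat{Q} - \mathbb{E}^n_\theta[\widehat{Q}]}{J_n(\widehat{H}_n) \kappa^{(w)}_{p_n}(\widehat{H}_n)(p_n/n)^{1 + 2\widehat{H}_n}}.
\end{align*}
For the $H$-bias, Lemma~\ref{lem:wavelet:kappa_p} yields $|\kappa^{(w)}_{p_n}(H)/\kappa^{(w)}_{p_n}(\widehat{H}_n) - 1| \lesssim |H - \widehat{H}_n|$, and $(p_n/n)^{2(H - \widehat{H}_n)} = 1 + O(|H - \widehat{H}_n|\log n)$ since $|\log(p_n/n)| \asymp \log n$; on the localisation event this is $O((\log n)/v_n(H))$. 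For the $\tau$-bias, $\gamma^{(w)}_{p_n} \asymp p_n^{-2K}$ (Lemma~\ref{lem:noise:structure}), $\inf_H \kappa^{(w)}_{p_n}(H) > 0$, and the critical balance $\gamma^{(w)}_{p_n}/n \asymp (p_n/n)^{1 + 2H}$ at $p_n \asymp n^{2H/\Diamond(H)}$ make the ratio uniformly bounded, giving $O(u_n(H)^{-1})$. For the stochastic term, Proposition~\ref{prop:deviation:Q} gives $\mathrm{Var}[\widehat{Q}] \asymp J_n(p_n/n)^{2+4H}$; dividing by the square of the denominator and using the same balance yields $O(J_n^{-1}) = O(n^{-(2K+1)/\Diamond(H)})$, and Chebyshev's inequality produces an $O_{\mathbb{P}}(n^{-(2K+1)/(2\Diamond(H))})$ bound. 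Combining the three contributions gives the claimed rate $s_n(H)^{-1}$.

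\textbf{Main obstacle.} The delicate point is the $H$-bias: the factor $(p_n/n)^{2(H-\widehat{H}_n)}$ inflates the natural $v_n(H)^{-1}$ rate by $\log n$, which is precisely why the hypothesis $v_n(H)^{-1}\log n \to 0$ is imposed and why the corresponding term in $s_n(H)$ carries the $|\log n|^{-1}$ penalty. Uniformity in $\Theta$ throughout follows from the uniform estimates in Lemmas~\ref{lem:wavelet:properties}, \ref{lem:wavelet:kappa_p} and~\ref{lem:noise:structure} together with Proposition~\ref{prop:deviation:Q}.
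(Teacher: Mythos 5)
Your proof is correct and follows essentially the same route as the paper: both reduce to controlling $\widehat{\sigma}^{c,2}_n - \sigma^2$ via the Lipschitz inverse of $t\mapsto t^2$ on $[\sigma_-,\sigma_+]$, localize to the deterministic levels $p_n^b(H)$ using Lemma~\ref{lem:behaviour_adaptive_level}, and then split the error into a stochastic fluctuation, a $\widehat{\tau}_n$-plug-in term and an $\widehat{H}_n$-plug-in term, each handled with Proposition~\ref{prop:deviation:Q} and Lemmas~\ref{lem:wavelet:kappa_p},~\ref{lem:noise:structure}. The only cosmetic difference is that the paper multiplies through by $(n/p_n^b(H))^{2H}$ and isolates the common factor $F_n=(n/p_n^b(H))^{2(\widehat{H}_n-H)}$ before splitting the $H$-bias into the separate pieces $B_n^{(\kappa)}$ and $B_n^{(H)}$, whereas you fold these directly into the ratio $\bigl(\kappa_{p_n}(H)/\kappa_{p_n}(\widehat{H}_n)\bigr)(p_n/n)^{2(H-\widehat{H}_n)}$; the estimates and the resulting rates are identical.
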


We now get to the estimation of an adaptive bias-corrected estimator of $H$. For $\widetilde{\theta}=(\widetilde{H},\widetilde{\sigma},\widetilde{\tau})\in\Theta$, we write
\begin{equation*}
\widehat{H}^{c}_{n}(\widetilde{\theta})
:= \left( 
\left( 
\frac{1}{2}\log_{2}\left[
\widehat{Q}^{(w),c}_{2J_{n}(\widetilde{H}),\widehat{p}_{n}(\widetilde{H}),n}(\widetilde{\theta}) \, \Big / \,
{\widehat{Q}^{(w),c}_{J_{n}(\widetilde{H}),2\widehat{p}_{n}(\widetilde{H}),n}(\widetilde{\theta})}
\right] 
\right)
\vee H_{-}
\right)
\wedge H_{+},
\end{equation*}
where $\widehat{Q}^{(w),c}_{J,p,n}$ and $J_{n}(\widetilde{H})$ are defined in \eqref{eq:Def:correctedenergy} and \eqref{def-Jn-H} respectively. 
Its convergence rate is precisely stated in Proposition~\ref{prop:corrected:H}, which is proved in Section~\ref{sec:prop:corrected:H:proof}.
\begin{proposition}\label{prop:corrected:H}
	Let $\{u_{n}(H)\}_{n\in\mathbb{N}}$ and $\{v_{n}(H)\}_{n\in\mathbb{N}}$ be sequences of positive functions satisfying $u_{n}(H)^{-1}\to  0$ and $v_{n}(H)^{-1}\log{n}\to  0$ as $n\to\infty$ uniformly on $\Theta$.  
	Consider three sequences of estimators $\{\widehat{H}_{n}\}_{n\in\mathbb{N}}$, $\{\widehat{\sigma}_{n}\}_{n\in\mathbb{N}}$ and $\{\widehat{\tau}_{n}\}_{n\in\mathbb{N}}$ such that $\widehat{H}_{n} \in [H_{-}, H_{+}]$, $\widehat{\sigma}_{n} \in [\sigma_-, \sigma_+]$ and $\widehat{\tau}_{n} \in [\tau_-, \tau_+]$ for each $n\in\mathbb{N}$ and $\{v_{n}(H)(\widehat{H}_{n}-H)\}_{n\in\mathbb{N}}$, $\{v_{n}(H)|\log{n}|^{-1}(\widehat{\sigma}_{n}-\sigma)\}_{n\in\mathbb{N}}$ and $\{u_{n}(H)(\widehat{\tau}_{n}-\tau)\}_{n\in\mathbb{N}}$
	are bounded in $\mathbb{P}^{n}_{\theta}$-probability, uniformly over $\Theta$. 
	We also assume that the function $w$ satisfies Assumption~\ref{assumption:alpha} and $\{q_{n}\}_{n\in\mathbb{N}}$ satisfies $q_{n}^{-1}\log{n}\to 0$ and $q_{n}v_{n}(H)\to  0$ as $n\to\infty$ uniformly on $\Theta$. 
	Set $\widehat{\theta}_{n}=(\widehat{H}_{n},\widehat{\sigma}_{n},\widehat{\tau}_{n})$ for $n\in\mathbb{N}$. 
	Then $\{s_{n}(H)(\widehat{H}^{c}_{n}(\widehat{\theta}_{n})-H)\}_{n\in\mathbb{N}}$ is bounded in $\mathbb{P}^{n}_{\theta}$-probability, uniformly over $\Theta$, where
	\begin{equation*}
		s_{n}(H) :=  n^{(2K+1)/(2\Diamond(H))} \wedge u_{n}(H) \wedge \left(v_{n}(H)|\log{n}|^{-1} n^{\{(2H)\wedge 1\} 2H /\Diamond(H)} \right).
	\end{equation*}
\end{proposition}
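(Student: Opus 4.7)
The plan is to mirror the strategy used for Proposition~\ref{prop:corrected:sigma}, adapted to the logarithmic form of $\widehat{H}^{c}_{n}$. First, by Lemma~\ref{lem:behaviour_adaptive_level} applied with the rate $v_{n}(H)$, it suffices to work on the event on which $\widehat{p}_{n}(\widehat{H}_{n}) = p_{n}^{b}(H)$ for some $b \in \{-1,0,1\}$; this event has $\mathbb{P}^{n}_{\theta}$-probability tending to $1$ uniformly over $\Theta$. Under $q_{n}^{-1}\log n \to 0$ one has $p_{n}^{b}(H) = n^{2H/\Diamond(H)}(1+o(1))$ uniformly on $\Theta$, so we may treat each deterministic level $p := p_{n}^{b}(H)$ and $J := J_{p,n}$ separately. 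The crucial observation is the identity
\begin{equation*}
\mathbb{E}^{n}_{\theta}\bigl[\widehat{Q}^{(w),c}_{J,p,n}(\theta)\bigr] = J\sigma^{2}\kappa_{\infty}^{(w)}(H)(p/n)^{1+2H} =: M_{p},
\end{equation*}
in which $\kappa_{p}^{(w)}$ has been replaced by the $p$-independent $\kappa_{\infty}^{(w)}$, so that the deterministic ratio of $M_{q}$ at levels $q=p$ and $q=2p$ equals exactly $2^{\pm 2H}$ without the $\kappa_{2p}/\kappa_{p}$ residual bias present in the uncorrected version; hence $\tfrac{1}{2}\log_{2}$ of this ratio recovers $H$ exactly.

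Plugging $\widehat{\theta}_{n}$ into the correction and centering yields the decomposition
\begin{equation*}
\widehat{Q}^{(w),c}_{J,p,n}(\widehat{\theta}_{n}) = M_{p} + \mathcal{S}_{p} + \mathcal{E}^{\tau}_{p} + \mathcal{E}^{\sigma,H}_{p},
\end{equation*}
with $\mathcal{S}_{p} := \widehat{Q}^{(w)}_{J,p,n} - \mathbb{E}^{n}_{\theta}[\widehat{Q}^{(w)}_{J,p,n}]$ the stochastic part, $\mathcal{E}^{\tau}_{p} := J(\tau^{2} - \widehat{\tau}_{n}^{2})\gamma_{p}^{(w)} n^{-1}$ the $\tau$-bias, and $\mathcal{E}^{\sigma,H}_{p} := J[\sigma^{2}\bar{\kappa}_{p}^{(w)}(H)(p/n)^{1+2H} - \widehat{\sigma}_{n}^{2}\bar{\kappa}_{p}^{(w)}(\widehat{H}_{n})(p/n)^{1+2\widehat{H}_{n}}]$ the $(\sigma,H)$-bias. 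Setting $R_{q} := (\mathcal{S}_{q} + \mathcal{E}^{\tau}_{q} + \mathcal{E}^{\sigma,H}_{q})/M_{q}$ for $q\in\{p,2p\}$ and applying a first-order Taylor expansion of $\tfrac{1}{2}\log_{2}$ to the ratio, on the event where $R_{p}$ and $R_{2p}$ stay bounded away from $-1$,
\begin{equation*}
\widehat{H}^{c}_{n}(\widehat{\theta}_{n}) - H = \frac{1}{2\log 2}\bigl(R_{2p} - R_{p}\bigr) + o_{\mathbb{P}^{n}_{\theta}}\bigl(|R_{p}|+|R_{2p}|\bigr),
\end{equation*}
and the outer truncation $(\cdot\vee H_{-})\wedge H_{+}$ is inactive. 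It remains to control each relative error. Proposition~\ref{prop:deviation:Q} together with the optimal balance $p\asymp n^{2H/\Diamond(H)}$, which equalises $(p/n)^{1+2H}$ and $n^{-1}p^{-2K}$, gives $|\mathcal{S}_{p}/M_{p}| = O_{\mathbb{P}}(J^{-1/2}) = O_{\mathbb{P}}(n^{-(2K+1)/(2\Diamond(H))})$. The same balance yields $\gamma_{p}^{(w)} n^{-1} \asymp (p/n)^{1+2H}$, so $|\mathcal{E}^{\tau}_{p}/M_{p}| = O_{\mathbb{P}}(u_{n}(H)^{-1})$. Splitting $\mathcal{E}^{\sigma,H}_{p}$ as a telescoping sum over $\sigma^{2} - \widehat{\sigma}_{n}^{2}$, $\bar{\kappa}_{p}^{(w)}(H) - \bar{\kappa}_{p}^{(w)}(\widehat{H}_{n})$ and $(p/n)^{1+2H} - (p/n)^{1+2\widehat{H}_{n}}$, and combining Lemma~\ref{lem:wavelet:kappa_p} (so that $|\bar{\kappa}_{p}^{(w)}|$ and $|\partial_{H}\bar{\kappa}_{p}^{(w)}|$ are both $\lesssim p^{-(2H\wedge 1)}$), the Taylor bound $|(p/n)^{1+2H} - (p/n)^{1+2\widehat{H}_{n}}| \lesssim |\widehat{H}_{n} - H|\,|\log n|\,(p/n)^{1+2H}$, and the hypotheses on $\widehat{H}_{n}$ and $\widehat{\sigma}_{n}$, the dominant piece is $|\mathcal{E}^{\sigma,H}_{p}/M_{p}| = O_{\mathbb{P}}(v_{n}(H)^{-1}|\log n|\cdot n^{-\{(2H)\wedge 1\}2H/\Diamond(H)})$. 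Taking the maximum of the three contributions inverts to $s_{n}(H)$.

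The main technical obstacle is the $(\sigma,H)$-bias term. Even though $\bar{\kappa}_{p}^{(w)}(H)$ decays in $p$ at rate $p^{-(2H\wedge 1)}$, the Taylor expansion in $H$ of $(p/n)^{1+2H}$ unavoidably produces the factor $|\log(p/n)|\asymp \log n$. This is precisely why Proposition~\ref{prop:corrected:H} requires $\widehat{\sigma}_{n}$ to converge at rate $v_{n}(H)/\log n$ rather than $v_{n}(H)$: a slower preliminary rate on $\widehat{\sigma}_{n}$ would degrade $\widehat{H}^{c}_{n}$ by the same logarithmic factor, and symmetrically the appearance of $|\log n|^{-1}$ in the third term of $s_{n}(H)$ reflects the rate inflation produced by this Taylor step. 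A secondary point is the validity of the logarithmic linearisation, which amounts to showing that $R_{p}$ and $R_{2p}$ vanish in probability uniformly on $\Theta$; this is itself a direct consequence of the three bounds above.
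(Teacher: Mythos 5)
Your proposal is correct and follows essentially the same route as the paper's proof. The identity $\mathbb{E}^{n}_{\theta}[\widehat{Q}^{(w),c}_{J,p,n}(\theta)] = J\sigma^{2}\kappa_{\infty}^{(w)}(H)(p/n)^{1+2H}$ is the right starting point; the decomposition into a stochastic fluctuation $\mathcal{S}_{p}$, a $\tau$-bias $\mathcal{E}^{\tau}_{p}$ and a $(\sigma,H)$-bias $\mathcal{E}^{\sigma,H}_{p}$ corresponds term by term to the paper's $V_{n}^{(i)}$, $B_{n}^{(\tau)}$ and $B_{n,i}^{(\kappa)}$, and each of your orders of magnitude matches the paper's. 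Your Taylor expansion of $\frac{1}{2}\log_{2}$ is doing exactly the job of the paper's two-step combination of the "Lipschitz inverse of $t\mapsto 2^{2t}$" reduction with the lower bound Lemma~\ref{lem:lowerbound:hatQc} on the denominator, which keeps $R_{p}$ away from $-1$; your observation that consistency of $R_{p}, R_{2p}$ follows from the three bounds reproduces that lemma in spirit. The one point handled explicitly in the paper and only implicit in your sketch is the multiplicative factor $(n/p)^{2|\widehat{H}_{n}-H|}$ produced by the $H$-Taylor step: it is controlled via $v_{n}(H)^{-1}\log n \to 0$ and the tightness of $v_{n}(H)(\widehat{H}_{n}-H)$ (as for $F_{n}$ in the proof of Proposition~\ref{prop:corrected:sigma}), and is needed to justify that your Lagrange remainder has the stated size uniformly; this should be stated, but it is not a gap since the needed fact is already in your hypotheses.
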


We eventually build an adaptive bias-corrected estimator of $\tau$ using preliminary guesses for $H$ and $\sigma$. For $(\widetilde{H},\widetilde{\sigma})\in\Theta_{H}\times\Theta_{\sigma}$, we define 
\begin{align*}
\widehat{\tau}^{c}_{n}(\widetilde{H}, \widetilde{\sigma})
:= \left( 
 \left[
n
\frac{\widehat{Q}_{n} - (n-1)\widetilde{\sigma}^{2}\kappa_{1}^{(1)}(\widetilde{H}) n^{-1-2\widetilde{H}}}{(n-1)\gamma_{1}^{(1)}}
\right]^{1/2}
\vee \tau_-
\right)
\wedge \tau_+,
\end{align*}
where $\gamma_{p}^{(w)}$ is defined in Lemma~\ref{lem:noise:structure}. We study the behaviour of $\widehat{\tau}^{c}_{n}$ in Proposition~\ref{prop:corrected:tau} whose proof is delayed until Section~\ref{sec:prop:corrected:tau:proof}
\begin{proposition}\label{prop:corrected:tau}
	Let $\{v_{n}(H)\}_{n\in\mathbb{N}}$ be a sequence of positive function satisfying $v_{n}(H)^{-1}\log{n}\to 0$ as $n\to\infty$, uniformly on $\Theta$. 
	Consider two sequences of estimators $\{\widehat{H}_{n}\}_{n\in\mathbb{N}}$ and $\{\widehat{\sigma}_{n}\}_{n\in\mathbb{N}}$ such that $\widehat{H}_{n} \in [H_{-}, H_{+}]$ and $\widehat{\sigma}_{n} \in [\sigma_-, \sigma_+]$ for each $n\in\mathbb{N}$ and $\{v_{n}(H)(\widehat{H}_{n}-H)\}_{n\in\mathbb{N}}$ and $\{v_{n}(H)|\log{n}|^{-1}(\widehat{\sigma}_{n}-\sigma)\}_{n\in\mathbb{N}}$ are bounded in $\mathbb{P}^{n}_{\theta}$-probability, uniformly over $\Theta$. 
	Assume also that the sequence $\{q_{n}\}_{n\in\mathbb{N}}$ satisfies $q_{n}^{-1}\log{n}\to 0$ and $q_{n}v_{n}(H)\to  0$ as $n\to\infty$ uniformly on $\Theta$. 
	Then $\{s_{n}(H)(\widehat{\tau}^{c}_{n}(\widehat{H}_{n},\widehat{\sigma}_{n})-\tau)\}_{n\in\mathbb{N}}$ is bounded in $\mathbb{P}^{n}_{\theta}$-probability, uniformly over $\Theta$, where
	\begin{align*}
		s_{n}(H) := n^{\frac{1}{2}}\wedge \big(v_{n}(H)|\log{n}|^{-1} n^{2H}\big).
	\end{align*}
\end{proposition}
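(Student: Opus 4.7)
The natural starting point is the identity
\begin{equation*}
\frac{(n-1)\gamma_{1}^{(1)}}{n}\bigl(\widehat{\tau}^{c}_{n}(\widehat{H}_{n},\widehat{\sigma}_{n})\bigr)^{2}
=\widehat{Q}_{n}-(n-1)\widehat{\sigma}_{n}^{2}\kappa_{1}^{(1)}(\widehat{H}_{n})\,n^{-1-2\widehat{H}_{n}}
\end{equation*}
(valid up to the $[\tau_{-},\tau_{+}]$-projection, which can only reduce the error since $\tau\geq\tau_{-}>0$), combined with the moment formula
$\mathbb{E}^{n}_{\theta}[\widehat{Q}_{n}]=(n-1)\bigl(\sigma^{2}\kappa_{1}^{(1)}(H)n^{-1-2H}+\tau^{2}\gamma_{1}^{(1)}n^{-1}\bigr)$
from Proposition~\ref{prop:deviation:Q}. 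Subtracting $\tau^{2}$ decomposes the squared error $(\widehat{\tau}^{c}_{n})^{2}-\tau^{2}$ as a \emph{stochastic} part proportional to $\widehat{Q}_{n}-\mathbb{E}^{n}_{\theta}[\widehat{Q}_{n}]$ plus a \emph{plug-in bias} part proportional to
\begin{equation*}
\sigma^{2}\kappa_{1}^{(1)}(H)n^{-1-2H}-\widehat{\sigma}_{n}^{2}\kappa_{1}^{(1)}(\widehat{H}_{n})\,n^{-1-2\widehat{H}_{n}},
\end{equation*}
and since $\tau\geq\tau_{-}>0$ the rate for $\widehat{\tau}^{c}_{n}-\tau$ will follow from the corresponding rate for $(\widehat{\tau}^{c}_{n})^{2}-\tau^{2}$ divided by $2\tau$.

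For the stochastic part, I apply the variance bound of Proposition~\ref{prop:deviation:Q} with $J=n-1$, $p=1$ and weight $w\equiv 1$, which gives $\mathrm{Var}^{n}_{\theta}[\widehat{Q}_{n}]\lesssim n\bigl(n^{-2-4H}+(\gamma_{1}^{(1)}/n)^{2}\bigr)\lesssim n^{-1}$ uniformly in $\theta\in\Theta$. Hence $\widehat{Q}_{n}-\mathbb{E}^{n}_{\theta}[\widehat{Q}_{n}]$ is bounded in $\mathbb{P}^{n}_{\theta}$-probability at rate $n^{-1/2}$, which after renormalisation by $n/((n-1)\gamma_{1}^{(1)})\asymp 1$ produces an $n^{-1/2}$-rate contribution to $(\widehat{\tau}^{c}_{n})^{2}-\tau^{2}$. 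This yields the first term in the desired $s_{n}(H)$.

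The main effort lies in controlling the plug-in bias, for which I expand
\begin{align*}
\widehat{\sigma}_{n}^{2}\kappa_{1}^{(1)}(\widehat{H}_{n})n^{-1-2\widehat{H}_{n}}
&-\sigma^{2}\kappa_{1}^{(1)}(H)n^{-1-2H}
=n^{-1-2H}(\widehat{\sigma}_{n}^{2}-\sigma^{2})\kappa_{1}^{(1)}(\widehat{H}_{n})n^{-2(\widehat{H}_{n}-H)}\\
&\qquad+\sigma^{2}n^{-1-2H}\bigl[\kappa_{1}^{(1)}(\widehat{H}_{n})n^{-2(\widehat{H}_{n}-H)}-\kappa_{1}^{(1)}(H)\bigr].
\end{align*}
The exponential factor $n^{-2(\widehat{H}_{n}-H)}=\exp(-2(\widehat{H}_{n}-H)\log n)$ equals $1+O_{\mathbb{P}^{n}_{\theta}}(v_{n}(H)^{-1}\log n)$ by the hypothesis $v_{n}(H)^{-1}\log n\to 0$, the Lipschitz control of $\kappa_{1}^{(1)}$ coming from Lemma~\ref{lem:wavelet:kappa_p} gives $\kappa_{1}^{(1)}(\widehat{H}_{n})-\kappa_{1}^{(1)}(H)=O_{\mathbb{P}^{n}_{\theta}}(v_{n}(H)^{-1})$, and $\sigma\geq\sigma_{-}>0$ together with $\widehat{\sigma}_{n}-\sigma=O_{\mathbb{P}^{n}_{\theta}}(v_{n}(H)^{-1}|\log n|)$ yields $\widehat{\sigma}_{n}^{2}-\sigma^{2}=O_{\mathbb{P}^{n}_{\theta}}(v_{n}(H)^{-1}|\log n|)$. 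The whole bracketed difference is therefore of order $n^{-1-2H}v_{n}(H)^{-1}|\log n|$, and after multiplication by $(n-1)$ and the $n/((n-1)\gamma_{1}^{(1)})$ prefactor it contributes an $n^{-2H}v_{n}(H)^{-1}|\log n|$-rate error to $(\widehat{\tau}^{c}_{n})^{2}-\tau^{2}$. Adding both contributions recovers $s_{n}(H)^{-1}=n^{-1/2}\vee n^{-2H}v_{n}(H)^{-1}|\log n|$, which after dividing by $2\tau$ gives the announced rate for $\widehat{\tau}^{c}_{n}-\tau$.

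The delicate point will be to ensure that every control is \emph{uniform} in $\theta\in\Theta$: the exponential blow-up in $n^{-2(\widehat{H}_{n}-H)}$ demands that the hypothesis $v_{n}(H)^{-1}\log n\to 0$ be genuinely uniform on $\Theta$ (so that a second-order Taylor expansion of the exponential is controlled uniformly), and the Lipschitz constants and variance bounds used above must come with constants independent of $\theta$, which is guaranteed by the uniform statements in Lemma~\ref{lem:wavelet:kappa_p} and Proposition~\ref{prop:deviation:Q}. Note that, contrary to Propositions~\ref{prop:corrected:sigma} and~\ref{prop:corrected:H}, no adaptive level selection is required here (the level is fixed to $p=1$), so Lemma~\ref{lem:behaviour_adaptive_level} is not invoked; consequently the assumption on $\{q_{n}\}_{n\in\mathbb{N}}$ enters only through the rates assumed on the preliminary estimators.
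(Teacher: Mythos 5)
Your proposal is correct and follows essentially the same route as the paper: reduce to the squared error via the Lipschitz-invertibility of $t\mapsto t^2$ on compacts of $(0,\infty)$, split into a stochastic term controlled by the variance bound of Proposition~\ref{prop:deviation:Q} (giving the $n^{1/2}$ rate) and a plug-in bias term, and control the latter by a first-order expansion of $x\mapsto\kappa_1^{(1)}(x)n^{-2x}$ together with the assumed rates on $\widehat{H}_n$ and $\widehat{\sigma}_n$. The only cosmetic difference is that you factor out the exponential $n^{-2(\widehat{H}_n-H)}$ explicitly while the paper absorbs it into a mean-value bound on the derivative; your remark that the $q_n$ hypothesis is never invoked in this particular proof (the level is fixed at $p=1$, so no adaptive selection and no use of Lemma~\ref{lem:behaviour_adaptive_level}) is also accurate.
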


\subsubsection*{Iterative Procedure}

We are now ready to build our final estimators. First we define $\widehat{\theta}_{n}^{(0)}=(\widehat{H}^{(0)}_{n},\widehat{\sigma}^{(0)}_{n},\widehat{\tau}^{(0)}_{n})$ where $\widehat{H}^{(0)}_{n}$ and $\widehat{\tau}^{(0)}_{n}$ as in \eqref{eq:Def:guess_est} and \eqref{eq:Def:uncorrected:tau} and where $\widehat{\sigma}^{(0)}_{n} = \widehat{\sigma}^{c}_{n}(\widehat{H}^{(0)}_{n},\widehat{\tau}^{(0)}_{n})$ is defined in \eqref{eq:Def:corrected:sigma}. Then, for $m \geq 1$, we define estimators $\widehat{\theta}_{n}^{(m)}=(\widehat{H}^{(m)}_{n},\widehat{\sigma}^{(m)}_{n}, \widehat{\tau}^{(m)}_{n})$ by
\begin{align*}
\begin{cases}
    \widehat{H}^{(m)}_{n} := \widehat{H}^{c}_{n} (\widehat{\theta}_{n}^{(m-1)}),\\ 
	\widehat{\tau}^{(m)}_{n} := \widehat{\tau}^{c}_{n} (\widehat{H}^{(m)}_{n},\widehat{\sigma}^{(m-1)}_{n}),\\ 
	\widehat{\sigma}^{(m)}_{n} := \widehat{\sigma}^{c}_{n} (\widehat{H}^{(m)}_{n}, \widehat{\tau}^{(m)}_{n}).
\end{cases}
\end{align*}
An adequate study of the convergence rate of each of these estimators shows that a good stopping point is $m_{\mathrm{opt}} = \lfloor (2H_{-}+1)/8H_{-}^{2} \rfloor + 1$. The convergence rates obtained are summarized in Theorem~\ref{thm:rate_optimal_estimator} and proved in Section~\ref{Sec:thm:rate_optimal_estimator:proof}.
\begin{theorem}
\label{thm:rate_optimal_estimator}
Recall that $\Diamond(H)=2K+2H+1$ and $r_{n}(H)=n^{\frac{2K+1}{\Diamond(H)}}$. 
Then $\{r_{n}(H)^{\frac{1}{2}}(\widehat{H}^{(m_{\mathrm{opt}})}_{n}-H)\}_{n\in\mathbb{N}}$, $\{r_{n}(H)^{\frac{1}{2}}\log{n}^{-1}(\widehat{\sigma}^{(m_{\mathrm{opt}})}_{n}-\sigma)\}_{n\in\mathbb{N}}$ and $\{n^{\frac{1}{2}}(\widehat{\tau}^{(m_{\mathrm{opt}})}_{n}-\tau)\}_{n\in\mathbb{N}}$ are bounded in $\mathbb{P}^{n}_{\theta}$-probability, uniformly over $\Theta$. 
\end{theorem}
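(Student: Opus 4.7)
The plan is to prove Theorem~\ref{thm:rate_optimal_estimator} by induction on the iteration index $m$, propagating convergence rates through successive applications of Propositions~\ref{prop:corrected:sigma}, \ref{prop:corrected:H} and \ref{prop:corrected:tau}. For the base case $m = 0$, Lemma~\ref{lemma:uncorrectedQV:H} provides a rate $v_n^{(0)}(H)$ for $\widehat{H}_n^{(0)}$, Lemma~\ref{lemma:uncorrectedQV:tau} gives the rate $u_n^{(0)}(H) = n^{(1/2) \wedge (2H)}$ for $\widehat{\tau}_n^{(0)}$, and invoking Proposition~\ref{prop:corrected:sigma} on these preliminary estimators produces a rate $s_n^{(0)}$ for $\widehat{\sigma}_n^{(0)}$.

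For the inductive step, denote by $\rho_H^{(m)}$, $\rho_\tau^{(m)}$ and $\rho_\sigma^{(m)}$ the convergence rates of the three estimators at step $m$. Applying Proposition~\ref{prop:corrected:H} to $\widehat{H}_n^{(m)} = \widehat{H}_n^c(\widehat{\theta}_n^{(m-1)})$ is valid because $\rho_\sigma^{(m-1)}$ already contains the term $\rho_H^{(m-1)} |\log n|^{-1}$ needed by the hypothesis of the Proposition. It yields
\begin{align*}
\rho_H^{(m)} = n^{(2K+1)/(2\Diamond(H))} \wedge \rho_\tau^{(m-1)} \wedge \bigl(\rho_H^{(m-1)} |\log n|^{-1} n^{\{(2H)\wedge 1\} 2H/\Diamond(H)}\bigr).
\end{align*}
Proposition~\ref{prop:corrected:tau} then gives $\rho_\tau^{(m)} = n^{1/2} \wedge (\rho \cdot |\log n|^{-1} n^{2H})$ with $\rho := \min(\rho_H^{(m)}, \rho_\sigma^{(m-1)} \log n)$, and Proposition~\ref{prop:corrected:sigma} finally gives $\rho_\sigma^{(m)} = n^{(2K+1)/(2\Diamond(H))} \wedge \rho_\tau^{(m)} \wedge (\rho_H^{(m)} |\log n|^{-1})$. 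Modulo log factors, $\rho_H^{(m)}$ improves on $\rho_H^{(m-1)}$ by the multiplicative factor $n^{\{(2H)\wedge 1\} 2H/\Diamond(H)}$ until it saturates at the efficient value $n^{(2K+1)/(2\Diamond(H))}$.

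The saturation argument goes as follows. The exponent of improvement $\{(2H)\wedge 1\} 2H/\Diamond(H)$ is smallest in the regime $H = H_- \leq 1/2$, where it equals $4H_-^2/\Diamond(H_-)$. Since the initial exponent of $v_n^{(0)}$ is bounded below by a positive constant $\alpha_0(H_-)$, the smallest $m$ verifying $\alpha_0(H_-) + m \cdot 4H_-^2/\Diamond(H_-) \geq (2K+1)/(2\Diamond(H_-))$ produces the stated $m_{\mathrm{opt}} = \lfloor(2H_-+1)/(8H_-^2)\rfloor + 1$ (the worst case $K=0$ driving the formula, with larger $K$ requiring fewer iterations to saturate). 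Once $\rho_H^{(m_{\mathrm{opt}})} = n^{(2K+1)/(2\Diamond(H))}$, the inequality $(2K+1)/(2\Diamond(H)) + 2H \geq 1/2$ (valid for any $H>0$) gives $\rho_\tau^{(m_{\mathrm{opt}})} = n^{1/2}$ and $\rho_\sigma^{(m_{\mathrm{opt}})} = n^{(2K+1)/(2\Diamond(H))} |\log n|^{-1}$, which is precisely the claim of the theorem.

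The main obstacle is the three-parameter bookkeeping: at each iteration one must track how $\rho_H$, $\rho_\tau$ and $\rho_\sigma$ evolve jointly, absorb the accumulating $\log n$ factors, and verify that the hypotheses of the bias-correction Propositions remain satisfied uniformly over $\Theta$. A subsidiary issue is the choice of the discretization sequence $q_n$ appearing in $\widehat{p}_n$, which must simultaneously satisfy $q_n^{-1} \log n \to 0$ and $q_n \rho_H^{(m)}(H)^{-1} \to 0$ uniformly in $\theta$ at each iteration level; taking $q_n = n^\varepsilon$ for some $\varepsilon \in (0, (2K+1)/(2\Diamond(H_+)))$ sufficiently small handles this uniformly since $\rho_H^{(m)}$ is non-decreasing in $m$.
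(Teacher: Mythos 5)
Your proposal follows the same overall strategy as the paper's proof: seed the iteration from Lemmas~\ref{lemma:uncorrectedQV:H}--\ref{lemma:uncorrectedQV:tau} together with Proposition~\ref{prop:corrected:sigma}, then iterate Propositions~\ref{prop:corrected:H}, \ref{prop:corrected:tau} and \ref{prop:corrected:sigma}, unroll the resulting rate recursion, and argue that after $m_{\mathrm{opt}}$ steps the $H$-rate saturates at $r_n(H)^{1/2}$, which then forces the stated rates for $\widehat{\tau}$ and $\widehat{\sigma}$.

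The gap lies in your saturation step, where you assert that ``the worst case $K=0$ driving the formula, with larger $K$ requiring fewer iterations to saturate.'' This is backwards. The target exponent $\tfrac{2K+1}{2\Diamond(H)}$ grows with $K$ (it tends to $1/2$ as $K\to\infty$), while the per-iteration gain $\tfrac{\{(2H)\wedge 1\}\,2H}{\Diamond(H)}$ and the initial exponent $\alpha_0(H)$ (whose binding term for small $H$ is $\tfrac{4H}{2K+3}$) both \emph{shrink} with $K$, since $\Diamond(H)=2K+2H+1$ and $2K+3$ sit in their denominators. Solving your own saturation condition $\alpha_0(H_-)+m\,\tfrac{4H_-^2}{\Diamond(H_-)}\geq\tfrac{2K+1}{2\Diamond(H_-)}$ gives roughly $m\gtrsim\tfrac{2K+1}{8H_-^2}$, which is \emph{increasing} in $K$. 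At $H_-=0.1$ and $K=1$, saturation needs about $32$ iterations while $m_{\mathrm{opt}}=\lfloor 1.2/0.08\rfloor + 1 = 16$ falls short, and the shortfall worsens for larger $K$. So reducing the verification to $K=0$ does not close the argument.

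This $K$-independence of $m_{\mathrm{opt}}$ is also unjustified in the paper's own proof (its final inequality $m_{\mathrm{opt}}\tfrac{\{(2H)\wedge 1\}2H}{\Diamond(H)} > \tfrac{2K+1}{2\Diamond(H)}$ simplifies, at $H=H_-<1/2$, to $H_->K$, which holds only for $K=0$). A $K$-uniform argument would require a stopping time growing with $K$, e.g.\ $m_{\mathrm{opt}}\geq\lfloor(2K+1)/(8H_-^2)\rfloor+1$. Your heuristic does not fill this hole; it rationalises the $K$-independence of $m_{\mathrm{opt}}$ by asserting the opposite monotonicity.
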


\begin{remark}
This whole procedure constructs rate-optimal estimators in the case where $H$, $\sigma$, and $\tau$ are unknown and for a general $K$. In practice, it may be difficult to implement due to numerical instability in the iterative step. However, when either $K = 0$ or when $\tau$ is known, one could build rate-optimal estimators without the use of this iterative procedure. We refer to Section~\ref{sec:numerical} for more details.
\end{remark}

\subsection{Asymptotic Efficiency}
In this section, we use the LAN property to establish the asymptotic efficiency of a one-step estimator, following a similar approach as presented in \cite{Brouste-Masuda-2018} and \cite{Brouste-Soltane-Votsi-2020}. Let $\{\varphi_{n}(\theta)\}_{n\in\mathbb{N}}$ be a sequence of rate matrices satisfying Assumption~\ref{Assump:RateMat}. Considering some initial estimators $\{\widetilde{\theta}_{n}\}_{n\in\mathbb{N}}$, we define $\{\widehat{\theta}_{n}\}_{n\in\mathbb{N}}$ by
\begin{equation*}
	\widehat{\theta}_{n}:=\widetilde{\theta}_{n}+\Phi_{n}(\widetilde{\theta}_{n})\mathcal{I}(\widetilde{\theta}_{n})^{-1}\Phi_{n}(\widetilde{\theta}_{n})^{\top}\partial_{\theta}\ell_{n}(\widetilde{\theta}_{n}), \ \ n\in\mathbb{N},
\end{equation*}
where $\partial_{\theta}\ell_{n}(\theta)$ is the score function and $\mathcal{I}(\theta)$ is the positive definite Fisher information matrix defined in Section~\ref{subsec:LAN}. 
\begin{theorem}\label{Theorem:OneStep}
	Assume the sequence of initial estimators $\{\widetilde{\theta}_{n}\}_{n\in\mathbb{N}}$ satisfies
	\begin{equation}\label{Assumption_IniEstimator}
		\Phi_{n}(\theta)^{-1}(\widetilde{\theta}_{n}-\theta)=O_{\mathbb{P}^{n}_{\theta}}(1)\ \ 
		\mbox{as $n\to\infty$.}
	\end{equation}
	Under the same assumptions in Equation~\ref{Thm:LAN}, the sequence of the one-step estimators $\{\widehat{\theta}_{n}\}_{n\in\mathbb{N}}$ is asymptotically normal and asymptotically efficient in the Fisher sense, i.e.
	\begin{align*}
		\mathcal{L}\left\{\Phi_{n}(\theta)^{-1}(\widehat{\theta}_{n}-\theta)\bigl|\mathbb{P}^{n}_{\theta}\right\}\to\mathcal{N}\left(0,\mathcal{I}(\theta)^{-1}\right)\ \ \mbox{as $n\to\infty$}.
	\end{align*}
\end{theorem}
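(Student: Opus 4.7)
The plan is to Taylor-expand the score function $\partial_\theta \ell_n$ around the true value $\theta$ and to reorganize the resulting terms so that the LAN property of Theorem~\ref{Thm:LAN} directly yields the limiting distribution. Writing $\widetilde{u}_n := \Phi_n(\theta)^{-1}(\widetilde{\theta}_n - \theta)$, which is tight under $\mathbb{P}^n_\theta$ by the assumption~\eqref{Assumption_IniEstimator}, the definition of the one-step estimator gives
\begin{equation*}
\Phi_n(\theta)^{-1}(\widehat{\theta}_n - \theta)
= \widetilde{u}_n
+ \Phi_n(\theta)^{-1}\Phi_n(\widetilde{\theta}_n)\,\mathcal{I}(\widetilde{\theta}_n)^{-1}\,\Phi_n(\widetilde{\theta}_n)^{\top}\partial_\theta \ell_n(\widetilde{\theta}_n),
\end{equation*}
and the goal reduces to showing that this quantity converges in $\mathbb{P}^n_\theta$-distribution to $\mathcal{N}(0,\mathcal{I}(\theta)^{-1})$.

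The core step is to expand
\begin{equation*}
\partial_\theta \ell_n(\widetilde{\theta}_n)
= \partial_\theta \ell_n(\theta)
+ \left( \int_0^1 \partial_\theta^2 \ell_n(\theta + s\,\Phi_n(\theta)\widetilde{u}_n)\,\mathrm{d}s\right)\Phi_n(\theta)\widetilde{u}_n
\end{equation*}
and multiply on the left by $\Phi_n(\widetilde{\theta}_n)^{\top}$. To handle the two resulting summands, I would need (i) the replacement $\Phi_n(\widetilde{\theta}_n)^{\top}\partial_\theta \ell_n(\theta) = \zeta_n(\theta) + o_{\mathbb{P}^n_\theta}(1)$, and (ii) the uniform convergence of the observed information $\Phi_n(\theta)^{\top}\partial_\theta^2 \ell_n(\theta + s\,\Phi_n(\theta)u)\,\Phi_n(\theta) \to -\mathcal{I}(\theta)$ in $\mathbb{P}^n_\theta$-probability, uniformly in $s\in[0,1]$ on compact sets of $u$. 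Point (ii) is a standard by-product of the proof of the LAN property carried out in Section~\ref{Sec:Proof-LAN}, while point (i), together with the convergences $\Phi_n(\widetilde{\theta}_n)^{-1}\Phi_n(\theta) \to I_3$ and $\mathcal{I}(\widetilde{\theta}_n) \to \mathcal{I}(\theta)$ in probability, reduces to continuity properties of $\Phi_n$ and $\mathcal{I}$ collected in Appendix~\ref{Appendix:Rate-Matrix}.

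The main obstacle is precisely to show that the random rate matrix $\Phi_n(\widetilde{\theta}_n)$ may replace $\Phi_n(\theta)$ without losing precision. This is delicate because $\Phi_n$ involves $r_n^1(H)^{-1/2}$, whose logarithm moves at rate $a_n(H)(\widetilde{H}_n - H)$, and includes an off-diagonal correction featuring $a_n(H)$ itself. The key quantitative estimate is
\begin{equation*}
a_n(H)(\widetilde{H}_n - H) = o_{\mathbb{P}^n_\theta}(1),
\end{equation*}
which follows from Assumption~\ref{Assumption:asymptotics_lan}: one has $a_n(H) = O(\log n)$ by its defining formula~\eqref{def_an(H)} together with $\Delta_n = c_d n^{-\varrho_d}$ and $\nu_n = c_\nu n^{-\varrho_\nu}$, while $\widetilde{H}_n - H = O_{\mathbb{P}^n_\theta}(r_n^1(H)^{-1/2}) = O_{\mathbb{P}^n_\theta}(n^{-\epsilon/2})$. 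Combined with the continuity of the entries $\overline{\varphi}_{1j}$, $\overline{s}_{2j}$ and $\mathcal{F}_{ij}$ in $\theta$ stipulated in Assumption~\ref{Assump:RateMat}, this yields the two desired matrix convergences.

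Once (i) and (ii) are in hand, the expansion collapses to
\begin{equation*}
\Phi_n(\widetilde{\theta}_n)^{\top}\partial_\theta \ell_n(\widetilde{\theta}_n)
= \zeta_n(\theta) - \mathcal{I}(\theta)\,\widetilde{u}_n + o_{\mathbb{P}^n_\theta}(1),
\end{equation*}
and plugging this back into the expression for $\Phi_n(\theta)^{-1}(\widehat{\theta}_n - \theta)$ produces $\widetilde{u}_n + \mathcal{I}(\theta)^{-1}\zeta_n(\theta) - \widetilde{u}_n + o_{\mathbb{P}^n_\theta}(1) = \mathcal{I}(\theta)^{-1}\zeta_n(\theta) + o_{\mathbb{P}^n_\theta}(1)$. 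The conclusion follows from the central limit theorem~\eqref{LAN:Score-CLT} for $\zeta_n(\theta)$ together with the continuous mapping theorem applied to $z \mapsto \mathcal{I}(\theta)^{-1} z$.
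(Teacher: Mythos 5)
Your proposal is correct and follows essentially the same route as the paper: Taylor-expand the score around $\theta$, invoke the observed-information convergence and vanishing remainder from the LAN proof, and use Lemma~\ref{Lemma_RateMat}~(3) together with continuity of $\mathcal{I}$ to justify replacing $\Phi_n(\widetilde{\theta}_n)$ by $\Phi_n(\theta)$ and $\mathcal{I}(\widetilde{\theta}_n)$ by $\mathcal{I}(\theta)$. The paper's own argument rearranges the same identities through $\widetilde{\Psi}_n(\theta)=\Phi_n(\widetilde{\theta}_n)^{-1}\Phi_n(\theta)$ and an explicit remainder $\overline{v}_n$, but this is a cosmetic reorganization of your decomposition, not a different idea.
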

The proof of Theorem~\ref{Theorem:OneStep} is given in Appendix~\ref{Sec:Proof-OneStep}. We also provide a condition for \eqref{Assumption_IniEstimator} in the following lemma, also proved in Appendix~\ref{Sec:Proof-OneStep}.
\begin{lemma}
\label{Lemma:OneStep}
	Suppose $\{\varphi_{n}(\theta)\}_{n\in\mathbb{N}}$ satisfies Assumption \ref{Assump:RateMat}. 
	Then for any estimators $\{\widetilde{\theta}_{n}\}_{n\in\mathbb{N}}$, \eqref{Assumption_IniEstimator} is equivalent to
	\begin{align*}
		\left|\sqrt{r_{n}^{1}(H)}(\widetilde{H}_{n}-H)\right|
		+\left|r_{n}^{2}(H)^{1/2}(\widetilde{\tau}_{n}-\tau)\right| \nonumber
		+\left|\sqrt{r_{n}^{1}(H)}\left(
		a_{n}(H)(\widetilde{H}_{n}-H)+\sigma^{-1}(\widetilde{\sigma}_{n}-\sigma)
		\right)\right|=O_{\mathbb{P}^{n}_{\theta}}(1).
	\end{align*}
\end{lemma}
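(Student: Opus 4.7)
The plan is to perform an explicit change of variables in $\Phi_n(\theta)^{-1}(\widetilde{\theta}_n-\theta)$ so that the three scalars appearing in the statement emerge naturally, up to multiplication by a matrix that converges uniformly on $\Theta$ to a bounded, invertible limit.

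First, I exploit the block structure of $\Phi_n(\theta)$. Since the first two diagonal entries of $\mathrm{diag}(r_n^1(H),r_n^1(H),r_n^2(H))^{-1/2}$ coincide, this diagonal factor commutes with the block-diagonal factor $\mathrm{blockdiag}(\varphi_n(\theta),1)$, which gives
\begin{equation*}
\Phi_n(\theta)^{-1}=\mathrm{blockdiag}\Big(\sqrt{r_n^1(H)}\,\varphi_n(\theta)^{-1},\;\sqrt{r_n^2(H)}\Big).
\end{equation*}
The third component already reads $\sqrt{r_n^2(H)}(\widetilde\tau_n-\tau)$, so it suffices to show that $\sqrt{r_n^1(H)}\,\varphi_n(\theta)^{-1}(\widetilde H_n-H,\widetilde\sigma_n-\sigma)^\top=O_{\mathbb{P}^n_\theta}(1)$ is equivalent, uniformly in $\theta$, to boundedness in probability of the two remaining quantities in the statement.

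Next, I introduce the $2\times 2$ transition matrix
\begin{equation*}
M_n(\theta):=\begin{pmatrix}1&0\\ a_n(H)&\sigma^{-1}\end{pmatrix},
\end{equation*}
so that $M_n(\theta)(\widetilde H_n-H,\widetilde\sigma_n-\sigma)^\top$ is exactly the vector whose two components are $\widetilde H_n-H$ and $a_n(H)(\widetilde H_n-H)+\sigma^{-1}(\widetilde\sigma_n-\sigma)$. A direct computation yields
\begin{equation*}
M_n(\theta)\varphi_n(\theta)=\begin{pmatrix}\varphi_n^{11}&\varphi_n^{12}\\ a_n\varphi_n^{11}+\sigma^{-1}\varphi_n^{21}&a_n\varphi_n^{12}+\sigma^{-1}\varphi_n^{22}\end{pmatrix},
\end{equation*}
and Assumption~\ref{Assump:RateMat} gives the uniform convergence on $\Theta$
\begin{equation*}
M_n(\theta)\varphi_n(\theta)\longrightarrow \overline M(\theta):=\begin{pmatrix}\overline\varphi_{11}(\theta)&\overline\varphi_{12}(\theta)\\ \tfrac12\overline s_{21}(\theta)&\tfrac12\overline s_{22}(\theta)\end{pmatrix},
\end{equation*}
because the first row converges uniformly to $(\overline\varphi_{11},\overline\varphi_{12})$ and each entry of the second row equals half the quantity $2(a_n\varphi_n^{1j}+\sigma^{-1}\varphi_n^{2j})$, which converges uniformly to $\overline s_{2j}$. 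The determinant of $\overline M(\theta)$ equals $\tfrac12(\overline\varphi_{11}\overline s_{22}-\overline\varphi_{12}\overline s_{21})$, bounded away from zero uniformly on $\Theta$ by the second bullet of Assumption~\ref{Assump:RateMat}, hence $M_n\varphi_n$ and its inverse are both uniformly bounded on $\Theta$ for $n$ large enough.

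Finally, writing $\varphi_n^{-1}=(M_n\varphi_n)^{-1}M_n$, I obtain
\begin{equation*}
\sqrt{r_n^1(H)}\,\varphi_n(\theta)^{-1}\begin{pmatrix}\widetilde H_n-H\\ \widetilde\sigma_n-\sigma\end{pmatrix}=(M_n\varphi_n)^{-1}\cdot\sqrt{r_n^1(H)}\,M_n\begin{pmatrix}\widetilde H_n-H\\ \widetilde\sigma_n-\sigma\end{pmatrix},
\end{equation*}
and since multiplication by a matrix that is uniformly bounded together with its inverse preserves the $O_{\mathbb{P}^n_\theta}(1)$ property in both directions, the claimed equivalence follows. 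The only substantive step is the uniform non-degeneracy of $M_n\varphi_n$, which is precisely what Assumption~\ref{Assump:RateMat} is designed to provide; everything else is linear algebra.
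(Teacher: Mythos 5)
Your proof is correct and follows essentially the same route as the paper's: both conjugate $\varphi_n(\theta)^{-1}$ by a lower-triangular matrix (your $M_n$ is the paper's $L_n(\xi)$ from Appendix~\ref{Appendix:Rate-Matrix} with its second row halved), reducing the claim to the uniform two-sided boundedness of $\widetilde{\varphi}_n(\theta)=L_n(\xi)\varphi_n(\theta)$ and its inverse, which is exactly what Assumption~\ref{Assump:RateMat} delivers. The only difference is cosmetic: you re-derive these bounds inline from the assumption, whereas the paper cites the corresponding estimates established in the proof of Lemma~\ref{Lemma_RateMat}.
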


\section{Numerical Analysis}
\label{sec:numerical}

In this section, we focus on the implementation of the previous estimators when $K=0$ and $\tau$ are known, we refer to Remarks~\ref{rem:Kgeq2} and \ref{rem:tauunknwon} for insights into what happens when $K>0$ or $\tau$ is unknown. The main issue with the procedure described in Section~\ref{Sec:estimation} is the iterative steps that create instability in the results even for relatively large sample sizes due to the slow convergence rate of the first guess estimator. Indeed, the convergence rate of the first estimator is $n^{\frac{1}{6}} \wedge n^{\frac{1}{3}(1-H)} \wedge n^{\frac{2}{3}\{(2H)\wedge 1\}}$ which is relatively small, in particular when $H \to 0$ or $H \to 1$. Therefore, $\widehat{H}_{n}^{(0)}$ is often far from the true value of $H$, see Figure \ref{figure:hist:first_H}. \\
\begin{figure}[htbp]
    \centering
    \begin{subfigure}[b]{0.3\textwidth}
        \includegraphics[width=\textwidth]{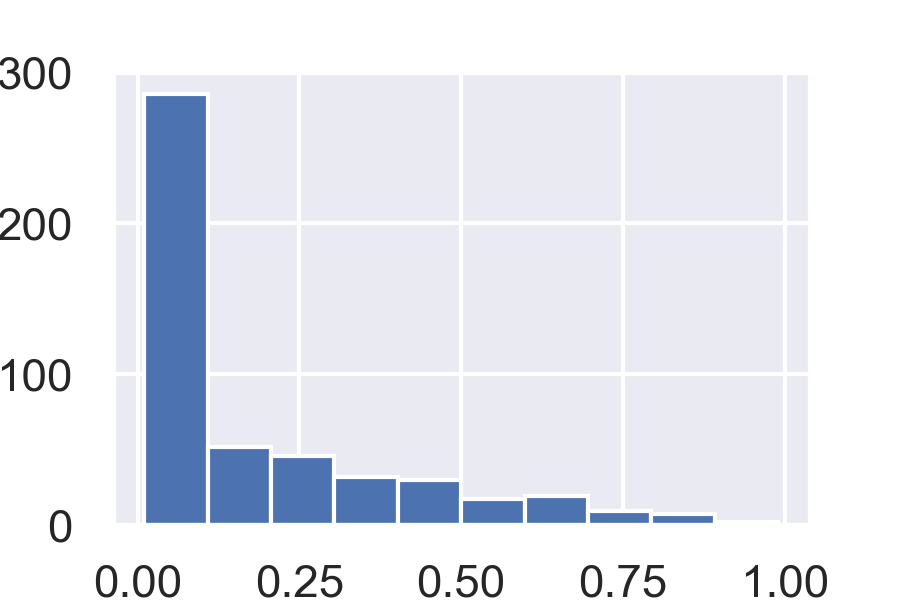}
        \caption{H = 0.15}
    	\label{figure:sub:firstH015}
    \end{subfigure}
    \hfill
    \begin{subfigure}[b]{0.3\textwidth}
        \includegraphics[width=\textwidth]{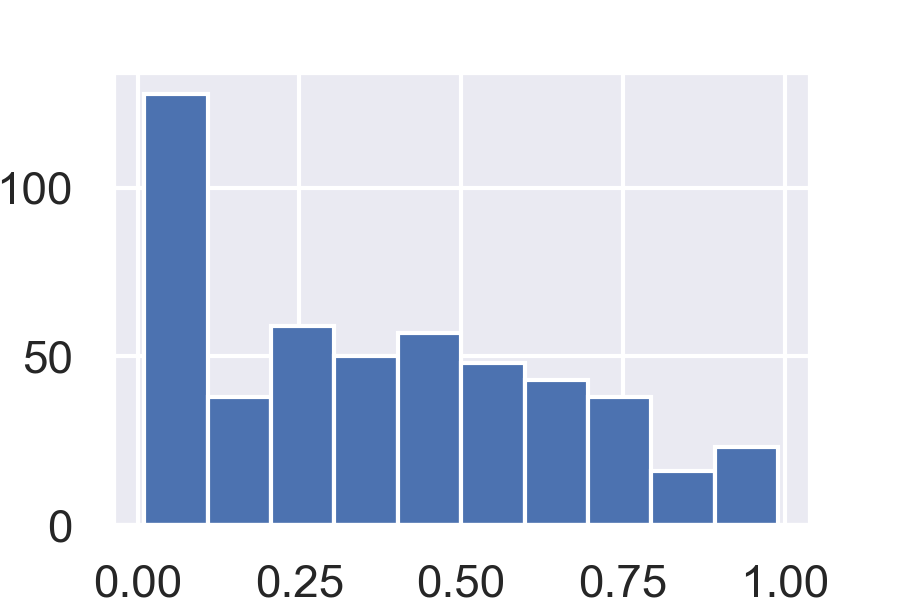}
        \caption{H = 0.5}
    \end{subfigure} 
    \hfill
    \begin{subfigure}[b]{0.3\textwidth}
        \includegraphics[width=\textwidth]{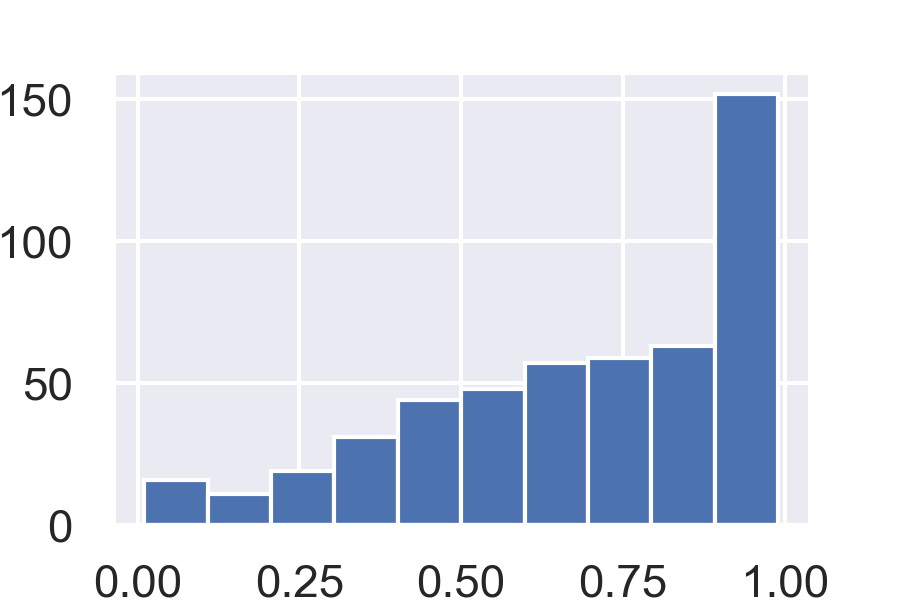}
        \caption{H = 0.85}
    \end{subfigure}
    \caption{Histograms representing the value of $\widehat{H}_{n}^{(0)}$ for different values of $H$ with $\tau = 0.1$, $n=2^{14}$, $\sigma=1$ and $K=0$, based on $500$ simulations. The huge bars near $0$ and $1$ are due to the projection on the interval $[H_-, H_+]$.}.
\label{figure:hist:first_H}
\end{figure}

From a theoretical perspective, these iterations are required for two main reasons. First, it is used to correct the bias induced by the different pre-averaging levels used in the numerator and the denominator. Indeed, we claimed in \eqref{eq:idea:guess_est} that 
$	{\widehat{Q}_{J,2p,n}^{(w)}} / {\widehat{Q}_{2J,p,n}^{(w)}}
	\approx 2^{2H}$ but from Proposition \ref{prop:deviation:Q}, a more accurate version of \eqref{eq:idea:guess_est} is
\begin{equation*}
	\frac{\widehat{Q}_{J,2p,n}^{(w)}}{\widehat{Q}_{2J,p,n}^{(w)}}
	\approx \frac{\kappa_{2p}^{(w)}(H) 2^{2H}}{\kappa_{p}^{(w)}(H)}.
\end{equation*}
The term $\bar{\kappa}_{p}^{(w)}(H)$ in the bias-correction procedure introduced in \eqref{eq:Def:correctedenergy} aims at correcting this effect. The second main reason for using the iterative procedure is to find adaptively the optimal pre-averaring level $p$. \\

To solve this problem, we propose in this Section a new selection rule for the pre-averaging level inspired directly by the level choice of \cite{gloter2007estimation}.
Let
\begin{align*}
\widehat{p^*} := \min \{ p \geq 2: \widehat{Q}_{J_{p,n},2p,n}^{(w)} \geq \nu_0 p^{-1} \}
\end{align*}
for some tuning parameter $\nu_0$. We do not prove that this choice is suitable to our model. However, following Theorem 2 in \cite{Chong-Hoffmann-Liu-Rosenbaum-Szymanski-2022-MinMax}, we can find $\nu_0$ such that with high probability $\widehat{p^*}$ is a good estimate of $p^{\mathrm{opt}}_n(H)$. When $H$ is small, this means that $\widehat{p^*}$ is small so that the bias induced by \eqref{eq:expec:energy} is dominating and we still do not get a good convergence rate. Let $\psi_p(x) := \frac{\kappa_{2p}^{(w)}(H) 2^{2H}}{\kappa_{p}^{(w)}(H)}$. Empirically, it appears that this function is increasing and has an inverse $\psi_p^{-1}$ continuously Lipschitz on any compact subset of $(0,1)$. Therefore in practice, a good estimator of $H$ is
\begin{align*}
\widehat{H^*} := \psi_{\widehat{p^*}}^{-1}\Big(\widehat{Q}_{\widehat{J^*},2\widehat{p^*},n}^{(w)} \, \big/ \, \widehat{Q}_{2\widehat{J^*},\widehat{p^*},n}^{(w)} \Big)
\end{align*}
where $\widehat{J^*} := J_{\widehat{p^*}, n}$. This expression depends on the tuning parameter $\nu_0$, which appears to have only little impact on the resulting estimator $\widehat{H^*}$, we found no significant difference while testing different values in the interval $[2, 6]$, see Figure \ref{fig:MAE_tuning}.  Therefore, we choose $\nu_0 = 2$ in the subsequent tests, which seems to be a good tradeoff. We refer to Figures \ref{figure:hist:best_H} and \ref{figure:table:best_H} for detailed results using $\widehat{H^*}$.

\begin{figure}
\centering
  \includegraphics[width=0.7\textwidth]{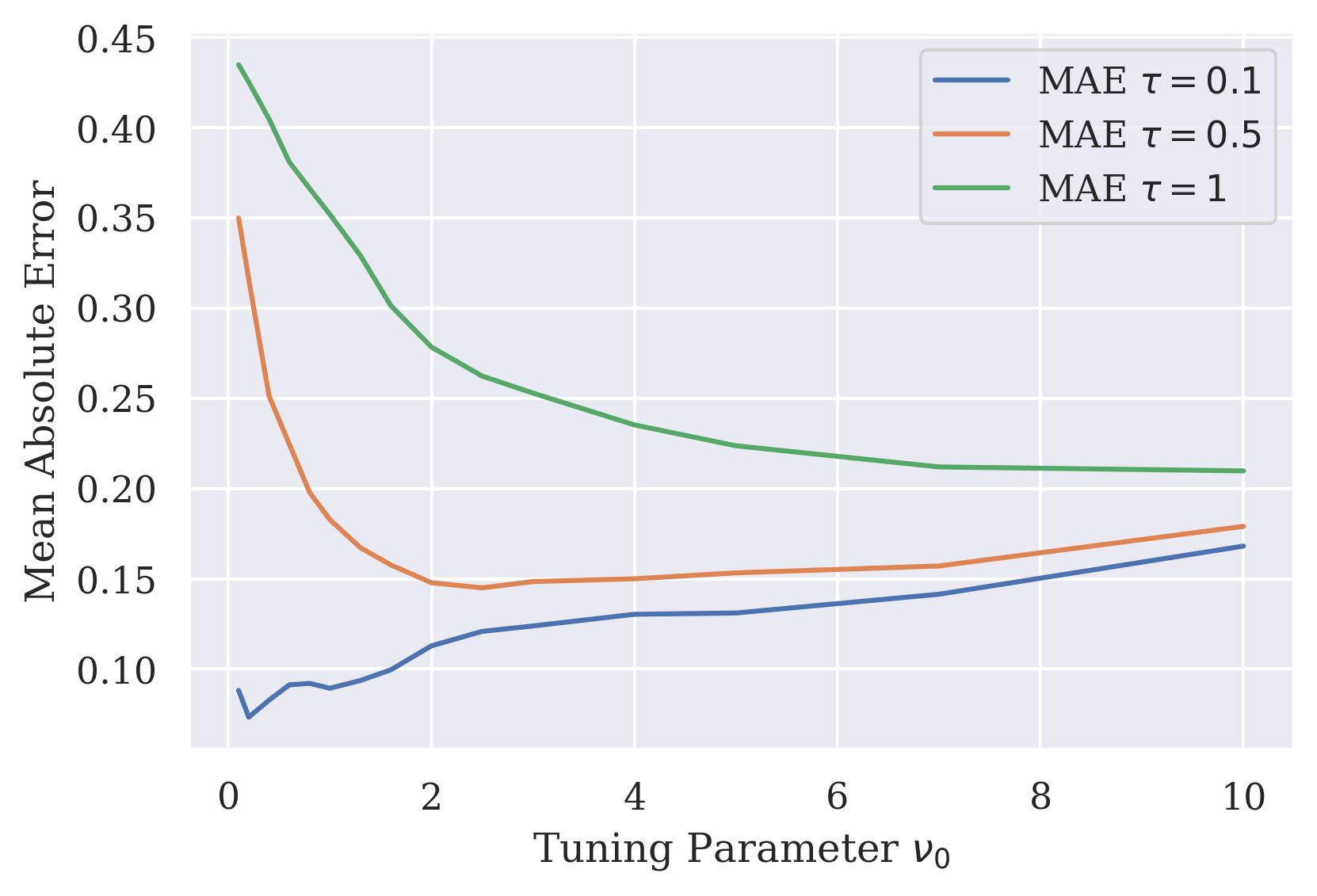}
  \caption{Mean Absolute Error for $\widehat{H^*}$ for different values of $\nu_0$. We used $500$ simulations and the parameters $n=2^{14}$, $\sigma=1$ and $K=0$.
  The different results for large $\tau$ and small $\tau$ illustrate that more pre-averaging is needed when the noise is larger.}
  \label{fig:MAE_tuning}
\end{figure}

\begin{figure}[htbp]
    \centering
    \begin{subfigure}[b]{0.3\textwidth}
        \includegraphics[width=\textwidth]{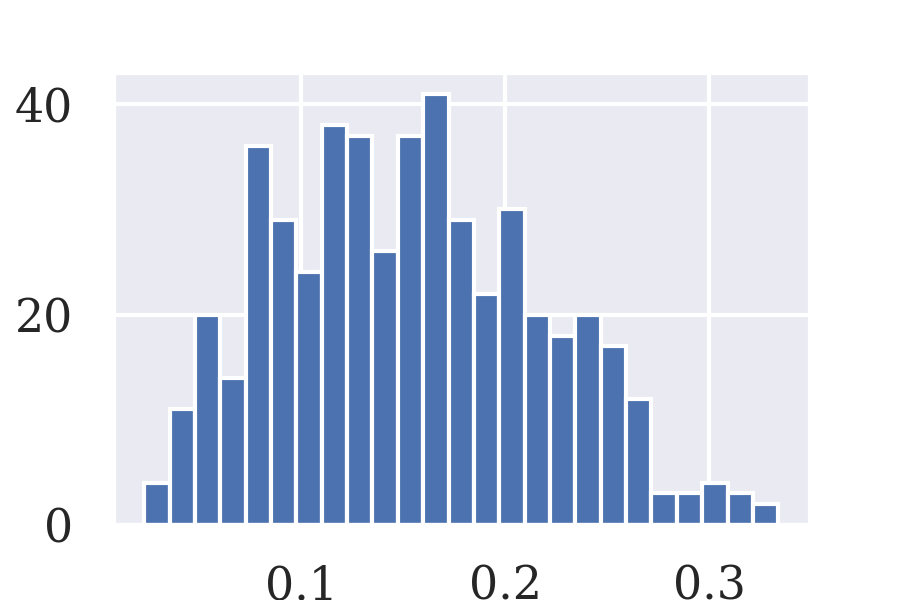}
        \caption{H = 0.15}
    	\label{figure:sub:newH015}
    \end{subfigure}
    \hfill
    \begin{subfigure}[b]{0.3\textwidth}
        \includegraphics[width=\textwidth]{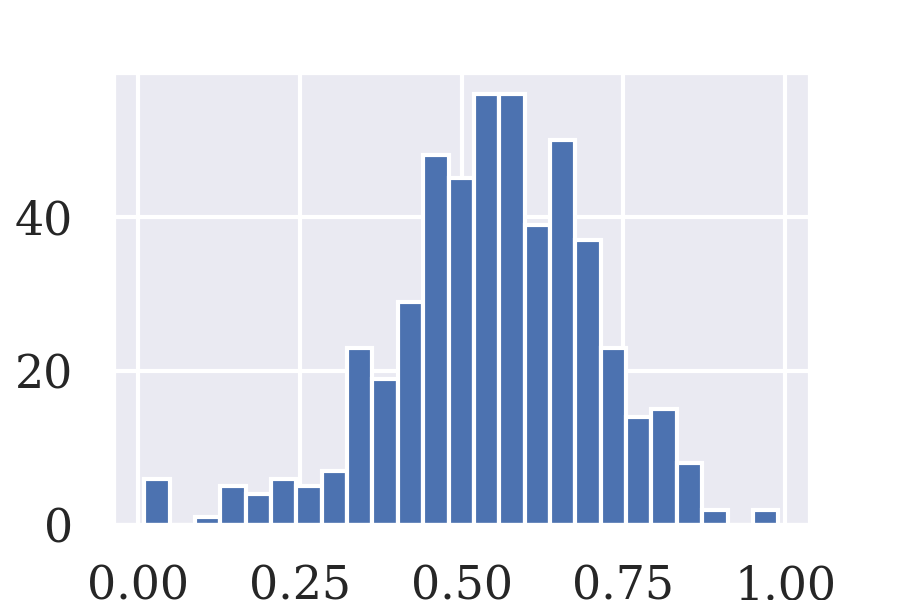}
        \caption{H = 0.5}
    	\label{figure:sub:newH05}
    \end{subfigure} 
    \hfill
    \begin{subfigure}[b]{0.3\textwidth}
        \includegraphics[width=\textwidth]{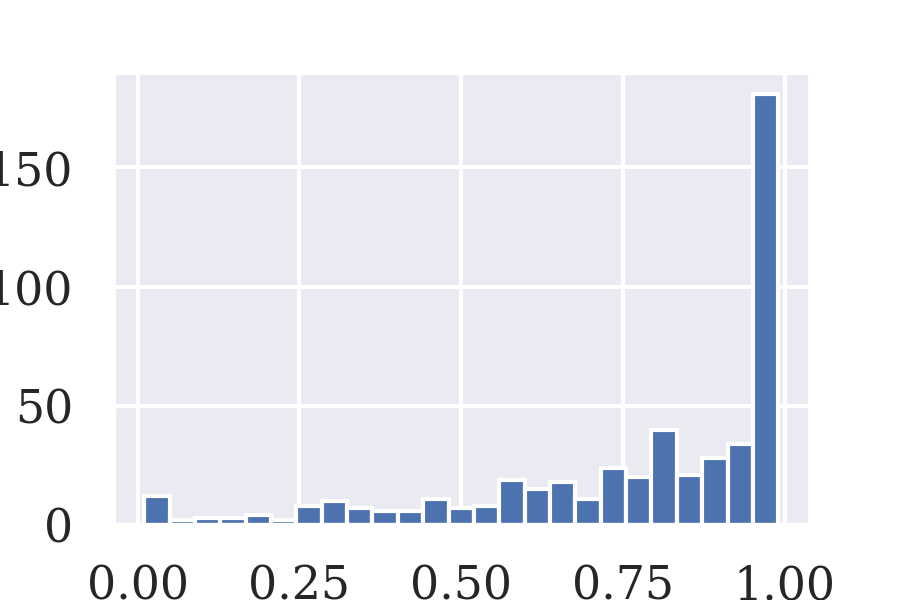}
        \caption{H = 0.85}
    \end{subfigure}
        \caption{Histograms representing the value of $\widehat{H^*}$ for different values of $H$ with $\tau = 0.1$, $n=2^{14}$, $\sigma=1$ and $K=0$, based on $500$ simulations. Note that the shape of Figures \ref{figure:sub:newH015} and \ref{figure:sub:newH05} are similar, although the scale is different due to the improved convergence rate for small $H$. The huge bars near $1$ for $H=0.85$ are due to the projection on the interval $[H_-, H_+]$ and the slow convergence rate in that case.}.
\label{figure:hist:best_H}
\end{figure}

\begin{figure}
\centering
\begin{tabular}{ | c|c|c|c|c|c | }
\hline $H$&$\tau$&Bias &Std dev&RMSE &NaN  \\\hline
\hline
$0.05$&$0.1$&$0.0030$&$0.0198$&$0.0201$&$0.00$ \%\\\hline
$0.05$&$0.5$&$0.0094$&$0.0304$&$0.0318$&$0.00$ \%\\\hline
$0.05$&$1.0$&$0.0516$&$0.0927$&$0.1061$&$0.00$ \%\\\hline
\hline
$0.15$&$0.1$&$0.0031$&$0.0641$&$0.0642$&$0.00$ \%\\\hline
$0.15$&$0.5$&$0.0202$&$0.0870$&$0.0893$&$0.00$ \%\\\hline
$0.15$&$1.0$&$0.0952$&$0.1936$&$0.2157$&$0.00$ \%\\\hline
\hline
$0.3$&$0.1$&$0.0014$&$0.1070$&$0.1070$&$0.00$ \%\\\hline
$0.3$&$0.5$&$0.0309$&$0.1654$&$0.1683$&$0.00$ \%\\\hline
$0.3$&$1.0$&$0.1432$&$0.3015$&$0.3338$&$0.00$ \%\\\hline
\hline
$0.5$&$0.1$&$0.0481$&$0.1586$&$0.1657$&$0.00$ \%\\\hline
$0.5$&$0.5$&$0.1069$&$0.2295$&$0.2532$&$0.00$ \%\\\hline
$0.5$&$1.0$&$0.1039$&$0.3640$&$0.3785$&$0.00$ \%\\\hline
\hline
$0.7$&$0.1$&$-0.0406$&$0.2391$&$0.2425$&$0.00$ \%\\\hline
$0.7$&$0.5$&$-0.0056$&$0.2788$&$0.2789$&$0.00$ \%\\\hline
$0.7$&$1.0$&$-0.1505$&$0.4256$&$0.4514$&$0.00$ \%\\\hline
\hline
$0.85$&$0.1$&$-0.0779$&$0.2558$&$0.2673$&$0.00$ \%\\\hline
$0.85$&$0.5$&$-0.0973$&$0.3175$&$0.3321$&$0.00$ \%\\\hline
$0.85$&$1.0$&$-0.3311$&$0.4421$&$0.5524$&$0.00$ \%\\\hline
\end{tabular}
        \caption{Detailed results of the bias and Error of $\widehat{H^*}$ for different values of $H$ and $\tau$ with $n=2^{14}$, $\sigma=1$ and $K=0$, based on $500$ simulations. }.
\label{figure:table:best_H}
\end{figure}

\begin{remark}
\label{rem:Kgeq2}
The implementation in the case $K>0$ with $\tau$ known is similar, though the computation time is much higher. This is due to the presence of the weight function $w$. Indeed, when $K=0$, we can take $w=1$ and therefore
\begin{align*}
\kappa_{p}^{(w)}(H)
&=p^{-2}\sum_{\ell_1=0}^{p-1}\sum_{\ell_2=0}^{p-1}
	w(\ell_1/p)w(\ell_2/p)\phi_{H}((\ell_1 - \ell_2) / p^{-1})
=p^{-1} \phi_{H}(0)
+
p^{-2}\sum_{\ell=1}^{p-1}
(p-\ell)
	\phi_{H}(\ell / p^{-1}).
\end{align*}
Using this simplification, the computation time of $\kappa_p^{(w)}$ is of the order $O(p)$.
When $K=0$, other choices of $w$ need to be considered, and this simplification does not hold anymore, so that the computation time is of the order $O(p^2)$, which makes the computation of $\psi_p^{-1}$ much slower in practice. Similar problems occur when computing $\gamma$.
\end{remark}

\begin{remark}
\label{rem:tauunknwon}
When $\tau$ is unknown, an efficient implementation could be obtained using another de-biasing method. Indeed, one could replace the energy levels $\widehat{Q}_{j,p,n}^{(w)}$ by
\begin{equation*}
    \widehat{D}_{J,p,n}^{(w)} := 
    \gamma_p^{(w)} \widehat{Q}_{J,2p,n}^{(w)}
    -
    \gamma_{2p}^{(w)} \widehat{Q}_{J,p,n}^{(w)}
\end{equation*}
so that \eqref{eq:expec:energy} gives
\begin{equation*}
\mathbb{E}_{\theta}^{n}\left[\widehat{D}_{J,p,n}^{(w)}\right]
=
J
\sigma^{2}(p/n)^{1+2H}
\Big(
    \gamma_p^{(w)} 2^{1+2H} \kappa_{2p}^{(w)}(H) 
    -
    \gamma_{2p}^{(w)} \kappa_{p}^{(w)}(H) 
\Big).
\end{equation*}
In order to use ratios of energy levels, we need to check that  $
    \gamma_p^{(w)} 2^{1+2H} \kappa_{2p}^{(w)}(H) 
    -
    \gamma_{2p}^{(w)} \kappa_{p}^{(w)}(H) 
$ does not vanish. This is not trivial and can only easily be checked asymptotically when $K=0$ and $w(x) = 1$ since these conditions imply that $\gamma_p^{(w)} = \gamma_{2p}^{(w)} = 6$ and we have $
    2^{1+2H} \kappa_{2p}^{(w)}(H) 
    -
    \kappa_{p}^{(w)}(H) 
\to (2^{1+2H} - 1) \kappa_{\infty}^{(w)}(H) \neq 0$. The same method can then be applied to build an estimator in that case. 
\end{remark}

\section*{Acknowledgments and fundings}

Gr\'egoire Szymanski gratefully acknowledges the financial support of the \'Ecole Polytechnique chairs {\it Deep Finance and Statistics} and {\it Machine Learning and Systematic Methods}.
Tetsuya Takabatake gratefully acknowledges financial support of JSPS KAKENHI (Grant Numbers JP19K23224 and JP23K13016) and the ANR project ``Efficient inference for large and high-frequency data'' (ANR-21-CE40-0021). 
This work was also supported by the Funds for the Development of Human Resources in Science and Technology under MEXT through the Home for Innovative Researchers and Academic Knowledge Users (HIRAKU) consortium.
The authors would like to thank Marc Hoffmann and Mathieu Rosenbaum for helpful remarks and discussions.

\bibliographystyle{alpha}
\bibliography{merged}

\appendix

\section{Analysis of the rate matrix}
\label{Appendix:Rate-Matrix}

\begin{lemma}\label{Lemma_RateMat}
	Under Assumption~\ref{Assump:RateMat}, the following properties hold:
	\begin{enumerate}[$(1)$]
		\item\label{Lemma_RateMat_elements_bdd} 
		Let $j=1,2$. 
		Then we have $|\varphi^{n}_{1j}(\theta)|\lesssim 1$ and $|\varphi^{n}_{2j}(\theta)|\lesssim |a_{n}(H)|$ uniformly on $\Theta$. 
        Moreover, under Assumption~\ref{Assumption:asymptotics_lan}, we also have $|\varphi^{n}_{2j}(\theta)|\lesssim\log{n}$ uniformly on $\Theta$.
		\item\label{Lemma_RateMat_positive-definite}
		The asymptotically non-degenerate property of the matrix $\Phi_{n}(\theta)$:
		\begin{align*}
		  \inf_{\theta\in\Theta}\left|\varphi^{n}_{11}(\theta)\varphi^{n}_{22}(\theta)-\varphi^{n}_{12}(\theta)\varphi^{n}_{21}(\theta)\right|\gtrsim 1.
		\end{align*}
		\item\label{Lemma_RateMat_UGR}
		Uniform growth rate of the normalizing matrix $\varphi_{n}(\theta)$:
		\begin{equation*}
            \sup_{\theta^{\prime}\in\Theta: \left\|\Phi_{n}(\theta)^{-1}(\theta^{\prime}-\theta)\right\|_{\mathbb{R}^{3}}\leq c}
			\left\|\Phi_{n}(\theta^{\prime})^{-1}\Phi_{n}(\theta)-I_{3}\right\|_{\mathrm{F}}\to 0\ \ \mbox{as $n\to\infty$}
		\end{equation*}
		for each $c>0$. 
	\end{enumerate}
\end{lemma}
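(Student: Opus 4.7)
\textbf{Plan for the proof of Lemma~\ref{Lemma_RateMat}.}

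The plan is to treat the three claims in order, since each builds on the previous.

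For Part (1), I would first combine uniform convergence $\varphi^n_{1j}(\theta)\to\overline{\varphi}_{1j}(\theta)$ with compactness of $\Theta$ and continuity of $\overline{\varphi}_{1j}$ to conclude $|\varphi^n_{1j}|\lesssim 1$. Solving the second convergence in Assumption~\ref{Assump:RateMat} for $\varphi^n_{2j}$ gives the identity
\begin{equation*}
    \varphi^n_{2j}(\theta) = -\sigma\,\varphi^n_{1j}(\theta)\,a_n(H) + \tfrac{\sigma}{2}\overline{s}_{2j}(\theta) + \tfrac{\sigma}{2}\epsilon_{n,j}(\theta), \qquad \epsilon_{n,j}\to 0 \text{ uniformly on }\Theta,
\end{equation*}
from which $|\varphi^n_{2j}|\lesssim|a_n(H)|$. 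Under Assumption~\ref{Assumption:asymptotics_lan}(3), the polynomial forms of $\Delta_n$ and $\nu_n$ immediately yield $|\log\Delta_n|=O(\log n)$ and $|\log(r_n^1(H)/n)|=O(\log n)$ uniformly, hence $|a_n(H)|\lesssim\log n$, refining the bound to $|\varphi^n_{2j}|\lesssim\log n$.

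For Part (2), I would substitute the identity just obtained into the $2\times 2$ determinant. The terms proportional to $a_n(H)$ cancel by antisymmetry, leaving
\begin{equation*}
    \varphi^n_{11}\varphi^n_{22}-\varphi^n_{12}\varphi^n_{21} = \tfrac{\sigma}{2}\bigl(\varphi^n_{11}\overline{s}_{22}-\varphi^n_{12}\overline{s}_{21}\bigr)+o(1)
\end{equation*}
uniformly on $\Theta$. Using $\varphi^n_{1j}\to\overline{\varphi}_{1j}$ uniformly, the non-degeneracy $\inf_\Theta|\overline{\varphi}_{11}\overline{s}_{22}-\overline{\varphi}_{12}\overline{s}_{21}|>0$ from Assumption~\ref{Assump:RateMat}, and $\sigma\geq\sigma_-$, I obtain the required uniform lower bound $\gtrsim 1$.

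Part (3) is the main obstacle. Since $r_n^2(H)=n$ is independent of $H$, a block computation shows
\begin{equation*}
    \Phi_n(\theta')^{-1}\Phi_n(\theta) = \begin{pmatrix} \sqrt{r_n^1(H')/r_n^1(H)}\,\varphi_n(\theta')^{-1}\varphi_n(\theta) & 0 \\ 0 & 1 \end{pmatrix},
\end{equation*}
reducing the task to proving $r_n^1(H')/r_n^1(H)\to 1$ and $\varphi_n(\theta')^{-1}\varphi_n(\theta)\to I_2$, both uniformly in $\theta$ and in $\theta'$ with $\|\Phi_n(\theta)^{-1}(\theta'-\theta)\|\leq c$. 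That constraint combined with Part~(1) yields $|H'-H|\lesssim r_n^1(H)^{-1/2}$ and $|\sigma'-\sigma|\lesssim r_n^1(H)^{-1/2}\log n$, and Assumption~\ref{Assumption:asymptotics_lan}(2) ensures $r_n^1(H)^{-1/2}(\log n)^2\to 0$ uniformly. The scalar ratio $r_n^1(H')/r_n^1(H)\to 1$ then follows from the mean value theorem applied to $H\mapsto\log r_n^1(H)$, whose derivative is $O(\log n)$ uniformly.

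To handle $\varphi_n(\theta')^{-1}\varphi_n(\theta)\to I_2$, I plan to use the factorization $\varphi_n(\theta)=A_n(\theta)\psi_n(\theta)+R_n(\theta)$ with
\begin{equation*}
    A_n(\theta)=\begin{pmatrix}1 & 0 \\ -\sigma a_n(H) & 1\end{pmatrix}, \qquad \psi_n(\theta)=\begin{pmatrix}\varphi^n_{11}(\theta) & \varphi^n_{12}(\theta) \\ \tfrac{\sigma}{2}\overline{s}_{21}(\theta) & \tfrac{\sigma}{2}\overline{s}_{22}(\theta)\end{pmatrix},
\end{equation*}
and $R_n(\theta)$ a rank-one matrix supported on the second row with entries $\tfrac{\sigma}{2}\epsilon_{n,j}(\theta)$. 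Because $A_n(\theta)^{-1}$ is lower triangular with $1$s on the diagonal, $A_n(\theta)^{-1}R_n(\theta)=R_n(\theta)$ is still $o(1)$, so the perturbation $\tilde\psi_n(\theta):=\psi_n(\theta)+A_n(\theta)^{-1}R_n(\theta)$ still converges uniformly to a continuous limit $\overline{\psi}$ whose determinant is bounded below by Part~(2), and one obtains cleanly
\begin{equation*}
    \varphi_n(\theta')^{-1}\varphi_n(\theta) = \tilde\psi_n(\theta')^{-1}\bigl(A_n(\theta')^{-1}A_n(\theta)\bigr)\tilde\psi_n(\theta).
\end{equation*}
A direct computation gives $A_n(\theta')^{-1}A_n(\theta) = I_2 + [\sigma'a_n(H')-\sigma a_n(H)]\,e_2 e_1^\top$, and the elementary bound
\begin{equation*}
    |\sigma'a_n(H')-\sigma a_n(H)|\leq\sigma|a_n(H')-a_n(H)|+|\sigma'-\sigma|\cdot|a_n(H')|\lesssim r_n^1(H)^{-1/2}(\log n)^2\to 0
\end{equation*}
yields $A_n(\theta')^{-1}A_n(\theta)\to I_2$ uniformly. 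Combining this with the uniform boundedness of $\tilde\psi_n(\theta')^{-1}$ and the uniform convergence $\tilde\psi_n(\theta')-\tilde\psi_n(\theta)\to 0$, which follows from the modulus of continuity of $\overline{\psi}$ together with $|\theta'-\theta|\to 0$ uniformly, completes the argument. The delicate point, and what I anticipate as the main technical obstacle, is controlling the products of quantities of order $\log n$ against quantities of order $r_n^1(H)^{-1/2}$; it is precisely Assumption~\ref{Assumption:asymptotics_lan}(2) that forces these products to vanish.
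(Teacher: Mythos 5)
Your proposal is correct and follows essentially the same strategy as the paper: parts (1) and (2) are identical in spirit, and part (3) also proceeds via a unit-lower-triangular reduction. The only real difference is cosmetic. You factor $\varphi_n(\theta)=A_n(\theta)\tilde\psi_n(\theta)$ with $A_n(\theta)=\left(\begin{smallmatrix}1&0\\-\sigma a_n(H)&1\end{smallmatrix}\right)$, while the paper writes $\varphi_n(\theta)=L_n(\xi)^{-1}\widetilde\varphi_n(\theta)$ with $L_n(\xi)=\left(\begin{smallmatrix}1&0\\2a_n(H)&2\sigma^{-1}\end{smallmatrix}\right)$; these are related by the diagonal rescaling $\mathrm{diag}(1,\sigma/2)$ applied to the second row of $\widetilde\varphi_n$, so the decompositions are interchangeable. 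Both reduce the problem to showing that a unit-lower-triangular matrix perturbation $I_2+[\,\sigma'a_n(H')-\sigma a_n(H)\,]e_2e_1^\top$ converges to $I_2$, with the residual controlled by Assumption~\ref{Assumption:asymptotics_lan}(2), which you correctly single out as the crux.

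One imprecision worth flagging: the assertion that ``the constraint combined with Part (1) yields $|H'-H|\lesssim r_n^1(H)^{-1/2}$'' overstates what Part (1) gives directly. From the constraint $\|\Phi_n(\theta)^{-1}(\theta'-\theta)\|\leq c$ and only the entry bound $\|\varphi_n(\theta)\|_{\mathrm{op}}\lesssim\log n$ (Part (1)), the immediate conclusion is the cruder $|H'-H|\lesssim\log n\cdot r_n^1(H)^{-1/2}$. The tighter bound requires first multiplying $\sqrt{r_n^1(H)}\,\varphi_n(\theta)^{-1}(\xi'-\xi)$ through by the bounded matrix $\widetilde\varphi_n(\theta)$ (or your $\tilde\psi_n(\theta)$) and reading off the first coordinate; the paper does this explicitly via its equation \eqref{Tightness-Shrinkage}. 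This is not a gap in your plan, since the cruder bound still yields $|\sigma'a_n(H')-\sigma a_n(H)|\lesssim r_n^1(H)^{-1/2}(\log n)^2\to 0$ under Assumption~\ref{Assumption:asymptotics_lan}(2); but a complete write-up should make the intermediate step explicit rather than attribute it to Part (1) alone.
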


\begin{proof}[Proof of Lemma~\ref{Lemma_RateMat}]
    Lemma~\ref{Lemma_RateMat}~\eqref{Lemma_RateMat_elements_bdd} and \eqref{Lemma_RateMat_positive-definite} follow directly from Assumption~\ref{Assump:RateMat} and
    \begin{align*}
        \det(\varphi_n(\theta)) &=
        \left|\varphi_{11}^{n}(\theta)\varphi_{22}^{n}(\theta)-\varphi_{12}^{n}(\theta)\varphi_{21}^{n}(\theta)\right| 
        \\
        &=(\sigma/2)\left|
        \varphi_{11}^{n}(\theta)\left(s_{22}^{n}(\theta)-2\varphi_{11}^{n}(\theta)a_{n}(H)\right)
        -\varphi_{12}^{n}(\theta)\left(s_{21}^{n}(\theta)-2\varphi_{12}^{n}(\theta)a_{n}(H)\right)\right| \\
        &=(\sigma/2)\left|
        \varphi_{11}^{n}(\theta)s_{22}^{n}(\theta)
        -\varphi_{12}^{n}(\theta)s_{21}^{n}(\theta)\right|.
    \end{align*}

Before proving Lemma~\ref{Lemma_RateMat}~\eqref{Lemma_RateMat_UGR}, we start with a few preliminary computations. Fix $c>0$ and let $\xi^{\prime}=(H^{\prime},\sigma^{\prime})$. Recall $s^{n}_{2j}(\theta)=2(\varphi^{n}_{1j}(\theta)a_{n}(H)+\sigma^{-1}\varphi^{n}_{2j}(\theta))$ for $j=1,2$ and set
\begin{align}
\label{eq:def:Lphi}
	L_{n}(\xi):=
	\begin{pmatrix}
	 	1& 0\\
	 	2a_{n}(H)& 2\sigma^{-1}
	 \end{pmatrix}\;\;\text{ and }\;\;
	 \widetilde{\varphi}_{n}(\theta):=L_{n}(\xi)\varphi_{n}(\theta)
	 =
	 \begin{pmatrix}
	 	\varphi^{n}_{11}(\theta)&\varphi^{n}_{12}(\theta)\\
	 	s^{n}_{21}(\theta)&s^{n}_{22}(\theta)
	 \end{pmatrix},
\end{align}
where $\varphi_{n}(\theta)$ is defined in Assumption~\ref{Assump:RateMat}. First note that 
\begin{align*}
    \widetilde{\varphi}_{n}(\theta)^{\top}\widetilde{\varphi}_{n}(\theta)
    &=
    \begin{pmatrix}
	    \varphi^{n}_{11}(\theta)&s^{n}_{21}(\theta)\\
	 	\varphi^{n}_{12}(\theta)&s^{n}_{22}(\theta)
	\end{pmatrix}
    \begin{pmatrix}
	    \varphi^{n}_{11}(\theta)&\varphi^{n}_{12}(\theta)\\
	 	s^{n}_{21}(\theta)&s^{n}_{22}(\theta)
	\end{pmatrix}
    \\
    &=
    \begin{pmatrix}
	    \varphi^{n}_{11}(\theta)^{2} +s^{n}_{21}(\theta)^{2}
        &\varphi^{n}_{11}(\theta)\varphi^{n}_{12}(\theta) +s^{n}_{21}(\theta)s^{n}_{22}(\theta) \\
	 	\varphi^{n}_{11}(\theta)\varphi^{n}_{12}(\theta) +s^{n}_{21}(\theta)s^{n}_{22}(\theta)
        &\varphi^{n}_{12}(\theta)^{2} +s^{n}_{22}(\theta)^{2}
	\end{pmatrix}
\end{align*}
so that $\|\widetilde{\varphi}_{n}(\theta)\|_{\mathrm{F}}
=
(\mathrm{Tr}[\widetilde{\varphi}_{n}(\theta)^{\top}\widetilde{\varphi}_{n}(\theta)])^{\frac{1}{2}}
\lesssim 1$
uniformly on $\Theta$ by Assumption~\ref{Assump:RateMat}.
Since $\varphi_{n}(\theta)^{-1}=\widetilde{\varphi}_{n}(\theta)^{-1}L_{n}(\xi)$, we have
\begin{equation*}
	\sqrt{r_{n}^{1}(H)}\varphi_{n}(\theta)^{-1}(\xi^{\prime}-\xi)
	=\widetilde{\varphi}_{n}(\theta)^{-1}
	\begin{pmatrix}
		\sqrt{r_{n}^{1}(H)}(H^{\prime}-H)\\
		2\sqrt{r_{n}^{1}(H)}a_{n}(H)(H^{\prime}-H)
		+2\sigma^{-1}\sqrt{r_{n}^{1}(H)}(\sigma^{\prime}-\sigma)
	\end{pmatrix}.
\end{equation*}
Multiplying both sides by $\varphi_{n}(\theta)$, and using the inequality $\|A\|_{\mathrm{op}}\leq \|A\|_{\mathrm{F}}$, we get
\begin{equation*}
\Bigg \|
	\begin{pmatrix}
		\sqrt{r_{n}^{1}(H)}(H^{\prime}-H)\\
		2\sqrt{r_{n}^{1}(H)}a_{n}(H)(H^{\prime}-H)
		+2\sigma^{-1}\sqrt{r_{n}^{1}(H)}(\sigma^{\prime}-\sigma)
	\end{pmatrix}
 \Bigg \|_{\mathbb{R}^2}
 \leq 
	\|\widetilde{\varphi}_{n}(\theta)\|_{\mathrm{F}}
    \Big \| \sqrt{r_{n}^{1}(H)}\varphi_{n}(\theta)^{-1}(\xi^{\prime}-\xi) \Big \|_{\mathbb{R}^2}
\end{equation*}
and therefore
\begin{align*}
	\sup_{\theta^{\prime}\in\Theta: \left\|\Phi_{n}(\theta)^{-1}(\theta^{\prime}-\theta)\right\|_{\mathbb{R}^{3}}\leq c}
	\left|\sqrt{r_{n}^{1}(H)}(H^{\prime}-H)\right|\vee 
	\left|\sqrt{r_{n}^{1}(H)}\left(
	a_{n}(H)(H^{\prime}-H)+\sigma^{-1}(\sigma^{\prime}-\sigma)
	\right)\right| \lesssim  1
\end{align*}
uniformly on $\Theta$. Then, using that
\begin{equation*}
\frac{\sqrt{r_{n}^{1}(H)}}{a_{n}(H)}(\sigma^{\prime}-\sigma)
=
\frac{\sigma \sqrt{r_{n}^{1}(H)}}{a_{n}(H)}\left(
	a_{n}(H)(H^{\prime}-H)+\sigma^{-1}(\sigma^{\prime}-\sigma)
	\right) - \sigma \sqrt{r_{n}^{1}(H)} (H^{\prime}-H),
\end{equation*}
we obtain
\begin{equation}\label{Tightness-Shrinkage}
	\sup_{\theta^{\prime}\in\Theta: \left\|\Phi_{n}(\theta)^{-1}(\theta^{\prime}-\theta)\right\|_{\mathbb{R}^{3}}\leq c}
	\Big|\sqrt{r_{n}^{1}(H)}(H^{\prime}-H)\Big| \vee 
	\Big|\frac{\sqrt{r_{n}^{1}(H)}}{a_{n}(H)}(\sigma^{\prime}-\sigma)\Big|
	\lesssim 1
\end{equation}
uniformly on $\Theta$.  Moreover, note that we have
\begin{equation*}
\left|\frac{r_{n}^{1}(H^{\prime})}{r_{n}^{1}(H)}-1\right|
=
\left|
\nu_{n}^{-\frac{2}{\Diamond(H^{\prime})}+\frac{2}{\Diamond(H)}}\Delta_{n}^{\frac{2H^{\prime}}{\Diamond(H^{\prime})}-\frac{2H}{\Diamond(H)}}
-
1
\right|
=
\left|
n^{b(H^{\prime})-b(H)}
-
1
\right|
\end{equation*}
for some continuously differentiable function $b(H)$ on $(0,1)$ by  Assumption~\ref{Assumption:asymptotics_lan}. Using the fundamental theorem of the calculus, we have
\begin{align*}
	|n^{b(H^{\prime})-b(H)}-1|
	&=\left|\exp\left((b(H^{\prime})-b(H))\log{n}\right)-1\right|\leq n^{|b(H^{\prime})-b(H)|}|b(H^{\prime})-b(H)|\log{n}.
\end{align*}
Moreover, \eqref{Tightness-Shrinkage} gives
\begin{equation}\label{n_power_bdd}
	\sup_{\theta^{\prime}\in\Theta: \left\|\Phi_{n}(\theta)^{-1}(\theta^{\prime}-\theta)\right\|_{\mathbb{R}^{3}}\leq c}
	\sqrt{r_{n}^{1}(H)}\left|b(H^{\prime})-b(H)\right|\lesssim 1\;\;\mbox{ and }\;\;
	\sup_{\theta^{\prime}\in\Theta: \left\|\Phi_{n}(\theta)^{-1}(\theta^{\prime}-\theta)\right\|_{\mathbb{R}^{3}}\leq c}n^{|b(H^{\prime})-b(H)|}\lesssim 1,
\end{equation}
and therefore we get
\begin{equation}\label{Ratio-Rate}
	\sup_{\theta^{\prime}\in\Theta: \left\|\Phi_{n}(\theta)^{-1}(\theta^{\prime}-\theta)\right\|_{\mathbb{R}^{3}}\leq c}
	\left|\frac{r_{n}^{1}(H^{\prime})}{r_{n}^{1}(H)}-1\right|
	\lesssim  1
\end{equation}
uniformly on $\Theta$. \\

We are now ready to prove Lemma~\ref{Lemma_RateMat}~\eqref{Lemma_RateMat_UGR}. First note that we can write
\begin{align*}
	\Phi_{n}(\theta^{\prime})^{-1}\Phi_{n}(\theta)
	=\mathrm{diag}\left(\frac{r_{n}^{1}(H^{\prime})}{r_{n}^{1}(H)},\frac{r_{n}^{1}(H^{\prime})}{r_{n}^{1}(H)},\frac{r_{n}^{2}(H^{\prime})}{r_{n}^{2}(H)}\right)^{\frac{1}{2}}
	\begin{pmatrix}
		\varphi_{n}(\theta^{\prime})^{-1}\varphi_{n}(\theta)&0_{2\times 1}\\
		0_{1\times 2}&1
	\end{pmatrix}
\end{align*}
so that using \eqref{Ratio-Rate}, Lemma~\ref{Lemma_RateMat}~\eqref{Lemma_RateMat_UGR} follows from
\begin{equation*}
	\sup_{\theta^{\prime}\in\Theta: \left\|\Phi_{n}(\theta)^{-1}(\theta^{\prime}-\theta)\right\|_{\mathbb{R}^{3}}\leq c} 
	\left\|\varphi_{n}(\theta^{\prime})^{-1}\varphi_{n}(\theta)-I_{2}\right\|_{\mathrm{F}}
	=o(1)\ \ \mbox{as $n\to\infty$.}
\end{equation*}

Using the definition of $\widetilde{\varphi}_{n}(\theta)$ in \eqref{eq:def:Lphi}, the triangle inequality and the sub-multiplicative property of the matrix norm $\|\cdot\|_{\mathrm{F}}$, we obtain
\begin{align}
\nonumber
\left\|\varphi_{n}(\theta^{\prime})^{-1}\varphi_{n}(\theta)-I_{2}\right\|_{\mathrm{F}}&\leq
	\left\|\widetilde{\varphi}_{n}(\theta^{\prime})^{-1}
	\left(L_{n}(\xi^{\prime})L_{n}(\xi)^{-1}-I_{2}\right)
	\widetilde{\varphi}_{n}(\theta)\right\|_{\mathrm{F}}
	+\left\|\widetilde{\varphi}_{n}(\theta^{\prime})^{-1}\widetilde{\varphi}_{n}(\theta)-I_{2}\right\|_{\mathrm{F}}\\
	&\leq 
	\left\|\widetilde{\varphi}_{n}(\theta^{\prime})^{-1}\right\|_{\mathrm{F}}
	\left\|L_{n}(\xi^{\prime})L_{n}(\xi)^{-1}-I_{2}\right\|_{\mathrm{F}}
	\left\|\widetilde{\varphi}_{n}(\theta)\right\|_{\mathrm{F}}
	+\left\|\widetilde{\varphi}_{n}(\theta^{\prime})^{-1}\widetilde{\varphi}_{n}(\theta)-I_{2}\right\|_{\mathrm{F}}.
\label{eq:twolines}
\end{align}
Since we have already proved  $\|\widetilde{\varphi}_{n}(\theta)\|_{\mathrm{F}}\lesssim 1$ uniformly on $\Theta$, we only need to focus on the terms $\|\widetilde{\varphi}_{n}(\theta)^{-1}\|_{\mathrm{F}}$, $\|\widetilde{\varphi}_{n}(\theta^{\prime})^{-1}\widetilde{\varphi}_{n}(\theta)-I_{2}\|_{\mathrm{F}}$ and $\|L_{n}(\xi^{\prime})L_{n}(\xi)^{-1}-I_{2}\|_{\mathrm{F}}$. 
Computing explicitely $\widetilde{\varphi}_{n}(\theta)^{\top}\widetilde{\varphi}_{n}(\theta)$, we can show that
\begin{align*}
    \left(\widetilde{\varphi}_{n}(\theta)\widetilde{\varphi}_{n}(\theta)^{\top}\right)^{-1}
    =\frac{1}{\mathrm{det}[\widetilde{\varphi}_{n}(\theta)]^{2}}
    \begin{pmatrix}
	    \varphi^{n}_{12}(\theta)^{2} +s^{n}_{22}(\theta)^{2}
        &-\varphi^{n}_{11}(\theta)\varphi^{n}_{12}(\theta) -s^{n}_{21}(\theta)s^{n}_{22}(\theta) \\
	 	\mathrm{sym.} 
        &\varphi^{n}_{11}(\theta)^{2} +s^{n}_{21}(\theta)^{2}
	\end{pmatrix}
\end{align*}
and therefore, using Assumption~\ref{Assump:RateMat} and Lemma~\ref{Lemma_RateMat}~\eqref{Lemma_RateMat_positive-definite}, we get
\begin{equation}
     \|\widetilde{\varphi}_{n}(\theta)^{-1}\|_{\mathrm{F}}
    \lesssim 1.
    \label{rate-mat:Phi-tilde-bounded}
\end{equation}

Similarly, we have
\begin{align*}
    \widetilde{\varphi}_{n}(\theta^{\prime})^{-1}\widetilde{\varphi}_{n}(\theta)
    &= \frac{1}{\det[\widetilde{\varphi}_{n}(\theta^{\prime})]}
    \begin{pmatrix}
	 	s^{n}_{22}(\theta^{\prime})&-\varphi^{n}_{12}(\theta^{\prime})\\
	 	-s^{n}_{21}(\theta^{\prime})&\varphi^{n}_{11}(\theta^{\prime})
	\end{pmatrix}
    \begin{pmatrix}
	    \varphi^{n}_{11}(\theta)&\varphi^{n}_{12}(\theta)\\
	 	s^{n}_{21}(\theta)&s^{n}_{22}(\theta)
	\end{pmatrix} \\
    &= \frac{1}{\det[\widetilde{\varphi}_{n}(\theta^{\prime})]}
    \begin{pmatrix}
	 	\varphi^{n}_{11}(\theta)s^{n}_{22}(\theta^{\prime}) -\varphi^{n}_{12}(\theta^{\prime})s^{n}_{21}(\theta)
        &s^{n}_{22}(\theta^{\prime})\varphi^{n}_{12}(\theta) -\varphi^{n}_{12}(\theta^{\prime})s^{n}_{22}(\theta) \\
	 	-s^{n}_{21}(\theta^{\prime})\varphi^{n}_{11}(\theta) +\varphi^{n}_{11}(\theta^{\prime})s^{n}_{21}(\theta)
        &-s^{n}_{21}(\theta^{\prime})\varphi^{n}_{12}(\theta) +\varphi^{n}_{11}(\theta^{\prime})s^{n}_{22}(\theta)
	\end{pmatrix}
\end{align*}
so that using Assumption~\ref{Assump:RateMat}, Lemma~\ref{Lemma_RateMat}~\eqref{Lemma_RateMat_positive-definite}, \eqref{Tightness-Shrinkage}, \eqref{Ratio-Rate} and Taylor's theorem, we have
\begin{equation}
    \sup_{\theta^{\prime}\in\Theta: \left\|\Phi_{n}(\theta)^{-1}(\theta^{\prime}-\theta)\right\|_{\mathbb{R}^{3}}\leq c} 
	\|\widetilde{\varphi}_{n}(\theta^{\prime})^{-1}\widetilde{\varphi}_{n}(\theta)-I_{2}\|_{\mathrm{F}} 
    \lesssim 1
    \label{rate-mat:Phi-tilde-error}.
\end{equation}

For the term $\|L_{n}(\xi^{\prime})L_{n}(\xi)^{-1}-I_{2}\|_{\mathrm{F}}$, we can write
\begin{align*}
	L_{n}(\xi^{\prime})L_{n}(\xi)^{-1}=
	\begin{pmatrix}
	 	1& 0\\
	 	2a_{n}(H^{\prime})& 2/\sigma^{\prime}
	 \end{pmatrix}
	 \begin{pmatrix}
	 	1& 0\\
	 	a_{n}(H)\sigma& \sigma/2
	 \end{pmatrix}
	 =
	 \begin{pmatrix}
	 	1& 0\\
	 	2a_{n}(H^{\prime})+2a_{n}(H)\sigma/\sigma^{\prime}& \sigma/\sigma^{\prime}
	 \end{pmatrix}.
\end{align*}
Then using that $a_{n}(H)=\log\Delta_{n}-\log(r_{n}^{1}(H)/n)$, \eqref{Tightness-Shrinkage}, \eqref{Ratio-Rate} and Taylor's theorem, we obtain
\begin{equation}
	\sup_{\theta^{\prime}\in\Theta: \left\|\Phi_{n}(\theta)^{-1}(\theta^{\prime}-\theta)\right\|_{\mathbb{R}^{3}}\leq c} 
	\left\|L_{n}(\xi^{\prime})L_{n}(\xi)^{-1}-I_{2}\right\|_{\mathrm{F}}
	\lesssim 1.
    \label{Ln-Error}
\end{equation}
We conclude by plugging \eqref{rate-mat:Phi-tilde-bounded}, \eqref{rate-mat:Phi-tilde-error} and \eqref{Ln-Error} into \eqref{eq:twolines}
\end{proof}

\section{Proof of Theorem~\ref{Thm:LAN}}\label{Sec:Proof-LAN}

\subsection{Notation and structure of the proof}
\label{Sec:Notation-Proof-LAN}
We first summarize the notation used in this proof.
For $m\in\mathbb{N}$, a $(m\times m)$-matrix $M$ and vectors $x,y\in\mathbb{R}^{m}$, we write $M[x,y]:=
        x^{\top}My$ and $M[x^{\otimes 2}]:=M[x,x]$.
For $i,j\in\{1,2\}$, we write
\begin{align*}
\begin{cases}
 A^{n}_{ij}(\theta):=
 r_{n}^{1}(H)^{-1}\Sigma_{n}(\Delta_{n}^{2H}\partial_{H}^{2-i}f_{\xi})\Sigma_{n}(h_{\theta}^{n})^{-1}\Sigma_{n}(\Delta_{n}^{2H}\partial_{H}^{2-j}f_{\xi})\Sigma_{n}(h_{\theta}^{n})^{-1},\\
 A^{n}_{33}(\theta):=
  r_{n}^{2}(H)^{-1}\Sigma_{n}(g_{\tau}^{n})\Sigma_{n}(h_{\theta}^{n})^{-1}\Sigma_{n}(g_{\tau}^{n})\Sigma_{n}(h_{\theta}^{n})^{-1},\\
 A^{n}_{j3}(\theta):=A^{n}_{3j}(\theta)=
\{r_{n}^{1}(H)r_{n}^{2}(H)\}^{-\frac{1}{2}}\Sigma_{n}(\Delta_{n}^{2H}\partial_{H}^{2-j}f_{\xi})\Sigma_{n}(h_{\theta}^{n})^{-1}\Sigma_{n}(g_{\tau}^{n})\Sigma_{n}(h_{\theta}^{n})^{-1},\\
 B^{n}_{ij}(\theta):=
 r_{n}^{1}(H)^{-1}\Sigma_{n}(\partial_{\sigma}^{i+j-2}\partial_{H}^{4-(i+j)}f_{\xi}^{n})\Sigma_{n}(h_{\theta}^{n})^{-1},\\ 
 B^{n}_{33}(\theta):=
 r_{n}^{2}(H)^{-1}\Sigma_{n}(\partial_{\tau}^{2}g_{\tau}^{n})\Sigma_{n}(h_{\theta}^{n})^{-1},\ \ 
 B^{n}_{j3}(\theta):=B^{n}_{3j}(\theta):=0. 
 \end{cases}
\end{align*}
Then, we define $\mathcal{F}^{n}(\theta)=(\mathcal{F}^{n}_{ij}(\theta))_{i,j=1,2,3}$ and $\mathcal{R}^{n}(\theta)=(\mathcal{R}^{n}_{ij}(\theta))_{i,j=1,2,3}$ by
\begin{align*}
\begin{cases}
    \mathcal{F}^{n}_{ij}(\theta):=
	\mathbf{Z}_{n}^{\top}\Sigma_{n}(h_{\theta}^{n})^{-1}A^{n}_{ij}(\theta)\mathbf{Z}_{n} -\frac{1}{2}\mathrm{Tr}\left[A^{n}_{ij}(\theta)\right],\\
	\mathcal{R}^{n}_{ij}(\theta):=
	\frac{1}{2}\left[\mathbf{Z}_{n}^{\top}\Sigma_{n}(h_{\theta}^{n})^{-1}B^{n}_{ij}(\theta)\mathbf{Z}_{n} -\mathrm{Tr}\left[B^{n}_{ij}(\theta)\right]\right]
 \end{cases}
\end{align*}
for $i,j=1,2,3$, and 
\begin{equation}\label{Def:overline-varphi}
	 \overline{\varphi}_{n}(\theta):=
	 \begin{pmatrix}
	 	1& 0\\
	 	2\log\Delta_{n}& 2\sigma^{-1}
	 \end{pmatrix}
	 \varphi_{n}(\theta)
	 =
	 \begin{pmatrix}
	 	\varphi^{n}_{11}(\theta)&\varphi^{n}_{12}(\theta)\\
	 	s^{n}_{21}(\theta)&s^{n}_{22}(\theta)
	 \end{pmatrix},
\end{equation}
where $s^{n}_{2j}(\theta):=2(\varphi^{n}_{1j}(\theta)\log{\Delta_{n}}+\sigma^{-1}\varphi^{n}_{2j}(\theta))$. 
Moreover, we define
\begin{align}
\label{Def:cn:dn}
\begin{cases}
	c_{n}(\theta):=c(\theta)^{-1}(\nu_{n}\Delta_{n}^{-H})^{\frac{2}{\Diamond(H)}}
	=c(\theta)^{-1}n/r_{n}^{1}(H),\\
	d_{n}(\theta):=\partial_{H}\log{c_{H}}+2\log{c_{n}(\theta)}. 
\end{cases}
\end{align}
Set $\Theta_{\ast}=(0,1)^{2q}\times\left\{[0,\infty)\times(0,\infty)\right\}^{2q}$. For a measurable set $A$ of $\mathbb{R}^{d}$, we write $\Indi_{A}$ the indicator function of $A$, that is $\Indi_{A}(x) = 1$ if $x \in A$ and $0$ otherwise. 
For $\underline{\xi}=(\underline{H},\underline{\sigma})$ with $\underline{H}=(H_{1},\cdots,H_{2q})\in(0,1)^{2q}$ and $\underline{\sigma}=(\sigma_{1},\cdots,\sigma_{2q})\in\left\{[0,\infty)\times(0,\infty)\right\}^{q}$, we define
\begin{align*}
\begin{cases}
	\psi_{q,\pm}(\underline{\xi}):=2\sum_{r=1}^{q}(H_{2r\pm 1}-H_{2r})_{+}\Indi_{(0,\infty)}(\sigma_{2r-1}),\\
	\rho_{q,\pm}(\underline{\xi}):=2\sum_{r=1}^{q}(H_{2r}-H_{2r\pm 1})_{+}\Indi_{(0,\infty)}(\sigma_{2r-1})
\end{cases}
\;\;\;\text{ and }
\;\;\;
\begin{cases}
	\psi_{q}(\underline{\xi}):=\psi_{q,-}(\underline{\xi})\vee\psi_{q,+}(\underline{\xi}),\\
	\rho_{q}(\underline{\xi}):=\rho_{q,-}(\underline{\xi})\vee\rho_{q,+}(\underline{\xi}).
\end{cases}
\end{align*}
Moreover, for $0 < \upsilon < 1$ fixed, we introduce the following restricted parameter space
\begin{align*}
\begin{cases}
	\Theta_{1}^{\underline{\xi}}(\iota):=\{\underline{\xi}\in(0,1)^{2q}\times\left\{[0,\infty)\times(0,\infty)\right\}^{q}:\psi_{q}(\underline{\xi})\leq 1-\upsilon\},\\
	\Theta_{1}(\iota):=\Theta_{1}^{\underline{\xi}}(\iota)\times\left\{[0,\infty)\times(0,\infty)\right\}^{2q}.
\end{cases}
\end{align*}
For $\underline{\theta}=(\underline{H},\underline{\sigma},\underline{\tau})\in \Theta_{\ast}$ with $\underline{H}=(H_{1},\cdots,H_{2q})$, $\underline{\sigma}=(\sigma_{1},\cdots,\sigma_{2q})$ and $\underline{\tau}=(\tau_{1},\cdots,\tau_{2q})$, we write $\theta_{r}=(H_{r},\sigma_{r},\tau_{r})$ for $r\in\{1,2,\cdots,2q\}$. \\

We are now ready to give the structure of the proof. 
Using the change of the variable and Taylor's theorem, we can show
\begin{align*}
	\log\frac{\mathrm{d}\mathbb{P}^{n}_{\theta+\Phi_{n}(\theta)u}}{\mathrm{d}\mathbb{P}^{n}_{\theta}}
	=u^{\top}\zeta_{n}(\theta)-\frac{1}{2}u^{\top}\mathcal{I}_{n}(\theta)u 
	+v_{n}(u,\theta),
\end{align*}
where the remainder term $v_{n}(u,\theta)$ is given by
\begin{equation}\label{Def:LAN-remainder}
	v_{n}(u,\theta):=
	\int_{0}^{1}(1-z)\left(
	\partial_{\theta}^{2}\ell_{n}\left(\theta+z\varphi_{n}(\theta)u\right)
	-\partial_{\theta}^{2}\ell_{n}(\theta)
	\right)
	\left[\left(\varphi_{n}(\theta)u\right)^{\otimes 2}\right]\,\mathrm{d}z.
\end{equation}
Therefore, the LAN property follows once we have verified \eqref{LAN:Score-CLT} and the estimates
\begin{equation}\label{LAN:ObsFisherInfo:Convergence}
	\mathcal{I}_{n}(\theta)=\mathcal{I}(\theta)+o_{\mathbb{P}^{n}_{\theta}}(1)\ \ \mbox{as $n\to\infty$},
\end{equation}
and
\begin{equation}\label{LAN:remainder}
	v_{n}(u,\theta)=o_{\mathbb{P}^{n}_{\theta}}(1)\ \ \mbox{as $n\to\infty$}
\end{equation}
for each $\theta\in\Theta$ and $u\in\mathbb{R}^{3}$. Estimates \eqref{LAN:ObsFisherInfo:Convergence} and \eqref{LAN:remainder} are proved in Section~\ref{Sec:Proof:LAN:ObsFisherInfo:Convergence} and~\ref{Sec:Proof:LAN:Remainder} respectively while \eqref{LAN:Score-CLT} follows from \eqref{LAN:ObsFisherInfo:Convergence} proceeding as for the proof of Lemma 2.5 of \cite{Cohen-Gamboa-Lacaux-Loubes-2013}. These proofs are based on the following result of the trace approximation, which is proved in Section~\ref{Section_Proof_LRR10_Thm5}.

\begin{proposition}\label{Prop:LRR10_Thm5}
		Let $q\in\mathbb{N}$ and $K \geq -1$. Suppose that for each $r\in\{1,2,\cdots,q\}$, $d_{r,\xi}(x)$ and $e_{r}(x)$ are periodic functions on $\mathbb{R}$ with period $2\pi$  satisfying
			\begin{equation}\label{assumption_k}
				\sup_{x\in[-\pi,\pi]\setminus\{0\}}|x|^{2H-1+v-\epsilon_{2}^{\prime}}|\partial_{x}^{v}d_{r,\xi}(x)|\lesssim 1,\ \ 
				\sup_{x\in[-\pi,\pi]\setminus\{0\}}|x|^{-2(K+1)+v}|\partial_{x}^{v}e_{r}(x)|<\infty
			\end{equation}
			uniformly on compact subsets of $(0,1)\times[0,\infty)^{2}$ for any $\epsilon_{2}^{\prime}>0$ and each $v\in\{0,1\}$. Then, for each $\epsilon_{1}^{\prime}>0$, we define $d_{r,\xi}^{n}(x)= d_{r,\xi}^{n,\epsilon_{1}^{\prime}}(x)=\Delta_{n}^{2H-\epsilon_{1}^{\prime}}d_{r,\xi}(x)$, $e_{r,\tau}^{n}(x)=\nu_{n}^{2}\tau^{2}e_{r}(x)$ and $k_{r,\theta}^{n}(x)=d_{r,\xi}^{n}(x)+e_{r,\tau}^{n}(x)$,
		Then we obtain
		\begin{equation*}
			\Delta_{n}^{\rho_{q}(\underline{\xi})+\epsilon_{1}}
			n^{-\psi_{q}(\underline{\xi})-\epsilon_{2}}
			\left|\mathrm{Tr}\left[\prod_{r=1}^{q}\Sigma_{n}(k_{r,\theta_{2r-1}}^{n})\Sigma_{n}(h_{\theta_{2r}}^{n})^{-1}\right]
	   		-\frac{n}{2\pi}\int_{-\pi}^{\pi}\prod_{r=1}^{q}\frac{k_{r,\theta_{2r-1}}^{n}(x)}{h_{\theta_{2r}}^{n}(x)}\,\mathrm{d}x\right|
			=o(1)
		\end{equation*}
		as $n\to\infty$ uniformly on compact subsets of $\Theta_{1}(\iota)$ for any $\epsilon_{1},\epsilon_{2}>0$ and $\iota\in(0,1)$.  
\end{proposition}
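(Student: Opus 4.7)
The proof follows the iterative trace-approximation scheme of Lieberman, Rosemarin, and Rousseau, adapted to the present setting where the spectral densities combine a singularity at zero of order $1-2H$ (coming from $f_H$) with a zero of order $2(K+1)$ (coming from $g$). The strategy proceeds in three main stages.

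First, I would replace each inverse Toeplitz matrix $\Sigma_n(h^n_{\theta_{2r}})^{-1}$ by the Toeplitz matrix of the reciprocal, writing
\begin{equation*}
\Sigma_n(h^n_{\theta_{2r}})^{-1} = \Sigma_n(1/h^n_{\theta_{2r}}) + R_n(\theta_{2r}),
\end{equation*}
where $R_n$ admits a Hilbert--Schmidt norm bound derived from classical trace approximations of Dahlhaus and Fox--Taqqu, with explicit scaling in $\Delta_n$, $\nu_n$ and $H_{2r}$. Expanding the full product yields a principal term of the form $\prod_r \Sigma_n(k^n_r)\Sigma_n(1/h^n_{\theta_{2r}})$ plus $O(1)$ residual products containing at least one factor $R_n$, each of which is controlled via the cyclic invariance of the trace together with submultiplicativity of Frobenius and operator norms.

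Second, I would telescope the principal product by iterating the identity
\begin{equation*}
\Sigma_n(f)\Sigma_n(g) = \Sigma_n(fg) + E_n(f,g),
\end{equation*}
where $\|E_n(f,g)\|_{\mathrm{F}}$ is controlled under the regularity assumed in \eqref{assumption_k}. After $q$ iterations, the principal trace collapses to $\operatorname{Tr}[\Sigma_n(\prod_r k^n_r/h^n_{\theta_{2r}})]$, which is exactly equal to $\frac{n}{2\pi}\int_{-\pi}^{\pi}\prod_r (k^n_r/h^n_{\theta_{2r}})\,\mathrm{d}x$. Strict integrability of this limit integral is guaranteed by the restriction $\psi_q(\underline{\xi})\leq 1-\upsilon$ that defines $\Theta_1(\iota)$.

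The core of the argument is the bookkeeping of the error terms: each ratio $d^n_{r,\xi_{2r-1}}/h^n_{\theta_{2r}}$ can blow up near the origin, with the worst case dictated by the sign of $H_{2r\pm 1}-H_{2r}$ and by the competition between the fractional term of order $\Delta_n^{2H}|x|^{1-2H}$ and the noise term of order $\nu_n^2 |x|^{2(K+1)}$. The exponents $\rho_q(\underline{\xi})$ and $\psi_q(\underline{\xi})$ are engineered precisely to absorb the cumulative worst-case powers of $\Delta_n$ and $n$, while the slack $\epsilon_1,\epsilon_2$ soaks up the logarithmic factors inherent to these spectral bounds. Uniformity on compact subsets of $\Theta_1(\iota)$ then follows from the joint continuity of all constants in $(H,\sigma,\tau)$. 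The main obstacle will be obtaining sharp Hilbert--Schmidt bounds on $E_n(f,g)$ and $R_n$ that simultaneously track each singularity or zero at the origin and the two small parameters $\Delta_n$ and $\nu_n$; this is delicate because both a singular $f_H$ factor and a vanishing $g$ factor coexist inside $h^n_{\theta_{2r}}$, and the dominant scale at the origin depends on whether $\nu_n\Delta_n^{-H}$ is finite or infinite, which is fixed by Assumption~\ref{Assumption:asymptotics_lan} to be the large-noise regime.
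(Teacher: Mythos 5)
Your first step — replacing $\Sigma_n(h^n_{\theta_{2r}})^{-1}$ by the Toeplitz operator $\Sigma_n(1/h^n_{\theta_{2r}})$ plus a Hilbert--Schmidt remainder — is exactly where the argument breaks down in this model, and it is the place where the paper's proof departs essentially from what you propose. The paper does not approximate $\Sigma_n(h)^{-1}$ by $\Sigma_n(1/h)$ but by the $(2m-1)$-fold product $\Gamma_{n,m}(\theta)=\Sigma_n(u^{n,m}_\theta)^{2m-1}$ with $u^{n,m}_\theta=[(2\pi)^{2m}h^n_\theta]^{-1/(2m-1)}$, and it fixes $m$ so that $m>K+3/2$. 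This constraint is not cosmetic. The noise spectral density $g$ contributes a zero of order $2(K+1)$ at the origin inside $h^n_\theta$, and once you pass to the multidimensional Fourier-integral representation of the trace (the objects $I^{\varpi}_{n,1}$, $I^{\varpi}_{n,2}$ built from Dirichlet kernels $D_n(x)=\sum_t e^{ixt}$), the integrands that control the error contain factors of size $|\overline x|^{-2(K+1)/(2m-1)}$. The Fox--Taqqu integrability criterion (their Proposition~6.1, used in the proof of Lemma~\ref{key_proposition_lemma}~\eqref{key_Prop:W}) therefore requires $2(K+1)/(2m-1)<1$, which forces $m>K+3/2$; in particular $m\geq 2$ as soon as $K\geq 0$. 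Your decomposition is the $m=1$ case, for which the relevant exponent is $2(K+1)\geq 2$ and the controlling integrals diverge. This is why the fractional-power surrogate $\Gamma_{n,m}$ is used throughout the Whittle/LAN literature for long-memory processes: it spreads a hard singularity of $1/h$ across $2m-1$ factors, each mild enough to fall within the scope of the Fox--Taqqu estimates.

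Once this is corrected, the remaining organization in your proposal is recognizably in the same family as the paper's, but not identical. You propose to telescope $\Sigma_n(f)\Sigma_n(g)=\Sigma_n(fg)+E_n(f,g)$ after the first replacement; the paper instead splits the error into $E^{(1)}_n$ and $E^{(2)}_n$ (see \eqref{def-En12}), treats $E^{(1)}_n$ (comparison of $\mathrm{Tr}[\prod_rB_r^n]$ with the integral) by a $2mq$-dimensional Fourier integral over a partition of $[-\pi,\pi]^{2mq}$ into the sets $W$ and $W^c$ defined in \eqref{def-W}, and controls $E^{(2)}_n$ (comparison of $\prod A_r^n$ with $\prod B_r^n$) via the algebraic identity built on $D_r^n=I_n-\Sigma_n(h^n_{\theta_{2r}})\Gamma_{n,m}(\theta)$ together with Hilbert--Schmidt bounds on $\widetilde D^n_r$ (Lemma~\ref{Lemma_T22_QWLE_Ext}) and operator-norm bounds on $V^n_{w,r}$ (Lemma~\ref{Lemma_Operator_{n}orm}). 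These norm bounds carry the precise powers of $\Delta_n$ and $n$ that produce the exponents $\rho_q$ and $\psi_q$; a naive telescoping with a remainder that is merely Hilbert--Schmidt would not track these scales. Your intuition about how $\rho_q$, $\psi_q$, and the slack $\epsilon_1,\epsilon_2$ enter is correct, but you have to first repair the choice of surrogate for $\Sigma_n(h)^{-1}$ to make the error bounds attainable at all.
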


\subsection{Proof of \eqref{LAN:ObsFisherInfo:Convergence}}
\label{Sec:Proof:LAN:ObsFisherInfo:Convergence}
First, note that we have
    \begin{align*}
         \partial_{\theta_{j}}\ell_{n}(\theta)
         =-\frac{1}{2}\mathrm{Tr}[\Sigma_{n}(h_{\theta}^{n})^{-1}\Sigma_{n}(\partial_{\theta_{j}}h_{\theta}^{n})]
         +\frac{1}{2}\mathbf{Z}_{n}^{\top}\Sigma_{n}(h_{\theta}^{n})^{-1}\Sigma_{n}(\partial_{\theta_{j}}h_{\theta}^{n})\Sigma_{n}(h_{\theta}^{n})^{-1}\mathbf{Z}_{n}.
    \end{align*}
and
\begin{align*}		
    \partial_{\theta_{k}}\partial_{\theta_{j}}\ell_{n}(\theta)
		&=\frac{1}{2}\mathrm{Tr}[\Sigma_{n}(h_{\theta}^{n})^{-1}\Sigma_{n}(\partial_{\theta_{k}}h_{\theta}^{n})\Sigma_{n}(h_{\theta}^{n})^{-1}\Sigma_{n}(\partial_{\theta_{j}}h_{\theta}^{n})]
		-\frac{1}{2}\mathrm{Tr}[\Sigma_{n}(h_{\theta}^{n})^{-1}\Sigma_{n}(\partial_{\theta_{k}}\partial_{\theta_{j}}h_{\theta}^{n})]\\
		&\;\;\;\;-\mathbf{Z}_{n}^{\top}\Sigma_{n}(h_{\theta}^{n})^{-1}\Sigma_{n}(\partial_{\theta_{k}}h_{\theta}^{n})\Sigma_{n}(h_{\theta}^{n})^{-1}\Sigma_{n}(\partial_{\theta_{j}}h_{\theta}^{n})\Sigma_{n}(h_{\theta}^{n})^{-1}\mathbf{Z}_{n}\\
		&\;\;\;\;+\frac{1}{2}\mathbf{Z}_{n}^{\top}\Sigma_{n}(h_{\theta}^{n})^{-1}\Sigma_{n}(\partial_{\theta_{k}}\partial_{\theta_{j}}h_{\theta}^{n})\Sigma_{n}(h_{\theta}^{n})^{-1}\mathbf{Z}_{n}.
	\end{align*}
	Moreover, we can easily check that
	\begin{align}
 \label{eq:all_der_h}
	\begin{cases}
		\partial_{\sigma}h_{\theta}^{n}=
		\partial_{\sigma}f_{\xi}^{n}=2\sigma^{-1}f_{\xi}^{n},\\ 
		\partial_{\tau}h_{\theta}^{n}=
		\partial_{\tau}g_{\tau}^{n}=2\tau^{-1}g_{\tau}^{n},\\
		\partial_{\tau}^{2}h_{\theta}^{n}=2\tau^{-2}g_{\tau}^{n},\\ 
		\partial_{\sigma}^{2}h_{\theta}^{n}=
		2\sigma^{-2}f_{\xi}^{n}
  \end{cases}
  \;\; \text{ and } \;\;
		\begin{cases}
	\partial_{H}h_{\theta}^{n}=
		\partial_{H}f_{\xi}^{n}=
		f_{\xi}^{n}(2\log\Delta_{n}+\partial_{H}\log{f_{H}}), \\ 
  \partial_{H}^{2}h_{\theta}^{n}=
		f_{\xi}^{n}(2\log\Delta_{n}+\partial_{H}\log{f_{H}})^{2}
		+f_{\xi}^{n}(\partial_{H}^{2}\log{f_{H}}), \\
		\partial_{\sigma}\partial_{H}h_{\theta}^{n}=
		2\sigma^{-1}f_{\xi}^{n}(2\log\Delta_{n}+\partial_{H}\log{f_{H}}),
  \\ 
  		\partial_{\tau}\partial_{H}h_{\theta}^{n}=\partial_{\tau},\partial_{\sigma}h_{\theta}^{n}=0
   \end{cases}
	\end{align}
	so that we have
	\begin{align*}
 \begin{cases}
		\partial_{H}^{2}\ell_{n}(\theta)
		=r_{n}^{1}(H)\big\{-(2\log\Delta_{n})^{2}\mathcal{F}^{n}_{22}(\theta) -(2\log\Delta_{n})(\mathcal{F}^{n}_{12}(\theta)+\mathcal{F}^{n}_{21}(\theta)) -\mathcal{F}^{n}_{11}(\theta) +\mathcal{R}^{n}_{11}(\theta)\big\},
\\
\partial_{H}\partial_{\sigma}\ell_{n}(\theta)
		=r_{n}^{1}(H)\big\{-2\sigma^{-1}(2\log\Delta_{n})\mathcal{F}^{n}_{22}(\theta) -2\sigma^{-1}\mathcal{F}^{n}_{12}(\theta) +\mathcal{R}^{n}_{12}(\theta)\big\},
\\
\partial_{\sigma}^{2}\ell_{n}(\theta)		=r_{n}^{1}(H)\big\{-(2\sigma^{-1})^{2}\mathcal{F}^{n}_{22}(\theta) +\mathcal{R}^{n}_{22}(\theta)\big\},
\\
\partial_{H}\partial_{\tau}\ell_{n}(\theta)		=\sqrt{r_{n}^{1}(H)r_{n}^{2}(H)}\big\{-2\tau^{-1}(2\log\Delta_{n})\mathcal{F}^{n}_{23}(\theta) -2\tau^{-1}\mathcal{F}^{n}_{13}(\theta)\big\},
\\
\partial_{\sigma}\partial_{\tau}\ell_{n}(\theta)		=\sqrt{r_{n}^{1}(H)r_{n}^{2}(H)}\big\{(2\sigma^{-1})(2\tau^{-1})\mathcal{F}^{n}_{23}(\theta)\big\},
\\ 
		\partial_{\tau}^{2}\ell_{n}(\theta)
		=r_{n}^{2}(H)\big\{-(2\tau^{-1})^{2}\mathcal{F}^{n}_{33}(\theta) +\mathcal{R}^{n}_{33}(\theta)\big\}.
\end{cases}
\end{align*}
	Therefore, we obtain the following representation
	\begin{equation*}
		\partial_{\theta}^{2}\ell_{n}(\theta)
		=D_{n}(H)^{\frac{1}{2}}
		\left[
		-M_{n}(\theta)^{\top}\mathcal{F}^{n}(\theta)M_{n}(\theta)
		+\mathcal{R}^{n}(\theta)
		\right]
		D_{n}(H)^{\frac{1}{2}},
	\end{equation*}
	where
	\begin{equation*}
		D_{n}(H):=\mathrm{diag}\left(r_{n}^{1}(H),r_{n}^{1}(H),r_{n}^{2}(H)\right)\;\;\;\text{ and } \;\;\;
		M_{n}(\theta):=
		\begin{pmatrix}
			1&0&0 \\
			2\log\Delta_{n}&2\sigma^{-1}&0 \\
			0&0&2\tau^{-1}
		\end{pmatrix}.
	\end{equation*}
By definition of $D_n(H)$ and $M_n(H)$ and by Assumption \ref{Assump:RateMat}, we get
\begin{equation*}
    \mathcal{I}_{n}(\theta)
    =
	-\Phi_{n}(\theta)^{\top}\partial_{\theta}^{2}\ell_{n}(\theta)\Phi_{n}(\theta)
    =
		\begin{pmatrix}
			\overline{\varphi}_{n}(\theta)& 0_{2\times 1} \\
			0_{1\times 2}& 2\tau^{-1}
		\end{pmatrix}^{\top}
		\mathbb{E}_{\theta}^{n}[\mathcal{F}^{n}(\theta)]
		\begin{pmatrix}
			\overline{\varphi}_{n}(\theta)& 0_{2\times 1} \\
			0_{1\times 2}& 2\tau^{-1}
		\end{pmatrix}
		+\overline{\mathcal{R}}^{n}_{1}(\theta)
		+\overline{\mathcal{R}}^{n}_{2}(\theta),
\end{equation*}
	where
	\begin{align*}
		&\overline{\mathcal{R}}^{n}_{1}(\theta):=
		\begin{pmatrix}
			\overline{\varphi}_{n}(\theta)& 0_{2\times 1} \\
			0_{1\times 2}& 2\tau^{-1}
		\end{pmatrix}^{\top}
		(\mathcal{F}^{n}(\theta)-\mathbb{E}_{\theta}^{n}[\mathcal{F}^{n}(\theta)])
		\begin{pmatrix}
			\overline{\varphi}_{n}(\theta)& 0_{2\times 1} \\
			0_{1\times 2}& 2\tau^{-1}
		\end{pmatrix}
\end{align*}
and
\begin{align*}    
        &\overline{\mathcal{R}}^{n}_{2}(\theta):=
		-\begin{pmatrix}
    	\varphi_{n}(\theta)&0_{2\times 1} \\
    	0_{1\times 2}&1
    	\end{pmatrix}^{\top}
        \mathcal{R}^{n}(\theta)
        \begin{pmatrix}
    	\varphi_{n}(\theta)&0_{2\times 1} \\
    	0_{1\times 2}&1
    	\end{pmatrix}.
	\end{align*}
	Moreover, we have \begin{equation*}
	 \mathrm{Var}_{\theta}^{n}[\mathcal{F}^{n}_{ij}(\theta)]
	 =2\mathrm{Tr}\left[A^{n}_{ij}(\theta)^{2}\right]		\;\;\;\text{ and } \;\;\;
	 \mathrm{Var}_{\theta}^{n}[\mathcal{R}^{n}_{ij}(\theta)]
	 =\frac{1}{2}\mathrm{Tr}\left[B^{n}_{ij}(\theta)^{2}\right].
	\end{equation*}
We need to evaluate these quantities. 
Using the similar calculations used in the proofs of Propositions~\ref{Prop:Limit_Key} and \ref{Prop:FisherInfo-Cross-Terms}, which are stated in the end of this proof, and combining them with \eqref{eq:all_der_h}, Propositions~\ref{Prop:LRR10_Thm5}, Assumption~\ref{Assumption:asymptotics_lan} and using that $\mathbb{E}_\theta^n[\mathcal{R}_{n}(\theta)] = 0$ and $\max_{i,j=1,2}\{|\varphi_{ij}^{n}(\theta)|+|s^{n}_{2j}(\theta)|\}\lesssim|\log{n}|$ under Assumption~\ref{Assumption:asymptotics_lan}, we obtain
\begin{equation*}
		\Var_{\theta}^{n}[\mathcal{F}^{n}_{ij}(\theta)]
		+\Var_{\theta}^{n}[\mathcal{R}^{n}_{ij}(\theta)]
		=o(n^{\epsilon})\ \ \mbox{as $n\to\infty$}
\end{equation*}
and therefore
\begin{equation*}
		\mathcal{I}_{n}(\theta)=
		\begin{pmatrix}
			\overline{\varphi}_{n}(\theta)& 0_{2\times 1} \\
			0_{1\times 2}& 2\tau^{-1}
		\end{pmatrix}^{\top}
		\mathbb{E}_{\theta}^{n}[\mathcal{F}^{n}(\theta)]
		\begin{pmatrix}
			\overline{\varphi}_{n}(\theta)& 0_{2\times 1} \\
			0_{1\times 2}& 2\tau^{-1}
		\end{pmatrix}
		+o_{\mathbb{P}^{n}_{\theta}}(1)\ \ \mbox{as $n\to\infty$}.
\end{equation*}
Note that the right-hand side is no longer random, and therefore this implies that \eqref{LAN:ObsFisherInfo:Convergence}
is a consequence of
\begin{equation}
\label{UCFI1_Key1}
\begin{pmatrix}
			\overline{\varphi}_{n}(\theta)& 0_{2\times 1} \\
			0_{1\times 2}& 2\tau^{-1}
		\end{pmatrix}^{\top}
		\mathbb{E}_{\theta}^{n}[\mathcal{F}^{n}(\theta)]
		\begin{pmatrix}
			\overline{\varphi}_{n}(\theta)& 0_{2\times 1} \\
			0_{1\times 2}& 2\tau^{-1}
		\end{pmatrix}
		=\mathcal{I}(\theta)+o(1)\ \ \mbox{as $n\to\infty$}
	\end{equation}
	for each $\theta\in\Theta$. 
	For conciseness, we write
 \begin{align*}
  \begin{cases}
      &\widetilde{\mathcal{F}}^{n}_{1}(\theta):=
		\begin{pmatrix}
			\mathcal{F}^{n}_{11}(\theta)&\mathcal{F}^{n}_{12}(\theta) \\
			\mathcal{F}^{n}_{21}(\theta)&\mathcal{F}^{n}_{22}(\theta)
		\end{pmatrix},\\ 
	  &\widetilde{\mathcal{F}}^{n}_{2}(\theta):=
		\begin{pmatrix}
			\mathcal{F}^{n}_{13}(\theta)\\
			\mathcal{F}^{n}_{23}(\theta)
		\end{pmatrix},\\
  \end{cases}
  \;\; \text{ and } \;\;
  \begin{cases}
      &D_{n}(\theta):=
		\begin{pmatrix}
  		 1&0 \\
  		 -d_{n}(\theta)&1
  		\end{pmatrix},\\
	  &\varphi^{n}_{\dagger}(\theta):=
	 	D_{n}(\theta)^{-1}
	 	\overline{\varphi}_{n}(\theta)
	 	=
		\begin{pmatrix}
  		 1&0 \\
  		 d_{n}(\theta)&1
  		\end{pmatrix}
	 	\overline{\varphi}_{n}(\theta),
  \end{cases}\end{align*}
 where $\overline{\varphi}_{n}(\theta)$ and $d_{n}(\theta)$ are defined in \eqref{Def:overline-varphi} and \eqref{Def:cn:dn} respectively. Using that
     \begin{align*}
		&\overline{\varphi}_{n}(\theta)^{\top}\mathbb{E}_{\theta}^{n}[\widetilde{\mathcal{F}}^{n}_{1}(\theta)]\overline{\varphi}_{n}(\theta)
		=\varphi^{n}_{\dagger}(\theta)^{\top}D_{n}(\theta)^{\top}\mathbb{E}_{\theta}^{n}[\widetilde{\mathcal{F}}^{n}_{1}(\theta)]D_{n}(\theta)\varphi^{n}_{\dagger}(\theta),
		\\
		&\overline{\varphi}_{n}(\theta)^{\top}\mathbb{E}_{\theta}^{n}[\widetilde{\mathcal{F}}^{n}_{2}(\theta)]
		=\varphi^{n}_{\dagger}(\theta)^{\top}D_{n}(\theta)^{\top}\mathbb{E}_{\theta}^{n}[\widetilde{\mathcal{F}}^{n}_{2}(\theta)],
	\end{align*}
we see that the left-hand side of \eqref{UCFI1_Key1} can be rewritten as
	\begin{align*}
        \begin{pmatrix}
			\overline{\varphi}_{n}(\theta)& 0_{2\times 1} \\
			0_{1\times 2}& 2\tau^{-1}
		\end{pmatrix}^{\top}
		&\mathbb{E}_{\theta}^{n}[\mathcal{F}^{n}(\theta)]
		\begin{pmatrix}
			\overline{\varphi}_{n}(\theta)& 0_{2\times 1} \\
			0_{1\times 2}& 2\tau^{-1}
		\end{pmatrix} \\
        &=
		\begin{pmatrix}
			\overline{\varphi}_{n}(\theta)^{\top}\mathbb{E}_{\theta}^{n}[\widetilde{\mathcal{F}}^{n}_{1}(\theta)]\overline{\varphi}_{n}(\theta)
			&2\tau^{-1}\overline{\varphi}_{n}(\theta)^{\top}\mathbb{E}_{\theta}^{n}[\widetilde{\mathcal{F}}^{n}_{2}(\theta)] \\
			2\tau^{-1}\overline{\varphi}_{n}(\theta)^{\top}\mathbb{E}_{\theta}^{n}[\widetilde{\mathcal{F}}^{n}_{2}(\theta)] & (2\tau^{-1})^{2}\mathbb{E}_{\theta}^{n}[\mathcal{F}^{n}_{33}(\theta)]
		\end{pmatrix} \\
        &=
        \begin{pmatrix}
			\varphi^{n}_{\dagger}(\theta)& 0_{2\times 1} \\
			0_{1\times 2}& 2\tau^{-1}
		\end{pmatrix}^{\top}
		\begin{pmatrix}
			D_{n}(\theta)^{\top}\mathbb{E}_{\theta}^{n}[\widetilde{\mathcal{F}}^{n}_{1}(\theta)]D_{n}(\theta)
			&D_{n}(\theta)^{\top}\mathbb{E}_{\theta}^{n}[\widetilde{\mathcal{F}}^{n}_{2}(\theta)] \\
			D_{n}(\theta)^{\top}\mathbb{E}_{\theta}^{n}[\widetilde{\mathcal{F}}^{n}_{2}(\theta)]& \mathbb{E}_{\theta}^{n}[\mathcal{F}^{n}_{33}(\theta)]
		\end{pmatrix}
		\begin{pmatrix}
			\varphi^{n}_{\dagger}(\theta)& 0_{2\times 1} \\
			0_{1\times 2}& 2\tau^{-1}
		\end{pmatrix}.
	\end{align*}
Using that $\mathbb{E}_{\theta}^{n}[\mathcal{F}^{n}_{33}(\theta)]		=2^{-1}\mathrm{Tr}[A^{n}_{33}(\theta)]$, Proposition~\ref{Prop:LRR10_Thm5} and Assumption~\ref{Assumption:asymptotics_lan}, we also have
	\begin{align*}
		\mathbb{E}_{\theta}^{n}[\mathcal{F}^{n}_{33}(\theta)]
		=&\frac{n}{4\pi r_{n}^{2}(H)}\int_{-\pi}^{\pi}\left|\frac{g_{\tau}^{n}(\lambda)}{h_{\theta}^{n}(\lambda)}\right|^{2}\,\mathrm{d}\lambda +o(1)=\frac{1}{4\pi}\int_{-\pi}^{\pi}\left|\frac{g_{\tau}(\lambda)}{\nu_{n}^{-2}\Delta_{n}^{2H}f_{\xi}(\lambda)+g_{\tau}(\lambda)}\right|^{2}\,\mathrm{d}\lambda +o(1)
	\end{align*}
	as $n\to\infty$, so that we can show $\mathbb{E}_{\theta}^{n}[\mathcal{F}^{n}_{33}(\theta)]\to\mathcal{F}_{33}(\theta)$ as $n\to\infty$ using $\nu_{n}\Delta_{n}^{-H}\to\infty$ as $n\to\infty$ and Lebesgue's convergence theorem. 
	Note that, since we have $d_{n}(\theta)=d(\theta)-2\log(r_{n}^{1}(H)/n)$, Assumption~\ref{Assump:RateMat} implies
    \begin{align*}
        d_{n}(\theta)\varphi^{n}_{1j}(\theta)+s^{n}_{2j}(\theta)
        =d(\theta)\varphi^{n}_{1j}(\theta) 
        +2(\varphi^{n}_{1j}(\theta)a_{n}(H)+\sigma^{-1}\varphi^{n}_{2j}(\theta))
        \to d(\theta)\overline{\varphi}_{1j}(\theta)+\overline{s}_{2j}(\theta)
        \ \ \mbox{as $n\to\infty$}
    \end{align*}
    uniformly on $\Theta$ for each $j\in\{1,2\}$ so that we obtain 
	\begin{align*}
		\varphi^{n}_{\dagger}(\theta)=
	 	\begin{pmatrix}
	 	\varphi^{n}_{11}(\theta) &\varphi^{n}_{12}(\theta)\\
	 	d_{n}(\theta)\varphi^{n}_{11}(\theta)+s^{n}_{21}(\theta)&d_{n}(\theta)\varphi^{n}_{12}(\theta)+s^{n}_{22}(\theta)
		\end{pmatrix}
		\to\overline\varphi(\theta)\ \ \mbox{as $n\to\infty$.}
	\end{align*}
	Therefore, \eqref{UCFI1_Key1} follows once we have proved the following results:
	\begin{equation}\label{Fisher-Convergence-Key1}
		\left[D_{n}(\theta)^{\top}\mathbb{E}_{\theta}^{n}[\widetilde{\mathcal{F}}^{n}_{1}(\theta)]D_{n}(\theta)\right]_{ij}
		\to\mathcal{F}_{ij}(\theta)\;\;\text{ and }\;\;
		\left[D_{n}(\theta)^{\top}\mathbb{E}_{\theta}^{n}[\widetilde{\mathcal{F}}^{n}_{2}(\theta)]\right]_{j3}
		\to\mathcal{F}_{j3}(\theta)
	\end{equation}
	as $n\to\infty$ for each $i,j\in\{1,2\}$. 
	Set $k_{\theta,j}^{n}(\lambda)=f_{\xi}^{n}(\lambda)\left(-d_{n}(\theta)+\partial_{H}\log{f_{H}(\lambda)}\right)^{2-j}$. 
	First note that, since we have $\mathbb{E}_{\theta}^{n}[\mathcal{F}^{n}_{ij}(\theta)]
		=2^{-1}\mathrm{Tr}[A^{n}_{ij}(\theta)]$ for $i,j\in\{1,2,3\}$, we can rewrite
	\begin{align*}
		D_{n}(\theta)^{\top}\mathbb{E}_{\theta}^{n}[\widetilde{\mathcal{F}}^{n}_{1}(\theta)]D_{n}(\theta)
		&=
		\begin{pmatrix}
			\mathbb{E}_{\theta}^{n}[\mathcal{F}^{n}_{11}(\theta)]-2d_{n}(\theta)\mathbb{E}_{\theta}^{n}[\mathcal{F}^{n}_{12}(\theta)]+d_{n}(\theta)^{2}\mathbb{E}_{\theta}^{n}[\mathcal{F}^{n}_{22}(\theta)]&\mathrm{sym.}\\
            \mathbb{E}_{\theta}^{n}[\mathcal{F}^{n}_{21}(\theta)]-d_{n}(\theta)\mathbb{E}_{\theta}^{n}[\mathcal{F}^{n}_{22}(\theta)] 
			&\mathbb{E}_{\theta}^{n}[\mathcal{F}^{n}_{22}(\theta)]
		\end{pmatrix}\\
		&=
		\begin{pmatrix}
			\displaystyle
			\frac{1}{2r_{n}^{1}(H)}
			\mathrm{Tr}\left[\Sigma_{n}(k_{\theta,i}^{n})\Sigma_{n}(h_{\theta}^{n})^{-1}\Sigma_{n}(k_{\theta,j}^{n})\Sigma_{n}(h_{\theta}^{n})^{-1}\right]
		\end{pmatrix}_{i,j=1,2},
	\end{align*}
	and
	\begin{align*}
		D_{n}(\theta)^{\top}\mathbb{E}_{\theta}^{n}[\widetilde{\mathcal{F}}^{n}_{2}(\theta)]
		&=
		\begin{pmatrix}
			\mathbb{E}_{\theta}^{n}[\mathcal{F}^{n}_{13}(\theta)]-d_{n}(\theta)\mathbb{E}_{\theta}^{n}[\mathcal{F}^{n}_{23}(\theta)]\\
			\mathbb{E}_{\theta}^{n}[\mathcal{F}^{n}_{23}(\theta)]
		\end{pmatrix}\\
		&=
		\begin{pmatrix}
			\displaystyle
			\frac{1}{2\sqrt{r_{n}^{1}(H)r_{n}^{2}(H)}}
			\mathrm{Tr}\left[\Sigma_{n}(k_{\theta,i}^{n})\Sigma_{n}(h_{\theta}^{n})^{-1}\Sigma_{n}(g_{\tau}^{n})\Sigma_{n}(h_{\theta}^{n})^{-1}\right]
		\end{pmatrix}_{i=1,2}.
	\end{align*}
	Moreover, Proposition~\ref{Prop:LRR10_Thm5} and Assumption~\ref{Assumption:asymptotics_lan} give
	\begin{equation*}
		\left[D_{n}(\theta)^{\top}\mathbb{E}_{\theta}^{n}[\widetilde{\mathcal{F}}^{n}_{1}(\theta)]D_{n}(\theta)\right]_{ij}
        =
		\frac{n}{2\pi r_{n}^{1}(H)}\int_{0}^{\pi}\frac{f_{\xi}^{n}(\lambda)^{2}\left(-d_{n}(\theta)+\partial_{H}\log{f_{H}(\lambda)}\right)^{4-(i+j)}}{h_{\theta}^{n}(\lambda)^{2}}\,\mathrm{d}\lambda + o(1)
	\end{equation*}
	as $n\to\infty$ for each $i,j=1,2$, and
	\begin{equation*}
		\left[D_{n}(\theta)^{\top}\mathbb{E}_{\theta}^{n}[\widetilde{\mathcal{F}}^{n}_{2}(\theta)]\right]_{j} 
        =
		\frac{n}{2\pi \sqrt{r_{n}^{1}(H)r_{n}^{2}(H)}}\int_{0}^{\pi}\frac{g_{\tau}^{n}(\lambda)f_{\xi}^{n}(\lambda)\left(-d_{n}(\theta)+\partial_{H}\log{f_{H}(\lambda)}\right)^{2-j}}{h_{\theta}^{n}(\lambda)^{2}}\,\mathrm{d}\lambda + o(1)
	\end{equation*}
	as $n\to\infty$ for each $j=1,2$. 
	\eqref{Fisher-Convergence-Key1} eventually
	follows from these two identities and the following propositions, whose proofs are delayed until Appendix~\ref{Section_Proof_Prop_Limit_Key}.

\begin{proposition}\label{Prop:Limit_Key}
	Let $m\in\{0,1,2\}$ and $K \geq -1$. 
	For any $H\in(0,1)$, we obtain
    \begin{align*}
		\lim_{n\to\infty}
		\frac{n}{r_{n}^{1}(H)}\int_{0}^{\pi}\frac{f_{\xi}^{n}(\lambda)^{2}\left(-d_{n}(\theta)+\partial_{H}\log{f_{H}(\lambda)}\right)^{m}}{h_{\theta}^{n}(\lambda)^{2}}\,\mathrm{d}\lambda
		=c(\theta)\int_{0}^{\infty}\frac{\left(-2\log\mu\right)^{m}}{\left(1+\mu^{\Diamond(H)}\right)^{2}}\,\mathrm{d}\mu.
	\end{align*}
\end{proposition}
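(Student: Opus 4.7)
The approach is to introduce a rescaling $\mu = c_n(\theta)\lambda$ that collapses the noise-dominated and signal-dominated regions to opposite sides of $\mu = 1$, converting the integral into one with a finite limit as $n\to\infty$. Using the identity $c(\theta)c_n(\theta) = n/r_n^1(H)$, this substitution gives
\begin{equation*}
\frac{n}{r_n^1(H)}\int_0^\pi\frac{f_\xi^n(\lambda)^2\bigl(-d_n(\theta)+\partial_H\log f_H(\lambda)\bigr)^m}{h_\theta^n(\lambda)^2}\,\mathrm{d}\lambda = c(\theta)\int_0^{\pi c_n(\theta)}\Psi_n(\mu)\,\mathrm{d}\mu,
\end{equation*}
where $\Psi_n(\mu) := \bigl[f_\xi^n(\mu/c_n)/h_\theta^n(\mu/c_n)\bigr]^2\bigl(-d_n(\theta)+\partial_H\log f_H(\mu/c_n)\bigr)^m$. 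Since $c_n(\theta)\to\infty$ by Assumption~\ref{Assumption:asymptotics_lan}, it suffices, after extending $\Psi_n$ by zero outside $(0,\pi c_n(\theta))$, to show that $\Psi_n$ converges pointwise on $(0,\infty)$ to the target integrand and to exhibit an integrable dominating function; the result then follows by dominated convergence.

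For the pointwise limit, the algebraic identity $c_H c_n(\theta)^{\Diamond(H)} = \tau^2\nu_n^2/(\sigma^2\Delta_n^{2H})$, together with the local expansions $f_H(\lambda)\sim c_H|\lambda|^{1-2H}$ and $g(\lambda)\sim\lambda^{2(K+1)}$ near $0$, yields $g_\tau^n(\mu/c_n)/f_\xi^n(\mu/c_n)\to\mu^{\Diamond(H)}$, so the signal ratio converges to $(1+\mu^{\Diamond(H)})^{-2}$. For the logarithmic term, the decomposition $f_H(\lambda) = c_H\cdot 2(1-\cos\lambda)\cdot S_H(\lambda)$ with $S_H(\lambda) = \sum_{\tau\in\mathbb{Z}}|\lambda+2\pi\tau|^{-1-2H}$ and the fact that $\partial_H\log(2(1-\cos\lambda))=0$ reduce the analysis to that of $\partial_H\log S_H$. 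Since the $\tau=0$ summand dominates $S_H$ near zero, one obtains $\partial_H\log S_H(\lambda) = -2\log|\lambda| + B(\lambda,H)$ with $B(\lambda,H)\to 0$ as $\lambda\to 0$, and substituting the definition $d_n(\theta) = \partial_H\log c_H + 2\log c_n(\theta)$ then gives $-d_n(\theta)+\partial_H\log f_H(\mu/c_n)\to -2\log\mu$.

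For the dominating function, introduce $\rho(\lambda,H) := c_H\lambda^{-\Diamond(H)}g(\lambda)/f_H(\lambda)$ on $(0,\pi]$, which extends continuously to $\lambda = 0$ with $\rho(0,H) = 1$ and is therefore bounded above and below by positive constants $\rho_\pm$ uniformly in $H\in[H_-,H_+]$. This yields $g_\tau^n(\mu/c_n)/f_\xi^n(\mu/c_n) = \mu^{\Diamond(H)}\rho(\mu/c_n,H) \geq \rho_-\mu^{\Diamond(H)}$, hence $[f_\xi^n/h_\theta^n]^2(\mu/c_n) \leq (1+\rho_-\mu^{\Diamond(H)})^{-2}$. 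Uniform boundedness of $B(\lambda,H)$ on $[0,\pi]\times[H_-,H_+]$ gives $|{-d_n(\theta)+\partial_H\log f_H(\mu/c_n)}|\leq 2|\log\mu|+C$ uniformly in $n$ and $\mu$. The combined dominating function $(2|\log\mu|+C)^m/(1+\rho_-\mu^{\Diamond(H)})^2$ is integrable on $(0,\infty)$, as $m\leq 2$ handles the logarithmic growth at $0$ and $\Diamond(H)>1$ controls the tail at infinity.

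The main technical effort lies in controlling $B(\lambda,H)$ and verifying continuity and positivity of $\rho$ uniformly on the whole interval $[0,\pi]$ (not merely near $0$); both rely on elementary estimates for the convergent series $S_H$ and its $H$-derivative, exploiting that all terms with $\tau\neq 0$ are smooth in $(\lambda,H)$ on $[-\pi,\pi]\times[H_-,H_+]$ and therefore contribute only bounded corrections.
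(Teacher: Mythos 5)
Your proof is correct and takes a genuinely different route from the paper's. The paper proceeds by splitting $[0,\pi]$ at an intermediate scale $r_n^+(H)/n$ chosen between $r_n^1(H)/n$ and $1$, and then proving three lemmas: the tail $[r_n^+(H)/n,\pi]$ is negligible (Lemma~\ref{PropC1_Ext1_20200812}); the head integrand can be replaced by its small-$\lambda$ power-law approximation $f_\xi^{n,\downarrow}$, $h_\theta^{n,\downarrow}$ up to a vanishing error (Lemma~\ref{PropB1_20200812}); and the resulting approximated head integral has the claimed limit after the change of variables (Lemma~\ref{Lemma_Limit_Key2}). You bypass the intermediate-scale splitting entirely: applying the substitution $\mu = c_n(\theta)\lambda$ directly to the whole integral, verifying pointwise convergence of the rescaled integrand, and constructing the explicit dominating function $(2|\log\mu|+C)^m(1+\rho_-\mu^{\Diamond(H)})^{-2}$ so that dominated convergence applies. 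Your approach is more compact and transparent for a fixed $\theta$, because the cancellation $-d_n(\theta) + \partial_H\log f_H(\mu/c_n) \to -2\log\mu$ and the uniform-in-$n$ domination are proved once rather than distributed across three lemmas; the paper's more modular three-part decomposition is arguably better suited to establishing the uniform-in-$\theta$ versions of these limits stated in its lemmas, which the authors reuse elsewhere (e.g.\ for the remainder bounds in Section~\ref{Sec:Proof:LAN:Remainder}). One minor quibble: you invoke $\Diamond(H) > 1$ to control the tail of the dominating function, whereas integrability at $\infty$ only needs $\Diamond(H) > 1/2$ (which is the threshold the paper uses in Lemma~\ref{PropC1_Ext1_20200812}); under Assumption~\ref{Assumption_Y} with $K \ge 0$ one indeed has $\Diamond(H) > 1$, so this is harmless, but the proposition as stated allows $K \ge -1$ and the sharper condition $\Diamond(H) > 1/2$ is the one that actually matters.
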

\begin{proposition}\label{Prop:FisherInfo-Cross-Terms}
	Let $m\in\{1,2\}$ and $K \geq -1$. 
	For any $H\in(0,1)$, we obtain
	\begin{align*}
		\lim_{n\to\infty}
		\frac{n}{\sqrt{r_{n}^{1}(H)r_{n}^{2}(H)}}
		\int_{0}^{\pi}\frac{g_{\tau}^{n}(\lambda)f_{\xi}^{n}(\lambda)\left|-d_{n}(\theta)+\partial_{H}\log{f_{H}(\lambda)}\right|^{m-1}}{h_{\theta}^{n}(\lambda)^{2}}\,\mathrm{d}\lambda=0.
	\end{align*}
\end{proposition}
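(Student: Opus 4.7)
The plan is to mirror the strategy that proves Proposition~\ref{Prop:Limit_Key} and exploit the additional factor $g_\tau^n/f_\xi^n$ which vanishes at low frequencies. Setting $\rho_n^\theta(\lambda):=g_\tau^n(\lambda)/f_\xi^n(\lambda)$, the integrand in the statement rewrites as
\begin{equation*}
\frac{g_\tau^n(\lambda)f_\xi^n(\lambda)}{h_\theta^n(\lambda)^2}
= \frac{\rho_n^\theta(\lambda)}{(1+\rho_n^\theta(\lambda))^2},
\end{equation*}
which is bounded by $1/4$. Performing the substitution $\mu=c_n(\theta)\lambda$ with $c_n(\theta)=c(\theta)^{-1}(\nu_n\Delta_n^{-H})^{2/\Diamond(H)}$ (exactly as in the proof of Proposition~\ref{Prop:Limit_Key}), I would use the near-origin equivalents $f_H(\lambda)\sim c_H\lambda^{1-2H}$ and $g(\lambda)\sim \lambda^{2(K+1)}$ together with the algebraic identity $c_n(\theta)^{\Diamond(H)}=\tau^2\nu_n^2/(\sigma^2 c_H\Delta_n^{2H})$ to see that $\rho_n^\theta(\mu/c_n(\theta))\to \mu^{\Diamond(H)}$ pointwise in $\mu$. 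The same substitution combined with the expansion $\partial_H\log f_H(\lambda)=\partial_H\log c_H-2\log\lambda+O(\lambda^2)$ and the definition $d_n(\theta)=\partial_H\log c_H+2\log c_n(\theta)$ gives the pointwise limit $-d_n(\theta)+\partial_H\log f_H(\mu/c_n(\theta))\to -2\log\mu$.

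The central computation is then the prefactor, for which
\begin{equation*}
\frac{n}{\sqrt{r_n^1(H)r_n^2(H)}}\cdot\frac{1}{c_n(\theta)}
= (\nu_n\Delta_n^{-H})^{1/\Diamond(H)}\cdot c(\theta)(\nu_n\Delta_n^{-H})^{-2/\Diamond(H)}
= c(\theta)(\nu_n\Delta_n^{-H})^{-1/\Diamond(H)},
\end{equation*}
and this tends to $0$ by Assumption~\ref{Assumption:asymptotics_lan}\,(1). Consequently, once one shows that the $\mu$-integral of $\mu^{\Diamond(H)}(-2\log\mu)^{m-1}/(1+\mu^{\Diamond(H)})^2$ is finite (which it is, because $\Diamond(H)>1$ handles the tail at $\infty$ and the logarithmic singularity at $0$ is integrable against $\mu^{\Diamond(H)}$), the conclusion follows.

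The main obstacle, and the only place where the argument requires real work, is the dominated-convergence step justifying that the $\mu$-integral over the moving domain $[0,c_n(\theta)\pi]$ stays bounded uniformly in $n$. I would split the $\lambda$-integral into a neighbourhood of $0$ and its complement. On $[0,\epsilon]$ the bounds $f_H(\lambda)\asymp \lambda^{1-2H}$ and $g(\lambda)\asymp \lambda^{2(K+1)}$ give $\rho_n^\theta(\lambda)\asymp (c_n(\theta)\lambda)^{\Diamond(H)}$, so $\rho_n^\theta/(1+\rho_n^\theta)^2$ is dominated uniformly by $\min(\mu^{\Diamond(H)},\mu^{-\Diamond(H)})$ after substitution, and the log factor is dominated by $(1+|\log\mu|)$; both are integrable. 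On $[\epsilon,\pi]$ one has $\rho_n^\theta(\lambda)\gtrsim (\nu_n\Delta_n^{-H})^2\to\infty$, hence $\rho_n^\theta/(1+\rho_n^\theta)^2\lesssim (\nu_n\Delta_n^{-H})^{-2}$ uniformly in $\lambda$, contributing a term of order $(\nu_n\Delta_n^{-H})^{-2}\log n$; multiplied by the prefactor $(\nu_n\Delta_n^{-H})^{1/\Diamond(H)}$ this is negligible because $\Diamond(H)>1$. Combining the two pieces and taking the limit yields the desired $0$.
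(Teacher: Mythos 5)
Your proposal is correct and rests on the same two pillars as the paper's argument: the substitution $\mu\propto(\nu_n\Delta_n^{-H})^{2/\Diamond(H)}\lambda$, and the observation that the resulting prefactor $(\nu_n\Delta_n^{-H})^{-1/\Diamond(H)}$ vanishes under the large-noise assumption while the $\mu$-integral stays bounded (thanks to $\Diamond(H)>1$). Where you diverge is in style rather than substance. The paper takes a leaner route: it directly bounds $g_\tau^n f_\xi^n/(h_\theta^n)^2$ by $\rho_n/(1+\rho_n)^2$ and the logarithmic factor via the crude inequality $|{-}d_n+\partial_H\log f_H|\lesssim |d_n|+|\log\lambda|\lesssim|\log(r_n^1/n)|(1+|\log\mu|)$, extends the $\mu$-integral to $[0,\infty)$ where it is simply a finite constant, and concludes because $(\nu_n\Delta_n^{-H})^{-1/\Diamond(H)}|\log(\nu_n\Delta_n^{-H})^{-2/\Diamond(H)}|^{m-1}\to 0$. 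No dominated convergence is needed, because the quantity already vanishes at the level of one-sided bounds. You instead mirror the proof structure of Proposition~\ref{Prop:Limit_Key}: you keep the exact cancellation $-d_n(\theta)-2\log(\mu/c_n(\theta))+\partial_H\log c_H=-2\log\mu$, identify a pointwise limit of the integrand, and run a splitting/domination argument over $[0,\epsilon]$ and $[\epsilon,\pi]$. This buys a marginally cleaner prefactor (no residual log) and shows slightly more than is needed; it is heavier machinery than the statement requires, but it is sound. One small caveat: your expansion $\partial_H\log f_H(\lambda)=\partial_H\log c_H-2\log\lambda+O(\lambda^2)$ overstates the remainder's decay for $H<1/2$ (it is $O(\lambda^{(1+2H)\wedge 2})$), but for the domination estimate all that matters is that the remainder is uniformly bounded on $(0,\pi]$, which is true, so your bound $|{-}d_n+\partial_H\log f_H(\mu/c_n)|\lesssim 1+|\log\mu|$ stands.
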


\subsection{Proof of \eqref{LAN:remainder}}
\label{Sec:Proof:LAN:Remainder}
We first introduce some notation used in this proof.  
Let $m\in\{1,2,3,4\}$ and $q\in\mathbb{N}$. Consider functions $d_{j,\xi}^{n}$ and $e_{j,\tau}^{n}$ satisfying \eqref{assumption_k} for each $j\in\{1,2,3,4\}$. 
Set $k_{j,\theta}^{n,0}:=d_{j,\xi}^{n}$ and $k_{j,\theta}^{n,1}:=e_{j,\tau}^{n}$ for $j\in\{1,2,3,4\}$. 
For conciseness, we write 
$\mathbf{W}_{n}(\theta^{\prime}):=\Sigma_{n}(h_{\theta^{\prime}}^{n})^{-\frac{1}{2}}\mathbf{Z}_{n}$ 
and $S_{n}(k_{j,\theta^{\prime}}^{n}):=\Sigma_{n}(h_{\theta^{\prime}}^{n})^{-\frac{1}{2}}\Sigma_{n}(k_{j,\theta^{\prime}}^{n})\Sigma_{n}(h_{\theta^{\prime}}^{n})^{-\frac{1}{2}}$. 
Moreover, for $\underline{\upsilon}^{m}=(\upsilon_{1},\cdots,\upsilon_{m})\in\{0,1\}^{m}$, we write $|\underline{\upsilon}^{m}|:=\sum_{j=1}^{m}\upsilon_{j}$ and
\begin{equation}\label{def_rn_bar}
	\overline{r}_{n}(H)\equiv\overline{r}_{n}^{m,\underline{\upsilon}^{m}}(H):=\left\{
	\begin{array}{ll}
		r_{n}^{2}(H)&\mbox{if $m-|\underline{\upsilon}^{m}|=0$ with $m\in\{1,2,3,4\}$,}\\
		\sqrt{r_{n}^{1}(H)r_{n}^{2}(H)}&\mbox{if $m-|\underline{\upsilon}^{m}|=1$ with $m\in\{2,3,4\}$,}\\
		r_{n}^{1}(H)&\mbox{otherwise.}
	\end{array}
	\right.
\end{equation}
For conciseness, we write $\rho_{n,1}\equiv\rho_{n,1}(u,\theta):=r_{n}^{1}(H)^{-\frac{1}{2}}\|\Phi_{n}(\theta)\|_{\mathrm{F}}\|u\|_{\mathbb{R}^{3}}$, $\rho_{n,2}\equiv\rho_{n,2}(u,H):=r_{n}^{2}(H)^{-\frac{1}{2}}\|u\|_{\mathbb{R}^{3}}$. 

Recall that $v_{n}(u,\theta)$ is defined in \eqref{Def:LAN-remainder}. 
First note that
\begin{align*}
	|v_{n}(u,\theta)|
	&\leq\sup_{\theta^{\prime}=(\xi^{\prime},\tau^{\prime})\in
	{B}(\xi,\rho_{n,1})\times
	{B}(\tau,\rho_{n,2})}
	\left|\left(
	\partial_{\theta}^{2}\ell_{n}(\theta^{\prime})
	-\partial_{\theta}^{2}\ell_{n}(\xi,\tau^{\prime})
	\right)
	\left[\left(\varphi_{n}(\theta)u\right)^{\otimes 2}\right]\right|\\
	&\quad+\sup_{\tau^{\prime}\in{B}(\tau,\rho_{n,2})}
	\left|\left(
	\partial_{\theta}^{2}\ell_{n}(\xi,\tau^{\prime})
	-\partial_{\theta}^{2}\ell_{n}(\theta)
	\right)
	\left[\left(\varphi_{n}(\theta)u\right)^{\otimes 2}\right]\right|.
\end{align*}
so that using Chebyshev's inequality, we know that for any $M>0$ and $q\geq 1$, $\mathbb{P}^{n}_{\theta}\left[|v_{n}(u,\theta)|>M\right]$ is bounded above by
\begin{equation}
\begin{split}
	\left|\frac{2}{M}\right|^{2q}
	\mathbb{E}_{\theta}^{n}\bigg[
	&\sup_{\theta^{\prime}=(\xi^{\prime},\tau^{\prime})\in
	{B}(\xi,\rho_{n,1})\times
	{B}(\tau,\rho_{n,2})}
	\left|\left(
	\partial_{\theta}^{2}\ell_{n}(\theta^{\prime})
	-\partial_{\theta}^{2}\ell_{n}(\xi,\tau^{\prime})
	\right)
	\left[\left(\varphi_{n}(\theta)u\right)^{\otimes 2}\right]\right|^{2q}\bigg] 
	\\
	&+\left|\frac{2}{M}\right|^{2q}
	\mathbb{E}_{\theta}^{n}\bigg[
	\sup_{\tau^{\prime}\in{B}(\tau,\rho_{n,2})}
	\left|\left(
	\partial_{\theta}^{2}\ell_{n}(\xi,\tau^{\prime})
	-\partial_{\theta}^{2}\ell_{n}(\theta)
	\right)
	\left[\left(\varphi_{n}(\theta)u\right)^{\otimes 2}\right]\right|^{2q}
	\bigg]. 
	\end{split}
\label{LAN:LAN:remainder-Chebyshev}
\end{equation}
We now use the following Sobolev inequality that can be found in the lecture note by \cite{Driver-2003-LectureNote}, see Corollaries 25.10 and 25.11.
\begin{lemma}[Sobolev Inequality]\label{lemma:Sobolev}
    Let $d\in\mathbb{N}$, $\Theta_{\ast}$ be an open subset of $\mathbb{R}^{d}$ and $\{(\mathcal{X}_{n},\mathcal{A}_{n},\mathbb{P}^{n}_{\ast})\}_{n\in\mathbb{N}}$ be a sequence of complete probability spaces.
    Assume that for each $n\in\mathbb{N}$, $\{u_{n}(\theta,\cdot)\}_{\theta\in\Theta_{\ast}}$ is a continuous stochastic process defined on $(\mathcal{X}_{n},\mathcal{A}_{n},\mathbb{P}^{n}_{\ast})$ such that for $\mathbb{P}^{n}_{\ast}$-a.s. $\omega_{n}\in\mathcal{X}_{n}$, $\theta\mapsto u_{n}(\theta,\omega_{n})$ is a continuously differentiable function on $\Theta_{\ast}$. 
    Then for any $m\in\mathbb{N}$ satisfying $m>d$, there exists a constant $C:=C(m,d)>0$ such that for any $n\in\mathbb{N}$, $\theta\in\Theta_{\ast}$ and $\epsilon\in(0,1)$ satisfying $B(\theta,\epsilon)\subset\Theta_{\ast}$,
    \begin{align*}
        &\sup_{\theta^{\prime}\in B(\theta,\epsilon)}\left|u_{n}(\theta^{\prime},\omega_{n})-u_{n}(\theta,\omega_{n})\right|^{m}
        \leq C\epsilon^{m-d}
        \int_{B(\theta,\epsilon)}
        \left\|\partial_{\theta}u_{n}(\theta^{\prime},\omega_{n})\right\|_{\mathbb{R}^{d}}^{m}\,\mathrm{d}\theta^{\prime}, \\
        &\sup_{\theta^{\prime}\in B(\theta,\epsilon)}\left|u_{n}(\theta^{\prime},\omega_{n})\right|^{m}
        \leq C\left[
        \epsilon^{-d}
        \int_{B(\theta,\epsilon)}
        \left|u_{n}(\theta^{\prime},\omega_{n})\right|^{m}\,\mathrm{d}\theta^{\prime}
        +
        \epsilon^{m-d}\int_{B(\theta,\epsilon)}
        \left\|\partial_{\theta}u_{n}(\theta^{\prime},\omega_{n})\right\|_{\mathbb{R}^{d}}^{m}\,\mathrm{d}\theta^{\prime}
        \right]
    \end{align*}
    hold for $\mathbb{P}^{n}_{\ast}$-a.s. $\omega_{n}\in\mathcal{X}_{n}$.
\end{lemma}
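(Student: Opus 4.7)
The statement is a probabilistic packaging of Morrey's inequality, so the plan is to reduce to a deterministic Sobolev embedding on a ball in $\mathbb{R}^d$ and track the dependence on the radius $\epsilon$ by a scaling argument. By completeness of $(\mathcal{X}_n, \mathcal{A}_n, \mathbb{P}^n_\ast)$, there is a full-measure event on which $\theta \mapsto u_n(\theta, \omega_n)$ is continuously differentiable on $\Theta_\ast$, so it suffices to establish, for any $C^1$ function $v$ on $B(\theta_0, \epsilon) \subset \mathbb{R}^d$ and any integer $m > d$, the two pointwise bounds
\begin{equation*}
\sup_{\theta \in B(\theta_0,\epsilon)} |v(\theta) - v(\theta_0)|^m \leq C \epsilon^{m-d} \int_{B(\theta_0,\epsilon)} \|\nabla v(\theta)\|^m \, \mathrm{d}\theta
\end{equation*}
and
\begin{equation*}
\sup_{\theta \in B(\theta_0,\epsilon)} |v(\theta)|^m \leq C \Big[ \epsilon^{-d} \int_{B(\theta_0,\epsilon)} |v(\theta)|^m \, \mathrm{d}\theta + \epsilon^{m-d} \int_{B(\theta_0,\epsilon)} \|\nabla v(\theta)\|^m \, \mathrm{d}\theta \Big],
\end{equation*}
with $C = C(m,d)$.

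For the first bound I would follow the classical Morrey argument. Writing, for two points $\theta, \theta' \in B(\theta_0, \epsilon)$, the fundamental theorem of calculus along the segment joining them, $v(\theta') - v(\theta) = \int_0^1 \nabla v(\theta + t(\theta'-\theta)) \cdot (\theta'-\theta) \, \mathrm{d}t$, and averaging over $\theta$ in a ball of radius $|\theta'-\theta_0|$ intersected with $B(\theta_0, \epsilon)$, a polar change of variables absorbs the Newtonian singularity $|\theta' - \theta|^{1-d}$ precisely because $m > d$. This gives the Hölder-type estimate $|v(\theta') - v(\theta_0)| \leq C |\theta'-\theta_0|^{1-d/m} \|\nabla v\|_{L^m(B(\theta_0,\epsilon))}$, and raising to the $m$-th power and using $|\theta' - \theta_0|^{m-d} \leq \epsilon^{m-d}$ yields the first display.

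For the second bound, I would add a zeroth-order term by averaging: choose $\theta^{\star} \in B(\theta_0, \epsilon)$ with $|v(\theta^{\star})|^m \leq |B(\theta_0,\epsilon)|^{-1}\int_{B(\theta_0,\epsilon)} |v|^m \, \mathrm{d}\theta$ (which exists by the mean value property), write $v(\theta) = v(\theta^\star) + (v(\theta) - v(\theta^\star))$, take the $m$-th power with a factor of $2^{m-1}$, and control the oscillatory term by the first display applied with $\theta^\star$ in place of $\theta_0$. Since $|B(\theta_0,\epsilon)| \asymp \epsilon^d$, the $|v(\theta^\star)|^m$ contribution gives the factor $\epsilon^{-d}$ and the oscillation term the factor $\epsilon^{m-d}$. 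The main (minor) technical point is verifying that the constants are universal in $m$ and $d$, which is automatic since every step has used only a scaling by $\epsilon$ and universal Sobolev constants on the unit ball; no difficulty specific to the present statistical setting arises.
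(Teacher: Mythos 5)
The paper does not actually prove this lemma: immediately after stating it the authors cite Corollaries~25.10 and 25.11 of Driver's lecture notes and move on. Your proposal therefore does not so much take a different route as fill in a proof the paper omits. The reduction you make---first passing to an a.s.\ deterministic statement about $C^1$ functions on a ball, then establishing the unit-ball Morrey estimate and recovering the stated $\epsilon^{m-d}$ and $\epsilon^{-d}$ factors by rescaling $\theta \mapsto \epsilon\theta$---is exactly the standard route to Corollaries~25.10/25.11, and your sketch of the Morrey step (FTC along segments, averaging in $\theta$, polar coordinates to absorb the $|\theta'-\theta|^{1-d}$ singularity via $m>d$) is the classical argument.

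One small wording issue in the second display is worth tightening: you say you will ``control the oscillatory term by the first display applied with $\theta^\star$ in place of $\theta_0$,'' but taken literally that produces an integral over $B(\theta^\star,\epsilon)$ rather than $B(\theta_0,\epsilon)$. What you actually need (and what your Morrey step really proves) is the two-point H\"older bound
\[
|v(a)-v(b)|\;\le\; C\,|a-b|^{1-d/m}\,\|\nabla v\|_{L^m(B(\theta_0,\epsilon))}\qquad\text{for all }a,b\in B(\theta_0,\epsilon),
\]
with the $L^m$ norm always taken over the \emph{fixed} ball $B(\theta_0,\epsilon)$; applying this with $a=\theta$ and $b=\theta^\star$ and noting $|a-b|<2\epsilon$ gives the required $\epsilon^{m-d}\int_{B(\theta_0,\epsilon)}\|\nabla v\|^m$ factor without any change of domain. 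With that clarification the argument is complete and self-contained.
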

For the first term of \eqref{LAN:LAN:remainder-Chebyshev}, Lemma~\eqref{lemma:Sobolev} and Fubini's theorem give that for any $2q>2$, 
\begin{align*}
	\mathbb{E}_{\theta}^{n}\bigg[&
	\sup_{\theta^{\prime}=(\xi^{\prime},\tau^{\prime})\in
	{B}(\xi,\rho_{n,1})\times
	{B}(\tau,\rho_{n,2})}
	\left|\left(
	\partial_{\theta}^{2}\ell_{n}(\theta^{\prime})
	-\partial_{\theta}^{2}\ell_{n}(\xi,\tau^{\prime})
	\right)
	\left[\left(\varphi_{n}(\theta)u\right)^{\otimes 2}\right]\right|^{2q}\bigg] \\
	&\;\;\;\;\;\;\;\;\;\;\;\;\lesssim\left(\rho_{n,1}\right)^{2q-2}
	\int_{{B}(\xi,\rho_{n,1})}
	\mathbb{E}_{\theta}^{n}\left[
	\sup_{\tau^{\prime}\in{B}(\tau,\rho_{n,2})}
	\left|\partial_{\xi}\partial_{\theta}^{2}\ell_{n}(\theta^{\prime})\left[\left(\varphi_{n}(\theta)u\right)^{\otimes 2}\right]\right|^{2q}
	\right]\,\mathrm{d}\xi^{\prime} \\
	&\;\;\;\;\;\;\;\;\;\;\;\;\lesssim\left(\rho_{n,1}\right)^{2q}
	\sup_{\xi^{\prime}\in{B}(\xi,\rho_{n,1})}
	\mathbb{E}_{\theta}^{n}\left[
	\sup_{\tau^{\prime}\in{B}(\tau,\rho_{n,2})}
	\left|\partial_{\xi}\partial_{\theta}^{2}\ell_{n}(\theta^{\prime})\left[\left(\varphi_{n}(\theta)u\right)^{\otimes 2}\right]\right|^{2q}
	\right]
\end{align*}
and 
\begin{align*}
	\mathbb{E}_{\theta}^{n}\bigg[
	\sup_{\tau^{\prime}\in{B}(\tau,\rho_{n,2})}&
	\left|\partial_{\xi}\partial_{\theta}^{2}\ell_{n}(\theta^{\prime})\left[\left(\varphi_{n}(\theta)u\right)^{\otimes 2}\right]\right|^{2q}
	\bigg]\\
	&\lesssim\left(\rho_{n,2}\right)^{-1}
	\int_{{B}(\tau,\rho_{n,2})}
	\mathbb{E}_{\theta}^{n}\left[
	\left|\partial_{\xi}\partial_{\theta}^{2}\ell_{n}(\theta^{\prime})\left[\left(\varphi_{n}(\theta)u\right)^{\otimes 2}\right]\right|^{2q}
	\right]\,\mathrm{d}\tau^{\prime} \\
	&\quad+\left(\rho_{n,2}\right)^{2p-1}
	\int_{{B}(\tau,\rho_{n,2})}
	\mathbb{E}_{\theta}^{n}\left[
	\left|\partial_{\tau}\partial_{\xi}\partial_{\theta}^{2}\ell_{n}(\theta^{\prime})\left[\left(\varphi_{n}(\theta)u\right)^{\otimes 2}\right]\right|^{2q}
	\right]\,\mathrm{d}\tau^{\prime} \\
	&\lesssim
	\sup_{\tau^{\prime}\in{B}(\tau,\rho_{n,2})}
	\mathbb{E}_{\theta}^{n}\left[
	\left|\partial_{\xi}\partial_{\theta}^{2}\ell_{n}(\theta^{\prime})\left[\left(\varphi_{n}(\theta)u\right)^{\otimes 2}\right]\right|^{2q}
	\right] \\
	&\quad+\left(\rho_{n,2}\right)^{2q}
	\sup_{\tau^{\prime}\in{B}(\tau,\rho_{n,2})}
	\mathbb{E}_{\theta}^{n}\left[
	\left|\partial_{\tau}\partial_{\xi}\partial_{\theta}^{2}\ell_{n}(\theta^{\prime})\left[\left(\varphi_{n}(\theta)u\right)^{\otimes 2}\right]\right|^{2q}
	\right]
\end{align*}
uniformly on $\Theta$. Similarly, for the second term of \eqref{LAN:LAN:remainder-Chebyshev}, Lemma~\eqref{lemma:Sobolev} and Fubini's theorem also give that for any $2q>1$, 
\begin{align*}
	\mathbb{E}_{\theta}^{n}&\left[
	\sup_{\tau^{\prime}\in{B}(\tau,\rho_{n,2})}
	\left|\left(
	\partial_{\theta}^{2}\ell_{n}(\xi,\tau^{\prime})
	-\partial_{\theta}^{2}\ell_{n}(\theta)
	\right)
	\left[\left(\varphi_{n}(\theta)u\right)^{\otimes 2}\right]\right|^{2q}
	\right]
	\\
 &\;\;\;\;\;\;\;\;\;\;\lesssim\left(\rho_{n,2}\right)^{2q-1}
	\int_{{B}(\tau,\rho_{n,2})}
	\mathbb{E}_{\theta}^{n}\left[
	\left|\partial_{\tau}\partial_{\theta}^{2}\ell_{n}(\theta^{\prime})\left[\left(\varphi_{n}(\theta)u\right)^{\otimes 2}\right]\right|^{2q}
	\right]\,\mathrm{d}\tau^{\prime} \\
	&\;\;\;\;\;\;\;\;\;\;\lesssim\left(\rho_{n,2}\right)^{2q}
	\sup_{\tau^{\prime}\in{B}(\tau,\rho_{n,2})}
	\mathbb{E}_{\theta}^{n}\left[
	\left|\partial_{\tau}\partial_{\theta}^{2}\ell_{n}(\theta^{\prime})\left[\left(\varphi_{n}(\theta)u\right)^{\otimes 2}\right]\right|^{2q}
	\right] 
\end{align*}
uniformly on $\Theta$.
Therefore \eqref{LAN:remainder} follows once we have proved that for any $2q>2$,
\begin{align}
\label{LAN:remainder-Suff1}
\begin{array}{l}
    \sup_{\theta^{\prime}\in
	{B}(\xi,\rho_{n,1})\times
	{B}(\tau,\rho_{n,2})}
    \mathbb{E}_{\theta}^{n}\Big[
	\Big|r_{n}^{1}(H)^{-\frac{1}{2}}
	a_{n}(H)
	\partial_{\xi}\partial_{\theta}^{2}\ell_{n}(\theta^{\prime})\Big[\Big(\varphi_{n}(\theta)u\Big)^{\otimes 2}\Big]\Big|^{2q}
	\Big]=o(1), 
	\\
    \sup_{\theta^{\prime}\in
	{B}(\xi,\rho_{n,1})\times
	{B}(\tau,\rho_{n,2})}
    \mathbb{E}_{\theta}^{n}\Big[
	\Big|r_{n}^{1}(H)^{-\frac{1}{2}}
	a_{n}(H)
	\partial_{\tau}\partial_{\xi}\partial_{\theta}^{2}\ell_{n}(\theta^{\prime})\Big[\Big(\varphi_{n}(\theta)u\Big)^{\otimes 2}\Big]\Big|^{2q}
	\Big]=o(1), 
	\\
    \sup_{\theta^{\prime}\in
	{B}(\xi,\rho_{n,1})\times
	{B}(\tau,\rho_{n,2})}
    \mathbb{E}_{\theta}^{n}\Big[
	\Big|r_{n}^{2}(H)^{-\frac{1}{2}}\partial_{\tau}\partial_{\theta}^{2}\ell_{n}(\theta^{\prime})\Big[\Big(\varphi_{n}(\theta)u\Big)^{\otimes 2}\Big]\Big|^{2q}
	\Big]=o(1) 
\end{array}
\end{align}
as $n\to\infty$ uniformly on $\Theta$, where $a_{n}(H)$ is defined in Equation~\eqref{def_an(H)}. 
By explicit computation of $\partial_{\xi}\partial_{\theta}^{2}\ell_{n}(\theta)$, $\partial_{\tau}\partial_{\xi}\partial_{\theta}^{2}\ell_{n}(\theta)$ and $\partial_{\tau}\partial_{\theta}^{2}\ell_{n}(\theta)$ and using Lemma~\ref{Lemma_RateMat} \eqref{Lemma_RateMat_elements_bdd} and the linearity $k\mapsto S_{n}(k)$ on $L^{1}([-\pi,\pi])$, we can see that \eqref{LAN:remainder-Suff1} follow once we have proved the following two results:
\begin{equation}\label{LAN:remainder-Suff2-1}
	\sup_{\theta^{\prime}\in
	{B}(\xi,\rho_{n,1})\times
	{B}(\tau,\rho_{n,2})}\left[
    \mathbb{E}_{\theta}^{n}\bigg[
	\bigg|\frac{1}{\overline{r}_{n}(H)}\mathbf{W}_{n}(\theta^{\prime})^{\top}\prod_{j=1}^{m}S_{n}(k_{j,\theta^{\prime}}^{n,\upsilon_{j}})\mathbf{W}_{n}(\theta^{\prime})\bigg|^{2q}
	\bigg]
	+\frac{1}{\overline{r}_{n}(H)}\bigg|
	\mathrm{Tr}\bigg[\prod_{j=1}^{m}S_{n}(k_{j,\theta^{\prime}}^{n,\upsilon_{j}})
	\bigg]\bigg|
    \right] = o(n^{\epsilon})
\end{equation}
as $n\to\infty$ uniformly on $\Theta$ for any $\epsilon>0$ and $(\upsilon_{1},\cdots,\upsilon_{m})\in\{0,1\}^{m}$ with $m\geq 2$, and
\begin{equation}\label{LAN:remainder-Suff2-2}
	\sup_{\theta^{\prime}\in
	{B}(\xi,\rho_{n,1})\times
	{B}(\tau,\rho_{n,2})}\left[
    \mathbb{E}_{\theta}^{n}\left[
	\left|\frac{1}{\overline{r}_{n}(H)}\left(
	\mathbf{W}_{n}(\theta^{\prime})^{\top}S_{n}(k_{1,\theta^{\prime}}^{n,\upsilon_{1}})\mathbf{W}_{n}(\theta^{\prime})
	-\mathrm{Tr}[S_{n}(k_{1,\theta^{\prime}}^{n,\upsilon_{1}})]
	\right)\right|^{2q}\right] 
    \right] = o(n^{\epsilon})\ \ \mbox{as $n\to\infty$}
\end{equation}
for any $\epsilon>0$ uniformly on $\Theta$, $\upsilon_{1}\in\{0,1\}$ and $m=1$, where $\overline{r}_{n}(H)=\overline{r}_{n}^{m,\underline{\upsilon}^{m}}(H)$ is defined in \eqref{def_rn_bar}. 
On the other hand, expressing moments as sum of cumulants and using the closed form expression for cumulants of Gaussian quadratic forms computed in Lemma 2 of \cite{magnus1986exact} and the homogeneous and translation-invariant properties of cumulants, \eqref{LAN:remainder-Suff2-1} and \eqref{LAN:remainder-Suff2-2} follow once we have proved 
\begin{equation}\label{LAN:remainder-Suff3-1}
	\sup_{\theta^{\prime}\in
	{B}(\xi,\rho_{n,1})\times
	{B}(\tau,\rho_{n,2})}
    \overline{r}_{n}(H)^{-q}\bigg|
	\mathrm{Tr}\bigg[\bigg\{
	\Cov^{n}_{\theta}[\mathbf{W}_{n}(\theta^{\prime})]^{w}\prod_{j=1}^{m}S_{n}(k_{j,\theta^{\prime}}^{n,\upsilon_{j}})\bigg\}^{q}\bigg]
	\bigg|=o(n^{\epsilon})\ \ \mbox{as $n\to\infty$}
\end{equation}
uniformly on $\Theta$ for any $\epsilon>0$, $w\in\{0,1\}$, $(\upsilon_{1},\cdots,\upsilon_{m})\in\{0,1\}^{m}$, $m\in\{1,2,3,4\}$ and $q\in\{1,2,\cdots,2p\}$ satisfying $qm\geq 2$, 
and
\begin{equation}\label{LAN:remainder-Suff3-2}
	\sup_{\theta^{\prime}\in
	{B}(\xi,\rho_{n,1})\times
	{B}(\tau,\rho_{n,2})}\frac{1}{\overline{r}_{n}(H)}\left|
	\mathrm{Tr}\left[\Cov^{n}_{\theta}[\mathbf{W}_{n}(\theta^{\prime})]S_{n}(k_{1,\theta^{\prime}}^{n,\upsilon_{1}})\right]
	-\mathrm{Tr}[S_{n}(k_{1,\theta^{\prime}}^{n,\upsilon_{1}})]
	\right|
	=o(1)\ \ \mbox{as $n\to\infty$}
\end{equation}
uniformly on $\Theta$ for each $\upsilon_{1}\in\{0,1\}$ and $m=1$, where we denote $\Cov[\mathbf{W}_{n}(\theta^{\prime})]^{0}=I_{n}$ for conciseness. 
Moreover, since we have $\Cov[\mathbf{W}_{n}(\theta^{\prime})]=\Sigma_{n}(h_{\theta^{\prime}}^{n})^{-\frac{1}{2}}\Sigma_{n}(h_{\theta}^{n})\Sigma_{n}(h_{\theta^{\prime}}^{n})^{-\frac{1}{2}}$, \eqref{LAN:remainder-Suff3-1} and \eqref{LAN:remainder-Suff3-2} follow from Proposition~\ref{Prop:LRR10_Thm5} once we have proved that
\begin{equation}\label{LAN:remainder-Suff4-1}
	\sup_{\theta^{\prime}\in
	{B}(\xi,\rho_{n,1})\times
	{B}(\tau,\rho_{n,2})}
    \frac{n}{\overline{r}_{n}(H)}\int_{-\pi}^{\pi}\bigg[
	\bigg(\frac{h_{\theta}^{n}(\lambda)}{h_{\theta^{\prime}}^{n}(\lambda)}\bigg)^{w}
	\prod_{j=1}^{m}\frac{k_{j,\theta^{\prime}}^{n,\upsilon_{j}}(\lambda)}{h_{\theta^{\prime}}^{n}(\lambda)}
	\bigg]^{q}\,\mathrm{d}\lambda
	=o(n^{\epsilon})\ \ \mbox{as $n\to\infty$}
\end{equation}
uniformly on $\Theta$ for any $\epsilon>0$, $w\in\{0,1\}$, $(\upsilon_{1},\cdots,\upsilon_{m})\in\{0,1\}^{m}$, $m\in\{1,2,3,4\}$ and $q\in\{1,2,\cdots,2p\}$ satisfying $qm\geq 2$, 
and
\begin{equation}\label{LAN:remainder-Suff4-2}
	\sup_{\theta^{\prime}\in
	{B}(\xi,\rho_{n,1})\times
	{B}(\tau,\rho_{n,2})}
    \frac{n}{\overline{r}_{n}(H)}\int_{-\pi}^{\pi}\frac{(h_{\theta}^{n}(\lambda)-h_{\theta^{\prime}}^{n}(\lambda))k_{j,\theta^{\prime}}^{n,\upsilon_{j}}(\lambda)}{h_{\theta^{\prime}}^{n}(\lambda)^{2}}\,\mathrm{d}\lambda
	=o(1)\ \ \mbox{as $n\to\infty$}
\end{equation}
uniformly on $\Theta$ for each $\upsilon_{1}\in\{0,1\}$ and $m=1$. 
We give the proofs of \eqref{LAN:remainder-Suff4-1} and \eqref{LAN:remainder-Suff4-2} in Sections~\ref{Sec:Proof-LAN:remainder-Suff4-1} and~\ref{Sec:Proof-LAN:remainder-Suff4-2} respectively.

\subsubsection{Proof of \eqref{LAN:remainder-Suff4-1}}
\label{Sec:Proof-LAN:remainder-Suff4-1}

We consider separatedly the cases (a) $\upsilon_{j}=1$ for all $j\in\{1,\cdots,m\}$ and (b) there exists $j$ such that $\upsilon_{j}=0$.\\

We firstly consider the case (a). 
Since $\overline{r}_{n}(H)=n$, we can show
\begin{align*}
	&\sup_{\theta^{\prime}\in
	{B}(\xi,\rho_{n,1})\times
	{B}(\tau,\rho_{n,2})}
    \frac{n}{\overline{r}_{n}(H)}\left|\int_{-\pi}^{\pi}\left[
	\left(\frac{h_{\theta}^{n}(\lambda)}{h_{\theta^{\prime}}^{n}(\lambda)}\right)^{w}
	\prod_{j=1}^{m}\frac{k_{j,\theta^{\prime}}^{n,\upsilon_{j}}(\lambda)}{h_{\theta^{\prime}}^{n}(\lambda)}
	\right]^{q}\,\mathrm{d}\lambda
	\right| \\
	&\lesssim 
	\sup_{\theta^{\prime}\in
	{B}(\xi,\rho_{n,1})\times
	{B}(\tau,\rho_{n,2})}
    \int_{0}^{\pi}
	\left(\Delta_{n}^{2(H-H^{\prime})}\lambda^{-2(H-H^{\prime})}+1\right)^{qw}\,\mathrm{d}\lambda\\
	&\lesssim
	\sup_{\theta^{\prime}\in
	{B}(\xi,\rho_{n,1})\times
	{B}(\tau,\rho_{n,2})}
    \Delta_{n}^{-2qw|H-H^{\prime}|}
	\left[
	\int_{0}^{1}\lambda^{-2qw|H-H^{\prime}|}\,\mathrm{d}\lambda
	+\int_{1}^{\pi}\lambda^{2qw|H-H^{\prime}|}\,\mathrm{d}\lambda
	\right]
\end{align*}
uniformly on $\Theta$. 
Then \eqref{LAN:remainder-Suff4-1} in the case (a) follows from \eqref{n_power_bdd} .\\ 

Finally, we consider the case (b). 
Since we have $\overline{r}_{n}(H)\gtrsim r_{n}^{1}(H)=n(\nu_{n}\Delta_{n}^{-H})^{-\frac{2}{\Diamond(H)}}$, we can show that
\begin{align*}
	&\sup_{\theta^{\prime}\in
	{B}(\xi,\rho_{n,1})\times
	{B}(\tau,\rho_{n,2})}
    \frac{n}{\overline{r}_{n}(H^{\prime})}\left|\int_{-\pi}^{\pi}\left[
	\left(\frac{h_{\theta}^{n}(\lambda)}{h_{\theta^{\prime}}^{n}(\lambda)}\right)^{w}
	\prod_{j=1}^{m}\frac{k_{j,\theta^{\prime}}^{n,\upsilon_{j}}(\lambda)}{h_{\theta^{\prime}}^{n}(\lambda)}
	\right]^{q}\,\mathrm{d}\lambda
	\right| \\
	&\lesssim 
	\sup_{\theta^{\prime}\in
	{B}(\xi,\rho_{n,1})\times
	{B}(\tau,\rho_{n,2})}
    \frac{n}{r_{n}^{1}(H^{\prime})}\int_{0}^{\pi}\left[
	\left(\Delta_{n}^{2(H-H^{\prime})}\lambda^{-2(H-H^{\prime})}+1\right)^{w}
	\prod_{j=1}^{m-|\underline{\upsilon}^{m}|}
	\frac{\lambda^{-\epsilon}}
	{1+\nu_{n}(H^{\prime})^{2}\lambda^{\Diamond(H^{\prime})}}
	\right]^{q}\,\mathrm{d}\lambda \\
	&\lesssim 
	\sup_{\theta^{\prime}\in
	{B}(\xi,\rho_{n,1})\times
	{B}(\tau,\rho_{n,2})}
    \left(\frac{r_{n}^{1}(H^{\prime})}{n}\right)^{-q(m-|\underline{\upsilon}^{m}|)\epsilon}
	\int_{0}^{\frac{n\pi}{r_{n}^{1}(H^{\prime})}}\left[
	\left(
	\left(\frac{n\Delta_{n}}{\mu r_{n}^{1}(H^{\prime})}\right)^{2(H-H^{\prime})}
	+1\right)^{w}
	\prod_{j=1}^{m-|\underline{\upsilon}^{m}|}
	\frac{\mu^{-\epsilon}}
	{1+\mu^{\Diamond(H^{\prime})}}
	\right]^{q}\,\mathrm{d}\mu \\
	&\lesssim
	\sup_{\theta^{\prime}\in
	{B}(\xi,\rho_{n,1})\times
	{B}(\tau,\rho_{n,2})}
    \left(\frac{r_{n}^{1}(H^{\prime})}{n}\right)^{-q(m-|\underline{\upsilon}^{m}|)\epsilon-2qw|H-H^{\prime}|}
	\Delta_{n}^{-2qw|H-H^{\prime}|} \\
	&\hspace{3cm}\times\left[
	\int_{0}^{1}\mu^{-2qw|H-H^{\prime}|-q(m-|\underline{\upsilon}^{m}|)\epsilon}\,\mathrm{d}\mu
	+\int_{1}^{\frac{n\pi}{r_{n}^{1}(H^{\prime})}}\mu^{2qw|H-H^{\prime}|-q(m-|\underline{\upsilon}^{m}|)(\Diamond(H^{\prime})+\epsilon)}\,\mathrm{d}\mu 
	\right] 
\end{align*}
uniformly on $\Theta$ and \eqref{LAN:remainder-Suff4-1} in the case (b) follows, using also \eqref{Ratio-Rate}, \eqref{n_power_bdd} and $\inf_{H\in\Theta_{H}}\Diamond(H)>1\geq|q(m-|\underline{\upsilon}^{m}|)|^{-1}$.

\subsubsection{Proof of \eqref{LAN:remainder-Suff4-2}}
\label{Sec:Proof-LAN:remainder-Suff4-2}
First note that
\begin{equation*}
        \int_{-\pi}^{\pi}\frac{\left|(h_{\theta}^{n}(\lambda)-h_{\theta^{\prime}}^{n}(\lambda))k_{j,\theta^{\prime}}^{n,\upsilon_{j}}(\lambda)\right|}{h_{\theta^{\prime}}^{n}(\lambda)^{2}}\,\mathrm{d}\lambda
\leq
        \int_{-\pi}^{\pi}\frac{\left|(f_{\xi}^{n}(\lambda)-f_{\xi^{\prime}}^{n}(\lambda))k_{1,\xi^{\prime}}^{n,\upsilon_{1}}(\lambda)\right|}
	{h_{\theta^{\prime}}^{n}(\lambda)^{2}}\,\mathrm{d}\lambda
+
     \int_{-\pi}^{\pi}\frac{\left|(g_{\tau}^{n}(\lambda)-g_{\tau^{\prime}}^{n}(\lambda))k_{1,\xi^{\prime}}^{n,\upsilon_{1}}(\lambda)\right|}
	{h_{\theta^{\prime}}^{n}(\lambda)^{2}}\,\mathrm{d}\lambda.
\end{equation*}
We start by considering the first integral. We have
\begin{align*}
	\left|f_{\xi}^{n}(\lambda)-f_{\xi^{\prime}}^{n}(\lambda)\right|
	&=f_{\xi^{\prime}}^{n}(\lambda)\left|\frac{f_{\xi}^{n}(\lambda)}{f_{\xi^{\prime}}^{n}(\lambda)}-1\right| \leq f_{\xi^{\prime}}^{n}(\lambda)\left[
	\left|\Delta_{n}^{2(H-H^{\prime})}-1\right|\frac{f_{\xi}(\lambda)}{f_{\xi^{\prime}}(\lambda)}
	+\left|\frac{f_{\xi}(\lambda)}{f_{\xi^{\prime}}(\lambda)}-1\right|
	\right],
\end{align*}
and there exists a constant $C_{1}>0$, independent of $\lambda$, $\theta$, $\theta^{\prime}$ and $n$, such that for any $\xi\in\Theta_{H}\times\Theta_{\sigma}$ and $\xi^{\prime}\in B(\xi,\rho_{n,1})$,
\begin{align*}
	\left|\frac{f_{\xi}(\lambda)}{f_{\xi^{\prime}}(\lambda)}-1\right|
	&=\frac{\left|f_{\xi}(\lambda)-f_{\xi^{\prime}}(\lambda)\right|}{f_{\xi^{\prime}}(\lambda)}\\
	&\leq\|\xi-\xi^{\prime}\|_{\mathbb{R}^{2}}
	\int_{0}^{1}\frac{\left\|\partial_{\xi}f_{\xi^{\prime}+z(\xi-\xi^{\prime})}(\lambda)\right\|_{\mathbb{R}^{2}}}{f_{\xi^{\prime}}(\lambda)}\,\mathrm{d}z\\
	&\leq C_{1} r_{n}^{1}(H)^{-\frac{1}{2}}|\log{n}|
	(1+|\log{\lambda}|)
	\left(|\lambda|^{-2|H-H^{\prime}|}\vee |\lambda|^{2|H-H^{\prime}|}\right)
\end{align*}
using \eqref{Tightness-Shrinkage}. 
Then, we can show that for any $\upsilon_{1}\in\{0,1\}$ and $\epsilon>0$ small enough, 
\begin{align*}
	\sup_{\theta^{\prime}\in
	{B}(\xi,\rho_{n,1})\times
	{B}(\tau,\rho_{n,2})}
    &\frac{n}{\overline{r}_{n}(H)}
	\int_{-\pi}^{\pi}\frac{\left|(f_{\xi}^{n}(\lambda)-f_{\xi^{\prime}}^{n}(\lambda))k_{1,\xi^{\prime}}^{n,\upsilon_{1}}(\lambda)\right|}
	{h_{\theta^{\prime}}^{n}(\lambda)^{2}}\,\mathrm{d}\lambda \\
   &\lesssim
	\sup_{\theta^{\prime}\in
	{B}(\xi,\rho_{n,1})\times
	{B}(\tau,\rho_{n,2})}
    \frac{\log{n}}{r_{n}^{1}(H)^{\frac{1}{2}}}\cdot
	\frac{n}{\overline{r}_{n}(H)}
	\int_{0}^{\pi}\frac{
	\lambda^{-\epsilon(2-\upsilon_{1})}
	(\nu_{n}(H^{\prime})^{2}\lambda^{\Diamond(H^{\prime})})^{\upsilon_{1}}}
	{(1+\nu_{n}(H^{\prime})^{2}\lambda^{\Diamond(H^{\prime})})^{2}}\,\mathrm{d}\lambda 
\end{align*}  
uniformly on $\Theta$. 
Then, for each $\upsilon_{1}\in\{0,1\}$, we can use \eqref{Ratio-Rate} and \eqref{n_power_bdd} and take $\epsilon>0$ small enough to get
\begin{equation}
    	\sup_{\theta^{\prime}\in
	{B}(\xi,\rho_{n,1})\times
	{B}(\tau,\rho_{n,2})}
    \frac{n}{\overline{r}_{n}(H)}\int_{-\pi}^{\pi}\frac{\left|(f_{\xi}^{n}(\lambda)-f_{\xi^{\prime}}^{n}(\lambda))k_{1,\xi^{\prime}}^{n,\upsilon_{1}}(\lambda)\right|}
	{h_{\theta^{\prime}}^{n}(\lambda)^{2}}\,\mathrm{d}\lambda
	=o(1)\ \ \mbox{as $n\to\infty$}.
	\label{LAN:remainder-Suff4-2-1}
\end{equation}
Now, we consider the integral $\int_{-\pi}^{\pi}\frac{\left|(g_{\tau}^{n}(\lambda)-g_{\tau^{\prime}}^{n}(\lambda))k_{1,\xi^{\prime}}^{n,\upsilon_{1}}(\lambda)\right|}
	{h_{\theta^{\prime}}^{n}(\lambda)^{2}}\,\mathrm{d}\lambda$. 
Using that 
\begin{equation*}
    \sup_{\theta^{\prime}\in\Theta: \left\|\Phi_{n}(\theta)^{-1}(\theta^{\prime}-\theta)\right\|_{\mathbb{R}^{3}}\leq c} r_{n}^{2}(H)^{1/2} |\tau^{\prime}-\tau|\lesssim 1,
\end{equation*}
we can see that there exist constants $C_{1},C_{2}>0$, independent of $\lambda$, $\theta$, $\theta^{\prime}$ and $n$, such that for any $\tau\in\Theta_{\tau}$ and $\xi^{\prime}\in B(\tau,\rho_{n,2})$,
\begin{align*}
	\left|g_{\tau}^{n}(\lambda)-g_{\tau^{\prime}}^{n}(\lambda)\right|
	\leq C_{1}|\tau-\tau^{\prime}|
	\nu_{n}^{2}|\lambda|^{2(K+1)}
	\leq C_{2}r_{n}^{2}(H)^{-\frac{1}{2}}\nu_{n}^{2}|\lambda|^{2(K+1)}.
\end{align*}
Therefore, for any $\upsilon_{1}\in\{0,1\}$ and $\epsilon>0$ small enough,  
\begin{align*}
	&\sup_{\theta^{\prime}\in
	{B}(\xi,\rho_{n,1})\times
	{B}(\tau,\rho_{n,2})}
    \frac{n}{\overline{r}_{n}(H)}
	\int_{-\pi}^{\pi}\frac{\big|(g_{\tau}^{n}(\lambda)-g_{\tau^{\prime}}^{n}(\lambda))k_{1,\xi^{\prime}}^{n,\upsilon_{1}}(\lambda)\big|}
	{h_{\theta^{\prime}}^{n}(\lambda)^{2}}\,\mathrm{d}\lambda \\
    &\lesssim
	\sup_{\theta^{\prime}\in
	{B}(\xi,\rho_{n,1})\times
	{B}(\tau,\rho_{n,2})}
    \frac{n}{\overline{r}_{n}(H) r_{n}^{2}(H)^{\frac{1}{2}}}
	\int_{0}^{\pi}\frac{\nu_{n}(H^{\prime})^{2}
	\lambda^{\Diamond(H^{\prime})}
	\lambda^{-\epsilon(1-\upsilon_{1})}
	(\nu_{n}(H^{\prime})^{2}\lambda^{\Diamond(H^{\prime})})^{\upsilon_{1}}}
	{(1+\nu_{n}(H^{\prime})^{2}\lambda^{\Diamond(H^{\prime})})^{2}}\,\mathrm{d}\lambda
\end{align*}
uniformly on $\Theta$. Then for each $\upsilon_{1}\in\{0,1\}$, we get
\begin{equation}
\sup_{\theta^{\prime}\in
	{B}(\xi,\rho_{n,1})\times
	{B}(\tau,\rho_{n,2})}
    \frac{n}{\overline{r}_{n}(H)}\int_{-\pi}^{\pi}\frac{\left|(g_{\tau}^{n}(\lambda)-g_{\tau^{\prime}}^{n}(\lambda))k_{1,\xi^{\prime}}^{n,\upsilon_{1}}(\lambda)\right|}
	{h_{\theta^{\prime}}^{n}(\lambda)^{2}}\,\mathrm{d}\lambda
	=o(1)\ \ \mbox{as $n\to\infty$}
	\label{LAN:remainder-Suff4-2-2}
\end{equation}
using \eqref{Ratio-Rate} and \eqref{n_power_bdd} and taking $\epsilon>0$ small enough. \\

Combining \eqref{LAN:remainder-Suff4-2-1} and \eqref{LAN:remainder-Suff4-2-2} concludes the proof.

\section{Proof of Proposition~\ref{Prop:LRR10_Thm5}}\label{Section_Proof_LRR10_Thm5}
We prove Proposition~\ref{Prop:LRR10_Thm5} in a similar line with the proof of Equation~5 (a) in the full version of \cite{Lieberman-Rosemarin-Rousseau-2012}.  \\

First note that the mapping $f\mapsto\Sigma_{n}(f)$ on is linear $L^{1}([-\pi,\pi])$ and that we can decompose any function $f$ as $f=f_+ - f_-$. Therefore, without loss of generality, we can assume that for each $r\in\{1,2,\cdots,q\}$ and $\theta_{2r-1}=(H_{2r-1},\sigma_{2r-1},\tau_{2r-1})\in(0,1)\times[0,\infty)^{2}$, 
the function $d_{r,H_{2r-1}}$ and $e_{r}$ are non-negative and either $\sigma_{2r-1}$ or $\tau_{2r-1}$ is equal to $0$ and the other is positive. Moreover, we assume that the set $\{x\in\Pi:k_{r,\theta_{2r-1}}^{n}(x)>0\}$ has a positive Lebesgue measure.\\ 

Then we summarize the notation used in the proof of Proposition~\ref{Prop:LRR10_Thm5}. 
Fix $m\in\mathbb{N}$ satisfying $m>K+3/2$. Write
\begin{equation*}
	u_{\theta}^{n,m}(x):=[(2\pi)^{2m}h_{\theta}^{n}(x)]^{-\frac{1}{2m-1}}\;\;\text{ and }\;\;
	u_{\theta}^{n}(x):=u_{\theta}^{n,1}(x),
\end{equation*}
and for each $r\in\{1,2,\cdots,q\}$,
\begin{equation*}
	\Gamma_{n,m}(\theta)
	:=\Sigma_{n}(u_{\theta_{2r}}^{n,m})^{2m-1}.
\end{equation*}
Since $\Sigma_{n}(k_{r,\theta_{2r-1}}^{n})$ is positive definite for each  $r\in\{1,2,\cdots,q\}$ from the above Assumption $(3)$, we define 
\begin{align*}
\begin{cases}
	A_{r}^{n}(\underline{\theta}):=
	\Sigma_{n}(k_{r,\theta_{2r-1}}^{n})\Sigma_{n}(h_{\theta_{2r}}^{n})^{-1},\\ 
	\widetilde{A}_{r}^{n}(\underline{\theta}):=
	\Sigma_{n}(k_{r,\theta_{2r-1}}^{n})^{\frac{1}{2}}\Sigma_{n}(h_{\theta_{2r}}^{n})^{-1}\Sigma_{n}(k_{r+1,\theta_{2r+1}}^{n})^{\frac{1}{2}},\\ 
	B_{r}^{n}(\underline{\theta}):=
	\Sigma_{n}(k_{r,\theta_{2r-1}}^{n})\Gamma_{n,m}(\theta),\\ 
	\widetilde{B}_{r}^{n}(\underline{\theta}):=
	\Sigma_{n}(k_{r,\theta_{2r-1}}^{n})^{\frac{1}{2}}\Gamma_{n,m}(\theta)\Sigma_{n}(k_{r+1,\theta_{2r+1}}^{n})^{\frac{1}{2}},\\
\end{cases}
\;\;\;\text{ and }\;\;\;
	\begin{cases}
D_{r}^{n}(\underline{\theta}):=
	I_{n}-\Sigma_{n}(h_{\theta_{2r}}^{n})\Gamma_{n,m}(\theta),\\ 
	\widetilde{D}_{r}^{n}(\underline{\theta}):=
	I_{n}-\Sigma_{n}(h_{\theta_{2r}}^{n})^{\frac{1}{2}}\Gamma_{n,m}(\theta)\Sigma_{n}(h_{\theta_{2r}}^{n})^{\frac{1}{2}},\\
	V_{1,r}^{n}(\underline{\theta}):=
	\Sigma_{n}(k_{r,\theta_{2r-1}}^{n})^{\frac{1}{2}}\Sigma_{n}(h_{\theta_{2r}}^{n})^{-\frac{1}{2}},\\ 
	V_{2,r}^{n}(\underline{\theta}):=
	\Sigma_{n}(h_{\theta_{2r}}^{n})^{-\frac{1}{2}}\Sigma_{n}(k_{r+1,\theta_{2r+1}}^{n})^{\frac{1}{2}},
\end{cases}
\end{align*}	
where we write $\theta_{2q+1}:=\theta_{1}$ and $k_{2q+1,\theta}^{n}(x):=k_{1,\theta}^{n}(x)$ for conciseness.\\

Moreover, for $\underline{\theta}\in\Theta_{1}(\iota)$, we define
\begin{align*}
	I(\underline{\theta}):=
	(2\pi)^{2q-1}\int_{-\pi}^{\pi}\prod_{r=1}^{q}
	k_{r,\theta_{2r-1}}^{n}(x)u_{\theta_{2r}}^{n}(x)\,\mathrm{d}x 
	=\frac{1}{2\pi}\int_{-\pi}^{\pi}\prod_{r=1}^{q}\frac{k_{r,\theta_{2r-1}}^{n}(x)}{h_{\theta_{2r}}^{n}(x)}\,\mathrm{d}x
\end{align*}
and
\begin{equation*}
	E_{n}(\underline{\theta}):=
	\left|\mathrm{Tr}\left[\prod_{r=1}^{q}\Sigma_{n}(k_{r,\theta_{2r-1}}^{n})\Sigma_{n}(h_{\theta_{2r}}^{n})^{-1}\right]
	   	-\frac{n}{2\pi}\int_{-\pi}^{\pi}\prod_{r=1}^{q}\frac{k_{r,\theta_{2r-1}}^{n}(x)}{h_{\theta_{2r}}^{n}(x)}\,\mathrm{d}x\right|. 
\end{equation*}
Using the triangle inequality, the error $E_{n}(\underline{\theta})$ is divided into the following two terms:
\begin{equation*}
	E_{n}(\underline{\theta})\leq E_{n}^{(1)}(\underline{\theta})+E_{n}^{(2)}(\underline{\theta}),
\end{equation*}
where
\begin{align}
\label{def-En12}	
\begin{cases}
E_{n}^{(1)}(\underline{\theta}):=
	\left|\mathrm{Tr}\left[\prod_{r=1}^{q}B_{r}^{n}(\underline{\theta})\right]
	   	-\frac{n}{2\pi}\int_{-\pi}^{\pi}\prod_{r=1}^{q}\frac{k_{r,\theta_{2r-1}}^{n}(x)}{h_{\theta_{2r}}^{n}(x)}\,\mathrm{d}x\right|,
\\
	E_{n}^{(2)}(\underline{\theta}):=
	\left|\mathrm{Tr}\left[\prod_{r=1}^{q}A_{r}^{n}(\underline{\theta})\right]
	   	-\mathrm{Tr}\left[\prod_{r=1}^{q}B_{r}^{n}(\underline{\theta})\right]\right|.
	\end{cases}
\end{align}
In the rest of the proof, we will derive error bounds of $E_{n}^{(1)}(\underline{\theta})$ and $E_{n}^{(2)}(\underline{\theta})$ separately.

\subsection{Preliminary result}

In this section, we prove the following proposition needed for the proof of Proposition~\ref{Prop:LRR10_Thm5}.

\begin{proposition}\label{Prop:T23_JTSA}
	Let $m\in\mathbb{N}$ satisfying $m>K+3/2$. Under the same assumptions in Proposition~\ref{Prop:LRR10_Thm5}, we obtain
	\begin{equation*}
			\Delta_{n}^{\rho_{q,-}(\underline{\xi})+\epsilon_{1}}n^{-\psi_{q,-}(\underline{\xi})-\epsilon_{2}}
			E_{n}^{(1)}(\underline{\theta})
			=o(1)\ \ \mbox{as $n\to\infty$}
		\end{equation*}
		uniformly on compact subsets of $\Theta_{1}(\iota)$ for any $\epsilon_{1},\epsilon_{2}>0$ and $\iota\in(0,1)$. 
\end{proposition}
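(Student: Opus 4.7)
The strategy is to follow the method used in Theorem 5(a) of \cite{Lieberman-Rosemarin-Rousseau-2012}, adapted to the present setting. The central device is that $\Gamma_{n,m}(\theta) = \Sigma_n(u_{\theta_{2r}}^{n,m})^{2m-1}$ serves as a Toeplitz-matrix surrogate for $(2\pi)^{-2m}\Sigma_n(h_{\theta_{2r}}^n)^{-1}$: formally, since $(u_{\theta}^{n,m})^{2m-1} = [(2\pi)^{2m} h_\theta^n]^{-1}$, one expects $\Sigma_n(u_\theta^{n,m})^{2m-1} \approx \Sigma_n((u_\theta^{n,m})^{2m-1})$, and after multiplying through by the factors $\Sigma_n(k_{r,\theta_{2r-1}}^n)$ the trace reduces to a Szeg\H{o}-type integral. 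The choice $m > K + 3/2$ is made so that $u_\theta^{n,m}$ has enough Fourier-coefficient decay for the classical Toeplitz product approximation bounds to apply despite the singular contribution of $g$ at the origin.

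The first step is an iterative replacement argument. I would repeatedly apply an estimate of the form $\|\Sigma_n(f)\Sigma_n(g) - \Sigma_n(fg)\|_{\mathrm{F}} \lesssim \varepsilon_n(f,g)$, expressed as a sum of Fourier-coefficient tails, to rewrite
\begin{equation*}
\prod_{r=1}^{q} B_r^n(\underline{\theta}) = \prod_{r=1}^{q} \Sigma_n(k_{r,\theta_{2r-1}}^n) \Gamma_{n,m}(\theta)
\end{equation*}
as a single Toeplitz matrix plus controllable error terms. Taking the trace and using $\mathrm{Tr}[\Sigma_n(F)] = (n/2\pi)\int F$ then produces the leading term $(n/2\pi) \int \prod_r k_{r,\theta_{2r-1}}^n/h_{\theta_{2r}}^n \,\mathrm{d}x$ up to (i) a Riemann-sum correction and (ii) the cumulative error of the iterative replacement.

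The second step is bounding both error contributions against $\Delta_n^{-\rho_{q,-}(\underline{\xi}) - \epsilon_1} n^{\psi_{q,-}(\underline{\xi}) + \epsilon_2}$. The hypothesis \eqref{assumption_k} ensures the symbols $d_{r,\xi}^n$ and $e_{r,\tau}^n$ have power-type singularities of controlled order at the origin with bounded derivatives elsewhere, so products $k_{r,\theta}^n/h_{\theta_{2r}}^n$ behave like $\Delta_n^{2(H_{2r-1}-H_{2r})} |\lambda|^{-2(H_{2r-1}-H_{2r})+O(\epsilon)}$ near $\lambda = 0$, with the function $\psi_{q,-}$ (resp. $\rho_{q,-}$) precisely tracking the aggregate positive parts of $(H_{2r-1} - H_{2r})_+$ weighted by the indicator that a fractional Brownian term is present. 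The $n$-loss comes from the Riemann-type truncation near the singularity, while the $\Delta_n$-loss encodes the prefactor blow-up; the usual integration-by-parts and splitting of $[-\pi,\pi]$ into $\{|\lambda| \leq n^{-1}\}$ and its complement yield the claimed bound after checking that the residual exponents satisfy $\psi_{q,-}(\underline{\xi}) \leq 1 - \upsilon < 1$, which is exactly what the restriction to $\Theta_1(\iota)$ provides.

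The main obstacle will be the uniformity of the error bounds over compact subsets of $\Theta_1(\iota)$, combined with the need to accommodate both the low-frequency fractional-Brownian singularity (exponent $-2H$) and the high-order zero of $g$ at the origin (exponent $2(K+1)$). Separating the argument into subcases depending on whether $k_{r,\theta_{2r-1}}^n$ is of fractional-Brownian type ($\sigma_{2r-1} > 0$) or of noise type ($\tau_{2r-1}>0$) is unavoidable, and the non-symmetric choice $\rho_{q,-}, \psi_{q,-}$ (rather than $\rho_q, \psi_q$) reflects the fact that Proposition~\ref{Prop:T23_JTSA} only controls one direction in a larger triangle inequality whose other piece, $E_n^{(2)}$, is handled separately and symmetrically.
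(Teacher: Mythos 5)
Your proposal takes a genuinely different route from the paper, and the route you choose has a structural obstacle that you do not address. You propose an iterative ``Toeplitz product replacement'' argument, applying bounds of the form $\|\Sigma_n(f)\Sigma_n(g)-\Sigma_n(fg)\|_{\mathrm F}\lesssim\varepsilon_n(f,g)$ repeatedly to the $2mq$-fold product $\prod_r B_r^n = \prod_r \Sigma_n(k_{r,\theta_{2r-1}}^n)\Sigma_n(u_{\theta_{2r}}^{n,m})^{2m-1}$ and collapse it to a single Toeplitz matrix, then take the trace. At each replacement step the error contribution to the trace is bounded by the Frobenius error times the norm of the remaining product; since the symbols $u_{\theta}^{n,m}$ and $k_{r,\theta}^n$ are singular at the origin (of orders roughly $-2(K+1)/(2m-1)$, $1-2H$, etc.), the operator and Frobenius norms of the intermediate factors grow polynomially in $n$, and you give no mechanism for showing the compounded error over $2mq-1$ replacements stays below $\Delta_n^{-\rho_{q,-}-\epsilon_1}n^{\psi_{q,-}+\epsilon_2}$. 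Your mention of a ``Riemann-sum correction'' also does not fit the argument you sketch: once $\prod_r B_r^n$ is replaced by a single Toeplitz matrix $\Sigma_n(\prod_r k_r/h_r)$, the identity $\mathrm{Tr}[\Sigma_n(F)]=\frac{n}{2\pi}\int F$ is exact, so there is no residual Riemann-sum error to account for, and the step you ascribe the $n^{\psi_{q,-}}$ loss to is not actually present in your decomposition.

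The paper takes the Fox--Taqqu/Dahlhaus/Lieberman--Rosemarin--Rousseau approach (following Takabatake's JTSA supplement) rather than an iterative matrix approximation. Concretely: the trace $\mathrm{Tr}[\prod_r B_r^n]$ is written as a single $2mq$-dimensional Fourier integral over $[-\pi,\pi]^{2mq}$ involving Dirichlet kernels $D_n(x)=\sum_{t=1}^n e^{\mathrm{i}xt}$; the integrand is decomposed as $\prod_r(c_r^{n,(0)}+c_r^{n,(1)})$, isolating a ``diagonal'' piece $c_r^{n,(0)}$ in which all arguments of $u_{\theta_{2r}}^{n,m}$ are frozen at $\overline{x}_{2m(r-1)+1}$; and the integration domain is split into $W=\bigcup_s W_s$ (where some cumulative sum is comparable to its increment, killing the extra Dirichlet-kernel mass) and $W^c$ (where the mean-value estimates of Lemma~\ref{key_lemma_W-tilda-c} apply to the $c_r^{n,(1)}$ differences). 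The rate $\Delta_n^{-\rho_{q,-}-\epsilon_1}n^{\psi_{q,-}+\epsilon_2}$ then falls out of singularity-weight bookkeeping on this explicit integral (Lemma~\ref{key_proposition_lemma}), with the restriction $\psi_q(\underline\xi)\le 1-\upsilon$ from $\Theta_1(\iota)$ ensuring convergence of the dominating integral $\int f_n$. This machinery does the ``all-at-once'' bound, avoiding operator-norm issues entirely. To make your proposal work, you would need at minimum to (a) justify why the intermediate operator norms do not spoil the compounded Frobenius errors (this is precisely what the paper's Lemmas~\ref{Lemma_Operator_{n}orm} and~\ref{LRR_lemmaB5_3} do, but only for the \emph{difference} $E_n^{(2)}$ between $A_r^n$ and $B_r^n$, not for $E_n^{(1)}$ itself), and (b) explain where the $n^{\psi_{q,-}}$ loss actually enters, since your decomposition as stated has no place for it.
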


	\subsubsection{Outline of Proof of Proposition~\ref{Prop:T23_JTSA}}
	We prove Proposition~\ref{Prop:T23_JTSA} proceeding as the proof of Equation~1 of \cite{Takabatake-2023-JTSA-supplement}.
	We first introduce some notation. Fix $m\in\mathbb{N}$ such that $m>K+3/2$ and $c>1$. We define
\begin{align}
  \label{def-W}  
  \begin{cases}
    &W_{s}:=\left\{\mathbf{x}=(x_{1},\cdots,x_{2mq})\in\mathbb{R}^{2mq}:|\overline{x}_{s}|\leq c|x_{s+1}|\right\},\ \
		s\in\{1,2,\cdots,2mq-1\}, \\
	&W_{2mq}:=\left\{\mathbf{x}=(x_{1},\cdots,x_{2mq})\in\mathbb{R}^{2mq}:|\overline{x}_{2mq}|\leq c|\overline{x}_{2mq}-x_{1}|\right\},\\ 
    &W:=\bigcup_{s=1}^{2mq}W_{s},
  \end{cases}
\end{align}
where we write $\overline{x}_{s}:=\sum_{r=1}^{s}x_{r}$ for $s\in\{1,2,\cdots,2mq\}$. 
	For each $r\in\{1,2,\cdots,q\}$, we write
	\begin{align*}
		&c_{r}^{n,(0)}(\overline{x}_{2m(r-1)+1})=
		k_{r,\theta_{2r-1}}^{n}(\overline{x}_{2m(r-1)+1})
		\prod_{v=2}^{2m}u_{\theta_{2r}}^{n,m}(\overline{x}_{2m(r-1)+1})
		=k_{r,\theta_{2r-1}}^{n}(\overline{x}_{2m(r-1)+1})
		u_{\theta_{2r}}^{n}(\overline{x}_{2m(r-1)+1}),\\
		&c_{r}^{n,(1)}(\overline{x}_{2m(r-1)+1})=k_{r,\theta_{2r-1}}^{n}(\overline{x}_{2m(r-1)+1})
		\left(\prod_{v=2}^{2m}u_{\theta_{2r}}^{n,m}(\overline{x}_{2m(r-1)+v})
		-\prod_{v=2}^{2m}u_{\theta_{2r}}^{n,m}(\overline{x}_{2m(r-1)+1})\right),
	\end{align*} 
	and $I_{n}^{\varpi}(\underline{\theta}):=I_{n,1}^{\varpi}(\underline{\theta})+I_{n,2}^{\varpi}(\underline{\theta})$ where
	\begin{align*}
		&I_{n,1}^{\varpi}(\underline{\theta}):=
		\int_{[-\pi,\pi]^{2mq}\cap{W}}
		\prod_{r=1}^{q}c_{r}^{(\varpi_{r})}(\overline{x}_{2m(r-1)+1})		
		\cdot D_{n}(\overline{x}_{2mq}-x_{1})^{\ast}\prod_{r=2}^{2mq}D_{n}(x_{r})\,\mathrm{d}x_{1}\cdots\,\mathrm{d}x_{2mq},\\
		&I_{n,2}^{\varpi}(\underline{\theta}):=
        \int_{[-\pi,\pi]^{2mq}\cap{W}^{c}}
		\prod_{r=1}^{q}c_{r}^{(\varpi_{r})}(\overline{x}_{2m(r-1)+1})		
		\cdot D_{n}(\overline{x}_{2mq}-x_{1})^{\ast}\prod_{r=2}^{2mq}D_{n}(x_{r})\,\mathrm{d}x_{1}\cdots\,\mathrm{d}x_{2mq}
	\end{align*}
	for $\varpi=(\varpi_{1},\cdots,\varpi_{q})\in\{0,1\}^{q}$, where $D_{n}(x)=\sum_{t=1}^{n}e^{{i}xt}$ for $x\in\mathbb{R}$. 
	We also write $I_{n}(\underline{\theta})=I_{n}^{(0,\cdots,0)}(\underline{\theta})$ for conciseness.\\ 
	
Note in addition that
    \begin{equation*}		k_{r,\theta_{2r-1}}^{n}(\overline{x}_{2m(r-1)+1})
		\prod_{v=2}^{2m}u_{\theta_{2r}}^{n,m}(\overline{x}_{2m(r-1)+v})=c_{r}^{n,(0)}(\overline{x}_{2m(r-1)+1})+c_{r}^{n,(1)}(\overline{x}_{2m(r-1)+1}),
	\end{equation*}
	so we obtain the following upper bound for $E_{n}(\underline{\theta})$ using Lemma 3 of \cite{Takabatake-2023-JTSA-supplement}:
	\begin{equation*}
		E_{n}(\underline{\theta})\lesssim n^{-1}|I_{n}(\underline{\theta})-nI(\underline{\theta})| +n^{-1}\sum|I_{n,1}^{\varpi}(\underline{\theta})| +n^{-1}\sum|I_{n,2}^{\varpi}(\underline{\theta})|.
	\end{equation*}
    uniformly on compact subsets of $\Theta_{\ast}$, where the sums holds over all $\varpi=(\varpi_{1},\cdots,\varpi_{q})\in\{0,1\}^{q}$ such that $\varpi \neq (0,\cdots,0)$. 
	Therefore Proposition~\ref{Prop:T23_JTSA} follows once we have proved the following results.	
	\begin{lemma}\label{key_proposition_lemma}
		Suppose the assumptions given in Proposition~\ref{Prop:T23_JTSA} to hold. 
		Then the following results hold uniformly on compact subsets of $\Theta_{1}(\iota)$ for any $\iota\in(0,1)$. 
		\begin{enumerate}[$(1)$]
			\item\label{key_Prop:W}
			For any $\epsilon_{1},\epsilon_{2}>0$ and $\varpi\in\{0,1\}^{q}$, we have $I_{n,1}^{\varpi}(\underline{\theta})=o(\Delta_{n}^{-\rho_{q}(\underline{\xi})-\epsilon_{1}}
			    n^{\overline{\psi}_{q}(\underline{\xi})+\epsilon_{2}})$ as $n\to\infty$ uniformly on compact subsets of $\Theta_{\ast}$.
			\item\label{key_Prop:Wc}
			For any $\epsilon_{1},\epsilon_{2}>0$ and $\varpi\in\{0,1\}^{q}\setminus\{(0,\cdots,0)\}$, we have $I_{n,2}^{\varpi}(\underline{\theta})=o(\Delta_{n}^{-\rho_{q}(\underline{\xi})-\epsilon_{1}}
			    n^{\psi_{q,-}(\underline{\xi})+\epsilon_{2}})$ as $n\to\infty$ uniformly on compact subsets of $\Theta_{\ast}$.
			\item\label{key_Prop:main_term}
			For any $\epsilon_{1},\epsilon_{2}>0$, we have $|I_{n}(\underline{\theta})-nI(\underline{\theta})|=o(\Delta_{n}^{-\rho_{q}(\underline{\xi})-\epsilon_{1}}
			    n^{\overline{\psi}_{q}(\underline{\xi})+\epsilon_{2}})$ as $n\to\infty$ uniformly on compact subsets of $\Theta_{\ast}$.
		\end{enumerate}
	\end{lemma}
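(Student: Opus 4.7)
The plan is to adapt the methodology developed for the proof of Equation~1 in the supplement \cite{Takabatake-2023-JTSA-supplement}, now allowing the integrands $k_{r,\theta}^n$ to combine both a fractional-Brownian-type singularity $d_{r,\xi}^n(x)\sim |x|^{-(2H-1-\epsilon)}$ near zero and a moving-average-type behaviour $e_{r,\tau}^n(x)\sim |x|^{-2(K+1)}$. Throughout, the choice $m>K+3/2$ guarantees that the $(2m-1)$-fold smoothing built into $\Gamma_{n,m}$ tames the latter singularity sufficiently to make the relevant integrals against the Dirichlet kernels $D_n$ finite.

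For \eqref{key_Prop:W}, I would decompose $W=\bigcup_{s=1}^{2mq} W_s$ and treat each $W_s$ separately. By the definition in \eqref{def-W}, on $W_s$ one of the relevant running partial sums is dominated in modulus by a single coordinate, which lets one absorb the Dirichlet kernel indexed by that coordinate through the trivial bound $|D_n|\leq n$ and control the remaining kernels through the standard estimate $\int_{-\pi}^{\pi}|D_n(x)|^2\,\mathrm{d}x\lesssim n$. The residual $2mq-2$ coordinates are then integrated against $\prod_r c_r^{n,(\varpi_r)}(\overline{x}_{2m(r-1)+1})$; the singular-integral bounds \eqref{assumption_k} produce the factor $\Delta_n^{-\rho_q(\underline{\xi})-\epsilon_1}$ after collecting the contributions of all factors with $H_{2r-1}>H_{2r}$, while the bookkeeping of the $n$-powers coming from Dirichlet-kernel absorption yields the exponent $\overline{\psi}_q$.

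For \eqref{key_Prop:Wc}, the complement $W^c$ provides no dominating coordinate, but the hypothesis $\varpi\neq(0,\dots,0)$ supplies at least one difference factor $c_r^{n,(1)}$. Expanding this difference by the fundamental theorem of calculus in the variables $\overline{x}_{2m(r-1)+v}$, $v\geq 2$, produces an extra multiplicative factor proportional to $|\overline{x}_{2m(r-1)+v}-\overline{x}_{2m(r-1)+1}|$ together with a derivative of $u_{\theta_{2r}}^{n,m}$. This extra gap can then be converted into additional decay by summation by parts against one of the adjacent Dirichlet kernels. The reduction of the exponent from $\overline{\psi}_q$ to $\psi_{q,-}$ reflects the fact that on $W^c$ only the one-sided inequalities used in the definition of $\psi_{q,-}$ (those relating $H_{2r\pm 1}$ to $H_{2r}$ with a fixed orientation) survive the resulting estimate.

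For \eqref{key_Prop:main_term}, I would rely on the weak near-identity $(2\pi n)^{-1}|D_n(x)|^2\rightharpoonup \delta_0$ at zero: collapsing each partial-sum coordinate inductively via this approximation transforms the iterated integral $I_n(\underline{\theta})$ into $nI(\underline{\theta})$, modulo an error of order $n^{-1+\epsilon_2}\Delta_n^{-\rho_q-\epsilon_1}$, the extra singular factor being the price paid for taking absolute values when moving the Dirac approximation through the singular weights $k_{r,\theta}^n$. The main obstacle across all three parts is the uniformity of the error bounds over compact subsets of $\Theta_{\ast}$: the exponents $\rho_q$, $\overline{\psi}_q$ and $\psi_{q,-}$ depend continuously on $\underline{\xi}$, and ensuring that the losses $\epsilon_1,\epsilon_2$ are genuinely arbitrary requires careful tracking of the constants as $H_{2r-1}\uparrow 1$ or $H_{2r}\downarrow 0$; the slack $\upsilon>0$ in the definition of $\Theta_1(\iota)$ then enters only when the lemma is assembled into Proposition~\ref{Prop:T23_JTSA}, where the summation of the three bounds is converted into a bound on $E_n^{(1)}$.
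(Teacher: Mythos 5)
Your outline correctly identifies the geometric decomposition $W=\bigcup_s W_s$ versus $W^c$ and the special role of the difference terms $c_r^{n,(1)}$ when $\varpi\neq 0$, but the quantitative mechanisms you propose for parts~\eqref{key_Prop:W} and~\eqref{key_Prop:Wc} do not match the paper's and at least the first one does not work.

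For~\eqref{key_Prop:W}, your plan to "absorb one Dirichlet kernel with $|D_n|\leq n$ and control the remaining ones by $\int|D_n|^2\lesssim n$" gives a power of $n$ of order $n^{1+(2mq-1)/2}$, which is far larger than the target $n^{\psi_{q,-}(\underline{\xi})+\epsilon_2}$; there is no way to recover $\epsilon_2$-precision in the exponent from $L^\infty/L^2$ bounds alone. The actual argument uses the interpolated Dirichlet estimate $|D_n(x)|\lesssim n^{\eta}|x|^{\eta-1}$ for each of the $2mq$ kernels, with exponents $\eta_r\in(0,1)$ chosen so that $\sum_r\eta_r$ lies strictly between $\psi_{q,-}(\underline{\xi})$ and $\psi_{q,-}(\underline{\xi})+\epsilon$ while satisfying the pairwise lower bounds needed for integrability; this yields the sharp $n^{\sum\eta_r}$ factor, and the residual integral $\sum_s\int_{W_s}f_n(\mathbf{x})\,\mathrm{d}\mathbf{x}$ is then controlled by a Fox--Taqqu-type convolution lemma (Proposition~6.1 of \cite{fox1987central}). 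The hypothesis $m>K+3/2$, which you rightly flag, enters precisely here, ensuring $2(K+1)/(2m-1)<1$ so that this lemma applies. Your proposal does not contain this fractional-interpolation step nor the power-weight convolution lemma, and without them the $\epsilon$-tight exponent is out of reach.

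For~\eqref{key_Prop:Wc}, summation by parts against the Dirichlet kernels is not used. Instead, the key observation is that on $W^c$ the running sums satisfy $c_-|\overline{x}_{s-1}|\leq|\overline{x}_s|\leq c_+|\overline{x}_{s-1}|$ uniformly, so each $|\overline{x}_s|$ is comparable to $|x_1|$; combined with the mean-value theorem applied to $u_{\theta_{2r}}^{n,m}$ (your FTC idea is in the right spirit here), this produces a pointwise bound of the form $|x_1|^{-\psi_{q}(\underline{\xi})-\epsilon-|\varpi|}\prod_r(\sum_{w,v}|x_{2m(r-1)+v+1}|)^{\varpi_r}$ (Lemma~\ref{key_lemma_W-tilda-c}), from which the conclusion follows by the argument template of Proposition~4(2) in \cite{Takabatake-2023-JTSA-supplement}. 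For~\eqref{key_Prop:main_term}, your Dirac-approximation heuristic is essentially the right intuition, but the paper delegates the precise bookkeeping to Proposition~4(3) of the same supplement, using parts~\eqref{key_Prop:W}--\eqref{key_Prop:Wc} as inputs. Finally, note that the reduction from $\psi_q$ to $\psi_{q,-}$ is not the distinguishing feature of $W^c$ that you describe; the proof of part~\eqref{key_Prop:W} also produces $\psi_{q,-}$, and the exponent $\overline{\psi}_q$ in the statement is simply unfortunate notation.
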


	In the rest of this section, we will prove Lemma~\ref{key_proposition_lemma} (1)-(3) separately.  
	\subsubsection{Proof of Lemma~\ref{key_proposition_lemma} \eqref{key_Prop:W}}\label{Section_proof_key_prop1}
	For any $\epsilon>0$ and each $r\in\{1,2,\cdots,q\}$, $\xi_{2r-1}=(H_{2r-1},\sigma_{2r-1})\in(0,1)\in[0,\infty)$ and $\xi_{2r}=(H_{2r},\sigma_{2r})\in(0,1)\in(0,\infty)$, 
	we take $\eta_{2m(r-1)+v}\equiv\eta_{2m(r-1)+v}(\xi_{2r-1},\xi_{2r},\epsilon)\in(0,1)$ for $v\in\{1,2,\cdots,2m\}$ such that for each $r\in\{1,2,\cdots,q\}$ and $v\in\{2,3,\cdots,m\}$,
	\begin{align*}
		&\eta_{2m(r-1)+1}+\eta_{2m(r-1)+2}
		>\left((2H_{2r-1}-1)-(2m-1)^{-1}(2H_{2r}-1)\right)\Indi_{(0,\infty)}(\sigma_{2r-1}),\\
		&\eta_{2m(r-1)+2v-1}+\eta_{2m(r-1)+2v}>-2(2m-1)^{-1}(2H_{2r}-1)\Indi_{(0,\infty)}(\sigma_{2r-1}),
	\end{align*}
	and
	\begin{equation*}
		\sum_{v=1}^{2m}\eta_{2m(r-1)+v} 
		< 2 (H_{2r-1}-H_{2r})_{+}\Indi_{(0,\infty)}(\tau_{2r-1})
		+\frac{\epsilon}{q}.
	\end{equation*}
	Note that for each $r\in\{1,2,\cdots,q\}$ and $v\in\{2,3,\cdots,m\}$, we also have
	\begin{align*}
		&\eta_{2m(r-1)+1}+\eta_{2m(r-1)+2}>-\frac{4(m-1)(K+1)}{2m-1}, \;\;\text{ and }\;\;
		&\eta_{2m(r-1)+2v-1}+\eta_{2m(r-1)+2v}>\frac{-4(K+1)}{2m-1}.
	\end{align*}
	Moreover, since $\eta_{r}>0$ for all $r\in\{1,2,\cdots,2mq\}$, we can show
	\begin{align*}
		\sum_{v=1}^{2m}\eta_{2m(r-1)+v}>2(H_{2r-1}-H_{2r})_{+}\Indi_{(0,\infty)}(\tau_{2r-1})
	\end{align*}
	so that $\psi_{p,-}(\underline{H})<\sum_{r=1}^{2mq}\eta_{r}<\psi_{p,-}(\underline{H})+\epsilon$ holds for $\underline{H}=(H_{1},\cdots,H_{2q})$.\\

	Let $\eta\in[0,1]$ and $\overline{L}_{\eta}$ be the $2\pi$-periodic function given by $\overline{L}_{\eta}(x):=|x|^{\eta-1}$ on $[-\pi,\pi]$.	Define 
	\begin{equation*}
		f_{n}(\mathbf{x}):=
		\prod_{r=1}^{q}f_{n,r}^{(0)}(\mathbf{x})
		\cdot\prod_{s=1}^{2mq}\overline{L}_{\eta_{r}}(\overline{x}_{s+1}-\overline{x}_{s}),
	\end{equation*}
	where $\mathbf{x}=(x_{1},\cdots,x_{2mq})$ and 
	\begin{align*}
		f_{n,r}^{(0)}(\mathbf{x}):=&\left|\overline{x}_{2m(r-1)+1}\right|^{1-2H_{2r-1}}
		\prod_{v=2}^{2m}\left[
		\left|\overline{x}_{2m(r-1)+v}\right|^{\frac{2H_{2r}-1}{2m-1}}
		+\left|\overline{x}_{2m(r-1)+1}\right|^{\frac{2H_{2r}-1}{2m-1}}
		\right]\\
		&+\left|\overline{x}_{2m(r-1)+1}\right|^{2(K+1)}
		\prod_{v=2}^{2m}\left[
		\left|\overline{x}_{2m(r-1)+v}\right|^{-\frac{2(K+1)}{2m-1}}
		+\left|\overline{x}_{2m(r-1)+1}\right|^{-\frac{2(K+1)}{2m-1}}
		\right]
	\end{align*}
	where we denote by $\overline{x}_{2mq+1}=x_{1}$. Using Lemma 2 of \cite{Takabatake-2023-JTSA-supplement}, 
	we can show that for each $r\in\{1,2,\cdots,q\}$, we have
	\begin{align*}
		\max_{\varpi_{r}\in\{0,1\}}\left|c_{r}^{(\varpi_{r})}(\overline{x}_{2m(r-1)+1})\right|
		&\leq 
		\left|d_{r,\xi_{2r-1}}^{n}(\overline{x}_{2m(r-1)+1})\right|
		\prod_{v=2}^{2m}\left[
		\left\{f_{\xi_{2r}}^{n}(\overline{x}_{2m(r-1)+v})\right\}^{-\frac{1}{2m-1}}
		+\left[f_{\xi_{2r}}^{n}(\overline{x}_{2m(r-1)+1})\right]^{-\frac{1}{2m-1}}
		\right]\\
		&\quad+\left|e_{r,\tau_{2r-1}}^{n}(\overline{x}_{2m(r-1)+1})\right|
		\prod_{v=2}^{2m}\left[
		\left\{g_{\tau_{2r}}^{n}(\overline{x}_{2m(r-1)+v})\right\}^{-\frac{1}{2m-1}}
		+\left\{g_{\tau_{2r}}^{n}(\overline{x}_{2m(r-1)+1})\right\}^{-\frac{1}{2m-1}}
		\right] \\
		&\lesssim 
		\Delta_{n}^{2(H_{2r-1}-H_{2r})}\left|\overline{x}_{2m(r-1)+1}\right|^{1-2H_{2r-1}}
		\prod_{v=2}^{2m}\left[
		\left|\overline{x}_{2m(r-1)+v}\right|^{\frac{2H_{2r}-1}{2m-1}}
		+\left|\overline{x}_{2m(r-1)+1}\right|^{\frac{2H_{2r}-1}{2m-1}}
		\right] \\
		&\quad+\left|\overline{x}_{2m(r-1)+1}\right|^{2(K+1)}
		\prod_{v=2}^{2m}\left[
		\left|\overline{x}_{2m(r-1)+v}\right|^{-\frac{2(K+1)}{2m-1}}
		+\left|\overline{x}_{2m(r-1)+1}\right|^{-\frac{2(K+1)}{2m-1}}
		\right] \\
		&\lesssim 
		\Delta_{n}^{-2(H_{2r}-H_{2r-1})_{+}}f_{n,r}^{(0)}(\mathbf{x})
	\end{align*}
	uniformly on compact subsets of $\Theta_{\ast}$ and therefore
 Then we see
	\begin{equation*}
		|I_{n,1}^{\varpi}(\underline{\theta})|\lesssim  \Delta_{n}^{-\rho_{q,-}(\underline{\xi})-\epsilon_{1}}
		n^{\sum_{r=1}^{2mq}\eta_{r}}\sum_{r=1}^{2mq}\int_{[-\pi,\pi]^{2mq}\cap{W}_{r}}f_{n}(\mathbf{x})\,\mathrm{d}\mathbf{x}
	\end{equation*}
	uniformly on compact subsets of $\Theta_{\ast}$.
 Proceeding as the proof of Proposition~6.1 of \cite{fox1987central}, we can also prove that
	\begin{equation*}
		\sum_{r=1}^{2mq}\int_{[-\pi,\pi]^{2mq}\cap{W}_{r}}{f}_{n}(\mathbf{x})\,\mathrm{d}\mathbf{x}\lesssim 1
	\end{equation*}
	uniformly on compact subsets of $\Theta_{\ast}$ since the assumption $m>K+3/2$ implies $2(K+1)/(2m-1)\in(0,1)$. 
	This completes the proof.
	\subsubsection{Proof of Lemma~\ref{key_proposition_lemma}~\eqref{key_Prop:Wc}}
 \label{Section_proof_key_prop2}
	Before proving Lemma~\ref{key_proposition_lemma}~\eqref{key_Prop:Wc}, we prepare the following lemma. 
	\begin{lemma}\label{key_lemma_W-tilda-c}
		Denote 
		$H_{1}:=\{(x_{1},\cdots,x_{2mq})\in\mathbb{R}^{2mq}:x_{1}=0\}$ and set $A:=[-\pi,\pi]^{2mq}\cap{W}^{c}\cap{H}_{1}^{c}$, where $W$ is defined in \eqref{def-W}. 
		For any $\varpi=(\varpi_{1},\cdots,\varpi_{q})\in\{0,1\}^{q}$, we have
		\begin{equation}\label{key_lemma_W-tilda-c_ineq-main}
		  \Delta_{n}^{\rho_{q,-}(\underline{\xi})+\epsilon_{1}}\left|
		  \prod_{r=1}^{q}c_{r}^{n,(\varpi_{r})}(\overline{x}_{2m(r-1)+1})\right|
		  \lesssim |x_{1}|^{-\psi_{q}(\underline{\xi})-\epsilon_{2}-|\varpi|}\prod_{r=1}^{q}\left[\sum_{w=2}^{2m}\sum_{v=1}^{w-1}\left|x_{2m(r-1)+v+1}\right|\right]^{\varpi_{r}}
		\end{equation}
		uniformly on compact subsets of $A\times\Theta_{\ast}$ for any $\epsilon_{1},\epsilon_{2}>0$, where we write $|\varpi|:=\sum_{r=1}^{q}\varpi_{r}$.
	\end{lemma}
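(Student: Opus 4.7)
The plan is to exploit the defining property of $W^c$ to reduce the bound on the product to a single-variable estimate in $|x_1|$, and then bound each factor $c_r^{n,(\varpi_r)}$ using the assumed estimates \eqref{assumption_k} on $d_{r,\xi}$ and $e_r$, together with the lower bound on $h_{\theta_{2r}}^n$. The first step is to record the geometric consequence of being in $W^c$: if $|\overline{x}_s| > c |x_{s+1}|$ for every $s \in \{1,\ldots,2mq-1\}$ with $c>1$, then $|\overline{x}_{s+1}| \geq (1-c^{-1})|\overline{x}_s|$ and $|\overline{x}_{s+1}| \leq (1+c^{-1})|\overline{x}_s|$, so by induction all partial sums $|\overline{x}_s|$ are comparable to $|x_1|$ (and in particular $|\overline{x}_{2m(r-1)+v}|\asymp|x_1|$ for every $r,v$). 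This is where $H_1^c$ is needed: it guarantees $x_1 \neq 0$ so that the common magnitude is strictly positive.

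Next, I would bound each $c_r^{n,(\varpi_r)}$ case by case. For $\varpi_r = 0$, one writes $|c_r^{n,(0)}(y)| = |k_{r,\theta_{2r-1}}^n(y)| \cdot u_{\theta_{2r}}^n(y)$, bounds $|k_{r,\theta_{2r-1}}^n(y)| \lesssim \Delta_n^{2H_{2r-1}-\epsilon'_1}|y|^{1-2H_{2r-1}+\epsilon'_2}\mathbf{1}_{\sigma_{2r-1}>0}+\nu_n^2 |y|^{2(K+1)}\mathbf{1}_{\tau_{2r-1}>0}$ using \eqref{assumption_k} for $v=0$, and uses the lower bound $h_{\theta_{2r}}^n(y) \gtrsim \Delta_n^{2H_{2r}}|y|^{1-2H_{2r}} + \nu_n^2 |y|^{2(K+1)}$ so that $u_{\theta_{2r}}^n(y) \lesssim (\Delta_n^{2H_{2r}}|y|^{1-2H_{2r}})^{-1}$ (or the analogous noise bound). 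Plugging in $|y| \asymp |x_1|$ and collecting the negative powers of $\Delta_n$ produced by the mismatch between $H_{2r-1}$ and $H_{2r}$, the worst factor is $\Delta_n^{-2(H_{2r-1}-H_{2r})_+\mathbf{1}_{\sigma_{2r-1}>0}}$, which aggregates over $r$ to $\Delta_n^{-\rho_{q,-}(\underline{\xi})}$; the corresponding power of $|x_1|$ aggregates to $|x_1|^{-\psi_{q,-}(\underline{\xi})}$ plus an $\epsilon$-loss absorbed by $\epsilon'_1,\epsilon'_2$.

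For $\varpi_r = 1$, I rewrite the difference $\prod_{v=2}^{2m} u_{\theta_{2r}}^{n,m}(\overline{x}_{2m(r-1)+v}) - \prod_{v=2}^{2m}u_{\theta_{2r}}^{n,m}(\overline{x}_{2m(r-1)+1})$ as a telescoping sum and apply the mean-value theorem in each term, using \eqref{assumption_k} with $v=1$ to control $\partial_x u_{\theta_{2r}}^{n,m}$. This produces an extra factor $\sum_{w=2}^{2m}\sum_{v=1}^{w-1}|x_{2m(r-1)+v+1}|$ multiplied by a derivative of $u^{n,m}$ evaluated at a point comparable to $|x_1|$ (again by $W^c$), which costs one additional negative power of $|x_1|$: hence the total $|x_1|^{-|\varpi|}$ in the stated bound. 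The power of $\Delta_n$ is unchanged.

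The main obstacle will be the combinatorial bookkeeping in the $\varpi_r = 1$ case: writing out the telescoping carefully and verifying that the mean-value-theorem intermediate points still satisfy comparability to $|x_1|$ (since they lie on the line segment between two $\overline{x}$-partial sums which are both comparable to $|x_1|$ by $W^c$), and then tracking how the additive structure $d_{r,\xi}^n + e_{r,\tau}^n$ produces the supremum used to define $\rho_{q,-}$ without generating a $\rho_{q,+}$ contribution. A secondary subtlety is that the $h_{\theta_{2r}}^{n,m}$-factors in $u^{n,m}$ involve $(2m-1)$-th roots, so one must verify $2(K+1)/(2m-1)<1$, which is exactly the reason for choosing $m > K + 3/2$; this ensures that all exponents stay in the integrable range and that the factor $f_{n,r}^{(0)}$ from the decomposition in Section~\ref{Section_proof_key_prop1} behaves consistently with what is needed here.
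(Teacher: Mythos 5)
Your proposal follows the paper's proof essentially step for step: you first extract the comparability $|\overline{x}_s|\asymp|x_1|$ from membership in $W^c$ (and the strict positivity from $H_1^c$), then split into the $\varpi_r=0$ and $\varpi_r=1$ cases, bound the $\varpi_r=0$ case by treating $c_r^{n,(0)}$ as a ratio of spectral densities and separating the $\sigma_{2r-1}>0$ and $\tau_{2r-1}>0$ contributions, and handle the $\varpi_r=1$ case by a telescoping decomposition of the difference of products followed by the mean-value theorem with a logarithmic-derivative bound on $u_{\theta}^{n,m}$. The only discrepancy is a sign slip in your intermediate exponent: the negative power of $\Delta_n$ generated by the $\varpi_r=0$ case is $\Delta_n^{-2(H_{2r}-H_{2r-1})_+}$ (arising when the denominator's Hurst index $H_{2r}$ exceeds $H_{2r-1}$), not $\Delta_n^{-2(H_{2r-1}-H_{2r})_+}$; the latter is the exponent that appears in the negative power of $|x_1|$, and the two positive parts are mutually exclusive. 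Your aggregate conclusions $\Delta_n^{-\rho_{q,-}}$ and $|x_1|^{-\psi_{q,-}}$ are nonetheless correct, so this is a local typo rather than a gap.
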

	\begin{proof}[Proof of Lemma~\ref{key_lemma_W-tilda-c}]
		Fix $\varpi=(\varpi_{1},\cdots,\varpi_{q})\in\{0,1\}^{q}$. 
		We can assume $\overline{x}_{s}\neq 0$ for all $s\in\{1,2,\cdots,2mq\}$ without loss of generality. Indeed, $(x_{1},\cdots,x_{2mq})\in{W}^{c}$ implies
		\begin{equation}\label{key_inequality_21}
			c_{-}|\overline{x}_{s-1}|\leq|\overline{x}_{s}|\leq c_{+}|\overline{x}_{s-1}|,\ \ s\in\{2,3,\cdots,2mq\},
		\end{equation} 
		where $c_{\pm}=1\pm c^{-1}$.  
		Note that $c_\pm>0$ Using $c>1$. Therefore $x_{1}\neq 0$ implies $\overline{x}_{s}\neq 0$ for all $s\in\{1,2,\cdots,2mq\}$.\\

By a straightforward calculation, we can see that the inequality \eqref{key_lemma_W-tilda-c_ineq-main} follows once we have proved that for each $r\in\{1,2,\cdots,q\}$ and $v\in\{2,3,\cdots,m\}$ and any $\epsilon_{1},\epsilon_{2}>0$,
\begin{align}
	\left|c_{r}^{n,(0)}(\overline{x}_{2m(r-1)+1})\right|
	&\lesssim 
	1\vee\left(
    \Delta_{n}^{-2(H_{2r}-H_{2r-1})_{+} -\epsilon_{1}}
	|x_{1}|^{-2(H_{2r-1}-H_{2r})_{+} -\epsilon_{2}}
    \Indi_{(0,\infty)}(\sigma_{2r-1}) \right), 
	\label{key_ineq1} \\
	\left|c_{r}^{n,(1)}(\overline{x}_{2m(r-1)+1})\right|
	&\lesssim 
	\left[1\vee\left(
    \Delta_{n}^{-2(H_{2r}-H_{2r-1})_{+} -\epsilon_{1}}
	|x_{1}|^{-2(H_{2r-1}-H_{2r})_{+} -\epsilon_{2}}
    \Indi_{(0,\infty)}(\sigma_{2r-1}) \right)
    \right]
	\sum_{w=2}^{2m}\sum_{v=1}^{w-1}\left|x_{2m(r-1)+v+1}\right|
	\label{key_ineq2}
\end{align}
uniformly on compact subsets of $A\times\Theta_{\ast}$. 
We firstly verify the inequality \eqref{key_ineq1}.  
By a straightforward calculation, we can show that for any $\epsilon_{1},\epsilon_{2}>0$, 
\begin{align*}
	\left|c_{r}^{n,(0)}(\overline{x}_{2m(r-1)+1})\right| 
	&=\left|\frac{k_{r,\theta_{2r-1}}^{n}(\overline{x}_{2m(r-1)+1})}{h_{\theta_{2r}}^{n}(\overline{x}_{2m(r-1)+1})}\right| \\
	&\leq\frac{|d_{r,\theta_{2r-1}}^{n}(\overline{x}_{2m(r-1)+1})|}{d_{\theta_{2r}}^{n}(\overline{x}_{2m(r-1)+1})}\Indi_{(0,\infty)}(\sigma_{2r-1})
	+\frac{|e_{r,\theta_{2r-1}}^{n}(\overline{x}_{2m(r-1)+1})|}{e_{\theta_{2r}}^{n}(\overline{x}_{2m(r-1)+1})}\Indi_{(0,\infty)}(\tau_{2r-1}) \\
	&\lesssim\Delta_{n}^{2(H_{2r-1}-H_{2r})-\epsilon_{1}}|\overline{x}_{2m(r-1)+1}|^{-2(H_{2r-1}-H_{2r})-\epsilon_{2}}\Indi_{(0,\infty)}(\sigma_{2r-1})
	 +\Indi_{(0,\infty)}(\tau_{2r-1}) \\
	&\lesssim
    1\vee\left(
    \Delta_{n}^{-2(H_{2r}-H_{2r-1})_{+} -\epsilon_{1}}
	|x_{1}|^{-2(H_{2r-1}-H_{2r})_{+} -\epsilon_{2}}
    \Indi_{(0,\infty)}(\sigma_{2r-1}) \right)
\end{align*}
uniformly on compact subsets of $A\times\Theta_{\ast}$. 
This completes the proof of \eqref{key_ineq1}.\\

Finally, we verify the inequality \eqref{key_ineq2}. 
First note that $c_{r}^{n,(1)}(\overline{x}_{2m(r-1)+1})$ can rewriten as
\begin{align}
	k_{r,\theta_{2r-1}}^{n}&(\overline{x}_{2m(r-1)+1}) 
	 \cdot\sum_{w=2}^{2m}\left(\prod_{v=w+1}^{2m}u_{\theta_{2r}}^{n,m}(\overline{x}_{2m(r-1)+1})\right)
	\left[
	u_{\theta_{2r}}^{n,m}(\overline{x}_{2m(r-1)+w})
	-u_{\theta_{2r}}^{n,m}(\overline{x}_{2m(r-1)+1})
	\right]
	\prod_{v=2}^{w-1}u_{\theta_{2r}}^{n,m}(\overline{x}_{2m(r-1)+v}) 
	\nonumber \\
	&=k_{r,\theta_{2r-1}}^{n}(\overline{x}_{2m(r-1)+1})
	\left[u_{\theta_{2r}}^{n,m}(\overline{x}_{2m(r-1)+1})\right]^{2m-1}
	\nonumber \\
	&\quad \cdot\sum_{w=2}^{2m} 
	\left[\sum_{v_{1}=1}^{w-1}\left(
	u_{\theta_{2r}}^{n,m}(\overline{x}_{2m(r-1)+v_{1}+1})
	-u_{\theta_{2r}}^{n,m}(\overline{x}_{2m(r-1)+v_{1}})
	\right)
	\right]
	u_{\theta_{2r}}^{n,m}(\overline{x}_{2m(r-1)+1})^{-1}
	\prod_{v_{2}=2}^{w-1}\frac{u_{\theta_{2r}}^{n,m}(\overline{x}_{2m(r-1)+v_{2}})}{u_{\theta_{2r}}^{n,m}(\overline{x}_{2m(r-1)+1})},
	\label{rep_c1}
\end{align}
where, for conciseness, we write
\begin{align*}
	\prod_{v=2m+1}^{2m}u_{\theta_{2r}}^{n,m}(\overline{x}_{2m(r-1)+1})=
	\prod_{v=2}^{1}u_{\theta_{2r}}^{n,m}(\overline{x}_{2m(r-1)+v})=
	\prod_{v=2}^{1}\frac{u_{\theta_{2r}}^{n,m}(\overline{x}_{2m(r-1)+v})}{u_{\theta_{2r}}^{n,m}(\overline{x}_{2m(r-1)+1})}=1.
\end{align*}
Moreover, using the inequality \eqref{key_inequality_21}, we can also show that
\begin{align}
	\prod_{v=2}^{w-1}\frac{h_{\theta_{2r}}^{n}(\overline{x}_{2m(r-1)+v})}{h_{\theta_{2r}}^{n}(\overline{x}_{2m(r-1)+1})} 
	\nonumber
	&\leq\prod_{v=2}^{w-1}\left[
	\frac{d_{\theta_{2r}}^{n}(\overline{x}_{2m(r-1)+v})}{d_{\theta_{2r}}^{n}(\overline{x}_{2m(r-1)+1})}
	+\frac{e_{\theta_{2r}}^{n}(\overline{x}_{2m(r-1)+v})}{e_{\theta_{2r}}^{n}(\overline{x}_{2m(r-1)+1})}
	\right]
	 \\
	&\lesssim 
	\prod_{v=2}^{w-1}\left[
	|\overline{x}_{2m(r-1)+v}|^{1-2H_{2r}}
	|\overline{x}_{2m(r-1)+1}|^{2H_{2r}-1} 
	+|\overline{x}_{2m(r-1)+v}|^{2(K+1)}
	|\overline{x}_{2m(r-1)+1}|^{-2(K+1)}
	\right]
	\lesssim 1
	\label{ineq_ell_ratio}
\end{align}
uniformly on compact subsets of $A\times\Theta_{\ast}$. 
Therefore, using \eqref{key_ineq1}, \eqref{rep_c1} and \eqref{ineq_ell_ratio}, we can see that \eqref{key_ineq2} follows once we have proved
\begin{align}\label{key_ineq1_1}
	\left|
	\sum_{w=2}^{2m}\left[\sum_{v=1}^{w-1}\left(
	u_{\theta_{2r}}^{n,m}(\overline{x}_{2m(r-1)+v+1})
	-u_{\theta_{2r}}^{n,m}(\overline{x}_{2m(r-1)+v})
	\right)\right]
	u_{\theta_{2r}}^{n,m}(\overline{x}_{2m(r-1)+1})^{-1}
	\right| 
	\lesssim 
	|x_{1}|^{-1}
	\sum_{w=2}^{2m}\sum_{v=1}^{w-1}\left|x_{2m(r-1)+v+1}\right|
\end{align}
uniformly on compact subsets of $A\times\Theta_{\ast}$.
Note that $(x_{1},\cdots,x_{2mq})\in{W}^{c}\cap{H}_{1}^{c}$ also implies $|\overline{x}_{s}+z(\overline{x}_{s+1}-\overline{x}_{s})|\geq c_{-}|\overline{x}_{s}|>0$ for any $z\in[0,1]$ and $s\in\{1,2,\cdots,2mq-1\}$. 
Thus, using the inequality \eqref{key_inequality_21} and the mean-value theorem, we can show that for each $v\in\{1,2,\cdots,2m-1\}$,
\begin{align}
	\left|
	u_{\theta_{2r}}^{n,m}(\overline{x}_{2m(r-1)+v+1})
	-u_{\theta_{2r}}^{n,m}(\overline{x}_{2m(r-1)+v})
	\right| 
	\leq
	\left|x_{2m(r-1)+v+1}\right|
	\sup_{c_{-}|\overline{x}_{2m(r-1)+v+1}|\leq y\leq c_{+}|\overline{x}_{2m(r-1)+v+1}|}
	\left|\partial_{x}u_{\theta_{2r}}^{n,m}(y)\right|.
	\label{ineq12_c1}
\end{align}
Moreover, we can also show
\begin{equation*}
	\left|\partial_{x}u_{\theta}^{n,m}(x)\right|
	\lesssim \left|h_{\theta}^{n}(x)^{-\frac{1}{2m-1}-1}\partial_{x}h_{\theta}^{n}(x)\right|
	\lesssim \left(\frac{|\partial_{x}f_{\xi}(x)|}{f_{\xi}(x)}+\frac{|\partial_{x}g_{\tau}(x)|}{g_{\tau}(x)}\right)u_{\theta}^{n,m}(x)
	\lesssim |x|^{-1}u_{\theta}^{n,m}(x)
 \end{equation*}
 and 
 \begin{equation}\label{ineq_ratio-u}
	u_{\theta}^{n,m}(y)u_{\theta}^{n,m}(x)^{-1}
	\lesssim
	\left[\frac{f_{\xi}(x)}{f_{\xi}(y)}+\frac{g_{\tau}(x)}{g_{\tau}(y)}\right]^{\frac{1}{2m-1}}
	\lesssim
	\left[|y|^{1-2H}|x|^{2H-1} + |y|^{2(K+1)}|x|^{-2(K+1)}\right]^{\frac{1}{2m-1}}.
 \end{equation}
uniformly on compact subsets of $A\times\Theta_{\ast}$. 
Thus, using  the inequalities \eqref{key_inequality_21}, \eqref{key_ineq1}, \eqref{ineq12_c1}-\eqref{ineq_ratio-u}, we can also show that
\begin{align}
	&\left|\sum_{w=2}^{2m}\left[\sum_{v=1}^{w-1}\left(
	u_{\theta_{2r}}^{n,m}(\overline{x}_{2m(r-1)+v+1})
	-u_{\theta_{2r}}^{n,m}(\overline{x}_{2m(r-1)+v})
	\right)\right]u_{\theta_{2r}}^{n,m}(\overline{x}_{2m(r-1)+1})^{-1}\right| 
	\nonumber \\
	&\lesssim
	\sum_{w=2}^{2m}\sum_{v=1}^{w-1}\left|x_{2m(r-1)+v+1}\right|
	\sup_{c_{-}|\overline{x}_{2m(r-1)+v+1}|\leq y\leq c_{+}|\overline{x}_{2m(r-1)+v+1}|}
	 \left[\left|\partial_{x}u_{\theta_{2r}}^{n,m}(y)\right|u_{\theta_{2r}}^{n,m}(\overline{x}_{2m(r-1)+v+1})^{-1}\right]
	\nonumber \\
	&\lesssim
	\sum_{w=2}^{2m}\sum_{v=1}^{w-1}\left|x_{2m(r-1)+v+1}\right||\overline{x}_{2m(r-1)+v+1}|^{-1}
	\lesssim
	|x_{1}|^{-1}\sum_{w=2}^{2m}\sum_{v=1}^{w-1}\left|x_{2m(r-1)+v+1}\right|
	\label{trace-app_key_ineq2}
\end{align}
uniformly on compact subsets of $A\times\Theta_{\ast}$. 
Then \eqref{key_ineq1_1} follows from \eqref{ineq12_c1} and \eqref{trace-app_key_ineq2}. 
	\end{proof}
We are now ready to prove Lemma~\ref{key_proposition_lemma} \eqref{key_Prop:Wc}. 
Since $\varpi\neq (0,\cdots,0)$, there exists $r_{0}\in\{1,\cdots,q\}$ such that $\varpi_{r_{0}}=1$. 
Then, using Lemma~\ref{key_lemma_W-tilda-c}, the definition of $W$ in \eqref{def-W} and \eqref{key_inequality_21}, we can show
\begin{equation}\label{key_inequality_Wc}
	\Delta_{n}^{\rho_{q,-}(\underline{\xi})+\epsilon_{1}}\left|\prod_{r=1}^{q}c_{r,n}^{(\varpi_{r})}(\overline{x}_{2m(r-1)+1})\right|
			\lesssim
    |x_{1}|^{-\psi_{q,-}(\underline{\xi})_{+}-\epsilon_{2}-1}
	\sum_{w=2}^{2m}\sum_{v=1}^{w-1}\left|x_{2m(r_{0}-1)+v+1}\right|
\end{equation}
uniformly on compact subsets of $A\times\Theta_{1}(\iota)$ for any $\epsilon_{1},\epsilon_{2}>0$ and $\iota\in(0,1)$ since $\Theta_{1}(\iota)$ is a subset of $\Theta_{\ast}$. 
The rest of the proof of Lemma~\ref{key_proposition_lemma} \eqref{key_Prop:Wc} is similar to the proof of Proposition~4 (2) of \cite{Takabatake-2023-JTSA-supplement} using the inequality \eqref{key_inequality_Wc} instead of the inequality (5) of \cite{Takabatake-2023-JTSA-supplement}. We omit the details for conciseness. 
		
\subsubsection{Proof of Lemma~\ref{key_proposition_lemma}~\eqref{key_Prop:main_term}}
We can also prove Lemma~\ref{key_proposition_lemma} (\ref{key_Prop:main_term}) proceeding as the proof of Proposition~4 (3) of \cite{Takabatake-2023-JTSA-supplement} using Lemma 2 of \cite{Takabatake-2023-JTSA-supplement} and using Lemma~\ref{key_proposition_lemma} \eqref{key_Prop:W}-\eqref{key_Prop:Wc} instead of Proposition~4 (1)-(2) of \cite{Takabatake-2023-JTSA-supplement}. 
We again omit the details for conciseness.

\subsection{Proof of Proposition~\ref{Prop:LRR10_Thm5}}
		 Using Proposition~\ref{Prop:T23_JTSA}, it suffices to prove that
		 \begin{equation}\label{order-En2}
		 	\Delta_{n}^{\rho_{q}(\underline{\xi})+\epsilon_{1}}n^{-\psi_{p}(\underline{\xi})-\epsilon_{2}}
			E_{n}^{(2)}(\underline{\theta})
			=o(1)\ \ \mbox{as $n\to\infty$}
		 \end{equation}
		 uniformly on compact subsets of $\Theta_{1}(\iota)$ for any $\epsilon_{1},\epsilon_{2}>0$, where $E_{n}^{(2)}(\underline{\theta})$ is defined in \eqref{def-En12}. By definition of $A_{r}^{n}$ and $B_{r}^{n}$, we have
		  \begin{equation*}
		 	A_{r}^{n}(\underline{\theta})-B_{r}^{n}(\underline{\theta})=A_{r}^{n}(\underline{\theta})D_{r}^{n}(\underline{\theta})=(A_{r}^{n}(\underline{\theta})-B_{r}^{n}(\underline{\theta}))D_{r}^{n}(\underline{\theta})+B_{r}^{n}(\underline{\theta})D_{r}^{n}(\underline{\theta})
		 \end{equation*}
		 and therefore
		 \begin{align*}
		 	E_{n}^{(2)}(\underline{\theta})
		 	&=\bigg|\sum_{r=1}^{q}\mathrm{Tr}\bigg[\bigg(\prod_{s=1}^{r-1}B_{s}^{n}(\underline{\theta})\bigg)(A_{r}^{n}(\underline{\theta})-B_{r}^{n}(\underline{\theta}))\bigg(\prod_{s=r+1}^{q}A_{s}^{n}(\underline{\theta})\bigg)\bigg]\bigg|\leq\sum_{r=1}^{q}\bigg(v_{r,1}^{n}(\underline{\theta})+v_{r,2}^{n}(\underline{\theta})+v_{r,3}^{n}(\underline{\theta})+v_{r,4}^{n}(\underline{\theta})\bigg)
		 \end{align*}
where we write
\begin{align*}
   \begin{cases}
		 	v_{r,1}^{n}(\underline{\theta}):=\left|\mathrm{Tr}\left[\left(\prod_{s=1}^{r-1}A_{s}^{n}(\underline{\theta})\right)(A_{r}^{n}(\underline{\theta})-B_{r}^{n}(\underline{\theta}))D_{r}^{n}(\underline{\theta})\left(\prod_{s=r+1}^{q}A_{s}^{n}(\underline{\theta})\right)\right]\right|,\\
		 	v_{r,2}^{n}(\underline{\theta}):=\left|\mathrm{Tr}\left[\left(\prod_{s=1}^{r-1}B_{s}^{n}(\underline{\theta})-\prod_{s=1}^{r-1}A_{s}^{n}(\underline{\theta})\right)(A_{r}^{n}(\underline{\theta})-B_{r}^{n}(\underline{\theta}))D_{r}^{n}(\underline{\theta})\left(\prod_{s=r+1}^{q}A_{s}^{n}(\underline{\theta})\right)\right]\right|,\\
		 	v_{r,3}^{n}(\underline{\theta}):=\left|\mathrm{Tr}\left[\left(\prod_{s=1}^{r-1}B_{s}^{n}(\underline{\theta})\right)B_{r}^{n}(\underline{\theta})D_{r}^{n}(\underline{\theta})\left(\prod_{s=r+1}^{q}B_{s}^{n}(\underline{\theta})\right)\right]\right|,\\ 
		 	v_{r,4}^{n}(\underline{\theta}):=\left|\mathrm{Tr}\left[\left(\prod_{s=1}^{r-1}B_{s}^{n}(\underline{\theta})\right)B_{r}^{n}(\underline{\theta})D_{r}^{n}(\underline{\theta})\left(\prod_{s=r+1}^{q}A_{s}^{n}(\underline{\theta})-\prod_{s=r+1}^{q}B_{s}^{n}(\underline{\theta})\right)\right]\right|,
\end{cases}
\end{align*}
and where $\prod_{s=p+1}^{q}A_{s}^{n}(\underline{\theta}):=I_{n}$ and $\prod_{s=p+1}^{q}B_{s}^{n}(\underline{\theta}):=I_{n}$ for conciseness. In the rest of this proof, we evaluate the four terms $v_{r,1}^{n}(\underline{\theta})$, $v_{r,2}^{n}(\underline{\theta})$, $v_{r,3}^{n}(\underline{\theta})$ and $v_{r,4}^{n}(\underline{\theta})$ as $n\to\infty$ for each $r\in\{1,2,\cdots,q\}$ separately.\\
\par\noindent
\textit{Estimate of $v_{r,1}^{n}(\underline{\theta})$}: First note that 
\begin{equation}
		 	(A_{r}^{n}(\underline{\theta})-B_{r}^{n}(\underline{\theta}))D_{r}^{n}(\underline{\theta})
		 	=A_{r}^{n}(\underline{\theta})D_{r}^{n}(\underline{\theta})^{2} =\Sigma_{n}(k_{r,\theta_{2r-1}}^{n})^{\frac{1}{2}}V_{1,r}^{n}(\underline{\theta})\widetilde{D}_{r}^{n}(\underline{\theta})^{2}V_{2,r}^{n}(\underline{\theta})\Sigma_{n}(k_{r+1,\theta_{2r+1}}^{n})^{-\frac{1}{2}}. \label{decomposition_LRR10_2}
		 \end{equation}
Moreover, using that $\widetilde{A}_{r}^{n}(\underline{\theta}) =  V_{1,r}^{n}(\underline{\theta}) V_{2,r}^{n}(\underline{\theta})$ and the properties of Frobenius and operator norms, we have
		 \begin{align*}
		 	v_{r,1}^{n}(\underline{\theta})
		 	&=\bigg|\mathrm{Tr}\bigg[\bigg(\prod_{s=1}^{r-1}\widetilde{A}_{s}^{n}(\underline{\theta})\bigg)
		 	V_{1,r}^{n}(\underline{\theta})\widetilde{D}_{r}^{n}(\underline{\theta})^{2}V_{2,r}^{n}(\underline{\theta})
		 	\bigg(\prod_{s=r+1}^{q}\widetilde{A}_{s}^{n}(\underline{\theta})\bigg)\bigg]\bigg|\\
		 	&\leq\bigg\|\bigg(\prod_{s=1}^{r-1}\widetilde{A}_{s}^{n}(\underline{\theta})\bigg)V_{1,r}^{n}(\underline{\theta})\widetilde{D}_{r}^{n}(\underline{\theta})\bigg\|_{\mathrm{F}}
		 	\bigg\|\widetilde{D}_{r}^{n}(\underline{\theta})V_{2,r}^{n}(\underline{\theta})\bigg(\prod_{s=r+1}^{q}\widetilde{A}_{s}^{n}(\underline{\theta})\bigg)\bigg\|_{\mathrm{F}}\\
		 	&\leq\left\|\widetilde{D}_{r}^{n}(\underline{\theta})\right\|_{\mathrm{F}}^{2}\prod_{s=1}^{q}\left\|J_{1,s}^{n}(\theta)\right\|_{\mathrm{op}}\left\|J_{2,s}^{n}(\theta)\right\|_{\mathrm{op}}.
		 \end{align*}
		 Therefore, we obtain
		 \begin{equation}\label{upper-bound-aj}
			\Delta_{n}^{\rho_{q}(\underline{\xi})+\epsilon_{1}}n^{-\psi_{p}(\underline{\xi})-\epsilon_{2}}
			v_{r,1}^{n}(\underline{\theta})=o(1)\ \ \mbox{as $n\to\infty$}
		 \end{equation}
		 uniformly on compact subsets of $\Theta_{\ast}$ for any $\epsilon_{1},\epsilon_{2}>0$
		  and $r\in\{1,2,\cdots,q\}$ using the two following lemmas.

\begin{lemma}\label{Lemma_T22_QWLE_Ext}
	Let $m\in\mathbb{N}$ satisfying $m\geq K+3/2$. 
	Under the same assumptions in Proposition~\ref{Prop:LRR10_Thm5}, we obtain
	\begin{equation*}
		\left\|I_{n}-\Sigma_{n}(h_{\theta}^{n})^{\frac{1}{2}}\Gamma_{n,m}(\theta)\Sigma_{n}(h_{\theta}^{n})^{\frac{1}{2}}\right\|_{\mathrm{F}}^{2}=o(n^{\epsilon})\ \ \mbox{as $n\to\infty$}
	\end{equation*}
	uniformly on compact subsets of $(0,1)\times(0,\infty)^{2}$ for any $\epsilon>0$.
\end{lemma}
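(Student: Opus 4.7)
The lemma is a quantitative near-inverse statement: the matrix $\Gamma_{n,m}(\theta)=\Sigma_n(u_\theta^{n,m})^{2m-1}$, a $(2m-1)$-fold product of the Toeplitz matrix with symbol $u_\theta^{n,m}$, sandwiched with $\Sigma_n(h_\theta^n)^{1/2}$ on both sides, should approximate the identity in Frobenius norm up to a $o(n^\epsilon)$ error. The reason the result can be expected is that the symbol $u_\theta^{n,m}$ is deliberately constructed so that the pointwise product $h_\theta^n\cdot (u_\theta^{n,m})^{2m-1}$ is the constant function $(2\pi)^{-2m}$, and the $(2\pi)^{2m}$ normalization inside $u_\theta^{n,m}$ is chosen precisely to compensate the convolution factors that arise from the fact that $f\mapsto\Sigma_n(f)$ is not a homomorphism.

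The plan is to exploit the symmetry of $T_n:=\Sigma_n(h_\theta^n)^{1/2}\Gamma_{n,m}(\theta)\Sigma_n(h_\theta^n)^{1/2}$ and expand
\[
\|I_n - T_n\|_\mathrm{F}^2 = \mathrm{Tr}[(I_n-T_n)^2] = n - 2\mathrm{Tr}[\Sigma_n(h_\theta^n)\Gamma_{n,m}(\theta)] + \mathrm{Tr}[(\Sigma_n(h_\theta^n)\Gamma_{n,m}(\theta))^2],
\]
using the cyclic invariance of the trace. Each of the two remaining traces is a trace of a product of Toeplitz matrices (of total length $2m$ and $4m$ respectively), whose symbols are drawn from $\{h_\theta^n,u_\theta^{n,m}\}$.

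Next I would apply a trace approximation in the spirit of Proposition~\ref{Prop:LRR10_Thm5}, adapted to these pure products of Toeplitz matrices. This delivers
\[
\mathrm{Tr}[\Sigma_n(h_\theta^n)\Gamma_{n,m}(\theta)] = n + o(n^\epsilon), \qquad \mathrm{Tr}[(\Sigma_n(h_\theta^n)\Gamma_{n,m}(\theta))^2] = n + o(n^\epsilon),
\]
once one carefully tracks the $(2\pi)$ prefactors accumulated by the $2m-1$ iterated Toeplitz products of $u^{n,m}$ and uses that $h\cdot(u^{n,m})^{2m-1}=(2\pi)^{-2m}$ is constant; the leading orders then telescope to $n-2n+n=0$, leaving only the $o(n^\epsilon)$ remainder. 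Combining the three pieces gives the claim uniformly on compact subsets of $(0,1)\times(0,\infty)^2$.

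The main obstacle is that Proposition~\ref{Prop:LRR10_Thm5} is formulated for alternating products of the form $\prod_r\Sigma_n(k_r)\Sigma_n(h_r)^{-1}$, whereas our traces feature plain products of Toeplitz matrices without inverses. Two routes are available: one can either adapt Proposition~\ref{Prop:T23_JTSA} directly, which is in fact technically simpler in the absence of inverse factors (the error analysis based on the Dirichlet kernel does not need to handle the inverse Toeplitz matrix), or one can recast each trace as an alternating product by inserting compatible trivial factors. The role of the hypothesis $m>K+3/2$ is to guarantee that the near-zero singularity of $u_\theta^{n,m}$, of order $|x|^{-2(K+1)/(2m-1)}$, has exponent strictly less than $1$, which is precisely what the integrability conditions~\eqref{assumption_k} and the proof of Proposition~\ref{Prop:T23_JTSA} require in order to produce an $o(n^\epsilon)$ remainder uniformly on compact subsets.
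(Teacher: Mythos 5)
Your proof is correct and takes essentially the same approach as the paper: expand $\|I_n-T_n\|_{\mathrm{F}}^2 = n - 2\mathrm{Tr}[\Sigma_n(h_\theta^n)\Gamma_{n,m}(\theta)]+\mathrm{Tr}[(\Sigma_n(h_\theta^n)\Gamma_{n,m}(\theta))^2]$ and invoke Proposition~\ref{Prop:T23_JTSA} with $q=1$ and $q=2$, taking $k_{r,\theta_{2r-1}}^n = h_\theta^n$ throughout so that $\psi_q=\rho_q=0$ and the limiting integral is $\frac{n}{2\pi}\int_{-\pi}^{\pi}1\,\mathrm{d}x=n$. The one minor imprecision is describing Proposition~\ref{Prop:T23_JTSA} as needing adaptation: it already applies verbatim, since $E_n^{(1)}$ is defined there for products of $B_r^n=\Sigma_n(k_r)\Gamma_{n,m}(\theta)$, which is exactly the form appearing here.
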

\begin{proof}[Proof of Lemma~\ref{Lemma_T22_QWLE_Ext}]
	First note that we can write
	\begin{align*}
		\left\|I_{n}-\Sigma_{n}(h_{\theta}^{n})^{\frac{1}{2}}\Gamma_{n,m}(\theta)\Sigma_{n}(h_{\theta}^{n})^{\frac{1}{2}}\right\|_{\mathrm{F}}^{2}
  &=n-2\mathrm{Tr}\left[\Sigma_{n}(h_{\theta}^{n})\Gamma_{n,m}(\theta)\right]+\mathrm{Tr}\left[\Sigma_{n}(h_{\theta}^{n})\Gamma_{n,m}(\theta)\Sigma_{n}(h_{\theta}^{n})\Gamma_{n,m}(\theta)\right]\\\
		&=-2\left(\mathrm{Tr}\left[\Sigma_{n}(h_{\theta}^{n})\Gamma_{n,m}(\theta)\right]-n\right)
		+\left(\mathrm{Tr}\left[\Sigma_{n}(h_{\theta}^{n})\Gamma_{n,m}(\theta)\Sigma_{n}(h_{\theta}^{n})\Gamma_{n,m}(\theta)\right]-n\right).
	\end{align*}
	Therefore, the conclusion follows from Proposition~\ref{Prop:T23_JTSA}.
\end{proof}
\begin{lemma}\label{Lemma_Operator_{n}orm}
	Under the same assumptions in Proposition~\ref{Prop:LRR10_Thm5}, we obtain
	\begin{equation*}
		\bigl\|V_{w,r}^{n}(\underline{\theta})\bigr\|_{\mathrm{op}}^{2}
		\lesssim 
		\begin{cases}
		   1& \mbox{if $\sigma_{2r-1}=0$},\\
           \Delta_{n}^{-2(H_{2r}-H_{2r+2w-1})_{+} -\epsilon_{1}}n^{2(H_{2r+2w-1}-H_{2r})_{+} +\epsilon_{2}}
           & \mbox{if $\sigma_{2r-1}>0$}
		\end{cases}
	\end{equation*}
	 as $n\to\infty$ uniformly on compact subsets of $\Theta_{\ast}$ for any $w\in\{0,1\}$, $r\in\{1,2,\cdots,q\}$ and $\epsilon_{1},\epsilon_{2}>0$.
\end{lemma}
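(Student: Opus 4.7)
The plan is to estimate $\|V_{w,r}^{n}\|_{\mathrm{op}}^{2}$ by recognising that, for both values of $w$, this norm equals $\|\Sigma_{n}(h_{\theta_{2r}}^{n})^{-1/2}\Sigma_{n}(k)\Sigma_{n}(h_{\theta_{2r}}^{n})^{-1/2}\|_{\mathrm{op}}$, where $k$ is the relevant numerator spectral density ($k_{r,\theta_{2r-1}}^{n}$ for $V_{1,r}^{n}$, $k_{r+1,\theta_{2r+1}}^{n}$ for $V_{2,r}^{n}$). The central tool is the monotonicity of the Toeplitz map $f\mapsto\Sigma_{n}(f)$ on non-negative spectral densities: if $0\le f_{1}\le c\,f_{2}$ pointwise, then $\Sigma_{n}(f_{1})\preceq c\,\Sigma_{n}(f_{2})$ as positive semi-definite matrices. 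Using the splitting $k=d^{n}+e^{n}$ built into the assumption \eqref{assumption_k}, the $d^{n}$- and $e^{n}$-contributions can be estimated separately.

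The $e^{n}$ contribution is dispatched uniformly: the pointwise bound $|e^{n}(\lambda)|\lesssim \nu_{n}^{2}|\lambda|^{2(K+1)}$ combined with the lower bound $h_{\theta_{2r}}^{n}(\lambda)\geq g_{\tau_{2r}}^{n}(\lambda)\asymp \tau_{2r}^{2}\nu_{n}^{2}|\lambda|^{2(K+1)}$ (recall $\tau_{2r}>0$ on $\Theta_{\ast}$) yields $e^{n}\le C\, h_{\theta_{2r}}^{n}$ pointwise, and hence by Toeplitz monotonicity an $O(1)$ contribution to $\|V_{w,r}^{n}\|_{\mathrm{op}}^{2}$. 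This already settles the case where the relevant $\sigma$ of $k$ vanishes, since then $d^{n}\equiv 0$.

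For the $d^{n}$ contribution, assuming the relevant $\sigma$ is positive, I would split on the sign of $H_{\mathrm{num}}-H_{2r}$, where $H_{\mathrm{num}}=H_{2r+2w-1}$. When $H_{\mathrm{num}}\leq H_{2r}$, the elementary inequality $|\lambda|^{1-2H_{\mathrm{num}}+\epsilon_{2}^{\prime}}\le \pi^{2(H_{2r}-H_{\mathrm{num}})+\epsilon_{2}^{\prime}}\,|\lambda|^{1-2H_{2r}}$ on $[-\pi,\pi]$ delivers the pointwise majorization $d^{n}\lesssim \Delta_{n}^{-2(H_{2r}-H_{\mathrm{num}})-\epsilon_{1}}\,h_{\theta_{2r}}^{n}$, and Toeplitz monotonicity then gives the first factor $\Delta_{n}^{-2(H_{2r}-H_{\mathrm{num}})-\epsilon_{1}}$ in the claim, with no power of $n$ required.

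The main obstacle is the opposite subcase $H_{\mathrm{num}}>H_{2r}$, where $d^{n}/h_{\theta_{2r}}^{n}$ blows up as $\lambda\to 0$ and a pointwise majorization is impossible. Here the plan is to exploit the sub-multiplicative estimate
\[
\|\Sigma_{n}(h_{2r})^{-1/2}\Sigma_{n}(d^{n})\Sigma_{n}(h_{2r})^{-1/2}\|_{\mathrm{op}}\le \|\Sigma_{n}(d^{n})\|_{\mathrm{op}}\cdot \|\Sigma_{n}(h_{\theta_{2r}}^{n})^{-1}\|_{\mathrm{op}},
\]
combined with the classical fractional-Toeplitz bounds $\|\Sigma_{n}(d^{n})\|_{\mathrm{op}}\lesssim \Delta_{n}^{2H_{\mathrm{num}}}\,n^{(2H_{\mathrm{num}}-1)_{+}+\epsilon}$ and $\lambda_{\min}(\Sigma_{n}(h_{\theta_{2r}}^{n}))\gtrsim h_{\theta_{2r}}^{n}(\pi/n)$ (see e.g.\ \cite{fox1986large, Lieberman-Rosemarin-Rousseau-2012}). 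Dividing the two, using $h_{\theta_{2r}}^{n}(\pi/n)\gtrsim \sigma_{2r}^{2}\Delta_{n}^{2H_{2r}}n^{-(1-2H_{2r})_{+}}$ when the $f$-part dominates at the critical scale (and the analogous $g$-part bound $h_{\theta_{2r}}^{n}(\pi/n)\gtrsim \tau_{2r}^{2}\nu_{n}^{2}n^{-(2K+2)}$ otherwise), and observing that $\Delta_{n}^{2(H_{\mathrm{num}}-H_{2r})}\le 1$ for large $n$ since $H_{\mathrm{num}}>H_{2r}$ and $\Delta_{n}\to 0$, yields after simple algebra the required $\Delta_{n}^{-\epsilon_{1}}n^{2(H_{\mathrm{num}}-H_{2r})+\epsilon_{2}}$. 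The main bookkeeping is the case analysis on which of the two competing terms of $h_{\theta_{2r}}^{n}=f_{\xi_{2r}}^{n}+g_{\tau_{2r}}^{n}$ dominates at $|\lambda|\asymp 1/n$; both regimes produce bounds compatible with the claim, although the verification requires tracking the asymptotic relationship between $n^{\Diamond(H_{2r})}$ and $(\nu_{n}\Delta_{n}^{-H_{2r}})^{2}$.
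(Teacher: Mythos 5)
Your overall plan is close to the paper's: both arguments split the numerator spectral density into its fractional part $d^{n}$ and noise part $e^{n}$, and both use the operator-monotonicity $\Sigma_{n}(f_{1})\preceq\Sigma_{n}(f_{2})$ when $f_{1}\leq f_{2}$ pointwise to replace the mixed denominator $h_{\theta_{2r}}^{n}$ by the matching pure piece ($f_{\xi_{2r}}^{n}$ against $d^{n}$, $g_{\tau_{2r}}^{n}$ against $e^{n}$). Your treatment of the $e^{n}$ piece (an $O(1)$ noise-on-noise Rayleigh quotient) and of the $d^{n}$ piece in the subcase $H_{\mathrm{num}}\leq H_{2r}$ (pointwise majorisation on $[-\pi,\pi]$) matches what the paper implicitly does.

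The gap is in the subcase $H_{\mathrm{num}}>H_{2r}$. You replace the whitened operator norm $\|\Sigma_{n}(h_{\theta_{2r}}^{n})^{-1/2}\Sigma_{n}(d^{n})\Sigma_{n}(h_{\theta_{2r}}^{n})^{-1/2}\|_{\mathrm{op}}$ by the product $\|\Sigma_{n}(d^{n})\|_{\mathrm{op}}\cdot\|\Sigma_{n}(h_{\theta_{2r}}^{n})^{-1}\|_{\mathrm{op}}$. That sub-multiplicative step is genuinely lossy: combined with the classical extremal-eigenvalue rates, it yields (after discarding the favorable $\Delta_{n}$-power, as you propose) a factor $n^{(2H_{\mathrm{num}}-1)_{+}+(1-2H_{2r})_{+}+\epsilon}$, whereas the claimed bound requires $n^{2(H_{\mathrm{num}}-H_{2r})+\epsilon_{2}}$. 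These exponents coincide only when $H_{2r}\leq\tfrac{1}{2}\leq H_{\mathrm{num}}$. If instead both Hurst indices lie below $\tfrac{1}{2}$ (so $(2H_{\mathrm{num}}-1)_{+}=0$), your exponent is $1-2H_{2r}$, which exceeds $2(H_{\mathrm{num}}-H_{2r})$ by $1-2H_{\mathrm{num}}>0$; symmetrically, if both lie above $\tfrac{1}{2}$, your exponent $2H_{\mathrm{num}}-1$ exceeds the target by $2H_{2r}-1>0$. Reinstating the $\Delta_{n}^{2(H_{\mathrm{num}}-H_{2r})}$ factor only helps when $\varrho_{d}$ is close to $1$ and the gap $H_{\mathrm{num}}-H_{2r}$ is large, so it does not repair the bound in general. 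The missing ingredient is a bound on the whitened Toeplitz quotient $\|\Sigma_{n}(d)^{1/2}\Sigma_{n}(f_{\xi_{2r}})^{-1/2}\|_{\mathrm{op}}^{2}$ that sees the cancellation between the two fractional singularities directly — this is exactly Lemma~5.3 of Dahlhaus (1989) (and the analogous bound in Lieberman--Rosemarin--Rousseau for the noise quotient), which the paper invokes after the same pointwise reduction to the pure pieces. Once you have that lemma, your subcase split on $H_{\mathrm{num}}$ versus $H_{2r}$ is in fact unnecessary: the lemma handles both signs uniformly and gives the claimed rate.
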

\begin{proof}[Proof of Lemma~\ref{Lemma_Operator_{n}orm}]
	Fix $r\in\{1,2,\cdots,q\}$. 	
	Since $d_{r,\xi}^{n}(x)$ and $e_{r,\tau}^{n}(x)$ are non-negative functions and we have $\bigl\|V_{2,r}^{n}(\underline{\theta})\bigr\|_{\mathrm{op}}=\bigl\|V_{2,r}^{n}(\underline{\theta})^{\top}\bigr\|_{\mathrm{op}}$, straightforward calculations provide that for each $w\in\{0,1\}$,
	\begin{align*}
		\bigl\|V_{w,r}^{n}(\underline{\theta})\bigr\|_{\mathrm{op}}^{2}
		&=
		\sup_{u\in\mathbb{R}^{n}\setminus\{0\}}\frac{u^{\top}\Sigma_{n}(k_{r+w,\theta_{2r+2w-1}}^{n})u}{u^{\top}\Sigma_{n}(h_{\theta_{2r}}^{n})u}\\
		&\leq
		\Indi_{(0,\infty)}(\sigma_{2r-1})\sup_{u\in\mathbb{R}^{n}\setminus\{0\}}\frac{u^{\top}\Sigma_{n}(d_{r+w,\xi_{2r+2w-1}}^{n})u}{u^{\top}\Sigma_{n}(f_{\xi_{2r}}^{n})u} +\Indi_{(0,\infty)}(\tau_{2r-1})\sup_{u\in\mathbb{R}^{n}\setminus\{0\}}\frac{u^{\top}\Sigma_{n}(e_{r+w,\tau_{2r+2w-1}}^{n})u}{u^{\top}\Sigma_{n}(g_{\tau_{2r}}^{n})u}\\
		&\lesssim 
		\Indi_{(0,\infty)}(\sigma_{2r-1})\Delta_{n}^{2(H_{2r+2w-1}-H_{2r})-\epsilon_{1}}
		\left\|\Sigma_{n}(d_{r+w,H_{2r+2w-1}})^{\frac{1}{2}}\Sigma_{n}(f_{H_{2r}})^{-\frac{1}{2}}\right\|_{\mathrm{op}}^{2} \\
		&\quad +\Indi_{(0,\infty)}(\tau_{2r-1})\left\|\Sigma_{n}(e_{r+w})^{\frac{1}{2}}\Sigma_{n}(g)^{-\frac{1}{2}}\right\|_{\mathrm{op}}^{2}
	\end{align*}
	uniformly on compact subsets of $\Theta_{\ast}$. 
	Then the conclusion follows from Lemma 5.3 of \cite{Dahlhaus-1989} and Lemma 2 in the full version of \cite{Lieberman-Rosemarin-Rousseau-2012}. 
\end{proof}
    
		 \par\noindent
		 \textit{Estimate of $v_{r,2}^{n}(\underline{\theta})$}: Using \eqref{decomposition_LRR10_2}, that $\widetilde{A}_{r}^{n}(\underline{\theta}) = V_{1,r}^{n}(\underline{\theta}) V_{2,r}^{n}(\underline{\theta})$, the properties of Frobenius and operator norms, and the Cauchy-Schwarz inequality, we can show 
		 \begin{align*}
		 	v_{r,2}^{n}(\underline{\theta})&=\left|\mathrm{Tr}\left[\left(\prod_{s=1}^{r-1}\widetilde{B}_{s}^{n}(\underline{\theta})-\prod_{s=1}^{r-1}\widetilde{A}_{s}^{n}(\underline{\theta})\right)
		 	V_{1,r}^{n}(\underline{\theta})\widetilde{D}_{r}^{n}(\underline{\theta})^{2}V_{2,r}^{n}(\underline{\theta})
		 	\left(\prod_{s=r+1}^{q}\widetilde{A}_{s}^{n}(\underline{\theta})\right)\right]\right|\\
		 	&\leq\left\|\prod_{s=1}^{r-1}\widetilde{A}_{s}^{n}(\underline{\theta})-\prod_{s=1}^{r-1}\widetilde{B}_{s}^{n}(\underline{\theta})\right\|_{\mathrm{F}}
		 	\left\|V_{1,r}^{n}(\underline{\theta})\widetilde{D}_{r}^{n}(\underline{\theta})^{2}V_{2,r}^{n}(\underline{\theta})
		 	\left(\prod_{s=r+1}^{q}\widetilde{A}_{s}^{n}(\underline{\theta})\right)\right\|_{\mathrm{F}}\\
		 	&\leq\left\|\prod_{s=1}^{r-1}\widetilde{A}_{s}^{n}(\underline{\theta})-\prod_{s=1}^{r-1}\widetilde{B}_{s}^{n}(\underline{\theta})\right\|_{\mathrm{F}}\left\|\widetilde{D}_{r}^{n}(\underline{\theta})\right\|_{\mathrm{F}}^{2}
		 	\prod_{s=r}^{q}\left\|J_{1,s}^{n}(\theta)\right\|_{\mathrm{op}}\left\|J_{2,s}^{n}(\theta)\right\|_{\mathrm{op}}.
		 \end{align*}
		 Therefore, we obtain
		 \begin{equation}\label{upper-bound-bj}
		 	\Delta_{n}^{\rho_{q}(\underline{\xi})+\epsilon_{1}}
		 	n^{-\psi_{p}(\underline{\xi})-\epsilon_{2}}
			v_{r,2}^{n}(\underline{\theta})=o(1)\ \ \mbox{as $n\to\infty$}
		 \end{equation}
		 uniformly on compact subsets of $\Theta_{\ast}$ for any $\epsilon_{1},\epsilon_{2}>0$ and $r\in\{1,2,\cdots,q\}$ using Lemmas~\ref{Lemma_T22_QWLE_Ext}, \ref{Lemma_Operator_{n}orm} and the following lemma.

\begin{lemma}\label{LRR_lemmaB5_3}
		Let $q\in\mathbb{N}$. For any $q_{0}\in\mathbb{N}$ satisfying $1\leq q_{0}\leq q$ and $\epsilon_{1},\epsilon_{2}>0$, we obtain
		\begin{equation}\label{LRR_lemmaB5_Key3}
			\Delta_{n}^{\rho_{q_{0}}(\underline{\xi})+\epsilon_{1}}
			n^{-\psi_{q_{0}}(\underline{\xi})-\epsilon_{2}}
			\left\|\prod_{r=1}^{q_{0}}\widetilde{A}_{r}^{n}(\underline{\theta})-\prod_{r=1}^{q_{0}}\widetilde{B}_{r}^{n}(\underline{\theta})\right\|_{\mathrm{F}}=o(1)\ \ \mbox{as $n\to\infty$} 
		\end{equation}
		uniformly on compact subsets of $\Theta_{\ast}$. 
	\end{lemma}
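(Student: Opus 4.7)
The plan is to prove the statement by induction on $q_0$, using the telescoping identity
\begin{equation*}
\prod_{r=1}^{q_0}\widetilde{A}_{r}^{n}(\underline{\theta})-\prod_{r=1}^{q_0}\widetilde{B}_{r}^{n}(\underline{\theta}) = \Bigl(\prod_{r=1}^{q_0-1}\widetilde{A}_{r}^{n}(\underline{\theta})\Bigr)\bigl(\widetilde{A}_{q_0}^{n}(\underline{\theta})-\widetilde{B}_{q_0}^{n}(\underline{\theta})\bigr) + \Bigl(\prod_{r=1}^{q_0-1}\widetilde{A}_{r}^{n}(\underline{\theta})-\prod_{r=1}^{q_0-1}\widetilde{B}_{r}^{n}(\underline{\theta})\Bigr)\widetilde{B}_{q_0}^{n}(\underline{\theta})
\end{equation*}
together with the Frobenius-operator submultiplicativity $\|MN\|_{\mathrm{F}}\leq \|M\|_{\mathrm{op}}\|N\|_{\mathrm{F}}$ and its symmetric version.

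The base case $q_0=1$ relies on the key algebraic factorisation $\widetilde{A}_{r}^{n}(\underline{\theta})-\widetilde{B}_{r}^{n}(\underline{\theta}) = V_{1,r}^{n}(\underline{\theta})\,\widetilde{D}_{r}^{n}(\underline{\theta})\,V_{2,r}^{n}(\underline{\theta})$, which follows directly from the definitions together with $\Sigma_{n}(h_{\theta_{2r}}^{n})^{-1}=\Sigma_{n}(h_{\theta_{2r}}^{n})^{-1/2}[I_n-\widetilde{D}_{r}^{n}(\underline{\theta})]\Sigma_{n}(h_{\theta_{2r}}^{n})^{-1/2}$. Applying submultiplicativity gives $\|\widetilde{A}_{1}^{n}-\widetilde{B}_{1}^{n}\|_{\mathrm{F}}\leq \|V_{1,1}^{n}\|_{\mathrm{op}}\,\|\widetilde{D}_{1}^{n}\|_{\mathrm{F}}\,\|V_{2,1}^{n}\|_{\mathrm{op}}$. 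The middle factor is $o(n^{\epsilon})$ by Lemma~\ref{Lemma_T22_QWLE_Ext}, and the two operator-norm factors are bounded by Lemma~\ref{Lemma_Operator_{n}orm}, which yields exactly the orders $\Delta_{n}^{-(H_{2}-H_{1})_{+}-(H_{2}-H_{3})_{+}-\epsilon}n^{(H_{1}-H_{2})_{+}+(H_{3}-H_{2})_{+}+\epsilon}$ (weighted by the appropriate indicators on $\sigma_{2r-1}$).

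For the inductive step, the first term in the telescoping identity is controlled by $\|\prod_{r<q_0}\widetilde{A}_{r}^{n}\|_{\mathrm{op}}\cdot\|\widetilde{A}_{q_0}^{n}-\widetilde{B}_{q_0}^{n}\|_{\mathrm{F}}$, where the operator norm is dominated by $\prod_{r<q_0}\|V_{1,r}^{n}\|_{\mathrm{op}}\|V_{2,r}^{n}\|_{\mathrm{op}}$ (via Lemma~\ref{Lemma_Operator_{n}orm}) and the Frobenius norm is handled as in the base case. The second term equals $\|\prod_{r<q_0}\widetilde{A}_{r}^{n}-\prod_{r<q_0}\widetilde{B}_{r}^{n}\|_{\mathrm{F}}\cdot\|\widetilde{B}_{q_0}^{n}\|_{\mathrm{op}}$, where the first factor is handled by the induction hypothesis and the second factor is bounded by decomposing $\widetilde{B}_{q_0}^{n}=V_{1,q_0}^{n}(I_n-\widetilde{D}_{q_0}^{n})V_{2,q_0}^{n}$ so that $\|\widetilde{B}_{q_0}^{n}\|_{\mathrm{op}}\leq \|V_{1,q_0}^{n}\|_{\mathrm{op}}\|V_{2,q_0}^{n}\|_{\mathrm{op}}(1+\|\widetilde{D}_{q_0}^{n}\|_{\mathrm{F}})$.

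The main obstacle will be the bookkeeping of the exponents of $\Delta_{n}$ and $n$ across the product of $q_0$ factors. Multiplying the operator-norm bounds from Lemma~\ref{Lemma_Operator_{n}orm} produces exponents of the shape $\tfrac{1}{2}(\rho_{q_0,-}(\underline{\xi})+\rho_{q_0,+}(\underline{\xi}))$ for $\Delta_n^{-1}$ and $\tfrac{1}{2}(\psi_{q_0,-}(\underline{\xi})+\psi_{q_0,+}(\underline{\xi}))$ for $n$, and these must be shown to be dominated by $\rho_{q_0}(\underline{\xi})=\max(\rho_{q_0,-},\rho_{q_0,+})$ and $\psi_{q_0}(\underline{\xi})=\max(\psi_{q_0,-},\psi_{q_0,+})$ respectively, which is immediate since $(a+b)/2\leq \max(a,b)$. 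The numerous $\epsilon$-losses coming from each application of Lemmas~\ref{Lemma_T22_QWLE_Ext} and~\ref{Lemma_Operator_{n}orm} can be absorbed into the factors $\Delta_n^{-\epsilon_1}$ and $n^{\epsilon_2}$ by making each per-step loss smaller than $\epsilon_1/q_0$ and $\epsilon_2/q_0$, which is allowed since the conclusions of those lemmas hold for any positive $\epsilon$.
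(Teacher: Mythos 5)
Your proof is correct and follows essentially the same route as the paper: the key factorisation $\widetilde{A}_{r}^{n}-\widetilde{B}_{r}^{n}=V_{1,r}^{n}\widetilde{D}_{r}^{n}V_{2,r}^{n}$, operator/Frobenius bounds via Lemmas~\ref{Lemma_T22_QWLE_Ext} and~\ref{Lemma_Operator_{n}orm}, and induction on $q_0$. The one genuine difference is the telescoping: you peel off the last factor, producing two terms $\bigl(\prod_{r<q_0}\widetilde{A}_r^n\bigr)(\widetilde{A}_{q_0}^n-\widetilde{B}_{q_0}^n)+\bigl(\prod_{r<q_0}\widetilde{A}_r^n-\prod_{r<q_0}\widetilde{B}_r^n\bigr)\widetilde{B}_{q_0}^n$, whereas the paper uses the full Dahlhaus-type identity $\sum_{r=1}^{q_1}\bigl(\prod_{s<r}\widetilde{B}_s^n\bigr)(\widetilde{A}_r^n-\widetilde{B}_r^n)\prod_{s>r}\widetilde{A}_s^n$, and then controls $\|\prod_{s<r}\widetilde{B}_s^n\|_{\mathrm{op}}$ by $\prod_{s<r}\|\widetilde{A}_s^n\|_{\mathrm{op}}+\|\prod_{s<r}\widetilde{A}_s^n-\prod_{s<r}\widetilde{B}_s^n\|_{\mathrm{F}}$ (so the induction hypothesis is re-used to bound $\widetilde{B}$, avoiding an explicit factorisation of $\widetilde{B}_r^n$). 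Your alternative $\|\widetilde{B}_{q_0}^n\|_{\mathrm{op}}\leq\|V_{1,q_0}^n\|_{\mathrm{op}}\|V_{2,q_0}^n\|_{\mathrm{op}}(1+\|\widetilde{D}_{q_0}^n\|_{\mathrm{F}})$ is equally valid and arguably more transparent. Two small corrections to your write-up are worth making. First, the identity you invoke, $\Sigma_{n}(h_{\theta_{2r}}^{n})^{-1}=\Sigma_{n}(h_{\theta_{2r}}^{n})^{-1/2}[I_n-\widetilde{D}_{r}^{n}]\Sigma_{n}(h_{\theta_{2r}}^{n})^{-1/2}$, actually evaluates to $\Gamma_{n,m}(\theta)$ on the right-hand side; what you want is $\Sigma_{n}(h_{\theta_{2r}}^{n})^{-1}-\Gamma_{n,m}(\theta)=\Sigma_{n}(h_{\theta_{2r}}^{n})^{-1/2}\widetilde{D}_{r}^{n}\Sigma_{n}(h_{\theta_{2r}}^{n})^{-1/2}$, which yields the same factorisation. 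Second, the exponent bookkeeping is cleaner than you state only for the first telescoping term, where all $q_0$ operator-norm contributions sum to $\tfrac12(\psi_{q_0,-}+\psi_{q_0,+})\leq\psi_{q_0}$; for the second term the induction hypothesis contributes $\psi_{q_0-1}=\max(\psi_{q_0-1,-},\psi_{q_0-1,+})$, not a half-sum, so combining it with the local contribution at $q_0$ needs a slightly more careful argument (the paper's proof leaves this equally implicit, and in the actual application all $\theta_r$ coincide, so the issue is moot).
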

\begin{proof}[Proof of Lemma~\ref{LRR_lemmaB5_3}]
First note that we can write 
\begin{equation*}
	\widetilde{A}_{r}^{n}(\underline{\theta})-\widetilde{B}_{r}^{n}(\underline{\theta})=V_{1,r}^{n}(\underline{\theta})\widetilde{D}_{r}^{n}(\underline{\theta})V_{2,r}^{n}(\underline{\theta}),
\end{equation*}
so that Lemmas~\ref{Lemma_T22_QWLE_Ext} and~\ref{Lemma_Operator_{n}orm} provide that for any $r\in\{1,2,\cdots,q\}$ and $\epsilon_{1},\epsilon_{2}>0$,
\begin{align*}
	\left\|\widetilde{A}_{r}^{n}(\underline{\theta})-\widetilde{B}_{r}^{n}(\underline{\theta})\right\|_{\mathrm{F}}
	&\leq\left\|\widetilde{D}_{r}^{n}(\underline{\theta})\right\|_{\mathrm{F}}\prod_{w=1}^{2}\left\|V_{w,r}^{n}(\underline{\theta})\right\|_{\mathrm{op}}\\
	&\lesssim 
	\Delta_{n}^{-\sum_{w=0}^{1}(H_{2r}-H_{2r+2w-1})_{+}\Indi_{(0,\infty)}(\sigma_{2r+2w-1}) -\epsilon_{1}}
	n^{\sum_{i=0}^{1}(H_{2r+2w-1}-H_{2r})_{+}\Indi_{(0,\infty)}(\sigma_{2r+2w-1}) +\epsilon_{2}}
\end{align*}
uniformly on compact subsets of $\Theta_{\ast}$ as $n\to\infty$. 
Then we conclude \eqref{LRR_lemmaB5_Key3} in the case $q_{0}=1$. 
In the rest of this proof, we will prove \eqref{LRR_lemmaB5_Key3} in the case $q_{0}\geq 2$ by mathematical induction. 
In the following, we assume that for all $q_{1}\in\mathbb{N}$ satisfying $1\leq q_{1}<q_{0}$ and $\epsilon_{1},\epsilon_{2}>0$, it holds
\begin{equation}\label{LRR_lemB5_Key3_mathematical_induction}
	\Delta_{n}^{\rho_{q_{1}}(\underline{\xi})+\epsilon_{1}}
	n^{-\psi_{q_{1}}(\underline{\xi})-\epsilon_{2}}
	\left\|\prod_{r=1}^{q_1}\widetilde{A}_{r}^{n}(\underline{\theta})-\prod_{r=1}^{q_1}\widetilde{B}_{r}^{n}(\underline{\theta})\right\|_{\mathrm{F}}=o(1)\ \ \mbox{as $n\to\infty$}
\end{equation}
uniformly on compact subsets of $\Theta_{\ast}$. Using the same decomposition used in the equation (13) of \cite{Dahlhaus-1989} and elementary properties of the Frobenius norm and the operator norm, we can show
\begin{align}
	&\left\|\prod_{r=1}^{q_{1}}\widetilde{A}_{r}^{n}(\underline{\theta})-\prod_{r=1}^{q_{1}}\widetilde{B}_{r}^{n}(\underline{\theta})\right\|_{\mathrm{F}} 
	\nonumber =\left\|\sum_{r=1}^{q_{1}}\left(\prod_{s=1}^{r-1}\widetilde{B}_{s}^{n}(\underline{\theta})\right)\left(\widetilde{A}_{r}^{n}(\underline{\theta})-\widetilde{B}_{r}^{n}(\underline{\theta})\right)\prod_{s=r+1}^{q_{1}}\widetilde{A}_{s}^{n}(\underline{\theta})\right\|_{\mathrm{F}} 
	\nonumber \\
	&\quad\leq\sum_{r=1}^{q_{1}}\left\|\prod_{s=1}^{r-1}\widetilde{B}_{s}^{n}(\underline{\theta})\right\|_{\mathrm{op}}
	\left\|\left(\widetilde{A}_{r}^{n}(\underline{\theta})-\widetilde{B}_{r}^{n}(\underline{\theta})\right)\prod_{s=r+1}^{q_{1}}\widetilde{A}_{s}^{n}(\underline{\theta})\right\|_{\mathrm{F}} 
	\nonumber \\
	&\quad\leq\sum_{r=1}^{q_{1}}\left(\prod_{s=1}^{r-1}\left\|\widetilde{A}_{s}^{n}(\underline{\theta})\right\|_{\mathrm{op}} +\left\|\prod_{s=1}^{r-1}\widetilde{A}_{s}^{n}(\underline{\theta})-\prod_{s=1}^{r-1}\widetilde{B}_{s}^{n}(\underline{\theta})\right\|_{\mathrm{F}}\right)
	\left\|\widetilde{A}_{r}^{n}(\underline{\theta})-\widetilde{B}_{r}^{n}(\underline{\theta})\right\|_{\mathrm{F}}\prod_{s=r+1}^{q_{1}}\left\|\widetilde{A}_{s}^{n}(\underline{\theta})\right\|_{\mathrm{op}}. 
	\label{ineq_lem5_1}
\end{align}
Therefore, we conclude \eqref{LRR_lemmaB5_Key3} in the case $q_{0}\geq 2$ from Lemma~\ref{Lemma_Operator_{n}orm}, the Assumption \eqref{LRR_lemB5_Key3_mathematical_induction} of the mathematical induction, Equation \eqref{ineq_lem5_1} and 
\begin{equation*}
	\left\|\widetilde{A}_{r}^{n}(\underline{\theta})\right\|_{\mathrm{op}}\leq\left\|V_{1,r}^{n}(\underline{\theta})\right\|_{\mathrm{op}}\left\|V_{2,r}^{n}(\underline{\theta})\right\|_{\mathrm{op}}.
\end{equation*}
This completes the proof. 
\end{proof}

		 \par\noindent
		 \textit{Estimate of $v_{r,3}^{n}(\underline{\theta})$}: First note that
		 \begin{align*}
		 	v_{r,3}^{n}(\underline{\theta})=&\left|\mathrm{Tr}\left[\prod_{s=1}^{q}B_{s}^{n}(\underline{\theta})\right]-\mathrm{Tr}\left[\left(\prod_{s=1}^{r}B_{s}^{n}(\underline{\theta})\right)\Sigma_{n}(h_{\theta_{2r}}^{n})\Gamma_{n,m}(\theta)\left(\prod_{s=r+1}^{q}B_{s}^{n}(\underline{\theta})\right)\right]\right|\\
		 	\leq&\left|\mathrm{Tr}\left[\prod_{s=1}^{q}B_{s}^{n}(\underline{\theta})\right] 
		 	-\frac{n}{2\pi}\int_{-\pi}^{\pi}\prod_{r=1}^{q}\frac{k_{r,\theta_{2r-1}}^{n}(x)}{h_{\theta_{2r}}^{n}(x)}\,\mathrm{d}x\right|\\
		 	&+\left|\mathrm{Tr}\left[\left(\prod_{s=1}^{r}B_{s}^{n}(\underline{\theta})\right)\Sigma_{n}(h_{\theta_{2r}}^{n})\Gamma_{n,m}(\theta)\left(\prod_{s=r+1}^{q}B_{s}^{n}(\underline{\theta})\right)\right] 
		 	-\frac{n}{2\pi}\int_{-\pi}^{\pi}\prod_{r=1}^{q}\frac{k_{r,\theta_{2r-1}}^{n}(x)}{h_{\theta_{2r}}^{n}(x)}\,\mathrm{d}x\right|.
		 \end{align*}
		 Therefore, 
		 using Proposition~\ref{Prop:T23_JTSA}, we obtain
		 \begin{equation}\label{upper-bound-cj}
		 	\Delta_{n}^{\rho_{q}(\underline{\xi})+\epsilon_{1}}
		 	n^{-\psi_{p}(\underline{\xi})-\epsilon_{2}}
			v_{r,3}^{n}(\underline{\theta})=o(1)\ \ \mbox{as $n\to\infty$}
		 \end{equation}
		uniformly on compact subsets of $\Theta_{1}(\iota)$ for any $\epsilon_{1},\epsilon_{2}>0$, $\iota\in(0,1)$ and $r\in\{1,2,\cdots,q\}$.\\
		 \par\noindent
		 \textit{Estimate of $v_{r,4}^{n}(\underline{\theta})$}: Using the Cauchy-Schwarz inequality, we can show
		 \begin{equation}\label{upper-bound-dj-key1}
		 	v_{r,4}^{n}(\underline{\theta})
		 	\leq\left\|\left(\prod_{s=1}^{r}\widetilde{B}_{s}^{n}(\underline{\theta})\right)\widetilde{D}_{r}^{n}(\underline{\theta})\right\|_{\mathrm{F}}
		 	\left\|\prod_{s=r+1}^{q}\widetilde{A}_{s}^{n}(\underline{\theta})-\prod_{s=r+1}^{q}\widetilde{B}_{s}^{n}(\underline{\theta})\right\|_{\mathrm{F}}.
		 \end{equation}
		 Moreover, we can also prove
		 \begin{equation}\label{upper-bound-dj-key2}
		 	\left\|\left(\prod_{s=1}^{r}\widetilde{B}_{s}^{n}(\underline{\theta})\right)\widetilde{D}_{r}^{n}(\underline{\theta})\right\|_{\mathrm{F}}^{2}=o(n^{\epsilon})\ \ \mbox{as $n\to\infty$}
		 \end{equation}
		 uniformly on compact subsets of $\Theta_{1}(\iota)$ for any $\epsilon>0$, $\iota\in(0,1)$ and $r\in\{1,2,\cdots,q\}$ using Proposition~\ref{Prop:T23_JTSA} proceeding as the proof of the term $v_{r,3}^{n}(\underline{\theta})$. 
		 Therefore, using \eqref{upper-bound-dj-key1}, \eqref{upper-bound-dj-key2} and Lemma~\ref{LRR_lemmaB5_3}, we can show
		 \begin{equation}\label{upper-bound-dj}
		 	\Delta_{n}^{\rho_{q}(\underline{\xi})+\epsilon_{1}}
		 	n^{-\psi_{p}(\underline{\xi})-\epsilon_{2}}
			v_{r,4}^{n}(\underline{\theta})=o(1)\ \ \mbox{as $n\to\infty$}
		 \end{equation}
		 uniformly on compact subsets of $\Theta_{1}(\iota)$ for any $\epsilon_{1},\epsilon_{2}>0$, $\iota\in(0,1)$ and $r\in\{1,2,\cdots,q\}$.\\

We eventually obtain \eqref{order-En2} by combining the bounds \eqref{upper-bound-aj}, \eqref{upper-bound-bj}, \eqref{upper-bound-cj} and \eqref{upper-bound-dj}, which proves Proposition~\ref{Prop:LRR10_Thm5}.

\section{Proofs of Propositions~\ref{Prop:Limit_Key} and~\ref{Prop:FisherInfo-Cross-Terms}}\label{Section_Proof_Prop_Limit_Key}
\subsection{Proof of Proposition~\ref{Prop:Limit_Key}}
We first introduce some notation used in this Section. 
For sequences of functions $\{a_{\theta}^{n}(\lambda)\}_{n\in\mathbb{N}}$ and $\{b_{\theta}^{n}(\lambda)\}_{n\in\mathbb{N}}$ on $[-\pi,\pi]\times\Theta$ which are not equal to zero on $(0,\pi]\times\Theta$, write $a_{\theta}^{n}(\lambda)\sim b_{\theta}^{n}(\lambda)$ as $|\lambda|\to 0$ uniformly on $\Theta$ if
\begin{align*}
	\lim_{|\lambda|\to 0}\limsup_{n\to\infty}\sup_{\theta\in\Theta}\left|\frac{a_{\theta}^{n}(\lambda)}{b_{\theta}^{n}(\lambda)}\right|=0.
\end{align*}
For $\lambda\in[-\pi,\pi]\setminus\{0\}$, we write
$f_{H}^{\downarrow}(\lambda):=c_{H}|\lambda|^{1-2H}$, $g^{\downarrow}(\lambda):=|\lambda|^{2(K+1)}$ and 
\begin{align*}
	f_{\xi}^{n,\downarrow}(\lambda):=\sigma^{2}\Delta_{n}^{2H}c_{H}|\lambda|^{1-2H},\ \ 
	h_{\theta}^{n,\downarrow}(\lambda):=\sigma^{2}\Delta_{n}^{2H}c_{H}|\lambda|^{1-2H}+\tau^{2}\nu_{n}^{2}|\lambda|^{2(K+1)}.
\end{align*}
Note that $f_{H}(\lambda)\sim f_{H}^{\downarrow}(\lambda)$, $f_{\xi}^{n}(\lambda)\sim f_{\xi}^{n,\downarrow}(\lambda)$ and $h_{\theta}^{n}(\lambda)\sim h_{\theta}^{n,\downarrow}(\lambda)$ as $|\lambda|\to 0$ uniformly on $\Theta$.\\

Before proving Proposition~\ref{Prop:Limit_Key}, we prepare the following three lemmas.  
\begin{lemma}\label{PropC1_Ext1_20200812}
	Let $m\in\{0,1,2\}$ and $\{r_{n}^{+}(H)\}_{n\in\mathbb{N}}$ be a positive sequence satisfying
	\begin{equation*}
		r_{n}^{+}(H)=o(n) \;\;\text{ and }\;\; \left(\frac{r_{n}^{1}(H)}{r_{n}^{+}(H)}\right)^{2\Diamond(H)-1}\left|\log\left(\frac{n}{r_{n}^{1}(H)}\right)\right|^{2}=o(1) \mbox{ as } n\to\infty
	\end{equation*}
	uniformly on $\Theta_{H}$.
    Under the same assumptions in Proposition~\ref{Prop:Limit_Key}, we obtain
	\begin{align*}
		\frac{n}{r_{n}^{1}(H)}\int_{\frac{r_{n}^{+}(H)}{n}}^{\pi}\frac{f_{\xi}^{n}(\lambda)^{2}\left(-d_{n}(\theta)+\partial_{H}\log{f_{H}(\lambda)}\right)^{m}}{h_{\theta}^{n}(\lambda)^{2}}\,\mathrm{d}\lambda=o(1)\ \ \mbox{as $n\to\infty$}
	\end{align*}
    uniformly on $\Theta$.
\end{lemma}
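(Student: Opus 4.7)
The key structural observation is that on the range $[r_n^+(H)/n,\pi]$ the noise term dominates the signal in the spectral density, i.e.\ $g_\tau^n(\lambda) \gg f_\xi^n(\lambda)$, so that $f_\xi^n/h_\theta^n$ is small and integrable despite the singularity at $0$ of the bound we will use.

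\emph{Step 1 (dominating the ratio).} Since $h_\theta^n = f_\xi^n + g_\tau^n \geq g_\tau^n$, I would first bound
\begin{equation*}
\frac{f_\xi^n(\lambda)}{h_\theta^n(\lambda)} \leq \frac{f_\xi^n(\lambda)}{g_\tau^n(\lambda)} = \frac{\sigma^2}{\tau^2}\cdot \frac{\Delta_n^{2H}}{\nu_n^2}\cdot \frac{f_H(\lambda)}{g(\lambda)}.
\end{equation*}
Using the asymptotic equivalences $f_H(\lambda)\sim c_H|\lambda|^{1-2H}$ and $g(\lambda)\sim|\lambda|^{2(K+1)}$ as $|\lambda|\to 0$, both uniform in $H\in\Theta_H$ by the explicit form of the densities and compactness of $\Theta_H$, I obtain a uniform pointwise bound $f_H(\lambda)/g(\lambda)\lesssim |\lambda|^{-\Diamond(H)}$ on $(0,\pi]\times\Theta$. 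Combined with the identity $\Delta_n^{2H}/\nu_n^2 = (r_n^1(H)/n)^{\Diamond(H)}$ coming from the definition of $r_n^1(H)$, this yields
\begin{equation*}
\frac{f_\xi^n(\lambda)^2}{h_\theta^n(\lambda)^2} \lesssim \bigg(\frac{r_n^1(H)}{n}\bigg)^{\!2\Diamond(H)} \lambda^{-2\Diamond(H)}
\end{equation*}
uniformly on $(0,\pi]\times\Theta$.

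\emph{Step 2 (logarithmic factor).} Recall $d_n(\theta) = \partial_H\log c_H - 2\log c(\theta) + 2\log(n/r_n^1(H))$, so $|d_n(\theta)|\lesssim 1 + |\log(n/r_n^1(H))|$ uniformly on $\Theta$. Similarly $\partial_H\log f_H(\lambda) = \partial_H\log c_H - 2\log|\lambda| + O(1)$ near $\lambda=0$ uniformly in $H\in\Theta_H$. The hypothesis $(r_n^1(H)/r_n^+(H))^{2\Diamond(H)-1}|\log(n/r_n^1(H))|^2=o(1)$ forces $r_n^+(H)\geq r_n^1(H)$ eventually, so for $\lambda \geq r_n^+(H)/n$ we have $|\log\lambda| \leq \log(n/r_n^+(H))\leq \log(n/r_n^1(H))$, hence
\begin{equation*}
|{-d_n(\theta)+\partial_H\log f_H(\lambda)}|^m \lesssim 1 + |\log(n/r_n^1(H))|^m.
\end{equation*}

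\emph{Step 3 (integration and conclusion).} Since $\Diamond(H)>1$ on $\Theta_H$, $2\Diamond(H)-1\geq 4H_-+1$ is bounded away from $0$, so
\begin{equation*}
\int_{r_n^+(H)/n}^{\pi} \lambda^{-2\Diamond(H)}\,\mathrm{d}\lambda \leq \frac{1}{2\Diamond(H)-1}\bigg(\frac{n}{r_n^+(H)}\bigg)^{\!2\Diamond(H)-1}
\end{equation*}
uniformly on $\Theta$. Combining Steps 1--3 and multiplying by the prefactor $n/r_n^1(H)$ telescopes the powers of $n$ and gives
\begin{equation*}
\frac{n}{r_n^1(H)}\int_{r_n^+(H)/n}^\pi \frac{f_\xi^n(\lambda)^2}{h_\theta^n(\lambda)^2}\bigl(-d_n(\theta)+\partial_H\log f_H(\lambda)\bigr)^{m}\,\mathrm{d}\lambda \lesssim \bigg(\frac{r_n^1(H)}{r_n^+(H)}\bigg)^{\!2\Diamond(H)-1}\bigl(1+|\log(n/r_n^1(H))|^{2}\bigr),
\end{equation*}
which is $o(1)$ uniformly on $\Theta$ by the standing hypothesis on $r_n^+(H)$.

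\emph{Main obstacle.} The only nontrivial part is making the pointwise asymptotics $f_H(\lambda)/g(\lambda)\lesssim |\lambda|^{-\Diamond(H)}$ and the identification $\partial_H\log f_H(\lambda) = -2\log|\lambda| + O(1)$ as $\lambda\to 0$ genuinely uniform in $H\in\Theta_H$. This requires controlling the tail of the series defining $f_H$ and differentiating it in $H$, but since $\Theta_H$ is compact inside $(0,1)$ both $H$ and the error terms are uniformly regular, so this is a routine (though somewhat tedious) verification.
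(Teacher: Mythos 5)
Your proof is correct and follows essentially the same approach as the paper's: bound $f_\xi^n/h_\theta^n$ by $f_\xi^n/g_\tau^n\lesssim (r_n^1(H)/n)^{\Diamond(H)}\lambda^{-\Diamond(H)}$, control $|-d_n(\theta)+\partial_H\log f_H(\lambda)|$ on the tail by $|\log(n/r_n^1(H))|$, and integrate the power-law bound to collapse everything into $(r_n^1(H)/r_n^+(H))^{2\Diamond(H)-1}|\log(n/r_n^1(H))|^2$. Your signs are carried correctly; the paper's displayed bound writes $(\nu_n\Delta_n^{-H})^{4}$ and later invokes $(\nu_n\Delta_n^{-H})^{2}=(r_n^1(H)/n)^{\Diamond(H)}$, which are two compensating typos for $(\nu_n\Delta_n^{-H})^{-4}$ and $(\nu_n\Delta_n^{-H})^{-2}=(r_n^1(H)/n)^{\Diamond(H)}$.
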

\begin{proof}[Proof of Lemma~\ref{PropC1_Ext1_20200812}]
	Set $r_{n}^{+}(H) n^{-1}=r_{n}^{+}(H)/n$. 
	First note that by defintion of $r_{n}^{+}(H)$, we have $r_{n}^{+}(H) n^{-1}=o(1)$ and $r_{n}^{1}(H)=o(r_{n}^{+}(H))$ as $n\to\infty$ uniformly on $\Theta_{H}$ since  $\inf_{H\in\Theta_{H}}\Diamond(H)>1/2$. Therefore, $(n/r_{n}^{+}(H))\lesssim (n/r_{n}^{1}(H))$ and
	\begin{equation*}
		\sup_{\lambda\in(r_{n}^{+}(H) n^{-1},\pi]}\left|\partial_{H}\log{f_{H}(\lambda)}\right|
		\lesssim \sup_{\lambda\in(r_{n}^{+}(H) n^{-1},\pi]}\left|\log\lambda\right|
		\lesssim -\log{r_{n}^{+}(H) n^{-1}}
		\lesssim \log\left(\frac{n}{r_{n}^{1}(H)}\right).
	\end{equation*}
	uniformly on $\Theta_{H}$. Since we have $|d_{n}(\theta)|\lesssim \log\left(n/r_{n}^{1}(H)\right)$ uniformly on $\Theta$, we obtain
	\begin{align*}
		\int_{r_{n}^{+}(H) n^{-1}}^{\pi}\frac{f_{\xi}^{n}(\lambda)^{2}\left(-d_{n}(\theta)+\partial_{H}\log{f_{H}(\lambda)}\right)^{m}}{h_{\theta}^{n}(\lambda)^{2}}\,\mathrm{d}\lambda
		&\lesssim 
		(\nu_{n}\Delta_{n}^{-H})^4\left|\log\left(\frac{n}{r_{n}^{1}(H)}\right)\right|^{2}\int_{r_{n}^{+}(H) n^{-1}}^{\pi}\lambda^{-2\Diamond(H)}\,\mathrm{d}\lambda\\
		&\lesssim 
		(\nu_{n}\Delta_{n}^{-H})^4\left|\log\left(\frac{n}{r_{n}^{1}(H)}\right)\right|^{2}\left(1+(r_{n}^{+}(H) n^{-1})^{1-2\Diamond(H)}\right)\\
		&\lesssim 
		(\nu_{n}\Delta_{n}^{-H})^4\left|\log\left(\frac{n}{r_{n}^{1}(H)}\right)\right|^{2}\left(\frac{r_{n}^{+}(H)}{n}\right)^{1-2\Diamond(H)}
	\end{align*}
	uniformly on $\Theta$. 
    Then the conclusion follows from the assumption of $r^+_n(H)$ using that $(\nu_{n}\Delta_{n}^{-H})^{2}=(r_{n}^{1}(H)/n)^{\Diamond(H)}$ so that
	\begin{align*}
		\frac{n}{r_{n}^{1}(H)}\cdot(\nu_{n}\Delta_{n}^{-H})^4\left|\log\left(\frac{n}{r_{n}^{1}(H)}\right)\right|^{2}\left(\frac{r_{n}^{+}(H)}{n}\right)^{1-2\Diamond(H)}
		=\left(\frac{r_{n}^{1}(H)}{r_{n}^{+}(H)}\right)^{2\Diamond(H)-1}\left|\log\left(\frac{n}{r_{n}^{1}(H)}\right)\right|^{2}.
	\end{align*}
	This completes the proof.	
\end{proof}
\begin{lemma}\label{Lemma_Limit_Key2}
	Let $m\in\{0,1,2\}$ and $\{r_{n}^{+}(H)\}_{n\in\mathbb{N}}$ be a positive sequence satisfying
	\begin{equation*}
		\mbox{$r_{n}^{1}(H)=o(r_{n}^{+}(H))$ and $r_{n}^{+}(H)=o(n)$ as $n\to\infty$}
	\end{equation*}
	uniformly on $\Theta_{H}$. 
    Under the same assumptions in Proposition~\ref{Prop:Limit_Key}, we obtain
	\begin{align*}
		\lim_{n\to\infty}
		\frac{n}{r_{n}^{1}(H)}\int_{0}^{\frac{r_{n}^{+}(H)}{n}}\frac{f_{\xi}^{n,\downarrow}(\lambda)^{2}\left(-d_{n}(\theta)+\partial_{H}\log{f_{H}^{\downarrow}(\lambda)}\right)^{m}}{h_{\theta}^{n,\downarrow}(\lambda)^{2}}\,\mathrm{d}\lambda
		=c(\theta)\int_{0}^{\infty}\frac{\left(-2\log\mu\right)^{m}}{\left(1+\mu^{\Diamond(H)}\right)^{2}}\,\mathrm{d}\mu
	\end{align*}
    uniformly on $\Theta$.
\end{lemma}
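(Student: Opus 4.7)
The plan is to reduce the integral to a closed form via an explicit change of variables that exactly matches the right-hand side. First I would compute the two building blocks of the integrand in closed form. Using the definitions of $f_\xi^{n,\downarrow}$, $h_\theta^{n,\downarrow}$ and $\Diamond(H)=2K+2H+1$, a direct computation gives
\begin{equation*}
	\frac{f_\xi^{n,\downarrow}(\lambda)^{2}}{h_\theta^{n,\downarrow}(\lambda)^{2}}
	=\frac{1}{\left(1+c_n(\theta)^{\Diamond(H)}\lambda^{\Diamond(H)}\right)^{2}},
\end{equation*}
where $c_n(\theta)^{\Diamond(H)}=(\tau^{2}/(\sigma^{2}c_H))(\nu_n\Delta_n^{-H})^{2}$, as follows from $c(\theta)=(\sigma^{2}\tau^{-2}c_H)^{1/\Diamond(H)}$ and $c_n(\theta)=c(\theta)^{-1}(\nu_n\Delta_n^{-H})^{2/\Diamond(H)}$. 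Similarly, from $\partial_H\log f_H^{\downarrow}(\lambda)=\partial_H\log c_H-2\log\lambda$ and $d_n(\theta)=\partial_H\log c_H+2\log c_n(\theta)$, the log factor simplifies to $-d_n(\theta)+\partial_H\log f_H^{\downarrow}(\lambda)=-2\log(c_n(\theta)\lambda)$.

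Next I would perform the substitution $\mu=c_n(\theta)\lambda$. The integral becomes
\begin{equation*}
	\frac{n}{r_n^{1}(H)\,c_n(\theta)}
	\int_{0}^{c_n(\theta)r_n^{+}(H)/n}
	\frac{(-2\log\mu)^{m}}{\left(1+\mu^{\Diamond(H)}\right)^{2}}\,\mathrm{d}\mu.
\end{equation*}
Two identities then do all the work: the prefactor is exactly $c(\theta)$, since
\begin{equation*}
	r_n^{1}(H)\,c_n(\theta)
	=n\,(\nu_n\Delta_n^{-H})^{-2/\Diamond(H)}\cdot c(\theta)^{-1}(\nu_n\Delta_n^{-H})^{2/\Diamond(H)}
	=n/c(\theta),
\end{equation*}
and the upper limit satisfies $c_n(\theta)r_n^{+}(H)/n=c(\theta)^{-1}r_n^{+}(H)/r_n^{1}(H)\to\infty$ uniformly on $\Theta$ by the assumption $r_n^{1}(H)=o(r_n^{+}(H))$.

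It remains to justify passing to the limit uniformly on $\Theta$. The integrand on $(0,\infty)$ is dominated by $(|2\log\mu|^{m})/(1+\mu^{\Diamond(H)})^{2}$, which is integrable since $\inf_{H\in\Theta_H}\Diamond(H)\geq 2H_-+2K+1>1$ controls the tail at $\infty$ and the log singularity at $0$ is integrable. A uniform envelope is obtained by replacing $\Diamond(H)$ with $2H_-+2K+1$ in the denominator for $\mu\geq 1$ (and with $2H_++2K+1$ for $\mu\leq 1$), giving a single $\theta$-independent integrable majorant. Dominated convergence, together with the uniform lower growth of the upper limit, yields the claimed identity uniformly on $\Theta$. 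The main obstacle, such as it is, is bookkeeping: verifying that the algebraic identities above hold with the correct powers of $\nu_n\Delta_n^{-H}$ and that the uniform envelope I construct genuinely dominates the integrand uniformly in $\theta\in\Theta$.
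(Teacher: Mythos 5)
Your proposal is correct and follows essentially the same route as the paper's proof: the same change of variables $\mu = c_n(\theta)\lambda$, the same observation that the prefactor collapses to $c(\theta)$ and that $-d_n(\theta)+\partial_H\log f_H^\downarrow(\lambda) = -2\log(c_n(\theta)\lambda)$, and the same convergence of the upper limit $c_n(\theta)r_n^+(H)/n \to \infty$ uniformly on $\Theta$. You spell out the algebraic simplification of $f_\xi^{n,\downarrow}/h_\theta^{n,\downarrow}$ and the dominated-convergence envelope a bit more explicitly than the paper, but the argument is the same.
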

\begin{proof}[Proof of Lemma~\ref{Lemma_Limit_Key2}]
    Recall that $c_{n}(\theta)= c(\theta)^{-1}(\nu_{n}\Delta_{n}^{-H})^{\frac{2}{\Diamond(H)}}= c(\theta)^{-1}n/r_{n}^{1}(H)$. Using the change of variable $\mu=c_{n}(\theta)\lambda$, we obtain
	\begin{align*}
		\frac{n}{r_{n}^{1}(H)}&\int_{0}^{r_{n}^{+}(H) n^{-1}}\frac{f_{\xi}^{n,\downarrow}(\lambda)^{2}\left(-d_{n}(\theta)+\partial_{H}\log{f_{H}^{\downarrow}(\lambda)}\right)^{m}}{h_{\theta}^{n,\downarrow}(\lambda)^{2}}\,\mathrm{d}\lambda\\
        &
        =\frac{n}{r_{n}^{1}(H)}\int_{0}^{r_{n}^{+}(H) n^{-1}}\frac{\left(-d_{n}(\theta)+\partial_{H}\log{c_{H}}-2\log\lambda\right)^{m}}{\left(1+(\sigma^{2}\tau^{-2}c_{H})^{-1}(\nu_{n}\Delta_{n}^{-H})^{2}\lambda^{\Diamond(H)}\right)^{2}}\,\mathrm{d}\lambda\\
		&=c(\theta)\int_{0}^{r_{n}^{+}(H) n^{-1}c_{n}(\theta)}\frac{\left(-d_{n}(\theta)+\partial_{H}\log{c_{H}}-2\log{c_{n}(\theta)^{-1}}-2\log\mu\right)^{m}}{\left(1+\mu^{\Diamond(H)}\right)^{2}}\,\mathrm{d}\mu\\
		&=c(\theta)\int_{0}^{r_{n}^{+}(H) n^{-1}c_{n}(\theta)}\frac{\left(-2\log\mu\right)^{m}}{\left(1+\mu^{\Diamond(H)}\right)^{2}}\,\mathrm{d}\mu
		\stackrel{n\to\infty}{\to}
		c(\theta)\int_{0}^{\infty}\frac{\left(-2\log\mu\right)^{m}}{\left(1+\mu^{\Diamond(H)}\right)^{2}}\,\mathrm{d}\mu 
	\end{align*}
 Moreover, we can see from the definition of $d_{n}(\theta)$, see \eqref{Def:cn:dn} that $-d_{n}(\theta)+\partial_{H}\log{c_{H}}-2\log{c_{n}(\theta)^{-1}}=0$ and therefore
 \begin{equation*}
\frac{n}{r_{n}^{1}(H)}\int_{0}^{r_{n}^{+}(H) n^{-1}}\frac{f_{\xi}^{n,\downarrow}(\lambda)^{2}\left(-d_{n}(\theta)+\partial_{H}\log{f_{H}^{\downarrow}(\lambda)}\right)^{m}}{h_{\theta}^{n,\downarrow}(\lambda)^{2}}\,\mathrm{d}\lambda
=
c(\theta)\int_{0}^{r_{n}^{+}(H) n^{-1}c_{n}(\theta)}\frac{\left(-2\log\mu\right)^{m}}{\left(1+\mu^{\Diamond(H)}\right)^{2}}\,\mathrm{d}\mu.
\end{equation*}
Then, using the convergence $\inf_{\theta=(H,\sigma,\tau)\in\Theta}r_{n}^{+}(H) n^{-1}c_{n}(\theta)\to\infty$ as $n\to\infty$, we get
\begin{equation*}
    \frac{n}{r_{n}^{1}(H)}\int_{0}^{r_{n}^{+}(H) n^{-1}}\frac{f_{\xi}^{n,\downarrow}(\lambda)^{2}\left(-d_{n}(\theta)+\partial_{H}\log{f_{H}^{\downarrow}(\lambda)}\right)^{m}}{h_{\theta}^{n,\downarrow}(\lambda)^{2}}\,\mathrm{d}\lambda
    \to
    \stackrel{n\to\infty}{\to}
		c(\theta)\int_{0}^{\infty}\frac{\left(-2\log\mu\right)^{m}}{\left(1+\mu^{\Diamond(H)}\right)^{2}}\,\mathrm{d}\mu
\end{equation*}
uniformly on $\Theta$, which concludes the proof.
\end{proof}

\begin{lemma}
\label{PropB1_20200812}
	Let $m\in\{0,1,2\}$ and $\{r_{n}^{+}(H)\}_{n\in\mathbb{N}}$ be a positive sequence satisfying 
	\begin{equation}\label{Assumption_rn-p_Key3}
		r_{n}^{+}(H)=o(n) \;\; \mbox{ and } \;\; \left(\frac{r_{n}^{+}(H)}{n}\right)\left|\log\left(\frac{n}{r_{n}^{1}(H)}\right)\right|\left|\log\left(\frac{n}{r_{n}^{+}(H)}\right)\right|=o(1)\;\; \text{ as } n\to\infty.
	\end{equation}
	uniformly on $\Theta_{H}$. Under the same assumptions in Proposition~\ref{Prop:Limit_Key}, we obtain
	\begin{align*}
		&\frac{n}{r_{n}^{1}(H)}\int_{0}^{\frac{r_{n}^{+}(H)}{n}}\frac{f_{\xi}^{n}(\lambda)^{2}\left(-d_{n}(\theta)+\partial_{H}\log{f_{H}(\lambda)}\right)^{m}}{h_{\theta}^{n}(\lambda)^{2}}\,\mathrm{d}\lambda\\
		&=\frac{n}{r_{n}^{1}(H)}\int_{0}^{\frac{r_{n}^{+}(H)}{n}}\frac{f_{\xi}^{n,\downarrow}(\lambda)^{2}\left(-d_{n}(\theta)+\partial_{H}\log{f_{H}^{\downarrow}(\lambda)}\right)^{m}}{h_{\theta}^{n,\downarrow}(\lambda)^{2}}\,\mathrm{d}\lambda +o(1)\ \ \mbox{as $n\to\infty$}
	\end{align*}
    uniformly on $\Theta$.
\end{lemma}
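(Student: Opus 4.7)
The plan is to replace $f_\xi^n$, $h_\theta^n$ and $\partial_H \log f_H$ by their leading-order approximations $f_\xi^{n,\downarrow}$, $h_\theta^{n,\downarrow}$ and $\partial_H \log f_H^\downarrow = \partial_H \log c_H - 2\log|\lambda|$ one factor at a time, bounding each swap error via the change of variable $\mu = c_n(\theta)\lambda$ already used in the proof of Lemma~\ref{Lemma_Limit_Key2}.

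First I would establish quantitative local expansions near $\lambda = 0$, uniform in $H \in \Theta_H$. Writing $f_H(\lambda) = f_H^\downarrow(\lambda)(1 + r_H(\lambda))$ and $g(\lambda) = g^\downarrow(\lambda)(1 + s(\lambda))$, the explicit formulas given in Section~\ref{Sec:stat_model} together with $2(1-\cos\lambda) = \lambda^2(1 + O(\lambda^2))$ and the smoothness of the residual sum $\sum_{\tau \neq 0} |\lambda + 2\pi\tau|^{-1-2H}$ give $r_H(\lambda) = O(|\lambda|^{2\kappa(H)})$, where $2\kappa(H) := \min(2H + 1, 2) > 0$, and $s(\lambda) = O(\lambda^2)$. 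Differentiating in $H$ yields $\partial_H r_H(\lambda) = O(|\lambda|^{2\kappa(H)}|\log\lambda|)$, uniformly on $\Theta_H$.

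Second, denote $Q(\lambda) := -d_n(\theta) + \partial_H \log f_H(\lambda)$ and $Q^\downarrow(\lambda) := -d_n(\theta) + \partial_H \log f_H^\downarrow(\lambda)$. The definition of $d_n(\theta)$ in \eqref{Def:cn:dn} yields the crucial identity $Q^\downarrow(\lambda) = -2\log(c_n(\theta)|\lambda|)$, so that $Q - Q^\downarrow = \partial_H \log(1 + r_H) = O(|\lambda|^{2\kappa(H)}|\log\lambda|)$. Writing $h_\theta^n/h_\theta^{n,\downarrow}$ as the convex combination
\[
\frac{h_\theta^n(\lambda)}{h_\theta^{n,\downarrow}(\lambda)} = \alpha(\lambda)\,(1 + r_H(\lambda)) + (1 - \alpha(\lambda))\,(1 + s(\lambda)),
\]
with $\alpha(\lambda) := f_\xi^{n,\downarrow}(\lambda)/h_\theta^{n,\downarrow}(\lambda) \in [0,1]$, gives $(f_\xi^n/h_\theta^n)^2 = (f_\xi^{n,\downarrow}/h_\theta^{n,\downarrow})^2 (1 + O(|\lambda|^{2\kappa(H)}))$ uniformly on $\Theta$. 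Combining with the mean-value estimate $|Q^m - (Q^\downarrow)^m| \leq m(|Q|+|Q^\downarrow|)^{m-1}|Q-Q^\downarrow|$, the difference of the two integrands in Lemma~\ref{PropB1_20200812} is bounded pointwise by $C\,(f_\xi^{n,\downarrow})^2(h_\theta^{n,\downarrow})^{-2}|\lambda|^{2\kappa(H)} P_m(|\log\lambda| + |\log c_n(\theta)|)$ for some polynomial $P_m$ of degree $m$ with absolute coefficients.

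Third, I would apply the substitution $\mu = c_n(\theta)\lambda$ as in the proof of Lemma~\ref{Lemma_Limit_Key2}. The prefactor $n/r_n^1(H)$ combines with the Jacobian and the explicit form of $f_\xi^{n,\downarrow}/h_\theta^{n,\downarrow}$ to yield the bound
\[
c(\theta)\, c_n(\theta)^{-2\kappa(H)} \int_0^{c_n(\theta) r_n^+(H)/n} \frac{\mu^{2\kappa(H)}}{(1+\mu^{\Diamond(H)})^2}\,\widetilde P_m\!\left(|\log\mu|, |\log c_n(\theta)|\right) d\mu.
\]
Splitting the $\mu$-integral at $\mu = 1$ and using $\inf_H \Diamond(H) > 1/2$ together with a short case analysis (according to the sign of $2\kappa(H) + 1 - 2\Diamond(H)$), the integral is at most $1 + [c_n(\theta) r_n^+(H)/n]^{(2\kappa(H) + 1 - 2\Diamond(H))_+}$ times a polynomial in $|\log c_n(\theta)|$. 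Using the relations $c_n(\theta) \asymp n/r_n^1(H) \to \infty$ and assumption \eqref{Assumption_rn-p_Key3}, each of the resulting terms is $o(1)$ uniformly on $\Theta$. The main obstacle is the careful bookkeeping of the logarithmic factors $|\log c_n(\theta)| \asymp |\log(n/r_n^1(H))|$ and $|\log(n/r_n^+(H))|$ produced at every step; assumption \eqref{Assumption_rn-p_Key3} is tailored precisely so that $(r_n^+(H)/n)$ absorbs an arbitrary fixed-degree product of these logs, which closes the estimate.
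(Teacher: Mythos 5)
Your proposal is correct in substance and takes a genuinely different route from the paper. The paper factors the error into three pieces $J_{n,1}^{(m)}+J_{n,2}^{(m)}+J_{n,3}^{(m)}$ (replacing in turn the polynomial factor, the $(f_\xi^n)^2$, and the $(h_\theta^n)^2$), then bounds each piece by a \emph{sup over $\lambda\in(0,r_n^+(H)/n]$ of a relative error} times the already-bounded reference integral $L_n^{(m)}(\theta)$, relying on $r_n^+(H)/n\to0$ (and, for the cross term $|d_n(\theta)|R_n^{(1)}$, on assumption~\eqref{Assumption_rn-p_Key3}) to drive those sups to zero. You instead build a single \emph{pointwise} bound on the difference of integrands, of the form $C(f_\xi^{n,\downarrow})^2(h_\theta^{n,\downarrow})^{-2}|\lambda|^{2\kappa(H)}P_m(|\log\lambda|+|\log c_n(\theta)|)$, then push it through the change of variable $\mu=c_n(\theta)\lambda$ used in Lemma~\ref{Lemma_Limit_Key2} and estimate the resulting integral directly. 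The two approaches share the essential ingredients — the local expansions $f_H/f_H^\downarrow-1=O(|\lambda|^{2\kappa(H)})$ and $g/g^\downarrow-1=O(\lambda^2)$, and the cancellation $-d_n(\theta)+\partial_H\log f_H^\downarrow(\lambda)=-2\log(c_n(\theta)|\lambda|)$ (which the paper uses implicitly inside Lemma~\ref{Lemma_Limit_Key2} and you rightly isolate) — and both ultimately invoke~\eqref{Assumption_rn-p_Key3}. Your version is more quantitative and makes the bookkeeping of logarithmic factors transparent, at the cost of the case analysis on the sign of $2\kappa(H)+1-2\Diamond(H)$; the paper's sup-error bound is shorter but slides over the $|d_n(\theta)|R_n^{(1)}$ growth. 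Two small imprecisions to flag: (i) after the split at $\mu=1$ the logarithmic polynomial also involves $|\log(n/r_n^+(H))|$ through $|\log M|$ with $M=c_n(\theta)r_n^+(H)/n$, not only $|\log c_n(\theta)|$; this is harmless because $2\kappa(H)=\min(2H+1,2)>1$, so $(r_n^+(H)/n)^{2\kappa(H)}\leq r_n^+(H)/n$ and~\eqref{Assumption_rn-p_Key3} absorbs a log from each source. (ii) You should note that if $c_n(\theta)r_n^+(H)/n<1$ the second integral is empty, which covers the regime $r_n^+(H)\lesssim r_n^1(H)$ permitted by the hypotheses of this lemma (no $r_n^1(H)=o(r_n^+(H))$ is assumed here, unlike in Lemma~\ref{Lemma_Limit_Key2}).
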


\begin{proof}[Proof of Lemma~\ref{PropB1_20200812}]
Recall that $c(\theta)=(\sigma^{2}\tau^{-2}c_{H})^{\frac{1}{\Diamond(H)}}$, $c_{n}(\theta)= c(\theta)^{-1}(\nu_{n}\Delta_{n}^{-H})^{\frac{2}{\Diamond(H)}}= c(\theta)^{-1}n/r_{n}^{1}(H)$ and $d_{n}(\theta)=\partial_{H}\log{c_{H}}+2\log{c_{n}(\theta)}$. 
For $m\in\{0,1,2\}$, we also set
	\begin{equation*}
		L_{n}^{(m)}(\theta):=\frac{n}{r_{n}^{1}(H)}\int_{0}^{r_{n}^{+}(H) n^{-1}}\frac{f_{\xi}^{n,\downarrow}(\lambda)^{2}\left|-d_{n}(\theta)+\partial_{H}\log{f_{H}^{\downarrow}(\lambda)}\right|^{m}}{h_{\theta}^{n,\downarrow}(\lambda)^{2}}\,\mathrm{d}\lambda
	\end{equation*}
and for $m\in\{1,2\}$, we write
	\begin{align*}
 \begin{cases}
		q_{f,n}^{(m)}(\theta)
		:=\sup_{\lambda\in(0,r_{n}^{+}(H) n^{-1}]}\left|\left(\frac{f_{H}(\lambda)}{f_{H}^{\downarrow}(\lambda)}\right)^{m}-1\right|,\\ 
		q_{g,n}^{(m)}(\theta)
		:=\sup_{\lambda\in(0,r_{n}^{+}(H) n^{-1}]}\left|\left(\frac{g(\lambda)}{g^{\downarrow}(\lambda)}\right)^{m}-1\right|,\\
		q_{h,n}^{(m)}(\theta)
		:=\sup_{\lambda\in(0,r_{n}^{+}(H) n^{-1}]}\left|\left(\frac{h_{\theta}^{n}(\lambda)}{h_{\theta}^{n,\downarrow}(\lambda)}\right)^{m}-1\right|
 \end{cases}
	\end{align*}
 and 
 \begin{equation*}
    R_{n}^{(m)}(\theta)
		:=\sup_{\lambda\in(0,r_{n}^{+}(H) n^{-1}]}\left|\left(\partial_{H}\log{f_{H}(\lambda)}\right)^{m}-\left(\partial_{H}\log{f_{H}^{\downarrow}(\lambda)}\right)^{m}\right|.
 \end{equation*}
 
    Note that
	\begin{align*}
		&\frac{n}{r_{n}^{1}(H)}\int_{0}^{r_{n}^{+}(H) n^{-1}}\left[
		\frac{f_{\xi}^{n}(\lambda)^{2}\left(-d_{n}(\theta)+\partial_{H}\log{f_{H}(\lambda)}\right)^{m}}{h_{\theta}^{n}(\lambda)^{2}}
		-\frac{f_{\xi}^{n,\downarrow}(\lambda)^{2}\left(-d_{n}(\theta)+\partial_{H}\log{f_{H}^{\downarrow}(\lambda)}\right)^{m}}{h_{\theta}^{n,\downarrow}(\lambda)^{2}}
		\right]\,\mathrm{d}\lambda\\
		&=\frac{n}{r_{n}^{1}(H)}\int_{0}^{r_{n}^{+}(H) n^{-1}}\frac{f_{\xi}^{n}(\lambda)^{2}}{h_{\theta}^{n}(\lambda)^{2}}\left[\left(-d_{n}(\theta)+\partial_{H}\log{f_{H}(\lambda)}\right)^{m} - \left(-d_{n}(\theta)+\partial_{H}\log{f_{H}^{\downarrow}(\lambda)}\right)^{m}\right]\,\mathrm{d}\lambda\\
		&\quad+\frac{n}{r_{n}^{1}(H)}\int_{0}^{r_{n}^{+}(H) n^{-1}}\frac{f_{\xi}^{n,\downarrow}(\lambda)^{2}}{h_{\theta}^{n}(\lambda)^{2}}\left(\frac{f_{\xi}^{n}(\lambda)^{2}}{f_{\xi}^{n,\downarrow}(\lambda)^{2}}-1\right)\left(-d_{n}(\theta)+\partial_{H}\log{f_{H}^{\downarrow}(\lambda)}\right)^{m}\,\mathrm{d}\lambda\\
		&\quad+\frac{n}{r_{n}^{1}(H)}\int_{0}^{r_{n}^{+}(H) n^{-1}}\frac{f_{\xi}^{n,\downarrow}(\lambda)^{2}}{h_{\theta}^{n}(\lambda)^{2}}\left(1-\frac{h_{\theta}^{n}(\lambda)^{2}}{h_{\theta}^{n,\downarrow}(\lambda)^{2}}\right)\left(-d_{n}(\theta)+\partial_{H}\log{f_{H}^{\downarrow}(\lambda)}\right)^{m}\,\mathrm{d}\lambda\\
		&=:J_{n,1}^{(m)}(\theta)+J_{n,2}^{(m)}(\theta)+J_{n,3}^{(m)}(\theta).
	\end{align*}
    From the definitions of $q_{f,n}^{(m)}(\theta)$, $q_{f,n}^{(m)}(\theta)$, $L_{n}^{(m)}(\theta)$ and $R_{n}^{(m)}(\theta)$, we can show
	\begin{equation}\label{inequality_J2_J3}
		J_{n,2}^{(m)}(\theta)\lesssim q_{f,n}^{(2)}(\theta)L_{n}^{(m)}(\theta),\ \ 
		J_{n,3}^{(m)}(\theta)\lesssim q_{h,n}^{(2)}(\theta)L_{n}^{(m)}(\theta),\ \ \forall m\in\{0,1,2\},
	\end{equation}
	and
	\begin{equation*}
		J_{n,1}^{(m)}(\theta)\lesssim \left\{
		\begin{array}{cc}
		  	0&\mbox{if $m=0$},\\
		  	R_{n}^{(1)}(\theta)L_{n}^{(2)}(\theta)&\mbox{if $m=1$},\\
		  	\left(2|d_{n}(\theta)|R_{n}^{(1)}(\theta)+R_{n}^{(2)}(\theta)\right)L_{n}^{(2)}(\theta)&\mbox{if $m=2$}
		\end{array}
		\right.
	\end{equation*}
    uniformly on $\Theta$. 
    Proceeding as the proof of Lemma~\ref{Lemma_Limit_Key2}, we also have
	\begin{equation*}
		L_{n}^{(m)}(\theta)\lesssim 1,\ \ \forall m\in\{0,1,2\}.
	\end{equation*}
	uniformly on $\Theta$. 
	Using the definitions of $f_H^{\downarrow}$ and $g_H^{\downarrow}$ and Taylor's theorem, we have 
	\begin{align}
\label{ineq:q-LT2}
	\begin{split}
		R_{n}^{(1)}(\theta)+R_{n}^{(2)}(\theta)=o(1)\ \ \mbox{as $n\to\infty$},
        \\
        q_{f,n}^{(2)}(\theta)+q_{h,n}^{(2)}(\theta)=o(1)\ \ \mbox{as $n\to\infty$}
    \end{split}
	\end{align}
	uniformly on $\Theta$.
    Moreover, Taylor's theorem also yields
    \begin{align*}
        q_{f,n}^{(1)}(\theta) +q_{g,n}^{(1)}(\theta)
        +\sup_{\lambda\in(0,r_{n}^{+}(H) n^{-1}]}\left|\frac{f_{H}(\lambda)}{f_{H}^{\downarrow}(\lambda)}\right|
        +\sup_{\lambda\in(0,r_{n}^{+}(H) n^{-1}]}
        \left|\frac{g(\lambda)}{g^{\downarrow}(\lambda)}\right| = o(1)\ \ \mbox{as $n\to\infty$}
    \end{align*}
    uniformly on $\Theta$. On the other hand, we can show $q_{h,n}^{(1)}(\theta)\leq q_{f,n}^{(1)}(\theta)+q_{g,n}^{(1)}(\theta)$ and
    \begin{align*}
        &q_{f,n}^{(2)}(\theta)
        \leq q_{f,n}^{(1)}(\theta)
        \sup_{\lambda\in(0,r_{n}^{+}(H) n^{-1}]}
        \left|\frac{f_{\xi}^{n}(\lambda)+f_{\xi}^{n,\downarrow}(\lambda)}{f_{\xi}^{n,\downarrow}(\lambda)}\right|
        \leq
        q_{f,n}^{(1)}(\theta)
        \left(
        1+\sup_{\lambda\in(0,r_{n}^{+}(H) n^{-1}]}
        \left|\frac{f_{H}(\lambda)}{f_{H}^{\downarrow}(\lambda)}\right|
        \right),\\ 
        &q_{h,n}^{(2)}(\theta)
        \leq q_{h,n}^{(1)}(\theta)
        \sup_{\lambda\in(0,r_{n}^{+}(H) n^{-1}]}
        \left|\frac{h_{\theta}^{n}(\lambda)+h_{\theta}^{n,\downarrow}(\lambda)}{h_{\theta}^{n,\downarrow}(\lambda)}\right|
        \leq
        q_{h,n}^{(1)}(\theta)
        \left(
        1+\sup_{\lambda\in(0,r_{n}^{+}(H) n^{-1}]}
        \left|\frac{f_{H}(\lambda)}{f_{H}^{\downarrow}(\lambda)}\right|
        +\sup_{\lambda\in(0,r_{n}^{+}(H) n^{-1}]}
        \left|\frac{g(\lambda)}{g^{\downarrow}(\lambda)}\right|
        \right).
    \end{align*}
    Thus we conclude \eqref{ineq:q-LT2} from the above calculations. 
    Then, using the inequalities~\eqref{inequality_J2_J3}-\eqref{ineq:q-LT2}, we also conclude
	\begin{align*}
		J_{n,1}^{(m)}(\theta)+J_{n,2}^{(m)}(\theta)+J_{n,3}^{(m)}(\theta)=o(1)\ \ \mbox{as $n\to\infty$}
	\end{align*}
	uniformly on $\Theta$. This completes the proof.
\end{proof}
\begin{proof}[Proof of Proposition~\ref{Prop:Limit_Key}]
Consider $r_{n}^{+}(H)$ defined by
	\begin{align*}
		r_{n}^{+}(H)=r_{n}^{1}(H){(\nu_{n}\Delta_{n}^{-H})}^{\epsilon}\ \ \mbox{for some $\epsilon\in(0,2\Diamond(H)^{-1})$.}
	\end{align*}
This sequence satisfies the conditions of Lemmas~\ref{PropC1_Ext1_20200812}-\ref{PropB1_20200812}.
	and the conclusion immediately follows from and application of these Lemmas.
\end{proof}
\subsection{Proof of Proposition~\ref{Prop:FisherInfo-Cross-Terms}}
Since $r_{n}^{2}(H)=n$ and $\inf_{H\in\Theta_{H}}\Diamond(H)>1$, we have
	\begin{align*}
		&\frac{n}{\sqrt{r_{n}^{1}(H)r_{n}^{2}(H)}}
		\int_{0}^{\pi}\frac{g_{\tau}^{n}(\lambda)f_{\xi}^{n}(\lambda)\left|-d_{n}(\theta)+\partial_{H}\log{f_{H}(\lambda)}\right|^{m-1}}{h_{\theta}^{n}(\lambda)^{2}}\,\mathrm{d}\lambda\\
		&\lesssim
		\sqrt{\frac{r_{n}^{1}(H)}{r_{n}^{2}(H)}}\cdot\frac{n}{r_{n}^{1}(H)}
		\int_{0}^{\pi}\frac{(\nu_{n}\Delta_{n}^{-H})^{2}|\lambda|^{\Diamond(H)}\left||d_{n}(\theta)|+|\log\lambda|\right|^{m-1}}{\left(1+(\nu_{n}\Delta_{n}^{-H})^{2}|\lambda|^{\Diamond(H)}\right)^{2}}\,\mathrm{d}\lambda\\
		&\lesssim
		\sqrt{\frac{r_{n}^{1}(H)}{r_{n}^{2}(H)}}
		\left|\log{\frac{r_{n}^{1}(H)}{n}}\right|^{m-1}
		\int_{0}^{\infty}\frac{\mu^{\Diamond(H)}\left|1+|\log\lambda|\right|^{m-1}}{\left(1+\mu^{\Diamond(H)}\right)^{2}}\,\mathrm{d}\mu\\
		&\leq
		\sqrt{\frac{r_{n}^{1}(H)}{r_{n}^{2}(H)}}
		\left|\log{\frac{r_{n}^{1}(H)}{n}}\right|^{m-1}
		\left(
		\int_{0}^{1}\mu^{\Diamond(H)}\left|1+|\log\mu|\right|^{m-1}\,\mathrm{d}\mu
		+\int_{1}^{\infty}\mu^{-\Diamond(H)}\left|1+|\log\mu|\right|^{m-1}\,\mathrm{d}\mu\right)
	\end{align*}
	uniformly on $\Theta$. 
    Moreover, since we have $\inf_{H\in\Theta_{H}}\Diamond(H)>0$ and $\lim_{n\to\infty}\inf_{H\in\Theta_{H}}\nu_{n}\Delta_{n}^{-H}=\infty$ from Assumption~\ref{Assump:RateMat}, we obtain
 \begin{align*}
     \sqrt{\frac{r_{n}^{1}(H)}{r_{n}^{2}(H)}}
		\left|\log{\frac{r_{n}^{1}(H)}{n}}\right|^{m-1}
     =(\nu_{n}\Delta_{n}^{-H})^{-\frac{1}{\Diamond(H)}}
		\left|\log{(\nu_{n}\Delta_{n}^{-H})^{-\frac{2}{\Diamond(H)}}}\right|^{m-1}
     \to 0 
 \end{align*}
 as $n\to\infty$ uniformly on $\Theta$. This completes the proof.

\section{Explicit Expressions of $\left[\mathcal{I}(\theta)^{-1}\right]_{jj}$}
\label{section:explicit_inverse}

In this section, we derive explicit expressions of $\left[\mathcal{I}(\theta)^{-1}\right]_{jj}$ for $j\in\{1,2,3\}$ when $\nu(H)=\infty$ with $\Diamond(H)>1/2$. 
For conciseness, we write
\begin{align*}
	\overline{\mathcal{F}_{1}}(\theta)=
	\begin{pmatrix}
		\mathcal{F}_{11}(\theta)&\mathcal{F}_{12}(\theta)\\
		\mathcal{F}_{21}(\theta)&\mathcal{F}_{22}(\theta)
	\end{pmatrix},\ \ 
	\overline{s}_{2j}(\theta)=
	\overline{\varphi}_{2j}(\theta)+d(\theta)\overline{\varphi}_{1j}(\theta),\ \ 
	j\in\{1,2\}.
\end{align*} 
By definition, we have
\begin{align*}
	\mathcal{I}(\theta)=
	\begin{pmatrix}
		\overline{\varphi}(\theta)^{\top}\overline{\mathcal{F}_{1}}(\theta)\overline{\varphi}(\theta)
		&0_{2\times 1}\\
		0_{1\times 2}&(2\tau^{-1})^{2}\mathcal{F}_{33}(\theta)
	\end{pmatrix}
\end{align*}
and
\begin{align*}
	\left[\overline{\varphi}(\theta)^{\top}\overline{\mathcal{F}_{1}}(\theta)\overline{\varphi}(\theta)\right]_{jj}
	=\overline{\varphi}_{1j}(\theta)^{2}\mathcal{F}_{11}(\theta)
	+2\overline{\varphi}_{1j}(\theta)\overline{s}_{2j}(\theta)\mathcal{F}_{12}(\theta)
	+\overline{s}_{2j}(\theta)^{2}\mathcal{F}_{22}(\theta)
\end{align*}
for $j\in\{1,2\}$. Moreover
\begin{align*}
	\mathrm{det}\left[\mathcal{I}(\theta)\right]
	&=(2\tau^{-1})^{2}\mathcal{F}_{33}(\theta)
	\mathrm{det}\left[\overline{\varphi}(\theta)^{\top}\overline{\mathcal{F}_{1}}(\theta)\overline{\varphi}(\theta)\right]\\
	&=(2\tau^{-1})^{2}\mathcal{F}_{33}(\theta)
	\mathrm{det}[\overline{\varphi}(\theta)]^{2}
	\mathrm{det}\left[\overline{\mathcal{F}_{1}}(\theta)\right],\\
\end{align*}
and
\begin{equation*}
    \mathrm{det}[\overline{\varphi}(\theta)]
	=\overline{\varphi}_{11}(\theta)\overline{\varphi}_{22}(\theta)-\overline{\varphi}_{12}(\theta)\overline{\varphi}_{21}(\theta),
\end{equation*}
so we obtain
\begin{align*}	
\left[\mathcal{I}(\theta)^{-1}\right]_{11}
	=&\mathrm{det}\left[\mathcal{I}(\theta)\right]^{-1}
	\mathrm{det}\left[\mathrm{diag}\left(
	\left[\overline{\varphi}(\theta)^{\top}\overline{\mathcal{F}_{1}}(\theta)\overline{\varphi}(\theta)\right]_{22},
	(2\tau^{-1})^{-2}\mathcal{F}_{33}(\theta)
	\right)
	\right] \\
	=&
	\left(\overline{\varphi}_{11}(\theta)\overline{\varphi}_{22}(\theta)-\overline{\varphi}_{12}(\theta)\overline{\varphi}_{21}(\theta)\right)^{-2}
	\mathrm{det}\left[\overline{\mathcal{F}_{1}}(\theta)\right]^{-1} \\
	&\quad\times
	\left(\overline{\varphi}_{12}(\theta)^{2}\mathcal{F}_{11}(\theta)
	+2\overline{\varphi}_{12}(\theta)\overline{s}_{22}(\theta)\mathcal{F}_{12}(\theta)
	+\overline{s}_{22}(\theta)^{2}\mathcal{F}_{22}(\theta)\right).
\end{align*}
Similarly, we have
\begin{align*}
	\left[\mathcal{I}(\theta)^{-1}\right]_{22}
	=&\mathrm{det}\left[\mathcal{I}(\theta)\right]^{-1}
	\mathrm{det}\left[\mathrm{diag}\left(
	\left[\overline{\varphi}(\theta)^{\top}\overline{\mathcal{F}_{1}}(\theta)\overline{\varphi}(\theta)\right]_{11},
	(2\tau^{-1})^{-2}\mathcal{F}_{33}(\theta)
	\right)
	\right] \\
	=&
	\left(\overline{\varphi}_{11}(\theta)\overline{\varphi}_{22}(\theta)-\overline{\varphi}_{12}(\theta)\overline{\varphi}_{21}(\theta)\right)^{-2}
	\mathrm{det}\left[\overline{\mathcal{F}_{1}}(\theta)\right]^{-1} \\
	&\quad\times
	\left(\overline{\varphi}_{11}(\theta)^{2}\mathcal{F}_{11}(\theta)
	+2\overline{\varphi}_{11}(\theta)\overline{s}_{21}(\theta)\mathcal{F}_{12}(\theta)
	+\overline{s}_{21}(\theta)^{2}\mathcal{F}_{22}(\theta)\right)
\end{align*}
and
\begin{equation*}
	\left[\mathcal{I}(\theta)^{-1}\right]_{33}
	=\mathrm{det}\left[\mathcal{I}(\theta)\right]^{-1}
	\mathrm{det}\left[\overline{\varphi}(\theta)^{\top}\overline{\mathcal{F}_{1}}(\theta)\overline{\varphi}(\theta)\right]
	=\frac{\tau^{2}}{2}.
\end{equation*}

\section{Proof of the results of Section~\ref{Sec:energy}}

\subsection{Proof of Lemma~\ref{lem:wavelet:properties}}
\label{Sec:lem:wavelet:properties:proof}

First, note that by self-similarity and the fact that $W^H$ has stationary increments, we have
\begin{equation*}
\mathbb{E}_{\theta}^{n}[(d_{j,p,n}^{(w)})^{2}]=
\frac{1}{n^{1+2H}p^{1-2H}} 
\sum_{\ell_1=0}^{p-1}
\sum_{\ell_2=0}^{p-1}
w(\tfrac{\ell_1}{p})
w(\tfrac{\ell_2}{p})
\mathbb{E}_{\theta}^{n}\big[
\left(
W^H_{(\ell_1-\ell_2)/p}
-
2W^H_{1+(\ell_1-\ell_2)/p}
+
W^H_{2+(\ell_1-\ell_2)/p}
\right)
(
W^H_{2}
-
2W^H_{1}
)
\big].
\end{equation*}
It follows that $\kappa_{p}^{(w)}(H)$ is well defined for all values of $H$ and $p$. Moreover, a direct computation of the expectation above yields
\begin{equation*}
\kappa_{p}^{(w)}(H) = 
p^{-2}
\sum_{\ell_1=0}^{p-1}
\sum_{\ell_2=0}^{p-1}
w(\tfrac{\ell_1}{p})
w(\tfrac{\ell_2}{p})
 \phi_{H}((\ell_1 - \ell_2) p^{-1}),
\end{equation*}
where $\phi_{H}(x) = \tfrac{1}{2}\sum_{k=0}^{4} (-1)^{k+1} \binom{4}{k} |x+k-2|^{2H}$.
\\

We now focus on the decay of the autocorrelation function. By symmetry, it suffices to show this decay only for $j_{1} - j_{2} \geq 3$. Since $W^H$ is self-similar and has stationary increments, we can compute explicitly this autocorrelation function and we have
\begin{align}
\label{eq:explicit_correl_d}
\mathbb{E}_{\theta}^{n}\big[d_{j_{1},p,n}^{(w)}d_{j_{2},p,n}^{(w)}\big]
=
(p/n)^{1+2H} \cdot
p^{-2}
\sum_{\ell_1=0}^{p-1}
\sum_{\ell_2=0}^{p-1}
w(\tfrac{\ell_1}{p})
w(\tfrac{\ell_2}{p})
 \phi_{H}( j_{1}-j_{2} + (\ell_1 - \ell_2) p^{-1}),
\end{align}
where $\phi_{H}(x) = \tfrac{1}{2}\sum_{k=0}^{4} (-1)^{k+1} \binom{4}{k} |x+k-2|^{2H}$. For $x > 2$, the absolute values appearing in the expression of $\phi_{H}(x)$ can be removed and we can use Taylor's formula to expand $|x+k-2|^{2H}$ around $|x|^{2H}$. Let $F(x,t) = |x+t|^{2H}$,  we have
\begin{align*}
\phi_{H}(x) &= \frac{1}{2\cdot 4!} \int_{0}^1 (1-t)^3 \left( 
	16 \partial^{4}_t F(x,-2t)
	-4\partial^{4}_t F(x,-t)
	-4\partial^{4}_t F(x,t)
	+16 \partial^{4}_t F(x,2t)
\right)\,\mathrm{d}t
\\
&= \frac{1}{2\cdot 4!} \int_{-1}^1 (1-|t|)^3 \left( 
	-4\partial^{4}_t F(x,t)
	+16 \partial^{4}_t F(x,2t)
\right)\,\mathrm{d}t.
\end{align*}
Then we infer $C_- |x-2|^{2H-4} \leq |\phi_{H}(x)| \leq C_+ |x-2|^{2H-4}$ for some constant $C_+$ and $C_-$ independent of $H \in [H_{-}, H_{+}]$. Summing over $\ell_1$ and $\ell_2$ in \eqref{eq:explicit_correl_d} yields the result.

\subsection{Proof of Lemma~\ref{lem:wavelet:kappa_p}}
\label{Sec:lem:wavelet:kappa_p:proof}

It is cleat from its definition that $(H, x) \mapsto \phi_H(x)$ is differentiable in $H$ and therefore $\kappa_p^{(w)}$ is differentiable. Using usual theorems for derivation under the integral, we also know that $\kappa_\infty^{(w)}$ is differentiable. 
\\

Moreover, since $w$ is Lipschitz continuous (since it is differentiable with bounded derivative on $[0,1]$) and $|\phi_{H}(x) - \phi_{H}(y)| \leq C |x-y|^{2H\wedge 1}$  for any $-1 \leq x, y \leq 1$, we deduce that
\begin{align*}
|\kappa_{p}^{(w)}(H) &- \kappa_{\infty}^{(w)}(H)|
\leq
\Big| p^{-2}
\sum_{\ell_1=0}^{p-1}
\sum_{\ell_2=0}^{p-1}
w(\tfrac{\ell_1}{p})
w(\tfrac{\ell_2}{p})
 \phi_{H}((\ell_1 - \ell_2) p^{-1}) - \int_{0}^1 \int_{0}^1 w(x)
w(y) \phi_{H}(x-y) \,\mathrm{d}x\mathrm{d}y
\Big |
\\
&\leq
\sum_{\ell_1=0}^{p-1}
\sum_{\ell_2=0}^{p-1}
\int_{\tfrac{\ell_1}{p}}^{\tfrac{\ell_1+1}{p}}
\int_{\tfrac{\ell_2}{p}}^{\tfrac{\ell_2+1}{p}}
\big| 
w(\tfrac{\ell_1}{p})
w(\tfrac{\ell_2}{p})
 \phi_{H}((\ell_1 - \ell_2) p^{-1})
-  
w(x)
w(y) 
\phi_{H}(x-y) 
\big |
\,\mathrm{d}x\mathrm{d}y
\\
&\leq
C
\sum_{\ell_1=0}^{p-1}
\sum_{\ell_2=0}^{p-1}
\int_{\tfrac{\ell_1}{p}}^{\tfrac{\ell_1+1}{p}}
\int_{\tfrac{\ell_2}{p}}^{\tfrac{\ell_2+1}{p}}
\big| 
\tfrac{\ell_1}{p}
-
x
\big |
+
\big| 
\tfrac{\ell_2}{p}
-
y
\big |
+
\big| 
\tfrac{\ell_1 - \ell_2}{p}
-
(x-y)
\big |^{2H\wedge 1}
\,\mathrm{d}x\mathrm{d}y
\\
&\leq
C
\sum_{\ell_1=0}^{p-1}
\sum_{\ell_2=0}^{p-1}
\int_{\tfrac{\ell_1}{p}}^{\tfrac{\ell_1+1}{p}}
\int_{\tfrac{\ell_2}{p}}^{\tfrac{\ell_2+1}{p}}
\tfrac{1}{p}
+
\tfrac{1}{p^{2H\wedge 1}}
\,\mathrm{d}x\mathrm{d}y
\\
&\leq
\tfrac{C}{p^{2H\wedge 1}}.
\end{align*}
We show similarly the bound $|{\partial_{H}\kappa_{p}^{(w)}}(H) - {\partial_{H}\kappa_{\infty}^{(w)}}(H) | \leq c^{(w)} p^{-2H\wedge 1}$. \\

It remains to prove the uniform control on $\kappa_p^{(w)}$ and on $\partial_H \kappa_\infty^{(w)}$. Recall from Lemma~\ref{lem:wavelet:properties} that $
\kappa_{p}^{(w)}(H)
=
(n/p)^{1+2H} \mathbb{E}_{\theta}^{n}[(d_{j,p,n}^{(w)})^{2}]
$. Moreover, $\mathbb{E}_{\theta}^{n}[(d_{j,p,n}^{(w)})^{2}]>0$ whenever there exists $0 \leq \ell \leq p-1$ such that $w(\ell/p) \neq 0$. This is the case whenever $p \geq 2$ by Assumption~\ref{assumption:alpha}. Using also continuity of $\kappa_{p}^{(w)}$ for any fixed $p$, we know that $\inf_{H} \kappa_{p}^{(w)}(H) > 0
$ for all $p \geq 2$
 and that $\sup_{H}  \kappa_{p}^{(w)}(H) <\infty
$
 for all $p \geq 1$. To get the uniformity in $p$ of \eqref{eq:kappa:uniform_bound}, we use the uniform convergence $\kappa_{p} \to \kappa_\infty$ (by Lemma~\ref{lem:wavelet:kappa_p}), the fact that $\kappa_{\infty}^{(w)}$ and that
\begin{equation*}
\kappa_{\infty}^{(w)}(H) = \mathbb{E}\left[
\left(\int_0^1 w(u) (W^H_u - 2 W^H_{u+1} + W^H_{u+2})\,\mathrm{d}u\right)^{2}
\right] > 0.
\end{equation*}

\subsection{Proof of Lemma~\ref{lem:noise:structure}}
\label{Sec:lem:noise:structure:proof}
Under Assumption~\ref{Assumption_Y}, we have $Y_{j} = \sum_{l=0}^{K}{K \choose l} (-1)^{l} \varepsilon_{j-l}$ in distribution for some standard Gaussian variables $\{\varepsilon_{j}\}_{j\in\mathbb{Z}}$ 
\begin{equation}
\label{eq:expression:e}
	e_{j,p,n}^{(w)}=
	\frac{\tau^{2}\nu_{n}^{2}}{\sqrt{np}}
	\sum_{l=-K}^{p-1}\alpha_{l,p}
	\left(\varepsilon_{jp+l}-2\varepsilon_{(j+1)p+l}+\varepsilon_{(j+2)p+l}\right)
\end{equation}
in distribution, where 
\begin{equation}
\label{eq:Def:beta}
	\alpha_{l,p} :=
	\sum_{i=0\vee(-l)}^{K\wedge(p-1-l)}{K\choose i} (-1)^{i} w(\tfrac{l+i}{p}).
\end{equation}
Since the $\varepsilon_{l}$ are independent standard Gaussian variables, we can compute the variance of $e_{j,p,n}^{(w)}$ from \eqref{eq:expression:e} and in particular, $\gamma_{p}^{(w)}= n\mathbb{E}_{\theta}^{n}[(e_{j,p,n}^{(w)})^{2}]$ is well defined. The precise computation of $\gamma_{p}^{(w)}$ require delicate handling because of the crossovers between $\varepsilon_{(j+1)p+l,n}$ for $l<0$ and $\varepsilon_{jp+l,n}$ for $l$ close to $p-1$. Indeed, we obtain
\begin{align*}
\gamma_{p}^{(w)}&=
\frac{1}{p}
\left(
\sum_{l=-K}^{p-K-1}
\alpha_{l,p}^{2}
+
\sum_{l=p-K}^{p-1}
(\alpha_{l,p}-2\alpha_{l-p,p})^{2}
+
\sum_{l=p}^{2p-K-1}
4\alpha_{l-p,p}^{2}
\right.\\&
\left.
\;\;\;\;\;\;\;\;\;\;\;\;\;\;\;\;+
\sum_{l=p-K}^{2p-1}
(2\alpha_{l-p,p}-\alpha_{l-2p,p})^{2}
+
\sum_{l=2p}^{3p-1}
\alpha_{l-2p,p}^{2}
\right)
\\
&=
\frac{1}{p}
\left(
6\sum_{l=-K}^{p-1}
\alpha_{l,p}^{2}
-
8
\sum_{l=p-K}^{p-1}
\alpha_{l,p}\alpha_{l-p,p}
\right).
\end{align*}
From \eqref{eq:Def:beta}, the coefficients $\alpha_{l,p}$ are bounded uniformly so that the sequence $(\gamma_p)_p$ is bounded.\\

Suppose now that Assumption~\ref{assumption:alpha} holds. Note that since the $K-1$ first derivatives of $w$ vanish at $0$ and at $1$, we have $|\alpha_{l,p}| \leq Cp^{-K}$ whenever $-K \leq l < 0$ of $p-K \leq l < p$. For $0 \leq l < p-K$, we have by Taylor-Lagrange's formula
\begin{align*}
	\alpha_{l,p}
	=\sum_{i=0}^{K}{K\choose i}(-1)^{i}
	\left(
	\sum_{k=0}^{K-1} \tfrac{1}{k!} (i/p)^{k} \partial_{x}^{k}w(l/p) 
	+\tfrac{1}{K!}(i/p)^{K} \partial_{x}^{K}w\left( \frac{l+i\theta_{i,l}}{p}\right) 
	\right)
\end{align*}
for some $0 \leq \theta_{i,l} \leq 1$. Using also that $\sum_{i=0}^{K} {K\choose i}i^{k}(-1)^{i}= 0$ whenever $k\in\{0,1,\cdots,K-1\}$, we get 
\begin{equation}\label{eq:simplified:beta}
	\alpha_{l,p}
	=
	\tfrac{\displaystyle 1}{\displaystyle K!p^{K}}
	\sum_{i=0}^{K}
	{K \choose i} (-1)^{i} i^{K} \partial_{x}^{K}w\left( \frac{l+i\theta_{i,l}}{p}\right).
\end{equation}
Using finally that $\partial_{x}^{K}w(x)$ is continuous, we retrieve the same bound $|\alpha_{l,p}| \leq Cp^{-K}$ for $0 \leq l < p-K$. Combining this with the explicit expression for $\gamma_{p}^{(w)}$ obtained previously yields the upper bound on $\gamma_{p}^{(w)}$ in Lemma~\ref{lem:noise:structure}.\\

We eventually focus on the lower bound. Note from \eqref{eq:simplified:beta} that since $\partial_{x}^{K}w$ is continuously differentiable on $[0,1]$, we have
\begin{align*}
	\alpha_{l,p}
	=&\frac{ 1}{K!p^{K}}
	\partial_{x}^{K}w( l p^{-1} )
	\sum_{i=0}^{K}
	{K \choose i} (-1)^{i} i^{K} 
	+O(p^{-K-1})\\ 
	=& 
	\frac{ (-1)^n}{ p^{K}}
	\partial_{x}^{K}w( l p^{-1} )
	+O(p^{-K-1}),
\end{align*}
where the $O$ is uniform in $l$. Note in addition that for $p>2K$, we have
\begin{align*}
\gamma_{p}^{(w)}&\geq
\frac{1}{p}
\sum_{l=K}^{p-K-1}
\alpha_{l,p}^{2}
\\
&\geq 
\frac{1}{p}
\sum_{l=K}^{p-K-1}
\left(
\frac{1}{p^{2K}}
\partial_{x}^{K}w\left( l/p\right)^{2}
+O(p^{-2K-1})
\right)
\\
&\geq 
\frac{1}{p^{2K+1}}
\sum_{l=K}^{p-K-1}
\partial_{x}^{K}w\left(l/p\right)^{2}
+O(p^{-2K-1}).
\end{align*}
Finally, using that $x\mapsto\partial_{x}^{K}w(x)$ is continuously differentiable, we have
\begin{align*}
	\gamma_{p}^{(w)}
	\geq 
	\frac{1}{p^{2K}}
	\norm{\partial_{x}^{K}w}_{L^{2}([0,1])}^{2}
	+O(p^{-2K-1}),
\end{align*}
which yields the lower bound ion $\gamma_{p}^{(w)}$ in Lemma~\ref{lem:noise:structure} when $p$ is large enough.

\subsection{Proof of Proposition~\ref{prop:deviation:Q}}
\label{Sec:prop:deviation:Q:proof}
Recall that
$\widehat{Q}_{J,p,n}^{(w)} = \sum_{j=0}^{J-1} |\widetilde{d}_{j,p,n}^{(w)}|^{2}$. Therefore, using Lemmas~\ref{lem:wavelet:properties} and~\ref{lem:noise:structure} and the independence between $W^H$ and the noise $Y$, we obtain
\begin{align*}
	\mathbb{E}_{\theta}^{n}\left[\widehat{Q}_{J,p,n}^{(w)}\right]
	=J\left(
	\sigma^{2} \kappa_{p}^{(w)}(H) (p/n)^{1+2H} + \tau^{2} \gamma_{p}^{(w)} n^{-1}
	\right).
\end{align*}
Then, using Wick's theorem and the independence between $W^H$ and $Y$ again, we obtain
\begin{align*}
	\Var_{\theta}^{n}\big[\widehat{Q}_{J,p,n}^{(w)}\big]
	&=
	\sum_{j_{1},j_{2}=0}^{J- 1} 
	\Cov_{\theta}^{n}\big[
	\big|\widetilde{d}_{j_{1},p,n}^{(w)}\big|^{2},
	\big|\widetilde{d}_{j_{2},p,n}^{(w)}\big|^{2}
	\big]\\
	&=
	2\sum_{j_{1},j_{2}=0}^{J- 1} 
	\Cov_{\theta}^{n}\big[\widetilde{d}_{j_{1},p,n}^{(w)},\widetilde{d}_{j_{2},p,n}^{(w)}\big]^{2}
	\\
	&=
	2\sum_{j_{1},j_{2}=0}^{J- 1}\big(
	\sigma^{2}\Cov_{\theta}^{n}\big[d_{j_{1},p,n}^{(w)},d_{j_{2},p,n}^{(w)}\big] 
	+\tau^{2}\Cov_{\theta}^{n}\big[e_{j_{1},p,n}^{(w)},e_{j_{2},p,n}^{(w)}\big]
	\big)^{2}
	\\
	&\leq
	4\sum_{j_{1},j_{2}=0}^{J- 1}\big(
	\sigma^{2}\Cov_{\theta}^{n}\big[d_{j_{1},p,n}^{(w)},d_{j_{2},p,n}^{(w)}\big]^{2} 
	+\tau^{2}\Cov_{\theta}^{n}\big[e_{j_{1},p,n}^{(w)},e_{j_{2},p,n}^{(w)}\big]^{2}
	\big).
\end{align*}
We first derive the upper bound. 
Using Lemma~\ref{lem:noise:structure}, we know $\Cov_{\theta}^{n}[e_{j_{1},p,n}^{(w)},e_{j_{2},p,n}^{(w)}]=0$ whenever $|j_{1}-j_{2}|\geq K+4$ and we obtain
\begin{align*}
	\Cov_{\theta}^{n}\left[e_{j_{1},p,n}^{(w)},e_{j_{2},p,n}^{(w)}\right]^{2}
	\leq\Var_{\theta}^{n}\left[e_{0,p,n}^{(w)}\right]^{2}
	\lesssim \frac{1}{n^{2}p^{4K}}
\end{align*}
uniformly on $\Theta$ whenever $|j_{1}-j_{2}|<K+4$.
Together with Lemma~\ref{lem:wavelet:properties}, we obtain
\begin{align*}
\Var_{\theta}^{n}\left[\widehat{Q}_{J,p,n}^{(w)}\right]
&\lesssim
J(p/n)^{2+4H}
\sum_{j=1}^{\infty}
j^{4H-8}
+
J
\frac{1}{n^{2} p^{4K}}
\end{align*}
uniformly on $\Theta$ and we conclude since $\sum_{j=1}^{\infty}
j^{4H-8} \leq \sum_{j=1}^{\infty}
j^{-4} < \infty$.
\\

	We now focus on the lower bound. Using Lemmas~\ref{lem:wavelet:properties} and~\ref{lem:noise:structure}, we obtain
\begin{align*}
\Var_{\theta}^{n}\left[\widehat{Q}_{J,p,n}^{(w)}\right]
&
\geq
\sum_{j=0}^{J-1}\left(\Var_{\theta}^{n}\left[d_{j,p,n}^{(w)}\right]+\Var_{\theta}^{n}\left[e_{j,p,n}^{(w)}\right]\right)^{2}
\\
&=
J\left(\Var_{\theta}^{n}\left[d_{0,p,n}^{(w)}\right]+\Var_{\theta}^{n}\left[e_{0,p,n}^{(w)}\right]\right)^{2}
\\
&\geq
c_{-}^{(w)} J
\left(
\left(\frac{p}{n}\right)^{1+2H}+\frac{1}{np^{{2K}}}
\right)^{2}
\\
&\geq 
c_{-}^{(w)}
J
\left(
\left(\frac{p}{n}\right)^{2+4H}+\frac{1}{n^{2}p^{4K}}\right).
\end{align*}
This completes the proof.

\section{Proof of the results of Section~\ref{Sec:QVestimators}}

\subsection{Proof of Lemma~\ref{lemma:uncorrectedQV:H}}
\label{Sec:lemma:uncorrectedQV:H:proof}

Recall that 
$p_{n}^{(0)} = \big \lfloor n^{2/(2K+3)} \big \rfloor \vee 2$ and that $\widehat{H}^{(0)}_{n}$ is defined in \eqref{eq:Def:guess_est} by
\begin{align*}
	\widehat{H}^{(0)}_{n} = \left( 
	\left( 
	\frac{1}{2}\log_{2}\left[
	\widehat{Q}_{J_{n}^{(0)},2p_{n}^{(0)},n}^{(w)} \, \Big / \, \widehat{Q}_{2J_{n}^{(0)},p_{n}^{(0)},n}^{(w)}
	\right] 
	\right)
	\vee H_{-}
	\right)
	\wedge H_{+}.
\end{align*}
Since $H \in [H_{-}, H_{+}]$ and $t \mapsto 2^{2t}$ is invertible on $(0,1)$ with inverse uniformly Lipschitz on the compact sets of $(0,1)$, it suffices to prove that
\begin{align*}
	\left\{v_{n}^{(0)}(H)
	\left(\frac{\widehat{Q}_{J_{n}^{(0)},2p_{n}^{(0)},n}^{(w)}}{\widehat{Q}_{2J_{n}^{(0)},p_{n}^{(0)},n}^{(w)}} - 2^{2H}\right)\right\}_{n\in\mathbb{N}}
\end{align*}
is bounded in $\mathbb{P}^{n}_{\theta}$-probability, uniformly over $\Theta$. We first prove a lower bound (in $\mathbb{P}^{n}_{\theta}$-probability) for $(p_{n}^{(0)}/n)^{2H} \widehat{Q}_{2J_{n}^{(0)},p_{n}^{(0)},n}^{(w)}$. 

\begin{lemma}
\label{lem:lowerbound:hatQ}
There exists $r>0$ such that 
\begin{align*}
\mathbb{P}^{n}_{\theta}\left[
(n/p_{n}^{(0)})^{2H} \widehat{Q}_{2J_{n}^{(0)},p_{n}^{(0)},n}^{(w)} \leq r
\right]
=o(1)\ \ \mbox{as $n\to\infty$ uniformly on compact subsets of $\Theta_{\ast}$.}
\end{align*}
\end{lemma}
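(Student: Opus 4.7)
The strategy is a second-moment argument: show that $(n/p_{n}^{(0)})^{2H}\mathbb{E}_{\theta}^{n}[\widehat{Q}_{2J_{n}^{(0)},p_{n}^{(0)},n}^{(w)}]$ is bounded below by a positive constant uniformly on compact subsets of $\Theta$, and that $(n/p_{n}^{(0)})^{2H}\widehat{Q}_{2J_{n}^{(0)},p_{n}^{(0)},n}^{(w)}$ concentrates around this mean.

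First, from \eqref{eq:expec:energy} applied with $J = 2J_{n}^{(0)}$ and $p = p_{n}^{(0)}$, we have
\begin{equation*}
    (n/p_{n}^{(0)})^{2H}\mathbb{E}_{\theta}^{n}[\widehat{Q}_{2J_{n}^{(0)},p_{n}^{(0)},n}^{(w)}]
    = 2J_{n}^{(0)}\Big(\sigma^{2}\kappa_{p_{n}^{(0)}}^{(w)}(H)\tfrac{p_{n}^{(0)}}{n} + \tau^{2}\gamma_{p_{n}^{(0)}}^{(w)}(n/p_{n}^{(0)})^{2H}n^{-1}\Big).
\end{equation*}
Since $2J_{n}^{(0)}p_{n}^{(0)}/n \to 1$ and, by Lemma~\ref{lem:wavelet:kappa_p}, $\inf_{p\geq 2}\inf_{H}\kappa_{p}^{(w)}(H) > 0$, the first term is bounded below by a strictly positive constant uniformly on $\Theta$. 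The noise contribution is nonnegative and hence does not harm this lower bound; in any case, a direct check using $p_{n}^{(0)} \asymp n^{2/(2K+3)}$, Lemma~\ref{lem:noise:structure} and $H < 1$ shows it tends to $0$.

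Second, Proposition~\ref{prop:deviation:Q} yields the variance bound
\begin{equation*}
    (n/p_{n}^{(0)})^{4H}\Var_{\theta}^{n}\big[\widehat{Q}_{2J_{n}^{(0)},p_{n}^{(0)},n}^{(w)}\big]
    \lesssim
    (n/p_{n}^{(0)})^{4H}\cdot J_{n}^{(0)}\Big((p_{n}^{(0)}/n)^{2+4H} + (np_{n}^{(0)\,2K})^{-2}\Big)
    \lesssim
    p_{n}^{(0)}/n + (n/p_{n}^{(0)})^{4H}(np_{n}^{(0)\,2K})^{-2}(n/p_{n}^{(0)}).
\end{equation*}
The first summand tends to $0$ because $p_{n}^{(0)}/n \to 0$. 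For the second, plugging $p_{n}^{(0)} \asymp n^{2/(2K+3)}$ gives an exponent of $n$ proportional to $2(H-1)(2K+1)/(2K+3) - 2K/(2K+3) < 0$, so this term also vanishes uniformly on $\Theta$.

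Combining these two steps via Chebyshev's inequality shows that $(n/p_{n}^{(0)})^{2H}\widehat{Q}_{2J_{n}^{(0)},p_{n}^{(0)},n}^{(w)}$ converges in $\mathbb{P}^{n}_{\theta}$-probability to its mean, which is bounded below by a positive constant $r$ uniformly on compact subsets of $\Theta$, yielding the claim. The only subtle arithmetic is in checking that the noise-induced variance term is negligible after multiplication by $(n/p_{n}^{(0)})^{4H}$; this is where the specific choice $p_{n}^{(0)} \asymp n^{2/(2K+3)}$, tailored so as to trade signal against noise, plays the key role.
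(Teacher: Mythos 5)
Your proof is correct and follows essentially the same second-moment/Chebyshev argument as the paper: lower-bound $(n/p_n^{(0)})^{2H}\mathbb{E}_\theta^n[\widehat{Q}]$ uniformly via Proposition~\ref{prop:deviation:Q} and the positivity of $\kappa_p^{(w)}$ from Lemma~\ref{lem:wavelet:kappa_p}, then show $(n/p_n^{(0)})^{4H}\Var_\theta^n[\widehat{Q}]\to 0$ and apply Chebyshev. One minor slip: the exponent you write for the noise-variance term does not simplify correctly — with $p_n^{(0)}\asymp n^{2/(2K+3)}$ the exponent is $(2K+1)(4H-5)/(2K+3)$, not $2(H-1)(2K+1)/(2K+3)-2K/(2K+3)$ — but both expressions are negative for $H\in(0,1)$, so the conclusion is unaffected.
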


\begin{proof}
We have
\begin{align*}
	\mathbb{P}^{n}_{\theta}\bigg[
	(n/p_{n}^{(0)})^{2H} &\widehat{Q}_{J_{n}^{(0)},2p_{n}^{(0)},n}^{(w)} \leq r
	\bigg] 
 \\ &=\mathbb{P}^{n}_{\theta}\left[
	\widehat{Q}_{J_{n}^{(0)},2p_{n}^{(0)},n}^{(w)} 
	-\mathbb{E}_{\theta}^{n}\left[\widehat{Q}_{J_{n}^{(0)},2p_{n}^{(0)},n}^{(w)}\right] 
	\leq\left(
	r - \mathbb{E}_{\theta}^{n}\left[\widehat{Q}_{J_{n}^{(0)},2p_{n}^{(0)},n}^{(w)}\right] (n/p_{n}^{(0)})^{2H} 
	\right)(p_{n}^{(0)}/n)^{2H}
	\right].
\end{align*}
By Proposition~\ref{prop:deviation:Q}, we also have
\begin{align*}
	r - \mathbb{E}_{\theta}^{n}\left[\widehat{Q}_{J_{n}^{(0)},2p_{n}^{(0)},n}^{(w)}\right] (n/p_{n}^{(0)})^{2H}
	&= r - (n/p_{n}^{(0)})^{2H} 2J_{p_{n}^{(0)},n} 
	\left(\sigma^{2} \kappa_{p_{n}^{(0)}}^{(w)}(H) (p_{n}^{(0)}/n)^{1+2H} +\tau^{2} \gamma_{p_{n}^{(0)}}^{(w)}n^{-1} \right)\\
	&\leq
	r - \tfrac{2}{3} \sigma^{2} \kappa_{p_{n}^{(0)}}^{(w)}(H) 
\end{align*}
since $J_{p_{n}^{(0)},n} \geq n/(3p_{n}^{(0)})$ for $n$ large enough. 
By Lemma~\ref{lem:wavelet:kappa_p}, we can pick $r>0$ small enough so that $r - \frac{2}{3} \kappa_{p_{n}^{(0)}}^{(w)}(H)  \sigma^{2} \leq - c^{\ast} $ for some $c^{\ast} > 0$, 
independent of $n$ and $\theta=(H,\sigma,\tau)$.
Using Proposition~\ref{prop:deviation:Q} again, we obtain
\begin{align*}
	\mathbb{P}^{n}_{\theta}\left[
	(n/p_{n}^{(0)})^{2H} \widehat{Q}_{J_{n}^{(0)},2p_{n}^{(0)},n}^{(w)} \leq r
	\right]
	&\leq   
	\mathbb{P}^{n}_{\theta}\left[
	\widehat{Q}_{J_{n}^{(0)},2p_{n}^{(0)},n}^{(w)} 
	-\mathbb{E}_{\theta}^{n}\left[\widehat{Q}_{J_{n}^{(0)},2p_{n}^{(0)},n}^{(w)}\right]
	\leq -c^{\ast} (p_{n}^{(0)}/n)^{2H}
	\right]\\
	&\lesssim  
	(n/p_{n}^{(0)})^{4H} \Var_{\theta}^{n}\left[\widehat{Q}_{J_{n}^{(0)},2p_{n}^{(0)},n}^{(w)}\right]\\
	&\lesssim  
	\left(\frac{n}{p_{n}^{(0)}}\right)^{4H}
	\left(
	\left(\frac{p_{n}^{(0)}}{n}\right)^{1+4H}
	+\frac{1}{n(p_{n}^{(0)})^{4K+1}}
	\right)
	\\
	&=\frac{p_{n}^{(0)}}{n} + \frac{n^{4H-1}}{(p_{n}^{(0)})^{4K+4H+1}}
\end{align*}
uniformly on $\Theta$. Therefore, the conclusion follows because $p_{n}^{(0)} = \lfloor n^{2/(2K+3)} \rfloor \vee 2$ and $4H-5 \leq -1$ so that 
\begin{equation}
\label{Q_hat_lb-suff-p0}
	\frac{p_{n}^{(0)}}{n} + \frac{n^{4H-1}}{(p_{n}^{(0)})^{4K+4H+1}}
 \lesssim
 n^{\frac{-(2K+1)}{2K+3}} + n^{4H-1 - \frac{2(4K+4H+1)}{2K+3}}
 =
 n^{\frac{-(2K+1)}{2K+3}} + n^{\frac{(2K+1)(4H-5)}{2K+3}}
 \lesssim 
 n^{\frac{-(2K+1)}{2K+3}}
	\to 0
\end{equation}
as $n \to \infty$, uniformly on $\Theta$.
\end{proof}

Now we return to the proof of Proposition~\ref{prop:deviation:Q}. 
Consider $r > 0$ as in Lemma~\ref{lem:lowerbound:hatQ}. Then we can show that for any $M > 0$, 
\begin{align*}
	&\mathbb{P}^{n}_{\theta}\left[
	v_{n}^{(0)}(H)\left|
	\frac{\widehat{Q}_{J_{n}^{(0)},2p_{n}^{(0)},n}^{(w)}}{\widehat{Q}_{2J_{n}^{(0)},p_{n}^{(0)},n}^{(w)}} - 2^{2H} 
	\right| \geq M
	\right]\leq
	\mathbb{P}^{n}_{\theta}\left[
	(n/p_{n}^{(0)})^{2H} v_{n}^{(0)}(H)
	\left| \widehat{Q}_{J_{n}^{(0)},2p_{n}^{(0)},n}^{(w)}- 2^{2H} \widehat{Q}_{2J_{n}^{(0)},p_{n}^{(0)},n}^{(w)}\right| \geq M r
	\right] + o(1)
\end{align*}
as $n\to\infty$ uniformly on compact subsets of $\Theta_{\ast}$. Moreover, a straightforward calculation gives
\begin{align*}
	(n/p_{n}^{(0)})^{2H} v_{n}^{(0)}(H)
	\left| \widehat{Q}_{J_{n}^{(0)},2p_{n}^{(0)},n}^{(w)}- 2^{2H} \widehat{Q}_{2J_{n}^{(0)},p_{n}^{(0)},n}^{(w)}\right|
	\leq V_{n}^{(1)} + 2^{2H}V_{n}^{(2)} + B_{n},
\end{align*}
where 
\begin{align*}
\begin{cases}
	V_{n}^{(i)} := 
	v_{n}^{(0)}(H) \big(\frac{n}{p_{n}^{(0)}}\big)^{2H} \big|
	\widehat{Q}_{(3-i)J_{n}^{(0)},ip_{n}^{(0)},n}^{(w)}
	-\mathbb{E}_{\theta}^{n}\big[\widehat{Q}_{(3-i)J_{n}^{(0)},ip_{n}^{(0)},n}^{(w)}\big]
	\big|,\ \ i\in\{1,2\},\\
	B_{n} := v_{n}^{(0)}(H) \big(\frac{n}{p_{n}^{(0)}}\big)^{2H} \big|
	\mathbb{E}_{\theta}^{n}\big[\widehat{Q}_{J_{n}^{(0)},2p_{n}^{(0)},n}^{(w)}\big]
	-2^{2H}\mathbb{E}_{\theta}^{n}\big[\widehat{Q}_{2J_{n}^{(0)},p_{n}^{(0)},n}^{(w)}\big]
	\big|. 
\end{cases}
\end{align*}
Therefore, the conclusion follows once we have proved that $\{V_{n}^{(1)}\}_{n\in\mathbb{N}}$, $\{V_{n}^{(2)}\}_{n\in\mathbb{N}}$ and $\{B_{n}\}_{n\in\mathbb{N}}$ are bounded in $\mathbb{P}^{n}_{\theta}$-probability, uniformly over $\Theta$.
\\

We firstly focus on the sequences of variance terms $\{V_{n}^{(i)}\}_{n\in\mathbb{N}}$ for $i\in\{1,2\}$. 
Note that it is enough to prove that for each $i\in\{1,2\}$, $\{\mathbb{E}_{\theta}^{n}[|V_{n}^{(i)}|^{2}]\}_{n\in\mathbb{N}}$ is bounded uniformly over $\Theta$. 
By Proposition~\ref{prop:deviation:Q} and using also the definition of $J_{n}^{(0)}$ and \eqref{Q_hat_lb-suff-p0}, we obtain
\begin{align*}
	\max_{i\in\{1,2\}}
	\mathbb{E}_{\theta}^{n}[|V_{n}^{(i)}|^{2}]
	&\lesssim 
	v_{n}^{(0)}(H)^{2}
	\bigg(\frac{n}{p_{n}^{(0)}}\bigg)^{4H}
	J_{n}^{(0)} \bigg(
	\bigg(\frac{p_{n}^{(0)}}{n}\bigg)^{2+4H}
	+\frac{1}{n^{2}(p_{n}^{(0)})^{4K}}\bigg)
 \\
    &\lesssim
	v_{n}^{(0)}(H)^{2}\bigg(\frac{p_{n}^{(0)}}{n} + \frac{n^{4H-1}}{(p_{n}^{(0)})^{4K+4H+1}}\bigg)
 \\
    &\lesssim
    v_{n}^{(0)}(H)^{2}n^{\frac{-(2K+1)}{2K+3}}
\end{align*}
which is uniformly bounded since $v_{n}^{(0)}(H) \leq n^{\frac{2K+1}{2(2K+3)}}$ by definition. This proves tightness of $\{V_{n}^{(i)}\}_{n\in\mathbb{N}}$ . 
\\

Finally, we focus on the sequence of bias terms $\{B_{n}\}_{n\in\mathbb{N}}$. 
Using \eqref{eq:expec:energy} and Lemmas~\ref{lem:wavelet:properties} and~\ref{lem:noise:structure}, we can show 
\begin{align*}
	&\bigg|
	\mathbb{E}_{\theta}^{n}\Big[\widehat{Q}_{J_{n}^{(0)},2p_{n}^{(0)},n}^{(w)}\Big]
	-2^{2H}\mathbb{E}_{\theta}^{n}\Big[\widehat{Q}_{2J_{n}^{(0)},p_{n}^{(0)},n}^{(w)}\Big]
	\bigg|\\
	&\leq 2^{1+2H}J_{n}^{(0)}\sigma^{2}(p_{n}^{(0)}/n)^{1+2H}
	\Big|\kappa_{2p_{n}^{(0)}}^{(w)}(H) - \kappa_{p_{n}^{(0)}}^{(w)}(H)\Big|
	+J_{n}^{(0)}\tau^{2}n^{-1}\Big|\gamma_{2p_{n}^{(0)}}^{(w)} - 2^{1+2H}\gamma_{p_{n}^{(0)}}^{(w)}\Big|\\
	&\leq 2^{1+2H}J_{n}^{(0)}\sigma^{2}(p_{n}^{(0)}/n)^{1+2H}
	\Big(|\overline{\kappa}_{2p_{n}^{(0)}}^{(w)}(H)| + |\overline{\kappa}_{p_{n}^{(0)}}^{(w)}(H)|\Big)
	+J_{n}^{(0)}\tau^{2}n^{-1}
	\Big(|\gamma_{2p_{n}^{(0)}}^{(w)}| + 2^{1+2H}|\gamma_{p_{n}^{(0)}}^{(w)}|\Big)\\
	&\lesssim 
	(p_{n}^{(0)}/n)^{2H}(p_{n}^{(0)})^{-\{(2H)\wedge 1\}} + (p_{n}^{(0)})^{-2K-1}
\end{align*}
uniformly on $\Theta$. By definition of $p_{n}^{(0)}$ , we obtain
\begin{align*}
	v_{n}^{(0)}(H)^{-1} B_{n}\lesssim 
    \frac{1}{(p_{n}^{(0)})^{(2H)\wedge 1}} + \frac{n^{2H}}{(p_{n}^{(0)})^{2K+2H+1}}
    \lesssim 
     n^{-2\frac{(2H)\wedge 1}{2K+3}} + n^{2H - 2\frac{2K+2H+1}{2K+3}}
     =
     n^{-2\frac{(2H)\wedge 1}{2K+3}} + n^{\frac{(4K+2)(H-1)}{2K+3}}
\end{align*}
so that by definition of $v_{n}^{(0)}(H)$, $B_{n}$ is uniformly bounded on $\Theta$. This completes the proof.

\subsection{Proof of Lemma~\ref{lemma:uncorrectedQV:tau}}
\label{Sec:lemma:uncorrectedQV:tau:proof}

Since $\tau \in [\tau_-, \tau_+]$ and $t \mapsto t^{2}$ is invertible on $(0,\infty)$ with inverse uniformly Lipschitz on the compact sets of $(0,\infty)$, it suffices to prove that
\begin{align*}
	\left\{
	u_{n}^{(0)}(H)
	\left|
	n\frac{
	\widehat{Q}_{n}	
	}{
	(n-1) \gamma_{1}^{(1)}
	}
	-\tau^{2}
	\right|
	\right\}_{n\in\mathbb{N}}
\end{align*}
is bounded in $\mathbb{P}^{n}_{\theta}$-probability, uniformly over $\Theta$. 
Moreover, since $\inf_{n\in\mathbb{N}} n^{-1} (n-1)\gamma_{1}^{(1)}> 0$, it suffices to prove that
\begin{equation}\label{stoch_bdd_tau1}
	\left\{
	u_{n}^{(0)}(H)
	\left|
	\widehat{Q}_{n} - \tau^{2} n^{-1}(n-1) \gamma_{1}^{(1)}
	\right|
	\right\}_{n\in\mathbb{N}}
\end{equation}
is bounded in $\mathbb{P}^{n}_{\theta}$-probability, uniformly over $\Theta$. 
Note that, for each $n\in\mathbb{N}$, the random variable in the sequence \eqref{stoch_bdd_tau1} is bounded above by $V_{n}+B_{n}$, where
\begin{align*}
\begin{cases}
	V_{n} = u_{n}^{(0)}(H)\big|
	\widehat{Q}_{n} - \mathbb{E}_{\theta}^{n}[\widehat{Q}_{n}]
	\big|, \\
	B_{n}= 
	u_{n}^{(0)}(H)\big|\mathbb{E}_{\theta}^{n}[\widehat{Q}_{n}] - \tau^{2} n^{-1}(n-1) \gamma_{1}^{(1)}\big|.  
\end{cases}
\end{align*}
Therefore, the conclusion follows once we have proved that both $\{V_{n}\}_{n\in\mathbb{N}}$ and $\{B_{n}\}_{n\in\mathbb{N}}$ are bounded in $\mathbb{P}^{n}_{\theta}$-probability, uniformly over $\Theta$. 

We start with the variance terms $\{V_{n}\}_{n\in\mathbb{N}}$. 
By Proposition~\ref{prop:deviation:Q}, we have
\begin{align*}
	\mathbb{E}_{\theta}^{n}[V_{n}^{2}]
	\lesssim
	u_{n}^{(0)}(H)^{2}n (n^{-2-4H} + n^{-2})
	\lesssim  u_{n}^{(0)}(H)^{2}n^{-1}
\end{align*}
uniformly on $\Theta$ and therefore $\{\mathbb{E}_{\theta}^{n}[V_{n}^{2}]\}_{n\in\mathbb{N}}$ is bounded uniformly which ensure tightness of $V_n$.

For the bias terms $\{u_{n}^{(0)}(H)B_{n}(\theta)\}_{n\in\mathbb{N}}$, we use Proposition~\ref{prop:deviation:Q} to get
\begin{align*}
	B_{n}
	\lesssim u_{n}^{(0)}(H)(n-1)n^{-(1+2H)}\leq u_{n}^{(0)}(H)n^{-2H}
\end{align*}
which is bounded in $\mathbb{P}^{n}_{\theta}$-probability, uniformly over $\Theta$ since $u_{n}^{(0)}(H)n^{-2H}\leq 1$ for each $n\in\mathbb{N}$.

\subsection{Proof of Proposition~\ref{prop:corrected:sigma} }
\label{sec:prop:corrected:sigma:proof}

Since $\sigma \in [\sigma_-, \sigma_+]$ and $t \mapsto t^{2}$ is invertible on $(0,\infty)$ with inverse uniformly Lipschitz on the compact sets of $(0,\infty)$, it suffices to prove that
\begin{align*}
	\left\{
	s_{n}(H)\left|
	\frac{
		\widehat{Q}_{\widehat{J}_{n}^{\star}(\widehat{H}_{n}),\widehat{p}_{n}(\widehat{H}_{n}), n}^{(w)}
		- 
		\widehat{J}_{n}^{\star}(\widehat{H}_{n}) \widehat{\tau}_{n}^{2} \gamma_{\widehat{p}_{n}(\widehat{H}_{n})}^{(w)}n^{-1}
	}{
		\widehat{J}_{n}^{\star}(\widehat{H}_{n}) \kappa_{\widehat{p}_{n}(\widehat{H}_{n})}^{(w)}(\widehat{H}_{n}) (\widehat{p}_{n}(\widehat{H}_{n})/n)^{1+2\widehat{H}_{n}}
	}
	- \sigma^{2}
	\right|
	\right\}_{n\in\mathbb{N}}
\end{align*}
is bounded in $\mathbb{P}^{n}_{\theta}$-probability, uniformly over $\Theta$. Moreover, Using Lemma~\ref{lem:behaviour_adaptive_level} and the assumption on $\{\widehat{H}_{n}\}_{n\in\mathbb{N}}$, 
it suffices to show that for each $b \in \{-1, 0, 1\}$, the sequence
\begin{align*}
	\left\{
	s_{n}(H)\left|
	\frac{
	\widehat{Q}_{J_{n}^{\star,b}(H),p_{n}^{b}(H), n}^{(w)} 
	-J_{n}^{\star,b}(H)\widehat{\tau}_{n}^{2} \gamma_{p_{n}^{b}(H)}^{(w)} n^{-1}
	}{
	J_{n}^{\star,b}(H)\kappa_{p_{n}^{b}(H)}^{(w)}(\widehat{H}_{n}) (p_{n}^{b}(H)/n)^{1+2\widehat{H}_{n}}
	}
	-\sigma^{2}
	\right|
	\right\}_{n\in\mathbb{N}}
\end{align*}
is bounded in $\mathbb{P}^{n}_{\theta}$-probability, uniformly over $\Theta$. 
Furthermore, since we know that $|\kappa_{p_{n}^{b}(H)}^{(w)}(\widehat{H}_{n})|^{-1}\lesssim 1$ uniformly on $\Theta$ Using Lemma~\ref{lem:wavelet:kappa_p} and 
$J_{n}^{\star,b}(H)(n/p_{n}^{b}(H))\to  1$ 
as $n\to\infty$ uniformly on $\Theta$ by the definition of $J_{n}^{\star,b}(H)$, it suffices to prove that 
\begin{align}\label{eq:large_expr_tightness_sigma}
	\scalebox{0.95}{$
	\left\{
	s_{n}(H) \left(
	\frac{n}{p_{n}^{b}(H)}\right)^{2\widehat{H}_{n}}
	\left|
	\widehat{Q}_{J_{n}^{\star,b}(H),p_{n}^{b}(H), n}^{(w)}
	-J_{n}^{\star,b}(H)\left(
	\widehat{\tau}_{n}^{2} \gamma_{p_{n}^{b}(H)}^{(w)}n^{-1}
	+\sigma^{2}\kappa_{p_{n}^{b}(H)}^{(w)}(\widehat{H}_{n}) \left(\frac{p_{n}^{b}(H)}{n}\right)^{1+2\widehat{H}_{n}}
	\right)
	\right|
	\right\}_{n\in\mathbb{N}}
	$}
\end{align}
is bounded in $\mathbb{P}^{n}_{\theta}$-probability, uniformly over $\Theta$. 
Recall then from Proposition~\ref{prop:deviation:Q} that 
\begin{align*}
\mathbb{E}_{\theta}^{n}\Big[\widehat{Q}^{(w)}_{J_{n}^{\star,b}(H),p_{n}^{b}(H),n}\Big]
=
J_{n}^{\star,b}(H)\bigg(
\sigma^{2}\kappa_{p_{n}^{b}(H)}^{(w)}(H)
\bigg(
\frac{p_{n}^{b}(H)}{n}\bigg)^{1+2H}
+\tau^{2}\gamma_{p_{n}^{b}(H)}^{(w)}n^{-1} 
\bigg)
\end{align*}
so that 
for each $n\in\mathbb{N}$, the random variable in the sequence \eqref{eq:large_expr_tightness_sigma} is bounded above by
\begin{align*}
	F_{n}(V_{n}+B_{n}^{(\tau)}+B_{n}^{(\kappa)}+B_{n}^{(H)}),
\end{align*}
where 
\begin{align*}
\begin{array}{l}
    F_{n} := (\frac{n}{p_{n}^{b}(H)})^{2(\widehat{H}_{n}-H)}
    \\
	V_{n} := s_{n}(H) \bigg(\frac{n}{p_{n}^{b}(H)}\bigg)^{2H}
	\bigg|
	\widehat{Q}^{(w)}_{J_{n}^{\star,b}(H),p_{n}^{b}(H),n}
	- 
	\mathbb{E}_{\theta}^{n}\bigg[\widehat{Q}^{(w)}_{J_{n}^{\star,b}(H),p_{n}^{b}(H),n}\bigg]
	\bigg|,
	\\
	B_{n}^{(\tau)} := s_{n}(H) 
	\left(\frac{n}{p_{n}^{b}(H)}\right)^{2H}
	J_{n}^{\star,b}(H)
	\gamma_{p_{n}^{b}(H)}^{(w)}n^{-1} 
	\left|\widehat{\tau}_{n}^{2}-\tau^{2}\right|,
	\\
	B_{n}^{(\kappa)} := s_{n}(H) 
	J_{n}^{\star,b}(H) \sigma^{2}
	\big|
	\kappa_{p_{n}^{b}(H)}^{(w)}(\widehat{H}_{n})
	-\kappa_{p_{n}^{b}(H)}^{(w)}(H)  
	\big|
	\frac{p_{n}^{b}(H) }{n},
	\\
	B_{n}^{(H)} := s_{n}(H) 
	J_{n}^{\star,b}(H) \sigma^{2}
	\kappa_{p_{n}^{b}(H)}^{(w)}(\widehat{H}_{n})
	\frac{p_{n}^{b}(H) }{n}
	\Big|
	\Big(\frac{p_{n}^{b}(H)}{n}\Big)^{2(\widehat{H}_{n}-H)}
	-1
	\Big|.
\end{array}\end{align*}
Therefore, the proof is down to proving that each sequence of these five terms is bounded in $\mathbb{P}^{n}_{\theta}$-probability, uniformly over $\Theta$.
\\

\noindent\textbf{Uniform boundedness of $\{F_{n}\}_{n\in\mathbb{N}}$. }
Note first that
\begin{align*}
F_{n}=\left( \frac{n}{p_{n}^{b}(H)}\right)^{2(\widehat{H}_{n} - H)}
	= \exp \left(2 v_{n}(H)(\widehat{H}_{n} - H) \cdot v_{n}(H)\log\left(\frac{n}{p_{n}^{b}(H)}\right) \right).
\end{align*}
Therefore, using that $\{v_{n}(H)(\widehat{H}_{n} - H)\}_{n\in\mathbb{N}}$ is bounded in $\mathbb{P}^{n}_{\theta}$-probability, uniformly over $\Theta$ and that we have $| v_{n}(H)\log ( \frac{n}{p_{n}^{b}(H)})| \lesssim  v_{n}(H)\log{n} \to  0$ as $n\to\infty$ uniformly on $\Theta$, we obtain that $\{F_{n}\}_{n\in\mathbb{N}}$ is uniformly bounded.
\\

\noindent\textbf{Uniform boundedness of $\{V_{n}\}_{n\in\mathbb{N}}$. }
Note that it suffices to prove that $\{\mathbb{E}_{\theta}^{n}[V_{n}^{2}]\}_{n\in\mathbb{N}}$ is bounded uniformly over $\Theta$. 
By Proposition~\ref{prop:deviation:Q}, we have
\begin{align*}
	\mathbb{E}_{\theta}^{n}[V_{n}^{2}]
	\lesssim 
	s_{n}(H)^{2}
	\left(\frac{n}{p_{n}^{b}(H)}\right)^{4H}
	J_{n}^{\star,b}(H) \left(
	\left(\frac{p_{n}^{b}(H)}{n}\right)^{2+4H}
	+\frac{1}{n^{2}p_{n}^{b}(H)^{4K}}\right)
\end{align*}
uniformly on $\Theta$. Then, by the definitions of $p_{n}^{b}(H)$ and $J_{n}^{\star,b}(H)$, we have
\begin{align*}
	\mathbb{E}_{\theta}^{n}[V_{n}^{2}]
	\lesssim 
	s_{n}(H)^{2}
	\left(
	\frac{p_{n}^{b}(H)}{n}
	+\frac{n^{4H-1}}{p_{n}^{b}(H)^{4H+4K+1}}
	\right)
	\lesssim 
	s_{n}(H)^{2} n^{-\frac{2K+1}{\Diamond(H)}}
\end{align*}
uniformly on $\Theta$ and the conclusion follows because $s_{n}(H)^{2} n^{-\frac{2K+1}{\Diamond(H)}}\leq 1$ by definition of $s_{n}(H)$. 
\\

\noindent\textbf{Uniform boundedness of $\{B_{n}^{(\tau)}\}_{n\in\mathbb{N}}$. } 
We use again the definition of $p_{n}^{b}(H)$ and $J_{n}^{\star,b}(H)$ together with Lemma~\ref{lem:noise:structure} ensuring that $\gamma_{p_{n}^{b}(H)}^{(w)}\lesssim {p_{n}^{b}(H)}^{-2K}$ uniformly on $\Theta$ to show that
\begin{align*}
	B_{n}^{(\tau)} \leq s_{n}(H)u_{n}(H)^{-1}  \times u_{n}(H)
	\left|\widehat{\tau}_{n}^{2}-\tau^{2}\right|.
\end{align*}
Then the conclusion follows because $\{u_{n}(H)|\widehat{\tau}_{n}^{2}-\tau^{2}|\}_{n\in\mathbb{N}}$ is bounded in $\mathbb{P}^{n}_{\theta}$-probability, uniformly over $\Theta$ from the assumption on $\{\widehat{\tau}_{n}\}_{n\in\mathbb{N}}$. 
\\

\noindent\textbf{Uniform boundedness of $\{B_{n}^{(\kappa)}\}_{n\in\mathbb{N}}$. }
By the definitions of $p_{n}^{b}(H)$, $J_{n}^{\star,b}(H)$ and $s_{n}(H)$, Taylor formula and Lemma~\ref{lem:wavelet:kappa_p}, we have
\begin{align*}
	B_{n}^{(\kappa)} 
	\lesssim  s_{n}(H) |\widehat{H}_{n}-H| \sup_{\widetilde{H}\in [H_{-}, H_{+}]}\left|\partial_{H}\kappa_{p_{n}^{b}(H)}^{(w)}(\widetilde{H})\right|
	\lesssim  v_{n}(H)|\widehat{H}_{n}-H|
\end{align*}
uniformly on $\Theta$. Then the conclusion follows from the assumption on $\{\widehat{H}_{n}\}_{n\in\mathbb{N}}$. 
\\

\noindent\textbf{Uniform boundedness of $\{B_{n}^{(H)}\}_{n\in\mathbb{N}}$. }
Note that $J_{n}^{\star,b}(H)(p_{n}^{b}(H)/n)$ is bounded uniformly on $H$ by the definition of $ $. Moreover, $\kappa_{p_{n}^{b}(H)}^{(w)}(\widehat{H}_{n})$ is bounded by Lemma~\ref{lem:wavelet:kappa_p}. Therefore, it suffices to prove that $s_{n}(H) \Big |
		  \left(\frac{p_{n}^{b}(H)}{n}\right)^{2(\widehat{H}_{n}-H)}
		  -
		  1
\Big |$ is tight uniformly over $\Theta$. But recall from Taylor's formula that for any real $x$, we have $|e^x - 1 | \leq e^{|x|}|x|$. Thus 
\begin{align*}
s_{n}(H)\bigg|
\Big(\frac{p_{n}^{b}(H)}{n}\Big)^{2(\widehat{H}_{n}-H)}-1
\bigg|
&\leq
s_{n}(H) 
\Big( \frac{n}{p_{n}^{b}(H)} \Big)^{2|\widehat{H}_{n}-H|}\Big|
2(\widehat{H}_{n}-H)\log\Big(\frac{p_{n}^{b}(H)}{n}\Big)
\Big|
\\
&\leq
2
s_{n}(H)v_{n}(H)^{-1}\log{n} \times 
\Big( \frac{n}{p_{n}^{b}(H)} \Big)^{2|\widehat{H}_{n}-H|} \times
v_{n}(H) \Big|\widehat{H}_{n}-H\Big| 
\end{align*}
and we conclude the proof using that $\{s_{n}(H)v_{n}(H)^{-1}\log{n}\}_{n\in\mathbb{N}}$ is bounded, that $\{(n/p_{n}^{b}(H))^{2|\widehat{H}_{n}-H|}\}_{n\in\mathbb{N}}$ is uniformly tight by applying the same ideas as for $\{F_{n}\}_{n\in\mathbb{N}}$ and that $\{v_{n}(H) |\widehat{H}_{n}-H|\}_{n\in\mathbb{N}}$ is uniformly tight.

\subsection{Proof of Proposition~\ref{prop:corrected:H} }
\label{sec:prop:corrected:H:proof}

Since $H \in [H_{-}, H_{+}]$ and $t \mapsto 2^{2t}$ is invertible on $(0,1)$ with inverse uniformly Lipschitz on the compact sets of $(0,1)$, it suffices to prove that 
\begin{align*}
	\left\{
	s_{n}(H)\left(
	\frac{
	\widehat{Q}^{(w),c}_{2\widehat{J}_{n}(\widehat{H}_{n}),\widehat{p}_{n}(\widehat{H}_{n}),n}(\widehat{\theta}_{n})
	}{
	\widehat{Q}^{(w),c}_{\widehat{J}_{n}(\widehat{H}_{n}),2\widehat{p}_{n}(\widehat{H}_{n}),n}(\widehat{\theta}_{n})
	}
	-2^{2H}
	\right)
	\right\}_{n\in\mathbb{N}}
\end{align*}
is bounded in $\mathbb{P}^{n}_{\theta}$-probability, uniformly over $\Theta$. 
Moreover, Using Lemma~\ref{lem:behaviour_adaptive_level} and the assumption on $\{\widehat{H}_{n}\}_{n\in\mathbb{N}}$, it suffices to prove that for each $b \in \{-1, 0, 1\}$, the sequence
\begin{align*}
	\left\{
	s_{n}(H)\left(
	\frac{\widehat{Q}^{(w),c}_{2J_{n}^{b}(H),p_{n}^{b}(H),n}(\widehat{\theta}_{n})}
    {\widehat{Q}^{(w),c}_{J_{n}^{b}(H),2p_{n}^{b}(H),n}(\widehat{\theta}_{n})}
	-2^{2H}
	\right)
	\right\}_{n\in\mathbb{N}}
\end{align*}
is bounded in $\mathbb{P}^{n}_{\theta}$-probability, uniformly over $\Theta$, where we write $J_{n}^{b}(H)=J_{n,p_{n}^{b}(H)}$. 
We start by proving a result similar to Lemma~\ref{lem:lowerbound:hatQ}.

\begin{lemma}
\label{lem:lowerbound:hatQc}
There exists $r > 0$ such that 
\begin{align*}
	\mathbb{P}^{n}_{\theta}\left[
	\left(\frac{n}{p_{n}^{b}(H)}\right)^{2H}
	\widehat{Q}^{(w),c}_{J_{n}^{b}(H),2p_{n}^{b}(H),n}(\widehat{\theta}_{n}) \leq r
	\right]=o(1)\ \ \mbox{as $n\to\infty$}
\end{align*}
uniformly on $\Theta$.
\end{lemma}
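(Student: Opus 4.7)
The plan is to adapt the argument of Lemma~\ref{lem:lowerbound:hatQ} to the bias-corrected energy levels, with an additional step to handle the plug-in error arising from using the estimator $\widehat{\theta}_n$ rather than $\theta$ in the definition of $\widehat{Q}^{(w),c}$. Concretely, I would decompose
\[
A_n := \Big(\tfrac{n}{p_n^b(H)}\Big)^{2H} \widehat{Q}^{(w),c}_{J_n^b(H),2p_n^b(H),n}(\widehat{\theta}_n) \;=\; M_n \;+\; V_n \;+\; E_n,
\]
where $M_n := (n/p_n^b(H))^{2H}\,\mathbb{E}_{\theta}^{n}[\widehat{Q}^{(w),c}_{J_n^b(H),2p_n^b(H),n}(\theta)]$ is the deterministic main term, $V_n$ is the centered fluctuation of $\widehat{Q}^{(w),c}_{J_n^b(H),2p_n^b(H),n}(\theta)$, and $E_n$ is the error from the plug-in correction. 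The goal is to show that $M_n$ is uniformly bounded below by some $c^*>0$, and that both $V_n$ and $E_n$ vanish in $\mathbb{P}^n_\theta$-probability; then choosing $r = c^*/2$ will conclude.

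For the main term, the key identity $\kappa_p^{(w)}(H) - \overline{\kappa}_p^{(w)}(H) = \kappa_\infty^{(w)}(H)$ together with \eqref{eq:expec:energy} yields
\[
M_n \;=\; 2^{1+2H}\,\sigma^2\,\kappa_\infty^{(w)}(H)\,J_n^b(H)\,(p_n^b(H)/n),
\]
and since $J_n^b(H)(p_n^b(H)/n)\to 1/2$ uniformly on $\Theta$ and $\inf_H \kappa_\infty^{(w)}(H)>0$ by Lemma~\ref{lem:wavelet:kappa_p}, we obtain $M_n \geq c^*>0$ uniformly on $\Theta$ for $n$ large enough. For the variance term, note that the bias correction is deterministic for fixed $\theta$, so $\Var_\theta^n[V_n] = (n/p_n^b(H))^{4H}\Var_\theta^n[\widehat{Q}^{(w)}_{J_n^b(H),2p_n^b(H),n}]$. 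Proposition~\ref{prop:deviation:Q} then gives, exactly as in Lemma~\ref{lem:lowerbound:hatQ}, $\mathbb{E}_\theta^n[V_n^2] \lesssim p_n^b(H)/n + n^{4H-1}/p_n^b(H)^{4K+4H+1} \lesssim n^{-(2K+1)/\Diamond(H)} \to 0$ uniformly on $\Theta$, so Chebyshev gives $V_n = o_{\mathbb{P}^n_\theta}(1)$.

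The plug-in error can be rewritten as
\[
E_n = \Big(\tfrac{n}{p_n^b(H)}\Big)^{2H} J_n^b(H)\bigg[\sigma^2 \overline{\kappa}^{(w)}_{2p_n^b(H)}(H)\Big(\tfrac{2p_n^b(H)}{n}\Big)^{1+2H} - \widehat{\sigma}_n^2 \overline{\kappa}^{(w)}_{2p_n^b(H)}(\widehat{H}_n)\Big(\tfrac{2p_n^b(H)}{n}\Big)^{1+2\widehat{H}_n}\bigg] + \Big(\tfrac{n}{p_n^b(H)}\Big)^{2H} J_n^b(H)\, n^{-1}\gamma^{(w)}_{2p_n^b(H)}(\tau^2 - \widehat{\tau}_n^2).
\]
Using the bounds $|\overline{\kappa}_p^{(w)}(H)| \lesssim p^{-(2H)\wedge 1}$ from Lemma~\ref{lem:wavelet:kappa_p} and $\gamma_p^{(w)} \lesssim p^{-2K}$ from Lemma~\ref{lem:noise:structure}, combined with Taylor expansions around $(H,\sigma,\tau)$ and the consistency rates of $\widehat{H}_n$, $\widehat{\sigma}_n$ and $\widehat{\tau}_n$ provided in the hypotheses of Proposition~\ref{prop:corrected:H}, each of these contributions can be seen to vanish in $\mathbb{P}^n_\theta$-probability uniformly on $\Theta$; this is essentially the same computation as for the bounds on $B_n^{(\tau)}$, $B_n^{(\kappa)}$ and $B_n^{(H)}$ in the proof of Proposition~\ref{prop:corrected:sigma}.

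The main obstacle, as in Proposition~\ref{prop:corrected:sigma}, is controlling the factor $(p_n^b(H)/n)^{2(\widehat{H}_n-H)}$ arising from the estimated exponent in the bias correction. The assumption $v_n(H)^{-1}\log n = o(1)$ is precisely what is needed to ensure $(n/p_n^b(H))^{2|\widehat{H}_n-H|}$ remains bounded in $\mathbb{P}^n_\theta$-probability, by the same argument used for the term $F_n$ in Proposition~\ref{prop:corrected:sigma}. Once this is handled, the conclusion follows: on the event where $|V_n|+|E_n| < c^*/2$, which has probability tending to one uniformly, $A_n \geq M_n - |V_n| - |E_n| \geq c^*/2 =: r$.
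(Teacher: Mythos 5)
Your proposal is correct and follows essentially the same approach as the paper: both hinge on (i) the deviation bound from Proposition~\ref{prop:deviation:Q} with the rate $p_n^b(H)/n + n^{4H-1}/p_n^b(H)^{4K+4H+1} \lesssim n^{-(2K+1)/\Diamond(H)}$ replacing \eqref{Q_hat_lb-suff-p0}, (ii) the vanishing of the $\overline{\kappa}$ and $\gamma$ plug-in correction terms, and (iii) the uniform tightness of $(n/p_n^b(H))^{2(\widehat{H}_n - H)}$ via the $F_n$-argument from Proposition~\ref{prop:corrected:sigma}. The only cosmetic difference is that you carry the $\overline{\kappa}$-correction into a deterministic main term $M_n$ using $\kappa_\infty^{(w)}$, whereas the paper keeps $\kappa_p^{(w)}$ in its main random quantity $\widehat{Q}^{(w)} - J\tau^2\gamma n^{-1}$ and shows the $\widehat{\sigma}_n^2\,\overline{\kappa}(\widehat{H}_n)$-term vanishes directly without Taylor-expanding against the true-parameter version; both bookkeepings lead to the same estimate.
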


\begin{proof}
Proceeding as in the proof of Lemma~\ref{lem:lowerbound:hatQ}, but using
\begin{align*}
	\frac{p_{n}^{b}(H)}{n} + \frac{n^{4H-1}}{p_{n}^{b}(H)^{4K+4H+1}}
    \lesssim
    n^{\frac{-(2K+1)}{2H+2K+1}} + n^{-1 + \frac{
    2H
    }{2H+2K+1}}
	\to 0\ \ \mbox{as $n\to\infty$}
\end{align*}
instead of \eqref{Q_hat_lb-suff-p0}, 
we can show that for $r>0$ small enough,  
\begin{align*}
	\mathbb{P}^{n}_{\theta}\bigg[
	\bigg(\frac{n}{p_{n}^{b}(H)}\bigg)^{2H}\bigg(
	\widehat{Q}^{(w)}_{J_{n}^{b}(H),2p_{n}^{b}(H),n} - 2J_{n}^{b}(H)\tau^{2}\gamma_{p_{n}^{b}(H)}^{(w)}n^{-1} 
	\bigg) \leq r
	\bigg]=o(1)\ \ \mbox{as $n\to\infty$}
\end{align*}
uniforly on $\Theta$. Moreover, we know from \eqref{eq:Def:correctedenergy} that $\widehat{Q}^{(w),c}_{J_{n}^{b}(H),2p_{n}^{b}(H),n}(\widehat{\theta}_{n})$ is given by
\begin{align*}
\widehat{Q}^{(w),c}_{J_{n}^{b}(H),2p_{n}^{b}(H),n}(\widehat{\theta}_{n})
&=\widehat{Q}^{(w)}_{J_{n}^{b}(H),2p_{n}^{b}(H),n}
-2J_{n}^{b}(H)\tau^{2}\gamma_{p_{n}^{b}(H)}^{(w)}n^{-1}
\\
&\quad-
2J_{n}^{b}(H)\widehat{\sigma}_{n}^{2}
\overline{\kappa}_{p_{n}^{b}(H)}^{(w)}(\widehat{H}_{n})\left(\frac{p_{n}^{b}(H)}{n}\right)^{1+2\widehat{H}_{n}} 
-2J_{n}^{b}(H)(\widehat{\tau}_{n}^{2} - \tau^{2}) \gamma_{p_{n}^{b}(H)}^{(w)}n^{-1}.
\end{align*}
Proceeding as for 
$\{F_{n}\}_{n\in\mathbb{N}}$ in the proof of Proposition~\ref{prop:corrected:sigma}, we also see that $\{(n/p_{n}^{b}(H))^{2(\widehat{H}_{n} -H)}\}_{n\in\mathbb{N}}$ is bounded in $\mathbb{P}^{n}_{\theta}$-probability, uniformly over $\Theta$ and therefore the proof of Lemma~\ref{lem:lowerbound:hatQc} is down to proving that, uniformly over $\Theta$, we have
\begin{equation}\label{eq-add1}
	J_{n}^{b}(H)\widehat{\sigma}_{n}^{2}
	\overline{\kappa}_{p_{n}^{b}(H)}^{(w)}(\widehat{H}_{n}) \frac{p_{n}^{b}(H)}{n}
	 \to 0 \;\;\;
	\text{ and }\;\;\;
	\bigg(\frac{n}{p_{n}^{b}(H)} \bigg)^{2H} J_{n}^{b}(H)(\widehat{\tau}_{n}^{2} - \tau^{2}) \gamma_{p_{n}^{b}(H)}^{(w)}n^{-1}
	\to 0.
\end{equation}

We start with the first limit. Since $\widehat{H}_{n}\in[H_{-},H_{+}]$ and $\widehat{\sigma}_{n}\in[\sigma_-,\sigma_+]$ by assumption and using also that $J_{n}^{b}(H)\frac{p_{n}^{b}(H)}{n}\lesssim 1$ uniformly on $\Theta$ by construction and Lemma~\ref{lem:wavelet:kappa_p} to bound the difference $\overline{\kappa}_{p_{n}^{b}(H)}^{(w)}(\widehat{H}_{n}) = \kappa_{p_{n}^{b}(H)}^{(w)}(\widehat{H}_{n}) - \kappa_{\infty}^{(w)}(\widehat{H}_{n})$, we obtain
\begin{align*}
	J_{n}^{b}(H) \widehat{\sigma}_{n}^{2} \overline{\kappa}_{p_{n}^{b}(H)}^{(w)}(\widehat{H}_{n}) \frac{p_{n}^{b}(H)}{n}
	\lesssim 
	\frac{1}{p_{n}^{b}(H)^{(2\widehat{H}_{n})\wedge 1}} 
	\leq
	\frac{1}{p_{n}^{b}(H)^{(2H_{-})\wedge 1}}
	\to 0\ \ \mbox{as $n\to\infty$.}
\end{align*}
uniformly on $\Theta$, which proves $\big \{
	J_{n}^{b}(H)\widehat{\sigma}_{n}^{2}
	\overline{\kappa}_{p_{n}^{b}(H)}^{(w)}(\widehat{H}_{n}) \frac{p_{n}^{b}(H)}{n}
	\big \}_{n\in\mathbb{N}}$ is bounded in $\mathbb{P}_{\theta}^{n}$-probability uniformly over $\Theta$.

Finally, we focus on the second limit in \eqref{eq-add1}. 
Using Lemma~\ref{lem:noise:structure} to bound $\gamma_{p_{n}^{b}(H)}^{(w)}$ and the definitions of $p_{n}^{b}(H)$ and $J_{n}^{b}(H)$, we can show that
\begin{align*}
	\bigg\{
	\bigg(\frac{n}{p_{n}^{b}(H)} \bigg)^{2H} J_{n}^{b}(H)\gamma_{p_{n}^{b}(H)}^{(w)}n^{-1}
	\bigg\}_{n\in\mathbb{N}}
\end{align*}
is bounded in $\mathbb{P}_{\theta}^{n}$-probability uniformly over $\Theta$. 
Then the conclusion follows because $\{u_{n}(H)(\widehat{\tau}_{n}^{2} - \tau^{2})\}_{n\in\mathbb{N}}$ is bounded in $\mathbb{P}^{n}_{\theta}$-probability, uniformly over $\Theta$ from the assumption.
\end{proof}

We now return to the proof of 
Proposition~\ref{prop:corrected:H}. Consider $r>0$ as in Lemma~\ref{lem:lowerbound:hatQc}. Then we can show that for any $M > 0$, 
\begin{align*}
	&\mathbb{P}^{n}_{\theta}\left[
	s_{n}(H)\left|
	\frac{\widehat{Q}^{(w),c}_{2J_{n}^{b}(H),p_{n}^{b}(H),n}(\widehat{\theta}_{n})}{\widehat{Q}^{(w),c}_{J_{n}^{b}(H),2p_{n}^{b}(H),n}(\widehat{\theta}_{n})}
	-2^{2H} 
	\right|\geq M
	\right]
	\\
	&\leq 
	\mathbb{P}^{n}_{\theta}\left[
	s_{n}(H) \left(\frac{n}{p_{n}^{b}(H)}\right)^{2H} \left|
	\widehat{Q}^{(w),c}_{2J_{n}^{b}(H),p_{n}^{b}(H),n}(\widehat{\theta}_{n})
	-2^{2H}
	\widehat{Q}^{(w),c}_{J_{n}^{b}(H),2p_{n}^{b}(H),n}(\widehat{\theta}_{n})
	\right|
	\geq Mr
	\right]
	+ o(1)
\end{align*}
as $n\to\infty$ uniformly on $\Theta$ and it is therefore enough to prove that
\begin{align*}
	s_{n}(H) \left(\frac{n}{p_{n}^{b}(H)}\right)^{2H} \left|
	\widehat{Q}^{(w),c}_{2J_{n}^{b}(H),p_{n}^{b}(H),n}(\widehat{\theta}_{n})
	-
	2^{2H}
	\widehat{Q}^{(w),c}_{J_{n}^{b}(H),2p_{n}^{b}(H),n}(\widehat{\theta}_{n})
	\right|
\end{align*}
is uniformly bounded in probability. But by its definition, it is bounded by above by $V_{n}^{(1)} + 2^{2H}V_{n}^{(2)} + B_{n}^{(\tau)} + 2^{1+2H}B_{n,1}^{(\kappa)} + B_{n,2}^{(\kappa)}$, where, for $i=1, 2$, we have
\begin{align*}
\begin{cases}
	V_{n}^{(i)} := 
	s_{n}(H) \left(\frac{n}{p_{n}^{b}(H)}\right)^{2H}\left|
	\widehat{Q}^{(w)}_{(3-i)J_{n}^{b}(H),ip_{n}^{b}(H),n}
	-\mathbb{E}_{\theta}^{n}\left[\widehat{Q}^{(w)}_{(3-i)J_{n}^{b}(H),ip_{n}^{b}(H),n}\right]
	\right|,\\
	B_{n}^{(\tau)} :=
	s_{n}(H) \left(\frac{n}{p_{n}^{b}(H)}\right)^{2H}
	J_{n}^{b}(H)\left(\gamma_{2p_{n}^{b}(H)}^{(w)}+ 2^{1+2H}\gamma_{p_{n}^{b}(H)}^{(w)}\right) n^{-1}
	\left|\widehat{\tau}^{2}-\tau^{2}\right|,\\
	B_{n,i}^{(\kappa)} :=
	s_{n}(H) \left(\frac{n}{p_{n}^{b}(H)}\right)^{2H} J_{n}^{b}(H)
	\left|
	\widehat{\sigma}_{n}^{2} \overline{\kappa}_{ip_{n}^{b}(H)}^{(w)}(\widehat{H}_{n}) \left(\frac{ip_{n}^{b}(H)}{n}\right)^{1+2\widehat{H}_{n}}
	-\sigma^{2} \overline{\kappa}_{ip_{n}^{b}(H)}^{(w)}(H) \left(\frac{ip_{n}^{b}(H)}{n}\right)^{1+2H}
	\right|.
 \end{cases}
\end{align*}
Thus, the proof is down to proving that each sequence of these terms is bounded in $\mathbb{P}^{n}_{\theta}$-probability, uniformly over $\Theta$.
We omit the proofs of $\{V_{n}^{(i)}\}_{n\in\mathbb{N}}$ and $\{B_{n}^{(\tau)}\}_{n\in\mathbb{N}}$ as they are similar to the proofs of  $\{V_{n}\}_{n\in\mathbb{N}}$ and $\{B_{n}^{(\tau)}\}_{n\in\mathbb{N}}$ in Proposition~\ref{prop:corrected:sigma} respectively. 
Instead we focus on the sequence $\{B_{n,i}^{(\kappa)}\}_{n\in\mathbb{N}}$ with $i=1,2$ and <e consider only in the case of $i=1$ since the result can be proved similarly for $i=2$. 
By definition of $J_{n}^{b}(H)$ and using $0 \leq \widehat{\sigma}_{n} \leq \sigma_+$ and results similar to the uniform stochastic boundedness of $\{F_{n}\}_{n\in\mathbb{N}}$ and $\{B_{n}^{(H)}\}_{n\in\mathbb{N}}$ in the proof of Proposition~\ref{prop:corrected:sigma}, we can show 
\begin{align*}
	B_{n,1}^{(\kappa)}
	\lesssim &
	s_{n}(H)\left|
	\widehat{\sigma}_{n}^{2}-\sigma^{2}
	\right|
	\left|\overline{\kappa}_{p_{n}^{b}(H)}^{(w)}(H)\right|
	+s_{n}(H) \widehat{\sigma}_{n}^{2} \left|\overline{\kappa}_{p_{n}^{b}(H)}^{(w)}(H)\right|
	\left|
	\left(\frac{p_{n}^{b}(H)}{n}\right)^{2(\widehat{H}_{n}-H)} - 1
	\right|\\
	&+s_{n}(H) \widehat{\sigma}_{n}^{2} \left|
	\overline{\kappa}_{p_{n}^{b}(H)}^{(w)}(\widehat{H}_{n}) - \overline{\kappa}_{p_{n}^{b}(H)}^{(w)}(H)
	\right|
	\left(\frac{p_{n}^{b}(H)}{n}\right)^{2(\widehat{H}_{n}-H)}\\
	\\
	\lesssim &
	s_{n}(H) \left|\widehat{\sigma}_{n}^{2}-\sigma^{2}\right| \left|\overline{\kappa}_{p_{n}^{b}(H)}^{(w)}(H)\right|
	+s_{n}(H) \left|\widehat{H}_{n}-H\right| 
	\left(
	\left|\overline{\kappa}_{p_{n}^{b}(H)}^{(w)}(H)\right|\log{n}
	+D_{n}
	\right)
\end{align*}
uniformly on $\Theta$, where
\begin{align*}
	D_{n} := \sup_{H \wedge \widehat{H}_{n} \leq H^{\ast} \leq H \vee \widehat{H}_{n}}
	\left| \partial_{H}\overline{\kappa}_{p_{n}^{b}(H)}^{(w)}(H^{\ast}) \right|.
\end{align*}
Since Lemma~\ref{lem:wavelet:kappa_p} gives $|\partial_{H}^{i}\overline{\kappa}_{p_{n}^{b}(H)}^{(w)}(H)| \lesssim p_{n}^{b}(H)^{-\{(2H)\wedge 1\}}$ for $i=0,1$ uniformly on $\Theta$, we obtain
\begin{align*}
	B_{n,1}^{(\kappa)}
	\lesssim &
	s_{n}(H) \left(v_{n}(H)|\log{n}|^{-1}p_{n}^{b}(H)^{2H \wedge 1}\right)^{-1}\\
	&\times\left\{
	v_{n}(H)|\log{n}|^{-1}\left|\widehat{\sigma}_{n}^{2}-\sigma^{2}\right|
	+v_{n}(H)\left|\widehat{H}_{n}-H\right|
	\left(
	1+
	|\log{n}|^{-1}p_{n}^{b}(H)^{(2H \wedge 1)-\{2(H \wedge \widehat{H}_{n})\wedge 1\}}
	\right)
	\right\}.
\end{align*}
uniformly on $\Theta$. Then note that we have
\begin{align*}
	p_{n}^{b}(H)^{(2H \wedge 1)-\{2(H \wedge \widehat{H}_{n})\wedge 1\}}
	\leq
	\exp \left(v_{n}(H)|(2H \wedge 1)-\{2(H \wedge \widehat{H}_{n})\wedge 1\}| \times v_{n}(H)^{-1}\log{p_{n}^{b}(H)} \right)
\end{align*}
and the RHS of the above inequality converges to $1$ in $\mathbb{P}^{n}_{\theta}$-probability (uniformly over $\Theta$) as $n\to\infty$ since $\{v_{n}(H)|(2H \wedge 1)-\{2(H \wedge \widehat{H}_{n})\wedge 1\}|\}_{n\in\mathbb{N}}$ is bounded in $\mathbb{P}^{n}_{\theta}$-probability and $v_{n}(H)^{-1}\log{n} \to 0$ as $n\to\infty$ uniformly on $\Theta$. 
Then, the conclusion follows since $s_{n}(H) \left(v_{n}(H)|\log{n}|^{-1}p_{n}^{b}(H)^{2H \wedge 1}\right)^{-1}\leq 1$ and the sequences $\{v_{n}(H)|\widehat{H}_{n}-H|\}_{n\in\mathbb{N}}$ and $\{v_{n}(H)|\log{n}|^{-1}|\widehat{\sigma}_{n}-\sigma|\}_{n\in\mathbb{N}}$ are bounded in $\mathbb{P}^{n}_{\theta}$-probability, uniformly over $\Theta$. 

\subsection{Proof of Proposition~\ref{prop:corrected:tau} }
\label{sec:prop:corrected:tau:proof}

Proceeding as in the proof of Lemma~\ref{lemma:uncorrectedQV:tau}, it suffices to prove that
\begin{align*}
	\left\{
	s_{n}(H)
	\left|
	\widehat{Q}_{n} - (n-1) \widehat{\sigma}_{n}^{2} \kappa_{1}^{(1)}(\widehat{H}_{n}) n^{-1-2\widehat{H}_{n}}
	-\tau^{2} n^{-1}(n-1) \gamma_{1}^{(1)}
	\right|
	\right\}_{n\in\mathbb{N}}
\end{align*}
is bounded in $\mathbb{P}^{n}_{\theta}$-probability, uniformly over $\Theta$. 
For each $n\in\mathbb{N}$, the random variable in the above sequence is bounded above by $\overline{V}_{n} + \overline{B}_{n}$, where 
\begin{align*}
\begin{cases}    
	\overline{V}_{n} := s_{n}(H)\Big|
	\widehat{Q}_{n} - \mathbb{E}_{\theta}^{n}[\widehat{Q}_{n}]
	\Big|, \\
	\overline{B}_{n}:= 
	s_{n}(H)(n-1)\Big|
	\widehat{\sigma}_{n}^{2} \kappa_{1}^{(1)}(\widehat{H}_{n}) n^{-1-2\widehat{H}_{n}}
	-\sigma^{2} \kappa_{1}^{(1)}(H) n^{-1-2H} 
	\Big| 
 \end{cases}
\end{align*}
Since we see that $\{\overline{V}_{n}\}_{n\in\mathbb{N}}$ is bounded in $\mathbb{P}^{n}_{\theta}$-probability, uniformly over $\Theta$ proceeding as for $\{V_{n}\}_{n\in\mathbb{N}}$ in the proof of Lemma~\ref{lemma:uncorrectedQV:tau}, it suffices to prove that $\{\overline{B}_{n}\}_{n\in\mathbb{N}}$ is bounded in $\mathbb{P}^{n}_{\theta}$-probability, uniformly over $\Theta$.
Using Lemma~\ref{lem:wavelet:kappa_p}, we know that $x\mapsto \kappa_{1}^{(1)}(x)$ is bounded and differentiable with bounded derivative on $[H_{-}, H_{+}]$. Therefore, the absolute value of the derivative of the function $x\mapsto \kappa_{1}^{(1)}(x)n^{-2x}$ is bounded by $Cn^{-2x}\log{n}$ for some uniform constant $C>0$
. Since we also have 
$\widehat{\sigma}_{n} \leq \sigma_+$ and $\widehat{H}_{n}\in[H_{-},H_{+}]$, we can show
\begin{align*}
	\overline{B}_{n}
	&\leq 
	s_{n}(H) \left|\widehat{\sigma}_{n}^{2}-\sigma^{2}\right| \kappa_{1}^{(1)}(H) n^{-2H}
	+
	s_{n}(H) \widehat{\sigma}_{n}^{2}
	\left|
	\kappa_{1}^{(1)}(H) n^{-2H}
	-
	\kappa_{1}^{(1)}(\widehat{H}_{n}) n^{-2\widehat{H}_{n}}
	\right|\\
	&
	\lesssim 
	s_{n}(H) \left(v_{n}(H)|\log{n}|^{-1}n^{2H}\right)^{-1}
	\left(
	v_{n}(H)|\log{n}|^{-1}|\widehat{\sigma}_{n}^{2}-\sigma^{2}|
	+v_{n}(H)|\widehat{H}_{n}-H|
	\right)
\end{align*}
uniformly on $\Theta$. Then we finish the proof since $s_{n}(H)\left(v_{n}(H)|\log{n}|^{-1}n^{2H}\right)^{-1}\leq 1$ and the sequences $\{v_{n}(H)|\widehat{H}_{n}-H|\}_{n\in\mathbb{N}}$ and $\{v_{n}(H)|\log{n}|^{-1}|\widehat{\sigma}_{n}-\sigma|\}_{n\in\mathbb{N}}$ are bounded in $\mathbb{P}^{n}_{\theta}$-probability, uniformly over $\Theta$. 

\subsection{Proof of Theorem~\ref{thm:rate_optimal_estimator}}
\label{Sec:thm:rate_optimal_estimator:proof}
Recall that $r_{n}(H)=n^{\frac{2K+1}{2\Diamond(H)}}$. We define
 \begin{align*}
	v_{n}^{(0)}(H) := n^{\frac{2K+3}{2(2K+5)}} \wedge n^{\frac{2K+3}{2K+5}(1-H)} \wedge n^{\frac{2}{2K+5}\{(2H)\wedge 1\}} 
	\text{ and } u_{n}^{(0)}(H) := n^{\frac{1}{2}} \wedge n^{2H}
\end{align*}
and by induction
\begin{align*}
	v_{n}^{(m)}(H) &:= 
	r_{n}(H)^{\frac{1}{2}} \wedge u_{n}^{(m-1)}(H)  \wedge \left( v_{n}^{(m-1)}(H) |\log{n}|^{-1} n^{\frac{\{(2H)\wedge 1\} 2H }{\Diamond(H)}} \right),\\
	u_{n}^{(m)}(H) &:= 
	n^{\frac{1}{2}} \wedge \left(v_{n}^{(m)}(H)|\log{n}|^{-1}n^{2H}\right)
\end{align*}
for each $m\in\mathbb{N}$. 
Then Lemmas~\ref{lemma:uncorrectedQV:H} and~\ref{lemma:uncorrectedQV:tau} and Proposition~\ref{prop:corrected:sigma} ensure that 
\begin{align*}
	\mbox{$\{v_{n}^{(0)}(H)(\widehat{H}^{(0)}_{n}-H)\}_{n\in\mathbb{N}}$, $\{v_{n}^{(0)}(H)|\log{n}|^{-1}(\widehat{\sigma}^{(0)}_{n}-\sigma)\}_{n\in\mathbb{N}}$ and $\{{u_{n}^{(0)}}(H)(\widehat{\tau}^{(m)}_{n}-\tau)\}_{n\in\mathbb{N}}$}
\end{align*}
are bounded in $\mathbb{P}^{n}_{\theta}$-probability, uniformly over $\Theta$. 
Moreover, Propositions~\ref{prop:corrected:H} and~\ref{prop:corrected:tau} yield that for each $m\in\mathbb{N}$, 
\begin{align*}
	\mbox{$\{v_{n}^{(m)}(H)(\widehat{H}^{(m)}_{n}-H)\}_{n\in\mathbb{N}}$, $\{v_{n}^{(m)}(H)|\log{n}|^{-1}(\widehat{\sigma}^{(m)}_{n}-\sigma)\}_{n\in\mathbb{N}}$ and $\{{u_{n}^{(m)}}(H)(\widehat{\tau}^{(m)}_{n}-\tau)\}_{n\in\mathbb{N}}$}
\end{align*}
are also bounded in $\mathbb{P}^{n}_{\theta}$-probability, uniformly over $\Theta$. 

Since $\Diamond(H)>1$ implies $u_{n}^{(m)}(H) \geq v_{n}^{(m)}(H)|\log{n}|^{-1}n^{\frac{\{(2H)\wedge 1\} 2H }{\Diamond(H)}}$ for each $m\in\mathbb{N}$, we conclude by induction that
\begin{align*}
	v_{n}^{(m)}(H) = 
	n^{\frac{2K+1}{2\Diamond(H)}} \wedge \left( v_{n}^{(0)}(H)|\log{n}|^{-m} n^{m \frac{\{(2H)\wedge 1\} 2H }{\Diamond(H)}} \right).
\end{align*}
Then the conclusion follows because $m_{\mathrm{opt}} \frac{\{(2H)\wedge 1\} 2H }{\Diamond(H)} > \frac{2K+1}{2\Diamond(H)}$ holds for any $H \in [H_{-}, H_{+}]$ from the definition of $m_{\mathrm{opt}}$ so that we obtain $v_{n}^{(m_{\mathrm{opt}})}(H)=r_{n}(H)^{\frac{1}{2}}$ and $u_{n}^{(m_{\mathrm{opt}})}(H)=n^{\frac{1}{2}}$ due to  $n^{\frac{1}{2}}\leq v_{n}^{(m_{\mathrm{opt}})}(H)|\log{n}|n^{2H}$.

\section{Proof of Theorem~\ref{Theorem:OneStep} and Lemma~\ref{Lemma:OneStep}}
\label{Sec:Proof-OneStep}

\subsection{Proof of Theorem~\ref{Theorem:OneStep}} 
Using Lemma~\ref{Lemma_RateMat}, we can assume without loss of generality that $\Phi_{n}(\theta)$ is invertible on $\Theta$. 
For conciseness, we write $\widetilde{u}_{n}(\theta):=\Phi_{n}(\theta)^{-1}(\widetilde{\theta}_{n}-\theta)$, $\widetilde{\Psi}_{n}(\theta):=
\Phi_{n}(\widetilde{\theta}_{n})^{-1}\Phi_{n}(\theta)$ and 
\begin{equation*}
	\overline{v}_{n}(u,\theta)
	:=\int_{0}^{1}(1-z)\Phi_{n}(\theta)^{\top}\left[\partial_{\theta}^{2}\ell_{n}\left(\theta+z\varphi_{n}(\theta)u\right)
	-\partial_{\theta}^{2}\ell_{n}(\theta)\right]\Phi_{n}(\theta)\,\mathrm{d}z.
\end{equation*}
First note that we have
\begin{equation*}
	\Phi_{n}(\theta)^{-1}(\widehat{\theta}_{n}-\theta)
	=
	\widetilde{u}_{n}(\theta)
	+\left(\widetilde{\Psi}_{n}(\theta)^{\top}\mathcal{I}(\widetilde{\theta}_{n})\widetilde{\Psi}_{n}(\theta)\right)^{-1}\Phi_{n}(\theta)^{\top}\partial_{\theta}\ell_{n}(\widetilde{\theta}_{n}).
\end{equation*}
Using Taylor's theorem, we have
\begin{equation*}
	\Phi_{n}(\theta)^{\top}\partial_{\theta}\ell_{n}(\widetilde{\theta}_{n})
	=\zeta_{n}(\theta)-\mathcal{I}_{n}(\theta)\widetilde{u}_{n}(\theta)
	+\overline{v}_{n}(\widetilde{u}_{n}(\theta),\theta)\widetilde{u}_{n}(\theta).
\end{equation*}
Then, we obtain 
\begin{align}
\begin{split}
	\Phi_{n}(\theta)^{-1}(\widehat{\theta}_{n}-\theta)
	&=\left(\widetilde{\Psi}_{n}(\theta)^{\top}\mathcal{I}(\widetilde{\theta}_{n})\widetilde{\Psi}_{n}(\theta)\right)^{-1}\zeta_{n}(\theta) \\
	&\quad+\left[ I_{3} -\left(\widetilde{\Psi}_{n}(\theta)^{\top}\mathcal{I}(\widetilde{\theta}_{n})\widetilde{\Psi}_{n}(\theta)\right)^{-1}\mathcal{I}_{n}(\theta)\right]\widetilde{u}_{n}(\theta) \\
	&\quad+\left(\widetilde{\Psi}_{n}(\theta)^{\top}\mathcal{I}(\widetilde{\theta}_{n})\widetilde{\Psi}_{n}(\theta)\right)^{-1}
	\overline{v}_{n}(\widetilde{u}_{n}(\theta),\theta)\widetilde{u}_{n}(\theta). 
 \end{split}
 \label{Theorem:OneStep_Key1}
\end{align}
Using \eqref{Assumption_IniEstimator} and the continuity of $\mathcal{I}(\theta)$ on $(0,1)\times(0,\infty)^{2}$, we know that  $\mathcal{I}(\widetilde{\theta}_{n}) \to \mathcal{I}(\theta)$ in distribtion as $n\to\infty$ and using also Lemma~\ref{Lemma_RateMat}, $\widetilde{\Psi}_{n}(\theta) \to I_{3}$ in distribtion as $n\to\infty$. Moreover, proceeding as the proof of \eqref{LAN:remainder}, we can show
\begin{equation*}
	\sup_{\|u\|_{\mathbb{R}^{3}}\leq c}\left\|\overline{v}_{n}(u,\theta)\right\|_{\mathrm{F}}=o_{\mathbb{P}^{n}_{\theta}}(1)\ \ \mbox{as $n\to\infty$}
\end{equation*}
for each $c>0$, so that \eqref{Assumption_IniEstimator} gives
\begin{equation*}
	\left\|\overline{v}_{n}(\widetilde{u}_{n}(\theta),\theta)\right\|_{\mathrm{F}}\stackrel{\mathbb{P}^{n}_{\theta}}{\rightarrow}0\ \ \mbox{as $n\to\infty$.}
\end{equation*}
We conclude by plugging the last limit, $\mathcal{I}(\widetilde{\theta}_{n}) \to \mathcal{I}(\theta)$ and $\widetilde{\Psi}_{n}(\theta) \to I_{3}$, and \eqref{LAN:Score-CLT}, into \eqref{Theorem:OneStep_Key1} (using Slutsky's lemma).

\subsection{Proof of Lemma~\ref{Lemma:OneStep}}\label{Sec:Proof-Tightness_Initial-Estimator}
Write $\widetilde{\xi}_{n}:=(\widetilde{H}_{n},\widetilde{\sigma}_{n})$. 
Using the notation of Section~\ref{Appendix:Rate-Matrix}, we have
\begin{equation*}
	\sqrt{r_{n}^{1}(H)}\varphi_{n}(\theta)^{-1}(\widetilde{\xi}_{n}-\xi)
	=
	\widetilde{\varphi}_{n}(\theta)^{-1}
	\begin{pmatrix}
		\sqrt{r_{n}^{1}(H)}(\widetilde{H}_{n}-H)\\
		2\sqrt{r_{n}^{1}(H)}a_{n}(H)(\widetilde{H}_{n}-H)
		+2\sigma^{-1}\sqrt{r_{n}^{1}(H)}(\widetilde{\sigma}_{n}-\sigma)
	\end{pmatrix}.
\end{equation*}
Since we have $\|\widetilde{\varphi}_{n}(\theta)\|_{\mathrm{F}}\lesssim 1$ and $\|\widetilde{\varphi}_{n}(\theta)^{-1}\|_{\mathrm{F}}\lesssim 1$ uniformly on $\Theta$, the conclusion follows.

\end{document}